\newtheorem{theorem}{Theorem}
\theoremstyle{plain}
\newtheorem{definition}{Definition}
\newtheorem*{acknowledgement}{Acknowledgement}
\newtheorem{construction}{Construction}
\newtheorem{lemma}{Lemma}
\newtheorem{proposition}[theorem]{Proposition}
\numberwithin{equation}{section}
\theoremstyle{remark}
\newtheorem{remark}[lemma]{Remark}
\newtheorem*{openproblem}{Open Problem}
\theoremstyle{definition}
\newtheorem{example}{Example}
\begin{document}
\title{Operator ideals in Tate objects}
\author{O. Braunling,\quad M. Groechenig,\quad J. Wolfson}

\address{Department of Mathematics, Universit\"{a}t Freiburg, FRG}
\email{oliver.braeunling@math.uni-freiburg.de}
\address{Department of Mathematics, Imperial College London, UK}
\email{m.groechenig@imperial.ac.uk}
\address{Department of Mathematics, University of Chicago, USA}
\email{wolfson@math.uchicago.edu}

\thanks{O.B.\ was supported by GK1821 \textquotedblleft Cohomological Methods in
Geometry\textquotedblright. M.G.\ was partially supported by EPSRC Grant
No.\ EP/G06170X/1. J.W.\ was partially supported by an NSF Post-doctoral
Research Fellowship under Grant No.\ DMS-1400349. Our research was supported
in part by NSF Grant No.\ DMS-1303100 and EPSRC Mathematics Platform grant EP/I019111/1.}

\begin{abstract}
Tate's central extension originates from 1968 and has since found many
applications to curves. In the 80s Beilinson found an $n$-dimensional
generalization: cubically decomposed algebras, based on ideals of bounded and
discrete operators in ind-pro limits of vector spaces. Kato and Beilinson
independently defined `($n$-)Tate categories' whose objects are formal
iterated ind-pro limits in general exact categories. We show that the
endomorphism algebras of such objects often carry a cubically decomposed
structure, and thus a (higher) Tate central extension. Even better, under very
strong assumptions on the base category, the $n$-Tate category turns out to be
just a category of projective modules over this type of algebra.

\end{abstract}
\maketitle

In his 1980 paper \textquotedblleft Residues and ad\`{e}les\textquotedblright%
\ \cite{MR565095} A. A. Beilinson introduced the following algebraic
structure, without giving it a name:

\begin{definition}
\label{def_BeilNFoldAlg}A \emph{Beilinson }$n$\emph{-fold cubical algebra} is

\begin{itemize}
\item an associative $k$-algebra $A$;

\item two-sided ideals $I_{i}^{+},I_{i}^{-}$ such that $I_{i}^{+}+I_{i}^{-}=A
$ for $i=1,\ldots,n$;

\item call $I_{tr}:=\bigcap_{i=1,\ldots,n}I_{i}^{+}\cap I_{i}^{-}$ the
\emph{trace-class} operators of $A$.
\end{itemize}
\end{definition}

In his 1987 paper \textquotedblleft How to glue perverse
sheaves\textquotedblright\ \cite{MR923134} he introduced the exact category
$1$-$\mathsf{Tate}_{\aleph_{0}}^{el}\mathcal{C}$ for any given exact category
$\mathcal{C}$. It was suggestively denoted by $\underleftrightarrow{\lim
}\,\mathcal{C}$ in loc. cit. We shall recall its definition in
\S \ref{sect_TateCategories}.

Although these two papers do not cite each other, some ideas in them can be
viewed as two sides of the same coin. In the present paper we establish a
rigorous connection between them. In fact, the main idea is that the latter
category $-$ under a number of assumptions $-$ are just the projective modules
over the former type of algebras. But this really requires some assumptions
$-$ in general it is quite far from the truth.\medskip

Define $n$-$\mathsf{Tate}_{\aleph_{0}}^{el}(\mathcal{C}):=\mathsf{Tate}%
_{\aleph_{0}}^{el}(\,(n-1)$-$\mathsf{Tate}_{\aleph_{0}}(\mathcal{C})\,)$ and
$n$-$\mathsf{Tate}_{\aleph_{0}}(\mathcal{C})$ as the idempotent completion of
$n$-$\mathsf{Tate}_{\aleph_{0}}^{el}(\mathcal{C})$. We write $P_{f}(R)$ for
the category of finitely generated projective right $R$-modules.

\begin{theorem}
\label{intro_Thm1}Let $\mathcal{C}$ be an idempotent complete split exact category.

\begin{enumerate}
\item For every object $X\in n$-$\mathsf{Tate}_{\aleph_{0}}^{el}(\mathcal{C})$
its endomorphism algebra canonically carries the structure of a Beilinson
$n$-fold cubical algebra.

\item If there is a countable family of objects $\{S_{i}\}$ in $\mathcal{C}$
such that every object in $\mathcal{C}$ is a direct summand of some countable
direct sum of objects from $\{S_{i}\}$, then there exists (non-canonically) an
object $X\in n$-$\mathsf{Tate}_{\aleph_{0}}^{el}(\mathcal{C})$ such that%
\[
n\text{-}\mathsf{Tate}_{\aleph_{0}}(\mathcal{C})\overset{\sim}{\longrightarrow
}P_{f}(R)\qquad\text{with}\qquad R:=\operatorname*{End}(X)\text{.}%
\]

\item Under this equivalence, the ideals $I_{i}^{\pm}$ correspond to certain
categorical ideals, which can be defined even if $\mathcal{C}$ is not split exact.
\end{enumerate}
\end{theorem}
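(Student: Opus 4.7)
My plan is to treat the three parts in sequence, with (1) by induction on $n$, (2) by a Morita-type argument, and (3) as a categorical reformulation of the ideals produced in (1).

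For part~(1), the base case $n=1$ is where the hypotheses on $\mathcal{C}$ enter most directly: idempotent completeness and split exactness together imply that every elementary Tate object $X$ decomposes as $X\cong L\oplus P$ for a lattice $L$ and a colattice $P$, with associated projections $\pi_L,\pi_P$. I would then define
\[
  I_1^+=\{\,f\in\operatorname{End}(X)\;:\;\exists\text{ lattice }L'\subseteq X\text{ with }f(X)\subseteq L'\,\},
\]
\[
  I_1^-=\{\,f\in\operatorname{End}(X)\;:\;\exists\text{ lattice }L'\subseteq X\text{ with }f|_{L'}=0\,\}.
\]
Both are visibly two-sided ideals. For $I_1^++I_1^-=\operatorname{End}(X)$ I would decompose $f=f\pi_L+f\pi_P$: the first summand has image in $f(L)$, which is contained in a lattice because $L$ is a pro-object, and the second annihilates the lattice $L$. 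For the inductive step, $X\in n$-$\mathsf{Tate}^{el}_{\aleph_0}(\mathcal{C})$ is elementary Tate in $(n-1)$-$\mathsf{Tate}_{\aleph_0}(\mathcal{C})$, so the same recipe yields $I_n^\pm$; the inner ideals $I_1^\pm,\ldots,I_{n-1}^\pm$ are obtained by invoking the inductive hypothesis on the endomorphism algebras of $L$ and $P$ and transporting them to $\operatorname{End}(X)$ via the matrix description coming from the splitting.

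For part~(2), the countable generation hypothesis on $\{S_i\}$ allows the construction of an iterated Tate object $X$ which contains copies of every $S_i$ as summands at every level of the $n$-fold tower. Setting $R:=\operatorname{End}(X)$, I would consider $\Phi:=\operatorname{Hom}(X,-):n\text{-}\mathsf{Tate}_{\aleph_0}(\mathcal{C})\to P_f(R)$, which sends $X$ to $R$ and respects finite direct sums and retracts. Fully faithfulness should amount to a projective-generator property of $X$; essential surjectivity follows because $P_f(R)$ is the idempotent completion of the additive category on $R$, both sides are idempotent complete, and (by construction of $X$) every object of $n$-$\mathsf{Tate}_{\aleph_0}(\mathcal{C})$ is a summand of a finite sum of copies of $X$.

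For part~(3), I would define $\widetilde{I}_i^+$ as the ideal of endomorphisms factoring through an admissible sub-object that plays the role of a ``lattice in the $i$-th direction'' of the Tate tower, and $\widetilde{I}_i^-$ symmetrically through admissible quotients. Such factorisation ideals are well-defined in any exact category and are automatically two-sided; one then checks that in the split-exact setting they recover $I_i^\pm$ from part~(1). The main obstacle I anticipate is the inductive step of part~(1), where it must be verified that the ideals inherited from $\operatorname{End}(L)$ and $\operatorname{End}(P)$ extend to two-sided ideals of $\operatorname{End}(X)$ with the sum axiom surviving in each of the $n$ directions simultaneously; constructing the correct generator $X$ in part~(2) is similarly delicate, since the iterated ind-pro structure has to interact compatibly with the idempotent completion.
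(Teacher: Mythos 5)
Your parts (2) and (3) are essentially the paper's argument: the paper builds a standard object $S:=\bigl(\widehat{\prod_{\mathbf N}S}\oplus\widehat{\bigoplus_{\mathbf N}S}\bigr)((t_2))\cdots((t_n))$ of which every object is a direct summand, and applies exactly the projective-generator/Morita lemma you describe (Lemma \ref{Lemma_SplitExactCatWithStandardObjectsAreProjModules} and Theorem \ref{thm_body_TateObjsAreProjModules}). The problem is in part (1), specifically your inductive step for the ``inner'' ideals.

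First, $L$ and $P$ are not objects of $(n-1)$-$\mathsf{Tate}_{\aleph_0}(\mathcal{C})$: a lattice $L$ is an admissible Pro-object of $(n-1)$-Tate objects and $P\cong X/L$ is an admissible Ind-object of them, so the inductive hypothesis does not apply to $\operatorname{End}(L)$ or $\operatorname{End}(P)$ at all. What \emph{is} an $(n-1)$-Tate object is a quotient $L_1/L_1'$ of nested lattices, and this is exactly what the paper's Definition \ref{Def_CubicalStructureForTateObjects} uses: $f\in I_i^{s}$ iff for \emph{every} double lattice factorization the induced map $\overline{f}:L_1/L_1'\to L_2'/L_2$ lies in $I_{i-1}^{s}$. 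Making this well defined and proving it is a two-sided ideal is the technical heart of the paper (Lemmas \ref{Lemma_CanFactorTateMorThroughLatticePairs}, \ref{Lemma_FindIntermediateDoubleLatticePair}, the immersive/submersive machinery, and Theorem \ref{theorem_CIdempCompleteHaveTateIdeals}); none of this is replaced by your matrix transport. Second, ``transporting via the matrix description coming from the splitting'' cannot give the canonical ideals the theorem asserts: the decomposition $X\cong L\oplus P$ is a non-canonical choice, an ideal of the diagonal subalgebra $\operatorname{End}(L)\times\operatorname{End}(P)$ does not in general generate a two-sided ideal of the full matrix algebra that restricts back to it, and off-diagonal entries $L\to P$, $P\to L$ are left unconstrained. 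The paper separates the two issues: the ideals are defined intrinsically and canonically for any idempotent complete $\mathcal{C}$ (Theorem \ref{theorem_CIdempCompleteHaveTateIdeals}), and only the identity $I_i^++I_i^-=\operatorname{End}(X)$ uses a (non-canonical) \emph{system of good idempotents}, whose existence in the countable split exact case is proved by choosing idempotents on one lattice quotient $L_1/L_1'$, extending them compatibly to larger quotients via split exactness of $L_2/L_2'\simeq L_1/L_1'\oplus(\cdots)$, and exhausting $X$ using countability of the index diagrams (Theorem \ref{thm_CSplitExactIdealsAddUpGetNFoldCubicalBeilAlgebra}). This compatibility-and-exhaustion argument is precisely the ``main obstacle'' you flag at the end, and your proposal does not supply it.
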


See Theorem \ref{thm_CSplitExactIdealsAddUpGetNFoldCubicalBeilAlgebra} and
Theorem \ref{thm_body_TateObjsAreProjModules} in the paper for details. In
other words: In some sense the approaches of the 1980 paper and the 1987 paper
are essentially equivalent. If $\mathcal{C}$ is not split exact, the ideals
$I_{i}^{+},I_{i}^{-}$ still exist, see Theorem
\ref{theorem_CIdempCompleteHaveTateIdeals} in the text. However, the property
$I_{i}^{+}+I_{i}^{-}=A$ can fail to hold; Example
\ref{Example_FinAbelianPGroups} will give a counter-example.\medskip

V. G. Drinfeld has also introduced a category fitting into the same context,
his notion of \textquotedblleft Tate $R$-modules\textquotedblright\ for a
given ring $R$ \cite{MR2181808}. We call it $\mathsf{Tate}^{Dr}(R)$ and give
the definition later. In loc. cit. these appear without a restriction on the
cardinality. However, if we restrict to countable cardinality, then Theorem
\ref{intro_Thm1} also implies:

\begin{theorem}
Let $R$ be a commutative ring. Then there is an exact equivalence of
categories%
\[
\mathsf{Tate}_{\aleph_{0}}^{Dr}(R)\overset{\sim}{\longrightarrow}%
P_{f}(E)\text{,}%
\]
where $E$ is the Beilinson $1$-fold cubical algebra%
\[
E:=\operatorname*{End}\nolimits_{\mathsf{Tate}_{\aleph_{0}}^{Dr}(R)}\left(
\left.  R((t))\right.  \right)  \text{,}%
\]
where \textquotedblleft$R((t))$\textquotedblright\ is understood to be the
`Tate $R$-module \`{a} la Drinfeld' denoted by $R((t))$ in Drinfeld's paper
\cite{MR2181808}.
\end{theorem}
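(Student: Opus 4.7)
The plan is to deduce the statement from Theorem~\ref{intro_Thm1} applied to $\mathcal{C}:=P_{f}(R)$ with $n=1$. First, I would invoke the comparison (established later in the paper) of Drinfeld's category with the iterated ind-pro construction, yielding an exact equivalence
\[
\mathsf{Tate}_{\aleph_{0}}^{Dr}(R)\;\simeq\;1\text{-}\mathsf{Tate}_{\aleph_{0}}(P_{f}(R)).
\]
At the level of objects, this matches a countable Tate $R$-module \`{a} la Drinfeld $-$ a summand of $P\oplus Q^{\ast}$ with $P,Q$ countably generated projective $-$ with an ind-pro system of finitely generated projectives in the sense of the paper.

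Next I would verify the hypotheses of Theorem~\ref{intro_Thm1} for $\mathcal{C}=P_{f}(R)$. Since $R$ is commutative (in fact, any associative ring suffices here), $P_{f}(R)$ is idempotent complete and split exact. The countable generation hypothesis is satisfied by the one-element family $\{S_{1}\}=\{R\}$: every finitely generated projective $R$-module is a direct summand of some $R^{n}$, and hence of the countable direct sum $R^{(\mathbb{N})}=\bigoplus_{i\in\mathbb{N}}R$.

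Theorem~\ref{intro_Thm1} therefore produces an object $X\in 1\text{-}\mathsf{Tate}_{\aleph_{0}}^{el}(P_{f}(R))$ whose endomorphism algebra $E$ is a Beilinson $1$-fold cubical algebra, together with an exact equivalence $1\text{-}\mathsf{Tate}_{\aleph_{0}}(P_{f}(R))\simeq P_{f}(E)$. The main remaining task $-$ and where I expect the real work to lie $-$ is to verify that $R((t))$ is a legitimate choice for $X$. Unpacking Drinfeld's object, $R((t))$ is the elementary Tate object
\[
R((t))=\varinjlim_{n}\,t^{-n}R[[t]],\qquad R[[t]]=\varprojlim_{n}R[t]/(t^{n}),
\]
built from the pro-finitely-generated-projective lattice $R[[t]]$ with quotients $R[t]/(t^{n})\in P_{f}(R)$ sitting inside the ind-object $R((t))$. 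One must then check that $R((t))$ satisfies the universal property used in the proof of Theorem~\ref{intro_Thm1}(2), namely that every object of $1\text{-}\mathsf{Tate}_{\aleph_{0}}(P_{f}(R))$ is a direct summand of a countable direct sum of copies of $R((t))$. Under the identification with Drinfeld's category, this reduces to showing that any $M\simeq P\oplus Q^{\ast}$ is a summand of
\[
R((t))^{(\mathbb{N})}\;\simeq\;\bigl(t^{-1}R[t^{-1}]\bigr)^{(\mathbb{N})}\oplus R[[t]]^{(\mathbb{N})},
\]
which follows from the universality of $R^{(\mathbb{N})}$ among countably generated projectives (accounting for $P$) together with the dual universality of $R[[t]]\cong(R^{(\mathbb{N})})^{\ast}$ among the corresponding linearly compact modules (accounting for $Q^{\ast}$). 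Once $X=R((t))$ is secured, Theorem~\ref{intro_Thm1}(1) automatically equips $E=\operatorname{End}(R((t)))$ with its Beilinson $1$-fold cubical structure, and composing the two equivalences above yields the claimed $\mathsf{Tate}_{\aleph_{0}}^{Dr}(R)\simeq P_{f}(E)$.
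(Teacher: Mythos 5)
Your proposal follows essentially the same route as the paper's proof: compose the equivalence $\mathsf{Tate}_{\aleph_{0}}^{Dr}(R)\simeq\mathsf{Tate}_{\aleph_{0}}(P_{f}(R))$ of Theorem \ref{thm_body_DrinfeldModulesSitInsideTateCat} with Theorem \ref{thm_body_TateObjsAreProjModules} applied to the standard object $R((t))$, and transport the endomorphism ring back through the fully faithful first equivalence. The one calibration to make is that the generator property actually needed (and used in the paper via \cite[Prop.~7.4]{TateObjectsExactCats}) is that every object is a direct summand of the \emph{single} object $R((t))=\bigoplus_{\mathbf{N}}R\oplus\prod_{\mathbf{N}}R$ --- the auxiliary object $R((t))^{(\mathbf{N})}$ you pass through need not itself be a Tate object --- and the two universality facts you cite ($P$ a summand of $R^{(\mathbf{N})}$, $Q^{\ast}$ a summand of $R[[t]]\cong(R^{(\mathbf{N})})^{\ast}$) already give this single-copy statement directly.
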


See Theorem \ref{thm_body_ModulesForTateObjsALaDrinfeld} in the paper. This
also reveals a certain additional structure on endomorphisms of Drinfeld's
Tate $R$-modules, which appears not to have been studied so far at all.

Beilinson has originally considered the category $1$-$\mathsf{Tate}%
_{\aleph_{0}}^{el}\mathcal{C}$, i.e. without idempotent completion. Our
previous paper \cite[\S 3.2.7]{TateObjectsExactCats} shows:

\begin{theorem}
The category $1$-$\mathsf{Tate}_{\aleph_{0}}^{el}(\mathcal{C})$ can fail to be
idempotent complete. In particular, one cannot improve Theorem
\ref{intro_Thm1} to%
\[
n\text{-}\mathsf{Tate}_{\aleph_{0}}^{el}(\mathcal{C})\overset{\sim
}{\longrightarrow}P_{f}(R)\text{,}%
\]
i.e. without the idempotent completion, regardless which ring $R$ is taken.
\end{theorem}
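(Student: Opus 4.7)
The plan is to deduce this theorem from two essentially independent ingredients: the explicit counterexample from our previous paper \cite[\S 3.2.7]{TateObjectsExactCats}, which exhibits an exact category $\mathcal{C}$ for which $1$-$\mathsf{Tate}_{\aleph_{0}}^{el}(\mathcal{C})$ fails to be idempotent complete; and the elementary observation that, for any associative unital ring $R$, the category $P_{f}(R)$ is itself idempotent complete.

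First, I invoke \cite[\S 3.2.7]{TateObjectsExactCats} to fix an exact category $\mathcal{C}$, an object $X \in 1\text{-}\mathsf{Tate}_{\aleph_{0}}^{el}(\mathcal{C})$, and an idempotent $e \in \operatorname{End}(X)$ which does not split in the elementary $1$-Tate category. This single witness already establishes the first assertion of the theorem. Next, I recall that for any ring $R$, an idempotent endomorphism $e \colon P \to P$ in $P_{f}(R)$ splits via $P \cong eP \oplus (1-e)P$, with both summands finitely generated projective; hence $P_{f}(R)$ is always idempotent complete.

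Suppose then, for contradiction, that an equivalence of categories $1\text{-}\mathsf{Tate}_{\aleph_{0}}^{el}(\mathcal{C}) \xrightarrow{\sim} P_{f}(R)$ existed for the $\mathcal{C}$ just fixed, for some ring $R$. Any equivalence of categories preserves the splitting of idempotents, so the source would inherit idempotent completeness from the target, contradicting the choice of $(X,e)$. Hence no such $R$ exists, which settles the case $n = 1$. For general $n \geq 1$, the same reasoning applies at the outermost level of the iterated Tate construction, using the identification $n\text{-}\mathsf{Tate}_{\aleph_{0}}^{el}(\mathcal{C}) = 1\text{-}\mathsf{Tate}_{\aleph_{0}}^{el}((n-1)\text{-}\mathsf{Tate}_{\aleph_{0}}(\mathcal{C}))$ and the fact that the construction of loc. cit. can be run with $(n-1)\text{-}\mathsf{Tate}_{\aleph_{0}}(\mathcal{C}_{0})$ in place of the base category, producing a non-splitting idempotent at that level.

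The main obstacle is thus already resolved externally in \cite{TateObjectsExactCats}: producing a concrete non-splitting idempotent inside an elementary $1$-Tate object is the delicate part. Once that input is available, everything here is purely formal and reduces to the transport of idempotent completeness along categorical equivalences.
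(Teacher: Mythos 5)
Your argument coincides with the paper's: the first assertion is imported verbatim from \cite[\S 3.2.7]{TateObjectsExactCats}, and the second follows because $P_{f}(R)$ is always idempotent complete while equivalences of categories transport that property. The paper dispatches this in one line after the theorem statement, so your proposal is correct and takes essentially the same route (your extra paragraph on general $n$ is harmless but not needed, since a single counterexample at $n=1$ already rules out the improved statement).
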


This follows simply since $P_{f}(-)$ is always an idempotent complete
category. For some constructions the categories $n$-$\mathsf{Tate}_{\aleph
_{0}}(\mathcal{C})$ are too small since the admissible Ind- and Pro-limits are
only allowed to be taken over \textit{countable} diagrams. This happens for
example when writing down the ad\`{e}les of a curve over an uncountable base
field as a 1-Tate object. In our previous paper \cite{TateObjectsExactCats} we
have therefore constructed categories $n$-$\mathsf{Tate}_{\kappa}%
(\mathcal{C})$, constraining the size of limits by a general infinite cardinal
$\kappa$. Examples due to J.\ \v{S}\v{t}ov\'{\i}\v{c}ek and J. Trlifaj
\cite[Appendix B]{TateObjectsExactCats} demonstrate the following

\begin{theorem}
Even if $\mathcal{C}$ is split exact and idempotent complete, the category $1
$-$\mathsf{Tate}_{\kappa}(\mathcal{C})$ for $\kappa>\aleph_{0}$ can fail to be
split exact. In particular, one cannot improve Theorem \ref{intro_Thm1} to
general cardinalities $\kappa$.
\end{theorem}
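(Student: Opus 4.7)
The plan is to combine a concrete counterexample due to Šťovíček--Trlifaj with a general observation about finitely generated projective modules. The first part of the statement is really an existence claim: I need to exhibit a split exact idempotent complete category $\mathcal{C}$ together with an uncountable cardinal $\kappa$ such that $1$-$\mathsf{Tate}_{\kappa}(\mathcal{C})$ fails to be split exact. For this I would simply invoke the construction in Appendix B of \cite{TateObjectsExactCats}, recalling precisely the object that is built there and pointing out the admissible short exact sequence in $1$-$\mathsf{Tate}_{\kappa}(\mathcal{C})$ that witnesses the failure of splitting. No new construction is needed here; the work is merely to quote the example with enough detail that a reader can see it lands inside $1$-$\mathsf{Tate}_{\kappa}(\mathcal{C})$ rather than only inside some larger ind-pro category.

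For the ``In particular'' clause I would argue by contradiction. Suppose that for some ring $R$ there were an exact equivalence $1$-$\mathsf{Tate}_{\kappa}(\mathcal{C}) \xrightarrow{\sim} P_{f}(R)$. The key observation is that $P_{f}(R)$, equipped with its canonical exact structure coming from short exact sequences of $R$-modules, is always split exact: any admissible short exact sequence in $P_{f}(R)$ has a finitely generated projective right-hand term and therefore splits as a sequence of $R$-modules. Since split exactness is preserved under exact equivalences, the category $1$-$\mathsf{Tate}_{\kappa}(\mathcal{C})$ would then be split exact, contradicting the counterexample produced in the previous paragraph.

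The main (and essentially only) obstacle is making sure that the Šťovíček--Trlifaj object genuinely yields a non-split admissible short exact sequence inside $1$-$\mathsf{Tate}_{\kappa}(\mathcal{C})$ and not merely inside a bigger ambient category such as $\mathrm{Ind}\,\mathrm{Pro}\,\mathcal{C}$. Since the cardinality bound $\kappa$ controls precisely which ind-pro limits are admitted as Tate objects, this is the step where one has to check that the cardinality parameters in the Šťovíček--Trlifaj construction can be matched with $\kappa$. Once this is verified, both parts of the theorem follow immediately by the short argument above.
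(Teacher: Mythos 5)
Your proposal is correct and follows essentially the same route as the paper, which likewise treats the first claim as a citation to the \v{S}\v{t}ov\'{\i}\v{c}ek--Trlifaj examples in \cite[Appendix B]{TateObjectsExactCats} and derives the ``in particular'' clause from the observation that $P_{f}(R)$ is always split exact (mirroring the paper's remark that $P_{f}(-)$ is always idempotent complete in the analogous preceding theorem). Your caveat about matching the cardinality parameters is sensible but is already handled in the cited appendix.
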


A key application of our results are to ad\`{e}les of schemes, as introduced
by A. N. Parshin and Beilinson \cite{MR0419458}, \cite{MR565095}. A detailed
account was given by A. Huber \cite{MR1138291}. We state the next result in
the language of these papers, but the reader will also find the necessary
notation and background explained in the main body of the present text:

\begin{theorem}
\label{thm_intro_OnAdeles}Let $k$ be a field and $X/k$ a finite type scheme of
pure dimension $n$. Let $\mathcal{F}$ be any quasi-coherent sheaf and
$\triangle\subseteq S\left(  X\right)  _{n}$ a subset.

\begin{enumerate}
\item Then the Beilinson-Parshin ad\`{e}les $A(\triangle,\mathcal{F})$ can be
viewed as an elementary $n$-Tate object in finite-dimensional $k$-vector
spaces, i.e. so that%
\[
A(\triangle,\mathcal{F})\in n\text{-}\mathsf{Tate}^{el}\left(  \mathsf{Vect}%
_{f}\right)  \text{.}%
\]

\item The ring $\operatorname*{End}\left(  A(\triangle,\mathcal{O}%
_{X})\right)  $ carries the structure of an $n$-fold cubical\ Beilinson
algebra as in Definition \ref{def_BeilNFoldAlg}.

\item If $\triangle=\{(\eta_{0}>\cdots>\eta_{n})\}$ is a singleton and
$\operatorname*{codim}\nolimits_{X}\overline{\{\eta_{i}\}}=i$, there is a
canonical isomorphism $\operatorname*{End}\nolimits_{\left.  n\text{-}%
\mathsf{Tate}^{el}\right.  }\left(  A(\triangle,\mathcal{O}_{X})\right)  \cong
E_{\triangle}^{\operatorname*{Beil}}$, where $E_{\triangle}%
^{\operatorname*{Beil}}$ denotes Beilinson's original cubical algebra from
\cite[\S 3, \textquotedblleft$\mathrm{E}_{\mathrm{\Delta}}$\textquotedblright%
]{MR565095} (defined without Tate categories).
\end{enumerate}
\end{theorem}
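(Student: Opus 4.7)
The plan is to reduce everything to the case of a single flag and then induct on $n$, with the bulk of the work landing on part (1); parts (2) and (3) will follow from (1) combined with Theorem \ref{intro_Thm1} and a direct comparison of structures.

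For part (1), I would argue by induction on $n$. The base case $n=0$ is trivial since $0$-$\mathsf{Tate}^{el}(\mathsf{Vect}_f) = \mathsf{Vect}_f$, and any quasi-coherent sheaf evaluated on the empty flag lives in finite-dimensional $k$-vector spaces (in the case of interest, after picking a suitable exhaustion). For the inductive step, I would invoke Huber's description of the Beilinson--Parshin adèles as iterated localization-and-completion operations along a flag: given the adèle $A(\triangle', \mathcal{F})$ for the truncated set of flags $\triangle' \subseteq S(X)_{n-1}$, the longer adèle is obtained by forming an $\mathfrak{m}$-adic completion (a Pro-limit) of $A(\triangle', \mathcal{F})$ along the new point $\eta_n$, followed by a localization (an Ind-limit) with respect to the elements not vanishing at $\eta_n$. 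The key technical verification is that both operations yield \emph{admissible} Ind- and Pro- diagrams in the exact category $(n-1)$-$\mathsf{Tate}^{el}(\mathsf{Vect}_f)$, i.e.\ that the transition maps of the completion tower are admissible epics with Tate kernels and the localization filtration is exhaustive with admissible monics. This requires exhibiting explicit lattices (in the Tate-theoretic sense) at each stage, inherited inductively from those present at the previous level. Finally, the passage from a single flag to a subset $\triangle$ uses admissible countable (or $\kappa$-) direct sums, which exist in $n$-$\mathsf{Tate}^{el}$.

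Part (2) is essentially a formal consequence. Once (1) is established, $\mathsf{Vect}_f$ is idempotent complete and split exact (every short exact sequence of vector spaces splits), so Theorem \ref{intro_Thm1}(1) immediately endows $\mathrm{End}(A(\triangle, \mathcal{O}_X))$ with a canonical $n$-fold cubical Beilinson algebra structure.

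Part (3) is the identification with Beilinson's original construction, and I would handle it by producing a map in one direction and then matching the ideals. Beilinson's algebra $E_\triangle^{\mathrm{Beil}}$ acts tautologically on the adèle $A(\triangle, \mathcal{O}_X)$, giving a homomorphism $E_\triangle^{\mathrm{Beil}} \to \mathrm{End}(A(\triangle, \mathcal{O}_X))$. The assumption that the codimensions increase by exactly one along the flag ensures that at each step the completion/localization picture is the classical one (so no spurious zero factors appear), which is precisely the setting in which Beilinson defines his ideals. I would then verify step by step that Beilinson's ideals $I_i^{\pm}$ match the categorical ideals coming from the Tate structure (as in part (3) of Theorem \ref{intro_Thm1}), using the inductive lattice picture built in part (1); this pins the map down to an isomorphism, since an endomorphism of an iterated Tate object is determined by its compatibility with the chosen lattices at every level.

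The main obstacle is clearly the inductive verification in part (1): one must convert Huber's set-theoretic description of adèles into genuine admissible Ind-Pro diagrams, exhibiting concrete lattices and checking exactness of the relevant transition maps at every stage. Everything else is either formal (part (2)) or amounts to a careful but essentially mechanical comparison of definitions (part (3)).
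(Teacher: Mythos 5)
There is a genuine gap in your part (2). You propose to deduce the cubical algebra structure on $\operatorname{End}(A(\triangle,\mathcal{O}_X))$ from Theorem \ref{intro_Thm1}(1), on the grounds that $\mathsf{Vect}_f$ is idempotent complete and split exact. But Theorem \ref{intro_Thm1}(1) is a statement about $n$-$\mathsf{Tate}_{\aleph_0}^{el}(\mathcal{C})$, and its proof (Theorem \ref{thm_CSplitExactIdealsAddUpGetNFoldCubicalBeilAlgebra}) uses the countability of the indexing diagrams in an essential way to exhaust $X$ by compatible idempotents. The adèles of a scheme involve uncountable products over scheme points and uncountable colimits over localizing elements, so $A(\triangle,\mathcal{F})$ does not lie in the countable Tate category except in degenerate cases; the introduction states explicitly that Theorem \ref{thm_intro_OnAdeles} does \emph{not} follow from Theorem \ref{intro_Thm1} for exactly this reason. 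Nor can one rescue the argument by appealing to split exactness of the ambient Tate category: the paper records that $1$-$\mathsf{Tate}_{\kappa}(\mathcal{C})$ for $\kappa>\aleph_0$ can fail to be split exact even when $\mathcal{C}$ is split exact and idempotent complete. The paper's actual route (Theorem \ref{thm_body_AdelesAreSlicedOverVectSpaces}) is to prove directly that the adèle object is $n$-sliced, by writing out the iterated Ind/Pro presentation of $A(\triangle,\mathcal{O}_X)$ and constructing explicit commuting good idempotents $P_j^{+}$ projecting onto the sub-diagram indexed by $f_{j-1}=1$; the required sections exist because they are built termwise from surjections of finite-dimensional $k$-vector spaces. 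Part (2) then follows from Proposition \ref{cor_SlicedTateObjectsHaveEndNFoldCubical}, not from Theorem \ref{intro_Thm1}. This also explains why the statement fails over $\mathsf{Coh}_0(X)$ or $\mathsf{Ab}_f$, a distinction your formal argument would not detect.

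On the other parts: your part (1) sketch is consistent in spirit with the paper, which outsources this to \cite[\S 7.2]{TateObjectsExactCats}, so no objection there. For part (3) your outline points in the right direction but omits the technical heart of the paper's argument, namely Lemma \ref{lemma_BeilLatticesAreTateLattices}: every Beilinson lattice yields a Tate lattice, and conversely every Tate lattice $T$ is sandwiched as $\mathcal{L}_{1\triangle'}\subseteq T\subseteq \mathcal{L}_{2\triangle'}$ between Beilinson lattices. It is this mutual cofinality of the two (genuinely different) notions of lattice that makes the bijection $E_\triangle^{\operatorname{Beil}}\cong E_\triangle^{\operatorname{Tate}}$ and the matching of the ideals $I_i^{\pm}$ (via Lemma \ref{Lemma_IdealsAndIndProLimits}) go through; "compatibility with the chosen lattices at every level" is not enough without it.
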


See Theorem \ref{thm_body_AdelesAreSlicedOverVectSpaces} in the paper $-$ in a
way this result is the counterpart of a recent result of Yekutieli
\cite[Theorem 0.4]{MR3317764}, who uses topologies instead of Tate objects
however. Theorem \ref{thm_intro_OnAdeles} does not follow from Theorem
\ref{intro_Thm1} since ad\`{e}les with very few exceptions hinge on forming
uncountably infinite limits. Trying to generalize (1), one may also view the
ad\`{e}les as $n$-Tate objects over other categories, e.g. finite abelian
groups if $k$ gets replaced by the integers $\mathbf{Z}$, or coherent sheaves
with zero-dimensional support. However, for these variations parts (2) and (3)
of the theorem would be false. We refer the reader to
\S \ref{sect_ApplicationsToAdeles} for counter-examples\medskip

Historically, J. Tate's paper \cite{MR0227171} introduced the first example of
a Beilinson $n$-fold cubical algebra, but only for the case $n=1$.\ He
developed a formalism of traces for his trace-class operators, lifting the
trace of finite-dimensional vector spaces. We can generalize this to exact categories:

An \textit{exact trace} is a natural notion of a formalism of traces for a
general exact category, see \S \ref{sect_TraceClassOps} for details.

\begin{theorem}
Suppose $\mathcal{C}$ is an idempotent complete exact category and
$\operatorname*{tr}\nolimits_{(-)}$ an exact trace on $\mathcal{C}$ with
values in an abelian group $Q$. Then for every object $X\in\left.
n\text{-}\mathsf{Tate}(\mathcal{C})\right.  $ and $I_{tr}:=I_{tr}%
(\operatorname*{End}(X))$ its trace-class operators, there is a canonically
defined trace%
\[
\tau_{X}:I_{tr}/[I_{tr},I_{tr}]\rightarrow Q\text{,}%
\]
such that for a short exact sequence $A\hookrightarrow B\twoheadrightarrow
A/B$ and $f\in I_{tr}(B)$ so that $f\mid_{A}$ factors over $A$, we have%
\[
\tau_{B}(f)=\tau_{A}(f\mid_{A})+\tau_{A/B}(\overline{f})\text{.}%
\]
If $X\in\mathcal{C}$, this trace agrees with the given trace, $\tau
_{X}=\operatorname*{tr}\nolimits_{X}$.
\end{theorem}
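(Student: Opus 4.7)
My plan is to proceed by induction on $n$. The base case $n=0$ is tautological: every endomorphism is trace-class and one sets $\tau_X := \operatorname{tr}_X$. Using the identification $n$-$\mathsf{Tate}(\mathcal{C}) = 1$-$\mathsf{Tate}((n-1)$-$\mathsf{Tate}(\mathcal{C}))$, the inductive step reduces to a single assertion: every exact trace on an idempotent complete exact category $\mathcal{D}$ valued in $Q$ extends canonically to an exact trace on $1$-$\mathsf{Tate}(\mathcal{D})$ valued in $Q$. Iterating this extension $n$ times then proves the theorem, provided the extended trace is itself exact so that the induction can be continued.

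For the one-step extension, fix $X \in 1$-$\mathsf{Tate}(\mathcal{D})$ and an admissible lattice $L \hookrightarrow X$, i.e., a Pro-object of $\mathcal{D}$ with Ind quotient. Given $f \in I_{tr}(\operatorname{End}(X))$, I would first show, using $f \in I_1^+ \cap I_1^-$ together with idempotent completeness, that $L$ can be arranged with $f(L) \subseteq L$ so that both $f|_L$ and the induced $\bar{f}$ on $X/L$ factor through objects of $\mathcal{D}$: the former by \emph{discreteness}, the latter by \emph{boundedness}. Each factorization yields an endomorphism of a $\mathcal{D}$-object to which $\operatorname{tr}$ applies, and one sets
\[
\tau_X(f) := \operatorname{tr}_{\mathcal{D}}(f|_L) + \operatorname{tr}_{\mathcal{D}}(\bar{f}).
\]
Independence of the factorization and of $L$ both reduce, via cofinality plus idempotent completeness, to additivity of $\operatorname{tr}$ applied to the short exact sequences relating two nested lattices $L_1 \subseteq L_2$. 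The asserted additivity of $\tau_X$ along $A \hookrightarrow B \twoheadrightarrow B/A$ then follows by choosing a lattice in $B$ whose intersection with $A$ and image in $B/A$ are themselves lattices of the correct type. Compatibility with the given trace for $X \in \mathcal{C}$ is immediate upon setting $L = X$.

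The main obstacle I anticipate is vanishing on commutators, i.e.\ factoring $\tau_X$ through $I_{tr}/[I_{tr}, I_{tr}]$. Given $a, b \in I_{tr}$, the strategy is to split each of $a, b$ using the decomposition $I_1^+ + I_1^- = \operatorname{End}(X)$ and to choose a single lattice $L$ simultaneously adapted to both pieces. After this reduction $[a, b]$ decomposes into a summand on $L$ and a summand on $X/L$, each of which is a commutator of endomorphisms of a $\mathcal{D}$-object and hence vanishes under $\operatorname{tr}$ by the standard commutator-invariance on $\mathcal{D}$. Since the $I_1^\pm$-decomposition is non-canonical, one must further verify independence of the choices; this is where I expect the most delicate computations to appear, and it is also the essential point where idempotent completeness of $\mathcal{C}$ (propagated up the induction) is exploited.
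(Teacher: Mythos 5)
Your overall architecture (reduce to a single $\mathsf{Tate}$ step via $n$-$\mathsf{Tate}(\mathcal{C})=\mathsf{Tate}\left((n-1)\text{-}\mathsf{Tate}(\mathcal{C})\right)$, define the trace through lattice factorizations, and prove well-definedness from additivity of $\operatorname*{tr}$ on nested-lattice short exact sequences plus directedness of the Sato Grassmannian) does match the paper's proof of Prop.~\ref{Prop_TraceExists}. However, there are two genuine gaps.

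First, the inductive invariant is wrong. What you produce on $1$-$\mathsf{Tate}(\mathcal{D})$ is \emph{not} an exact trace: it is defined only on the ideal $I_{tr}$, never on all of $\operatorname*{End}(X)$ (there is no sensible trace of $\operatorname*{id}_{k((t))}$). So the proviso ``provided the extended trace is itself exact'' cannot be met, and the induction as you set it up does not close. The paper inducts instead on the partial trace satisfying properties (1)--(4) of the proposition, with the additivity property (4) from step $n-1$ taking over the role that additivity of the exact trace plays at the bottom of the induction. Relatedly, to apply the inductive hypothesis to the maps induced on lattice quotients you must know that these are again trace-class as endomorphisms of $(n-1)$-Tate objects; this uses the higher ideals $I_i^{\pm}$ for $i\geq 2$ from Definition~\ref{Def_CubicalStructureForTateObjects}, not merely $I_1^{+}\cap I_1^{-}$, and your sketch never invokes them.

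Second, your commutator-vanishing strategy relies on the decomposition $I_1^{+}+I_1^{-}=\operatorname*{End}(X)$, which is not available under the stated hypotheses: the proposition assumes only that $\mathcal{C}$ is idempotent complete, and Example~\ref{Example_FinAbelianPGroups} shows the sum decomposition can fail (it requires the object to be sliced, e.g.\ via split exactness of $\mathcal{C}$). It is also unnecessary: since $a,b\in I_{tr}$ are already both bounded and discrete, one chooses by directedness a single double lattice factorization adapted to both, observes that $\overline{a\circ b}=\overline{a}\circ\overline{b}$ for compatible factorizations (the compatibility under composition is the content of Lemma~\ref{Lemma_FindIntermediateDoubleLatticePair}), hence $\overline{[a,b]}=[\overline{a},\overline{b}]$, and concludes from the commutator axiom of the exact trace on $\mathcal{C}$. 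The ``delicate computations'' you anticipate around the non-canonical splitting are a detour through a tool you do not have.
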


See Prop. \ref{Prop_TraceExists} for the full statement, which is more
detailed and gives a unique characterization of $\tau$ in terms of the input
trace. We also get:

\begin{theorem}
Let $\mathcal{C}$ be an idempotent complete exact category. Then for every
trace-class morphism $\varphi\in I_{tr}(X,X)$ some sufficiently high power
$\varphi^{\circ r}$ (or a sufficiently long word made from several such
morphisms) will factor through an object in $\mathcal{C}$.
\end{theorem}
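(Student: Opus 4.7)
I would induct on the Tate depth $n$; the base $n=0$ is immediate since $X\in\mathcal{C}$. For $n\geq 1$, view $X$ as a $1$-Tate object over $\mathcal{C}':=(n-1)\text{-}\mathsf{Tate}(\mathcal{C})$, so that the outer cubical ideals $I_1^{\pm}$ are the classical Beilinson--Tate bounded/discrete ideals of this outer $1$-Tate layer, and the deeper $I_i^{\pm}$ for $i\geq 2$ come from the $(n-1)$-fold cubical structure on the Beilinson lattices of $X$.

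The first key step uses $\varphi\in I_1^+\cap I_1^-$: after replacing by sum and intersection one finds nested admissible Pro-sublattices $L_-\subseteq L_+\subseteq X$ and an admissible factorization $\varphi=\iota\,\bar\alpha\,\pi$ with $\pi\colon X\twoheadrightarrow X/L_-$, $\bar\alpha\colon X/L_-\to L_+$, and $\iota\colon L_+\hookrightarrow X$. The composition $\pi\iota\colon L_+\to X/L_-$ factors admissibly as $L_+\twoheadrightarrow L_+/L_-\hookrightarrow X/L_-$, say $\pi\iota=q\circ p$. Telescoping then yields $\varphi^k=(\iota\bar\alpha q)\,\tilde\varphi^{\,k-2}\,(p\bar\alpha\pi)$ for $k\geq 2$, where $\tilde\varphi:=p\bar\alpha q\in\operatorname{End}(L_+/L_-)$. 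The crucial point is that $L_+/L_-$ is a Beilinson compact and lies in $\mathcal{C}'=(n-1)\text{-}\mathsf{Tate}(\mathcal{C})$, one Tate layer shallower; so it suffices to show some $\tilde\varphi^{r}$ factors through an object of $\mathcal{C}$, in which case $\varphi^{r+2}$ does as well.

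The second step is the descent: one must check that $\tilde\varphi$ is again trace-class, now in the $(n-1)$-fold cubical algebra $\operatorname{End}(L_+/L_-)$. Concretely, for $j=1,\ldots,n-1$ the condition $\varphi\in I_{j+1}^+\cap I_{j+1}^-$ on $X$ should force $\tilde\varphi\in I_j^+\cap I_j^-$ on $L_+/L_-$; the deeper Beilinson sub-objects witnessing the cubical structure of $X$ restrict to $L_+$ and descend modulo $L_-$ to produce the needed witnesses for $\tilde\varphi$, and the two-sidedness of the ideals absorbs the pre- and post-composition factors $\iota\bar\alpha q$ and $p\bar\alpha\pi$. Once this is in hand, the inductive hypothesis supplies $\tilde\varphi^{r}$ factoring through an object of $\mathcal{C}$, and iterating through all $n$ layers bounds the required exponent by a function of $n$ (essentially $2^n$).

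The main obstacle is precisely this descent of ideal membership at each of the $n-1$ deeper layers: one has to track admissible Ind/Pro sub-objects of $X$ as they pass through the intermediate Beilinson compact $L_+/L_-$, and verify they transport to witnesses for $\tilde\varphi$. The same bookkeeping also explains the \emph{word} formulation of the statement: if $\varphi$ is replaced by a product of several trace-class morphisms $\varphi_1,\ldots,\varphi_m$, the descent argument produces a word in the $\varphi_i$ rather than a pure power, but otherwise runs identically. Split exactness would permit complements for lattices and streamline the argument; idempotent completeness alone suffices because it already supplies the required admissible sub/quotients.
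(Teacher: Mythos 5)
Your proof is correct, and its core mechanism is the same as the paper's: a trace-class morphism is in particular finite, hence factors as $X\twoheadrightarrow X/L_-\to L_+\hookrightarrow X$, the middle of a composition then lands in the lattice quotient $L_+/L_-$, which lives one Tate level down, and the induced morphism there is trace-class. Two remarks on the details. First, the ``descent'' step you single out as the main obstacle is in fact immediate from the paper's Definition \ref{Def_CubicalStructureForTateObjects}: your $\tilde\varphi=p\bar\alpha q$ is exactly the induced morphism $\overline\varphi$ of the double lattice factorization $(L_-\subseteq L_+,\,L_-\subseteq L_+)$, and membership of $\varphi$ in $I_{i}^{0}$ for $i\geq 2$ is \emph{defined} by $\overline\varphi\in I_{i-1}^{0}$ for all such factorizations; no absorption by two-sidedness is needed. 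For words, aligning the lattices of consecutive letters requires common sub-lattices (co-directedness of the Sato Grassmannian, which is where idempotent completeness enters), but this works exactly as you indicate. Second, your combinatorial organization genuinely differs from the paper's. The paper (proof of Theorem \ref{theorem_CIdempCompleteHaveTateIdeals}(3)) pairs up the letters, sends each \emph{pair} one level down via Lemma \ref{Lemma_PropertiesBoundedOrDiscreteMorphisms}(4), and gets the doubling recurrence $m(n)=2\,m(n-1)$, hence the exponent $2^n$. Your telescoping compresses each letter individually through its own nested lattice pair and loses only the two outer factors, giving the additive recurrence $m(n)=m(n-1)+2$ and the linear bound $2n$. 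For $n\leq 2$ these agree, but for $n\geq 3$ your bound is strictly smaller than $2^n$, which sits in tension with the paper's assertion that words of fewer than $2^n$ letters can fail to factor through $\mathcal{C}$ (justified there only by ``adapt Example \ref{Example_NeedLongWordsToHaveFiniteRank} accordingly''). Since the theorem as stated only asks for \emph{some} power, your argument certainly proves it; but you should be aware that your sharper count contradicts the paper's claimed optimality of $2^n$, so one of the two (your word-case bookkeeping or the paper's lower-bound example) deserves a careful explicit check before you rely on the constant.
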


This generalizes a property which Tate had baptized `finite-potent' and which
plays a key role in his construction of the trace.

Tate and Beilinson used the $n$-fold cubical algebras to produce
(higher)\ central extensions. The classical example is Tate's central
extension, which encodes the residue of a rational $1$-form. Ultimately, these
constructions can be translated into Lie (and Hochschild) homology classes.
Under mild assumptions, we can construct these classes also for the
endomorphism algebras of $n$-Tate objects.

\begin{theorem}
Let $\mathcal{C}$ be a $k$-linear abelian category with a $k$-valued exact
trace. For every $n$-sliced object\footnote{See the main body of the text for
definitions.} $X\in\left.  n\text{-}\mathsf{Tate}(\mathcal{C})\right.  $ the
endomorphism algebra $E:=\operatorname*{End}(X)$ is a Beilinson $n$-fold
cubical algebra and

\begin{enumerate}
\item its Lie algebra $\mathfrak{g}_{X}:=E_{Lie}$ carries a canonical
Beilinson-Tate Lie cohomology class,%
\[
\phi_{\operatorname*{Beil}}\in H_{\operatorname*{Lie}}^{n+1}(\mathfrak{g}%
_{X},k)\text{;}%
\]

\item as well as canonical Hochschild and cyclic homology functionals%
\[
\phi_{HH}:HH_{n}(E)\rightarrow k\qquad\text{resp.}\qquad\phi_{HC}%
:HC_{n}(E)\rightarrow k\text{.}%
\]

\end{enumerate}
\end{theorem}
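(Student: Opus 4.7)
The plan is to decouple the statement into a categorical input and a purely algebraic output. The categorical input consists of (a) a Beilinson $n$-fold cubical structure on $E = \operatorname{End}(X)$, and (b) a $k$-valued trace on $I_{tr}/[I_{tr},I_{tr}]$. Given this input, an entirely algebraic construction, essentially due to Beilinson, produces all three cohomology classes.

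For (a), the cubical structure on $E$ is the analogue of Theorem~\ref{intro_Thm1}(1) valid for general cardinalities. The $n$-sliced hypothesis is introduced precisely so that the sum relation $I_i^+ + I_i^- = E$ holds for every $i$: without it one still obtains ideals $I_i^\pm$ (cf.\ part~(3) of Theorem~\ref{intro_Thm1}), but their sum can fail to exhaust $E$, as Example~\ref{Example_FinAbelianPGroups} illustrates. For (b), one applies the exact trace theorem stated earlier in the excerpt (Prop.~\ref{Prop_TraceExists}) to extend the given trace on $\mathcal{C}$ to a canonical $k$-linear functional $\tau_X : I_{tr}/[I_{tr},I_{tr}] \to k$.

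With this input in hand, the Lie cocycle is constructed as follows. For $a_0,\ldots,a_n \in E$ and each $i \in \{1,\ldots,n\}$, choose a decomposition $a_j = a_j^{+,i} + a_j^{-,i}$ with $a_j^{\pm,i} \in I_i^{\pm}$. Beilinson's formula is an alternating sum indexed by $S_{n+1} \times \{\pm\}^n$ of products of length $n+1$ in the chosen components; after antisymmetrization the expression lies in $I_{tr}$, and its image under $\tau_X$ defines $\phi_{\operatorname{Beil}}(a_0,\ldots,a_n)$. Three things must be verified: (i) the element in question actually lies in $I_{tr}$; (ii) independence of the choice of decompositions, modulo commutators on which $\tau_X$ vanishes; (iii) the Chevalley--Eilenberg cocycle identity. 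Step (iii) is the central obstacle; it amounts to Beilinson's combinatorial cancellation argument from \cite{MR565095}, which we would carry out in the present categorical setting.

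For part~(2), one observes that the formula defining $\phi_{\operatorname{Beil}}$ is the antisymmetrization of a manifestly cyclic multilinear form in the $a_j$. Consequently, the construction lifts through the Hochschild complex $CH_*(E)$ and descends to the cyclic complex $CC_*(E)$, yielding dual functionals $\phi_{HH}$ and $\phi_{HC}$; passing to Lie homology via the standard Loday--Quillen--Tsygan map recovers $\phi_{\operatorname{Beil}}$ as the antisymmetrization of $\phi_{HC}$. Once step (iii) is established, this final part is formal.
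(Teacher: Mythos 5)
Your overall architecture (cubical structure from the $n$-sliced hypothesis via Prop.~\ref{cor_SlicedTateObjectsHaveEndNFoldCubical}, trace via Prop.~\ref{Prop_TraceExists}, then Beilinson's algebraic construction) is the same as the paper's, but there is one genuine gap: you never upgrade the trace from a functional on $I_{tr}/[I_{tr},I_{tr}]$ to one on $I_{tr}/[I_{tr},E]$. Prop.~\ref{Prop_TraceExists} only guarantees $\tau_X(fg-gf)=0$ for $f,g\in I_{tr}$, whereas the definition of a cubically decomposed algebra (Definition~\ref{BT_DefCubicallyDecompAlgebra}) demands a map $\tau:I_{tr}/[I_{tr},A]\rightarrow k$, i.e.\ vanishing on commutators of a trace-class element with an \emph{arbitrary} endomorphism. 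This stronger vanishing is exactly what your step (ii) needs: when you replace one admissible decomposition $a_j=a_j^{+,i}+a_j^{-,i}$ by another, the two candidate cocycle values differ by $\tau_X$ applied to commutators of elements of $I_{tr}$ against general elements of $E$, not against other trace-class elements. With only the weaker vanishing, well-definedness of $\phi_{\operatorname{Beil}}$ does not follow.

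The paper closes this gap with Prop.~\ref{Prop_StrongVanishing} (Strong Commutator Vanishing), $\tau_X([I_{tr},E])=0$, whose proof uses kernels and images of morphisms and is therefore the one and only place where the hypothesis that $\mathcal{C}$ is \emph{abelian} enters. Your proposal never uses abelianness anywhere, which is the symptom of the missing step. Aside from this, your plan to re-derive Beilinson's combinatorial verification (i)--(iii) in the categorical setting is more work than the paper does --- the paper simply feeds the cubically decomposed algebra $(E,(I_i^{\pm}),\tau_X)$ into Construction~\ref{construction_1} and into \cite{olliTateRes} as black boxes --- but that is a presentational choice, not an error. Insert the strong commutator vanishing between your step (b) and the cocycle construction and the argument is complete.
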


See Theorem \ref{Thm_BTClassesExist} for details. For $n=1$ the class
$\phi_{\operatorname*{Beil}}$ just happens to define a central extension as a
Lie algebra. Of course, the classical examples are all special cases of this
construction. We provide some examples in \S \ref{sect_TateExtension}.

Tate categories and Beilinson cubical algebras have already found quite
diverse applications. Ranging from residue symbols in \cite{MR0227171},
\cite{MR565095}, glueing sheaves \cite{MR923134}, over models for
infinite-dimensional vector bundles \cite{MR2181808}\footnote{Drinfeld's paper
proposes several notions, one of them being Tate $R$-modules, which are
closest to the subject of this paper. The other are flat Mittag-Leffler
bundles, which are however also related via admissible Ind-objects of
projectives. See \cite{MR3223352}, \cite[Appendix]{TateObjectsExactCats}.}, to
higher local compactness and Fourier theory \cite{MR1804933}, \cite{MR2473773}%
, \cite{MR2866188}, e.g. for the representation theory of algebraic groups
over higher local fields \cite{MR1800352}, \cite{MR2100671}, \cite{MR2192062},
\cite{MR2214248}, \cite{MR2285231}, \cite{MR2760661}.\medskip

Quite recently, B. Hennion has introduced Tate categories for stable $\infty
$-categories \cite{HennionTateObjsInStableInftyCats},
\cite{HennionHighDimFormalLoopSpace}. It would be interesting to study the
counterparts of our results in this context. Higher local fields can be
regarded as $n$-Tate objects and in \cite{MR2314612} D. V. Osipov has already
related ad\`{e}les to categories similar to $n$-Tate categories and studied
endomorphism rings in this context. In a quite different direction, A.
Yekutieli \cite{MR1213064}, \cite{MR3317764} develops the use of
semi-topological algebraic structures to describe ad\`{e}les. These also give
rise to an $n$-fold cubical algebra, but in a different way based on picking
coefficient fields for the individual layers of the involved higher local
fields. The relation to his approach is explained in \cite{bgwGeomAnalAdeles}.

\begin{acknowledgement}
We thank Theo B\"{u}hler and Amnon Yekutieli for very helpful correspondence.
The paper is heavily based on ideas of Mikhail Kapranov.
\end{acknowledgement}

\section{\label{sect_TateCategories}Tate categories}

For every exact category $\mathcal{C}$ one can form the corresponding
categories of admissible Ind-objects $\mathsf{Ind}^{a}\mathcal{C}$ or
admissible Pro-objects $\mathsf{Pro}^{a}\mathcal{C}$, perhaps with some
conditions on the allowed cardinality of diagrams, denoted by a subscript as
in $\mathsf{Ind}_{\aleph_{0}}^{a}$ or more generally $\mathsf{Ind}_{\kappa
}^{a}$ for $\kappa$ an infinite cardinal. See \cite{MR2872533},
\cite{TateObjectsExactCats} for definitions and basic properties. Enlarging
$\mathcal{C}$ in both of these ways, we arrive at the commutative square of
inclusion functors%
\[%
% 
%^{a}\mathcal{C}`\mathsf{Pro}^{a}\mathcal{C}`\mathsf{Ind}^{a}\mathsf{Pro}%
%^{a}\mathcal{C}\text{.};```]
%\efig}}%
% 
\bfig\Square(0,0)[\mathcal{C}`\mathsf{Ind}^{a}\mathcal{C}`\mathsf{Pro}%
^{a}\mathcal{C}`\mathsf{Ind}^{a}\mathsf{Pro}^{a}\mathcal{C}\text{.};```]
\efig
\]

\begin{definition}
Let $\mathcal{C}$ be an exact category.

\begin{enumerate}
\item The category $\mathsf{Tate}^{el}\mathcal{C}$ is the smallest
extension-closed full sub-category of $\mathsf{Ind}^{a}\mathsf{Pro}%
^{a}\mathcal{C}$ which contains both $\mathsf{Ind}^{a}\mathcal{C}$ and
$\mathsf{Pro}^{a}\mathcal{C}$.

\item $\mathsf{Tate}(\mathcal{C})$ denotes the idempotent completion of
$\mathsf{Tate}^{el}\mathcal{C}$.

\item Define $n$-$\mathsf{Tate}^{el}(\mathcal{C}):=\mathsf{Tate}^{el}\left(
\,(n-1)\text{-}\mathsf{Tate}(\mathcal{C})\,\right)  $ and $n$-$\mathsf{Tate}%
(\mathcal{C}):=n$-$\mathsf{Tate}^{el}(\mathcal{C})^{ic}$ as its idempotent completion.
\end{enumerate}
\end{definition}

All of these categories come with a natural exact category structure so that
all basic tools of homological algebra are available, they have derived
categories, $K$-theory, etc\ldots\ Versions of the Tate category were first
introduced by K. Kato in 1980 in an IHES preprint, published only much later
\cite{MR1804933}, and independently by A. Beilinson under the suggestive name
$\underleftrightarrow{\lim}\,\mathcal{C}$ \cite{MR923134}. The equivalence of
these two approaches was established by L. Previdi. There is also a slightly
different variant due to V. Drinfeld \cite{MR2181808}. We refer the reader to
\cite{MR2872533}, \cite{TateObjectsExactCats} for extensive discussions of
these categories and comparison results. The `$C_{n} $ categories' of D. V.
Osipov \cite{MR2314612} a based on similar ideas. The definition of Tate
categories which we give here is due to \cite{TateObjectsExactCats}.

\begin{example}
[Kapranov]If $\mathcal{C}$ is the abelian category of finite-dimensional
$k$-vector spaces, $\mathsf{Tate}^{el}\mathcal{C}$ is equivalent to the exact
category of locally linearly compact topological $k$-vector spaces.
\end{example}

\begin{example}
If $R$ is a commutative ring and $P_{f}\left(  R\right)  $ the exact category
of finitely generated projective $R$-modules, $\mathsf{Tate}_{\aleph_{0}}%
P_{f}\left(  R\right)  $ is equivalent to the category of countably generated
\textquotedblleft Tate $R$-modules\textquotedblright\ in the sense of Drinfeld
\cite{MR2181808}. Without the restriction on countable generation, the latter
is in general only a full sub-category of the former. Both are proven in
\cite[Thm. 5.30]{TateObjectsExactCats}.
\end{example}

\begin{example}
We refer the reader to the works of Kato \cite{MR1804933}, Kapranov
\cite[Appendix]{MR1800352} and Previdi \cite{MR2872533} for a discussion of
Tate categories for non-additive categories. These will not appear in the
present paper.
\end{example}

Every object in the category $\mathsf{Tate}^{el}\mathcal{C}$ comes with a
notion of `lattices'.

\begin{definition}
Let $X\in\mathsf{Tate}^{el}\mathcal{C}$ be an object. We call a
sub-object\footnote{In this paper, the symbols $\hookrightarrow$ and
$\twoheadrightarrow$ denote admissible monics and epics with respect to the
exact structure of a category. Moreover, a sub-object always refers to an
admissible sub-object in the sense that the inclusion is an admissible monic.}
$L\hookrightarrow X$ a \emph{lattice} (or \emph{Tate lattice} if we wish to
contrast the notion to other concepts of lattices) if $L\in\mathsf{Pro}%
^{a}\mathcal{C}$ and $X/L\in\mathsf{Ind}^{a}\mathcal{C}$. The set of all
lattices in $X$ is the \emph{Sato Grassmannian }$Gr(X)$.
\end{definition}

There are two basic properties which control most of the behaviour of this
concept of lattices: If $L^{\prime}\hookrightarrow L\hookrightarrow X$ are two
lattices in $X$, then $L/L^{\prime}$ lies in the category $\mathcal{C}$
\cite[Proposition 6.6]{TateObjectsExactCats} . Furthermore, if $\mathcal{C}$
is idempotent complete, any two lattices have a common sub-lattice and a
common over-lattice \cite[Theorem 6.7]{TateObjectsExactCats}.

\section{\label{sect_ClassicalMotivatingExample}The motivating classical
example}

The following algebraic structure was introduced by Beilinson \cite{MR565095}
for the purpose of generalizing Tate's 1968 construction of the
one-dimensional residue symbol \cite{MR0227171} to higher dimensions. The
constructions in loc. cit. produce a kind of generalized residue symbol for
any such algebraic structure. The importance of this structure extends far
beyond just the residue symbol. In a way, it axiomatizes essential algebraic
features of the endomorphism algebra of a well-behaved $n$-Tate object. Before
addressing this, let us recall the definition:

\begin{definition}
\label{BT_DefCubicallyDecompAlgebra}\cite[\S 1]{MR565095} Let $k$ be a field.
An \emph{(}$n$\emph{-fold)} \emph{cubically decomposed algebra} over $k$ is
the datum $(A,(I_{i}^{\pm}),\tau)$:

\begin{itemize}
\item an associative $k$-algebra $A$;

\item two-sided ideals $I_{i}^{+},I_{i}^{-}$ such that $I_{i}^{+}+I_{i}^{-}=A
$ for $i=1,\ldots,n$;

\item writing $I_{i}^{0}:=I_{i}^{+}\cap I_{i}^{-}$ and $I_{tr}:=I_{1}^{0}%
\cap\cdots\cap I_{n}^{0}$, a $k$-linear map (called \emph{trace})%
\[
\tau:I_{tr}/[I_{tr},A]\rightarrow k\text{.}%
\]

\end{itemize}
\end{definition}

We next recall the original key example for this structure, coming straight
from geometry. Suppose $X/k$ is a reduced scheme of finite type and pure
dimension $n$. We shall use the same notation as in \cite{MR565095}. Notably,
$S\left(  X\right)  _{\bullet}$ denotes the simplicial set of flags of points
(i.e. $S\left(  X\right)  _{n}=\{(\eta_{0}>\cdots>\eta_{n})\}$ with $\eta
_{i}\in X$ and $x>y$ means that $\overline{\{x\}}\ni y$). Further, given
$\triangle\subseteq S\left(  X\right)  _{n}$ we write $\left.  _{\eta_{0}%
}\triangle\right.  :=\{(\eta_{1}>\cdots>\eta_{n})\mid(\eta_{0}>\cdots>\eta
_{n})\in\triangle\}$. Finally, $A(\triangle,\mathcal{M}) $ denotes the
\emph{Beilinson-Parshin ad\`{e}les} for $\triangle\subseteq S\left(  X\right)
_{n}$. This means that for any \textit{coherent} sheaf $\mathcal{M}$ we define%
\begin{align*}
A(\triangle,\mathcal{M}):=  &
{\textstyle\prod\nolimits_{\eta\in\triangle}}
\underset{i}{\underleftarrow{\lim}}\mathcal{M}\underset{\mathcal{O}_{X}%
}{\otimes}\mathcal{O}_{X,\eta}/\mathfrak{m}_{\eta}^{i}\text{\qquad(in the case
}n=0\text{)}\\
A(\triangle,\mathcal{M}):=  &
{\textstyle\prod\nolimits_{\eta\in X}}
\underset{i}{\underleftarrow{\lim}}A(\left.  _{\eta}\triangle\right.
,\mathcal{M}\underset{\mathcal{O}_{X}}{\otimes}\mathcal{O}_{X,\eta
}/\mathfrak{m}_{\eta}^{i})\text{\qquad(in the case }n\geq1\text{)}%
\end{align*}
and for a \textit{quasi-coherent} sheaf $\mathcal{M}$ we define $A(\triangle
,\mathcal{M}):=\underrightarrow{\operatorname*{colim}}_{\mathcal{M}^{\prime}%
}A(\triangle,\mathcal{M}^{\prime})$ and the colimit is taken over the category
of coherent sub-sheaves of $\mathcal{M}$ with inclusions as morphisms. These
colimits and limits are usually taken in the bi-complete category of
$\mathcal{O}_{X}$-module sheaves. We follow this viewpoint here as well, at
least for the moment. Later, in \S \ref{sect_ApplicationsToAdeles} we shall
address a novel perspective using Tate categories instead.

\begin{definition}
[Beilinson \cite{MR565095}]\label{def_BeilLattice}Let $\triangle=\{(\eta
_{0}>\cdots>\eta_{i})\}\in S\left(  X\right)  _{i}$ be given and $M$ a
finitely generated $\mathcal{O}_{\eta_{0}}$-module. Then a \emph{(Beilinson)
lattice} in $M$ is a finitely generated $\mathcal{O}_{\eta_{1}}$-module
$L\subseteq M$ such that $\mathcal{O}_{\eta_{0}}\cdot L=M$. Now and later on,
we shall use the abbreviation%
\[
M_{\triangle}:=A(\triangle,M)
\]
for $M$ a quasi-coherent sheaf on $X$.
\end{definition}

Whenever we are given a $\triangle$ as above, define $\triangle^{\prime
}:=\{(\eta_{1}>\cdots>\eta_{n})\}$, removing the initial entry.

\begin{definition}
[Beilinson \cite{MR565095}]\label{def_HigherAdeleOperatorIdeals}Let $M_{1}$
and $M_{2}$ be finitely generated $\mathcal{O}_{\eta_{0}}$-modules.

\begin{enumerate}
\item Let $\operatorname*{Hom}\nolimits_{\varnothing}(M_{1},M_{2}%
):=\operatorname*{Hom}\nolimits_{k}(M_{1},M_{2})$ be the set of all $k$-linear
maps. Then we define%
\[
\operatorname*{Hom}\nolimits_{\triangle}(M_{1},M_{2})\subseteq
\operatorname*{Hom}\nolimits_{k}(M_{1\triangle},M_{2\triangle})
\]
to be the sub-$k$-module of all $f\in\operatorname*{Hom}\nolimits_{k}%
(M_{1\triangle},M_{2\triangle})$ such that for all (Beilinson) lattices
$L_{1}\hookrightarrow M_{1},L_{2}\hookrightarrow M_{2}$ there exist
(Beilinson) lattices $L_{1}^{\prime}\hookrightarrow M_{1},L_{2}^{\prime
}\hookrightarrow M_{2}$ with%
\[
L_{1}^{\prime}\hookrightarrow L_{1},\qquad L_{2}\hookrightarrow L_{2}^{\prime
},\qquad f(L_{1\triangle^{\prime}}^{\prime})\hookrightarrow L_{2\triangle
^{\prime}},\qquad f(L_{1\triangle^{\prime}})\hookrightarrow L_{2\triangle
^{\prime}}^{\prime}%
\]
and for all such $L_{1},L_{1}^{\prime},L_{2},L_{2}^{\prime}$ the induced $k
$-linear map%
\[
\overline{f}:(L_{1}/L_{1}^{\prime})_{\triangle^{\prime}}\rightarrow
(L_{2}^{\prime}/L_{2})_{\triangle^{\prime}}%
\]
lies in $\operatorname*{Hom}\nolimits_{\triangle^{\prime}}(L_{1}/L_{1}%
^{\prime},L_{2}^{\prime}/L_{2})$.

\item Let $I_{1\triangle}^{+}(M_{1},M_{2})$ be those morphisms $f\in
\operatorname*{Hom}\nolimits_{\triangle}(M_{1},M_{2})$ such that there exists
a lattice $L\hookrightarrow M_{2}$ with $f(M_{1\triangle})\hookrightarrow
L_{\triangle^{\prime}}$.

\item Dually, $I_{1\triangle}^{-}(M_{1},M_{2})$ is formed of those such that
there exists a lattice $L\hookrightarrow M_{1}$ with $f(L_{\triangle^{\prime}%
})=0$.

\item For $i\geq2$ we let $I_{i\triangle}^{+}(M_{1},M_{2})$ be those
$f\in\operatorname*{Hom}\nolimits_{\triangle}(M_{1},M_{2})$ such that for all
lattices $L_{1},L_{1}^{\prime},L_{2},L_{2}^{\prime}$ as in part (3) the
condition%
\[
\overline{f}\in I_{(i-1)\triangle^{\prime}}^{+}(L_{1}/L_{1}^{\prime}%
,L_{2}^{\prime}/L_{2})
\]
holds. Analogously, define $I_{i\triangle}^{-}(M_{1},M_{2})$ to be those with
$\overline{f}\in I_{(i-1)\triangle^{\prime}}^{-}(L_{1}/L_{1}^{\prime}%
,L_{2}^{\prime}/L_{2})$.
\end{enumerate}
\end{definition}

With these definitions in place we are ready to formulate the principal source
of algebras as in Definition \ref{def_BeilNFoldAlg}:

\begin{theorem}
[{Beilinson, \cite[\S 3]{MR565095}}]%
\label{Thm_BeilFlagInSchemeGivesCubDecompAlgebra}Suppose $X/k$ is a reduced
finite type scheme of pure dimension $n$. Let $\eta_{0}>\cdots>\eta_{n}\in
S\left(  X\right)  _{n}$ be a flag with $\operatorname*{codim}\nolimits_{X}%
\overline{\{\eta_{i}\}}=i$. Then%
\[
E_{\triangle}^{\operatorname*{Beil}}:=\operatorname*{Hom}\nolimits_{\triangle
}(\mathcal{O}_{\eta_{0}},\mathcal{O}_{\eta_{0}})\qquad\subseteq
\operatorname*{End}\nolimits_{k}(\mathcal{O}_{X\triangle},\mathcal{O}%
_{X\triangle})
\]
is an associative sub-algebra. Define $I_{i\triangle}^{\pm}\subseteq
E_{\triangle}^{\operatorname*{Beil}}$ by $I_{i\triangle}^{\pm}(\mathcal{O}%
_{\eta_{0}},\mathcal{O}_{\eta_{0}})$ for $1\leq i\leq n$. Then $(E_{\triangle
}^{\operatorname*{Beil}},(I_{i\triangle}^{\pm}),\operatorname*{tr})$ is an
$n$-fold cubically decomposed algebra. Here \textquotedblleft%
$\operatorname*{tr}$\textquotedblright\ refers to Tate's trace for
finite-potent morphisms, see \cite{MR0227171} for the definition. In
particular, $E_{\triangle}^{\operatorname*{Beil}}$ is an example of the
algebras in Definition \ref{def_BeilNFoldAlg}.
\end{theorem}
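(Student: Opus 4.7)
The plan is to argue by induction on the flag length $n$. The base case $n=0$ is almost content-free: with $\eta_0$ both generic and closed and $X$ reduced, $\mathcal{O}_{\eta_0}$ is a finite field extension of $k$, so $\mathcal{O}_{\eta_0,\triangle}$ is a finite-dimensional $k$-vector space and $E_\triangle^{\operatorname{Beil}}$ reduces to a matrix algebra with the ordinary trace and no non-trivial ideals to check. The inductive step hinges on viewing $\mathcal{O}_{X\triangle}$ as a $1$-Tate object over the ad\`eles for the shorter flag $\triangle' = (\eta_1>\cdots>\eta_n)$: a Beilinson lattice $L \subseteq \mathcal{O}_{\eta_0}$ has completion $L_{\triangle'} \subseteq \mathcal{O}_{\eta_0,\triangle}$ a ``pro''-sub-object with ``ind''-quotient, and any two lattices admit a common sub- and common over-lattice.

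First, I would verify that $\operatorname{Hom}_\triangle$ is closed under composition (making $E_\triangle^{\operatorname{Beil}}$ a sub-algebra) and that each $I_{i\triangle}^\pm$ is a two-sided ideal. Both are nested-lattice arguments: given a composition $g\circ f$ and outer lattice data, one chooses matching lattices for $g$ first and then for $f$; the induced morphism on subquotients is the composition of the respective induced morphisms, so the conditions propagate by the inductive hypothesis on $\triangle'$. For instance $I_{1\triangle}^+$ absorbs pre- and post-composition since its image condition is stable under a further application of $\operatorname{Hom}_\triangle$.

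The real substance is the identity $I_{i\triangle}^+ + I_{i\triangle}^- = E_\triangle^{\operatorname{Beil}}$. For $i=1$, fix a Beilinson lattice $L_2\subseteq M_2$, use the defining property of $\operatorname{Hom}_\triangle$ to extract $L_1\subseteq M_1$ with $f(L_{1\triangle'})\subseteq L_{2\triangle'}$, and pick a $k$-linear splitting $\mathcal{O}_{\eta_0} = L_1 \oplus (\mathcal{O}_{\eta_0}/L_1)$, which exists since we are over a field. Applying the ad\`ele functor produces a splitting of $\mathcal{O}_{\eta_0,\triangle}$ with associated projectors $\tilde p$, $\tilde q$; then $f^+ := f\circ\tilde p$ has image in $L_{2\triangle'}$, so $f^+ \in I_1^+$, while $f^- := f\circ\tilde q$ vanishes on $L_{1\triangle'}$, so $f^-\in I_1^-$, and $f=f^++f^-$. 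For $i\geq 2$, the decomposition is reduced to the analogous problem for the induced map on subquotients $\bar f\colon (L_1/L_1')_{\triangle'}\to(L_2'/L_2)_{\triangle'}$, where by induction $\bar f=\bar f^++\bar f^-$ with $\bar f^\pm\in I_{i-1,\triangle'}^\pm$; this splitting of $\bar f$ lifts to a splitting of $f$ once one has shown that the level-$1$ projector construction commutes with passage to the relevant subquotients.

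For the trace, one first proves by a parallel induction that every $\varphi\in I_{tr}$ is finite-potent in Tate's sense: the point is that $\varphi\in I_i^+\cap I_i^-$ forces the induced map at each level to eventually factor through an object in $\mathsf{Vect}_{f}$, which is finite-dimensional over $k$. Tate's trace from \cite{MR0227171} then supplies $\tau\colon I_{tr}/[I_{tr},A]\to k$, and its additivity on short exact sequences yields independence of the auxiliary choices. The main obstacle, I expect, is the compatibility required in the $i\geq 2$ splitting step: the level-$1$ projectors $\tilde p, \tilde q$ must be chosen so as to descend to admissible splittings for \emph{every} admissible lattice tuple $(L_1,L_1',L_2,L_2')$ witnessing $f\in\operatorname{Hom}_\triangle$, otherwise the inductive hypothesis cannot be invoked uniformly. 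Verifying this amounts to checking that Beilinson lattices split $\mathcal{O}_{\eta_0}$ in an ad\`ele-compatible fashion and is the main place where a subtle technicality could hide.
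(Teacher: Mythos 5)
First, a remark on context: the paper does not prove this statement itself --- it is recalled from Beilinson's paper \cite{MR565095} and used as a black box. The closest the paper comes to an independent proof is the combination of Theorem \ref{thm_body_AdelesAreSlicedOverVectSpaces} and Theorem \ref{thm_RelationCubicallyDecompAlgsBeilToTate}, which recover the cubical structure on $E_{\triangle}^{\operatorname*{Beil}}$ by identifying it with $E_{\triangle}^{\operatorname*{Tate}}$. Your overall strategy (induction on the flag length, nested-lattice arguments for the ideal property, splittings over a field for $I_{i\triangle}^{+}+I_{i\triangle}^{-}=E_{\triangle}^{\operatorname*{Beil}}$, finite-potency for the trace) is in the same spirit as that route.

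There is, however, a genuine gap at the central step. To produce the projectors $\tilde p,\tilde q$ you choose a $k$-linear splitting of $L_{1}\hookrightarrow\mathcal{O}_{\eta_{0}}$ and then ``apply the ad\`ele functor.'' But $A(\triangle',-)$ is only functorial in $\mathcal{O}$-module maps --- its construction involves $-\otimes_{\mathcal{O}_{X}}\mathcal{O}_{X,\eta}/\mathfrak{m}_{\eta}^{i}$ and colimits over coherent submodules --- and a merely $k$-linear retraction is not a module map, so there is nothing to apply the functor to; an $\mathcal{O}_{\eta_{1}}$-linear retraction does not exist in general. The fix, which is exactly what the proof of Theorem \ref{thm_body_AdelesAreSlicedOverVectSpaces} does, is to pass to the explicit iterated Ind-Pro presentation of $A(\triangle,\mathcal{O}_{X})$ and split the individual transition maps, which are surjections of finite-dimensional $k$-vector spaces; compatibly chosen sections assemble into a straight morphism of Tate diagrams and hence into the desired idempotents. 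This also disposes of the compatibility problem you flag for $i\geq2$, since the idempotents are then defined diagram-level and automatically respect every admissible lattice subquotient. Two smaller points: closure of $\operatorname{Hom}_{\triangle}$ and of $I_{i\triangle}^{\pm}$ under composition requires relating $\overline{g\circ f}$ to $\overline{g}\circ\overline{f}$ for \emph{all} admissible lattice tuples, not just well-chosen ones (this is the technical heart of Lemma \ref{Lemma_FindIntermediateDoubleLatticePair} and Theorem \ref{theorem_CIdempCompleteHaveTateIdeals}); and a single $\varphi\in I_{tr}$ need \emph{not} factor through a finite-dimensional space (Example \ref{Example_NeedLongWordsToHaveFiniteRank}) --- finite-potency comes from the fact that sufficiently long products do, so your sentence about ``the induced map at each level eventually factoring through $\mathsf{Vect}_{f}$'' should be phrased in terms of powers of $\varphi$.
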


There are a number of other examples leading to cubically decomposed algebras:

\begin{example}
[Yekutieli]Every topological higher local field (TLF) \cite{MR1213064},
\cite{MR1352568} carries Yekutieli's canonical cubically decomposed algebra
structure \cite[Thm. 0.4]{MR3317764}. If the base field $k$ is perfect, one
can show that the ad\`{e}les decompose as a kind of restricted product of
TLFs. Yekutieli's cubically decomposed algebra then turns out to be isomorphic
to Beilinson's. See \cite{bgwGeomAnalAdeles} for details.
\end{example}

\begin{example}
Higher infinite matrix algebras also carry a cubically decomposed structure.
This is probably the simplest non-trivial example \cite{MR3207578}.
\end{example}

So how can we connect Beilinson's Theorem with the category $\mathsf{Tate}%
^{el}\mathcal{C}$?

\section{Operator ideals in Tate categories}

First of all, we will show that the condition of Definition
\ref{def_HigherAdeleOperatorIdeals}, part (1), naturally comes up in the
context of lattices of Tate objects. This requires some preparation. We need
to establish some features which would be entirely obvious if we dealt with
$k$-vector spaces and the notion of lattices from Definition
\ref{def_BeilLattice}.

It was shown in \cite{TateObjectsExactCats} that Pro-objects are left
filtering in $\mathsf{Tate}^{el}(\mathcal{C})$ and Ind-objects right
filtering. The following result strengthens these two facts:

\begin{proposition}
\label{Prop_LatticeLeftFiltAndRightFilt}Let $\mathcal{C}$ be an exact category.

\begin{enumerate}
\item Every morphism $Y\overset{a}{\longrightarrow}X$ in $\mathsf{Tate}%
^{el}(\mathcal{C})$ with $Y\in\mathsf{Pro}^{a}(\mathcal{C})$ can be factored
as $Y\overset{\tilde{a}}{\rightarrow}L\hookrightarrow X$ with $L$ a lattice in
$X$.

\item Every morphism $X\overset{a}{\longrightarrow}Y$ in $\mathsf{Tate}%
^{el}(\mathcal{C})$ with $Y\in\mathsf{Ind}^{a}(\mathcal{C})$ can be factored
as $X\twoheadrightarrow X/L\overset{\tilde{a}}{\rightarrow}Y$ with $L$ a
lattice in $X$.
\end{enumerate}
\end{proposition}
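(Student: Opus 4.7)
The plan is to prove part (1) directly; part (2) will then follow by duality, since replacing $\mathcal{C}$ by $\mathcal{C}^{op}$ exchanges admissible Ind- and Pro-objects and swaps admissible monics with admissible epics. Given $a : Y \to X$ with $Y \in \mathsf{Pro}^{a}(\mathcal{C})$, my strategy is to start from \emph{any} lattice $L_0 \hookrightarrow X$ and enlarge it inside $X$ to a lattice $L$ that absorbs the image of $a$, so that $a$ factors through $L \hookrightarrow X$ automatically.

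Fix an arbitrary lattice $L_0 \hookrightarrow X$, which exists because $X$ is a Tate object. The composite $\bar a : Y \to X \twoheadrightarrow X/L_0$ is a morphism from a Pro-object to an admissible Ind-object in $\mathcal{C}$, and the key observation is that any such morphism factors through an object of $\mathcal{C}$. Writing $Y = \underleftarrow{\lim}\, Y_i$ and $X/L_0 = \underrightarrow{\operatorname{colim}}\, X_\alpha$, with $X_\alpha \hookrightarrow X/L_0$ admissible sub-objects lying in $\mathcal{C}$, the standard Hom-identity in $\mathsf{Ind}^a \mathsf{Pro}^a \mathcal{C}$ produces a factorization $\bar a : Y \to Y_i \to X_\alpha \hookrightarrow X/L_0$ for suitable $i$ and $\alpha$. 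Set $C := X_\alpha$; then $C \hookrightarrow X/L_0$ is an admissible sub-object with $C \in \mathcal{C}$ through which $\bar a$ factors.

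Next I would form $L \hookrightarrow X$ as the pullback of the admissible monic $C \hookrightarrow X/L_0$ along the admissible epic $X \twoheadrightarrow X/L_0$. This yields a short exact sequence $L_0 \hookrightarrow L \twoheadrightarrow C$ together with the induced identification $X/L \cong (X/L_0)/C$. Extension-closure of $\mathsf{Pro}^a(\mathcal{C})$ applied to this sequence (with $L_0$ a Pro-object and $C \in \mathcal{C}$) gives $L \in \mathsf{Pro}^a(\mathcal{C})$; dually, $X/L$ is an admissible quotient of the Ind-object $X/L_0$ by a sub-object in $\mathcal{C}$, hence $X/L \in \mathsf{Ind}^a(\mathcal{C})$. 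Thus $L$ is a Tate lattice. By construction, the composite $Y \to X \twoheadrightarrow X/L$ coincides with $\bar a$ postcomposed with $X/L_0 \twoheadrightarrow (X/L_0)/C$, which is zero because $\bar a$ factors through $C$. The kernel property of $L \hookrightarrow X$ therefore yields the required $\tilde a : Y \to L$ with $a = (L \hookrightarrow X) \circ \tilde a$.

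The step I expect to be most delicate is keeping track of admissibility throughout: one must confirm that the $X_\alpha$ really witness $X/L_0$ as an admissible Ind-object (so that $C \hookrightarrow X/L_0$ is an admissible monic, not merely a categorical sub-object), that the pullback of an admissible monic along an admissible epic exists inside $\mathsf{Ind}^a \mathsf{Pro}^a \mathcal{C}$ and fits into the advertised short exact sequence, and finally that the resulting $L \hookrightarrow X$ qualifies as a lattice in the sense defined earlier, i.e.\ that both $L \in \mathsf{Pro}^a(\mathcal{C})$ and $X/L \in \mathsf{Ind}^a(\mathcal{C})$ hold inside the Tate category. All three points are formal consequences of the machinery developed in \cite{TateObjectsExactCats}, but care is needed to avoid conflating set-theoretic subobjects with admissible ones in an exact category that need not be abelian.
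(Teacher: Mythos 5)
Your argument is correct, and it is essentially the proof the paper is relying on: the paper itself gives no argument for Proposition \ref{Prop_LatticeLeftFiltAndRightFilt} but simply cites \cite{bgwRelativeTateObjects}, Proposition 2.7, whose proof proceeds exactly as you do --- factor $\bar a\colon Y\to X/L_0$ through a term $C\in\mathcal{C}$ of the admissible Ind-presentation of $X/L_0$ using the Hom-formula, then pull back $C\rightarrowtail X/L_0$ along $X\twoheadrightarrow X/L_0$ to produce the enlarged lattice $L$ with $L_0\rightarrowtail L\twoheadrightarrow C$ and $X/L\cong (X/L_0)/C$. All the admissibility points you flag are indeed covered by the standard machinery (terms of an admissible Ind-diagram map to the colimit by admissible monics; pullback of an admissible monic along an admissible epic in an exact category yields an admissible monic with the expected cokernel; $\mathsf{Pro}^a(\mathcal{C})$ and $\mathsf{Ind}^a(\mathcal{C})$ are extension-closed), and none of them requires idempotent completeness, so the statement really does hold for a general exact $\mathcal{C}$. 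The only step I would make explicit is the reduction of (2) to (1): the equivalence $\mathsf{Tate}^{el}(\mathcal{C})^{\mathrm{op}}\simeq\mathsf{Tate}^{el}(\mathcal{C}^{\mathrm{op}})$ does hold for general exact $\mathcal{C}$ and interchanges lattices $L\subseteq X$ with lattices $(X/L)^{\mathrm{op}}\subseteq X^{\mathrm{op}}$, but since the literature sometimes develops the dual viewpoint only under idempotent completeness, it is safer (and just as short) to dualize the argument by hand: factor $a|_{L_0}\colon L_0\to Y$ first through a term of the Ind-presentation of $Y$ and then through an admissible quotient $L_0\twoheadrightarrow L_0/L$ with $L_0/L\in\mathcal{C}$; the resulting sub-object $L$ is again a lattice of $X$ on which $a$ vanishes, so $a$ factors through $X\twoheadrightarrow X/L$ by the cokernel property.
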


\begin{proof}
A complete proof is given in \cite[Proposition 2.7]{bgwRelativeTateObjects}.
\end{proof}

\begin{lemma}
\label{Lemma_LatticeInsideLattice}Suppose $\mathcal{C}$ is an exact category.
Let $X,X^{\prime}\in\mathsf{Tate}^{el}\mathcal{C}$ and $\varphi\in
\operatorname*{Hom}(X,X^{\prime})$.

\begin{enumerate}
\item For every lattice $L\hookrightarrow X$ there exists a lattice
$L^{\prime}\hookrightarrow X^{\prime}$ admitting a factorization as depicted
below on the left.

\item For every lattice $L^{\prime}\hookrightarrow X^{\prime}$ there exists a
lattice $L\hookrightarrow X$ admitting a factorization as depicted on the
right.%
\[%
% 
%.>/[L`X`L^{\prime}`X^{\prime};`{\varphi}^{\prime}`{\varphi}`]
%\efig}}%
% 
\bfig\square/^{ (}->`.>`>`^{ (}.>/[L`X`L^{\prime}`X^{\prime};`{\varphi
}^{\prime}`{\varphi}`]
\efig
\qquad\qquad%
% 
%->/[L`X`L^{\prime}`X^{\prime};`{\varphi}^{\prime}`{\varphi}`]
%\efig}}%
% 
\bfig\square/^{ (}.>`.>`>`^{ (}->/[L`X`L^{\prime}`X^{\prime};`{\varphi
}^{\prime}`{\varphi}`]
\efig
\]

\end{enumerate}
\end{lemma}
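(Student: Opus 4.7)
The plan is to deduce both parts directly from Proposition \ref{Prop_LatticeLeftFiltAndRightFilt}, which is the main technical input; once that is available the lemma is essentially a formality.

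For part (1), I would observe that since $L$ is a lattice in $X$, we have $L \in \mathsf{Pro}^{a}(\mathcal{C})$. The composite $L \hookrightarrow X \xrightarrow{\varphi} X'$ is therefore a morphism from a Pro-object to a Tate object, and Proposition \ref{Prop_LatticeLeftFiltAndRightFilt}(1) applies directly: it factors as $L \xrightarrow{\tilde{\varphi}} L' \hookrightarrow X'$ for some lattice $L' \hookrightarrow X'$. Taking $\varphi' := \tilde{\varphi}$ gives the commutative square.

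For part (2), the idea is dual. Since $L' \hookrightarrow X'$ is a lattice, the quotient $X'/L'$ lies in $\mathsf{Ind}^{a}(\mathcal{C})$. Consider the composite $X \xrightarrow{\varphi} X' \twoheadrightarrow X'/L'$. By Proposition \ref{Prop_LatticeLeftFiltAndRightFilt}(2), this morphism factors as $X \twoheadrightarrow X/L \xrightarrow{\tilde{\varphi}} X'/L'$ for some lattice $L \hookrightarrow X$. It then remains to extract the desired map $\varphi' : L \to L'$. To do this, I would precompose with $L \hookrightarrow X$: since $L \hookrightarrow X \twoheadrightarrow X/L$ is zero, so is $L \hookrightarrow X \xrightarrow{\varphi} X' \twoheadrightarrow X'/L'$. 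The short exact sequence $L' \hookrightarrow X' \twoheadrightarrow X'/L'$ then produces, by the universal property of the kernel, a unique factorization $\varphi' : L \to L'$ through the admissible monic $L' \hookrightarrow X'$.

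The potential obstacle here is not really in the argument itself but in being careful about what gets unpacked: the real content lies in Proposition \ref{Prop_LatticeLeftFiltAndRightFilt}, i.e.\ the left/right filtering property of Pro- and Ind-objects in the Tate category together with the statement that the intermediate object can be chosen to be a \emph{lattice} (and not just an arbitrary Pro- or Ind-object). Assuming that proposition, the rest is a diagram chase using the universal property of kernels/cokernels, with no additional combinatorial or set-theoretic subtlety. In spirit this lemma is the categorical analogue of the familiar facts that a continuous map between topological vector spaces sends bounded sets to bounded sets and that preimages of neighborhoods of $0$ are neighborhoods of $0$.
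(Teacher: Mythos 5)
Your proposal is correct and follows exactly the paper's own (very terse) proof: both parts are deduced from Proposition \ref{Prop_LatticeLeftFiltAndRightFilt} by factoring $L\to X'$ through a lattice of $X'$ in part (1), and factoring $X\to X'/L'$ through a lattice quotient of $X$ in part (2). Your extra step extracting $\varphi':L\to L'$ via the universal property of the kernel of $X'\twoheadrightarrow X'/L'$ is just the detail the paper leaves implicit.
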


\begin{proof}
This immediately follows from Prop. \ref{Prop_LatticeLeftFiltAndRightFilt}.
(1) Here $L\rightarrow X^{\prime}$ is a morphism from a Pro-object, so it
factors through a lattice $L^{\prime}$ of $X^{\prime}$. (2) Here $X\rightarrow
X^{\prime}/L^{\prime}$ is a morphism to an Ind-object, so it factors
$X\twoheadrightarrow X/L\rightarrow X^{\prime}/L^{\prime}$ for some lattice
$L$.
\end{proof}

\begin{lemma}
[Cartesian sandwich]\label{Lemma_CartesianSandwich}Suppose $\mathcal{C}$ is
idempotent complete. Let $X_{1},X_{2}\in\mathsf{Tate}^{el}(\mathcal{C})$ and
$L\hookrightarrow X_{1}\oplus X_{2}$ a lattice. Then there exist lattices
$L_{i}^{\prime}\subseteq L_{i}$ of $X_{i}$ so that%
\[
L_{1}^{\prime}\oplus L_{2}^{\prime}\subseteq L\subseteq L_{1}\oplus
L_{2}\text{.}%
\]

\end{lemma}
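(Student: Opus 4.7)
The plan is to produce the two bounds separately, using Lemma~\ref{Lemma_LatticeInsideLattice} for each, and then adjust at the end using the common sub-lattice property (cited in the paper from \cite[Theorem 6.7]{TateObjectsExactCats}) to guarantee the containment $L_i'\subseteq L_i$.

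\textbf{Upper bound.} The composition $L\hookrightarrow X_{1}\oplus X_{2}\xrightarrow{\pi_{i}}X_{i}$ is a morphism out of a Pro-object (since lattices lie in $\mathsf{Pro}^{a}\mathcal{C}$). Apply Lemma~\ref{Lemma_LatticeInsideLattice}(1) to each $\pi_{i}$ to obtain lattices $L_{i}\hookrightarrow X_{i}$ together with factorizations $L\to L_{i}\hookrightarrow X_{i}$. The universal property of the product forces the composite $L\to L_{1}\oplus L_{2}\hookrightarrow X_{1}\oplus X_{2}$ to agree with the original inclusion. Since the outer composite and $L_{1}\oplus L_{2}\hookrightarrow X_{1}\oplus X_{2}$ are admissible monics, the map $L\to L_{1}\oplus L_{2}$ is an admissible monic too (standard cancellation property for admissible monics in an exact category).

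\textbf{Lower bound.} Dually, consider the composite $X_{i}\xrightarrow{\iota_{i}}X_{1}\oplus X_{2}\twoheadrightarrow(X_{1}\oplus X_{2})/L$, which is a map into an Ind-object (as $L$ is a lattice). Lemma~\ref{Lemma_LatticeInsideLattice}(2) yields lattices $L_{i}'\hookrightarrow X_{i}$ so that these composites vanish on each $L_{i}'$. Consequently $L_{i}'\hookrightarrow X_{1}\oplus X_{2}$ factors (uniquely) through the kernel $L\hookrightarrow X_{1}\oplus X_{2}$, producing admissible monics $L_{i}'\hookrightarrow L$ by the same cancellation argument. Assembling these, the map $L_{1}'\oplus L_{2}'\to L$ fits into the commutative diagram whose composite to $X_{1}\oplus X_{2}$ is the direct sum of the lattice inclusions, hence is an admissible monic, and cancellation once more shows $L_{1}'\oplus L_{2}'\hookrightarrow L$ is admissible.

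\textbf{Matching the two bounds.} Nothing so far guarantees $L_{i}'\subseteq L_{i}$. But $L_{i}$ and $L_{i}'$ are both lattices in $X_{i}$, and $\mathcal{C}$ is idempotent complete, so by the common sub-lattice property recalled just before \S\ref{sect_ClassicalMotivatingExample} we can replace $L_{i}'$ by a common sub-lattice $\widetilde{L}_{i}'\subseteq L_{i}'\cap L_{i}$. Since $\widetilde{L}_{1}'\oplus\widetilde{L}_{2}'\hookrightarrow L_{1}'\oplus L_{2}'\hookrightarrow L$ remains an admissible monic, renaming $\widetilde{L}_{i}'$ back to $L_{i}'$ completes the proof.

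\textbf{Expected obstacle.} The only subtle point is verifying that the maps produced by Lemma~\ref{Lemma_LatticeInsideLattice} are genuine admissible monics (rather than mere morphisms) when composed through direct sums. This is resolved cleanly by the standard fact that if $B\hookrightarrow C$ is an admissible monic and $A\to B\to C$ is itself an admissible monic, then $A\to B$ is admissible; the rest is bookkeeping and the common sub-lattice step.
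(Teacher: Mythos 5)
Your skeleton is the same as the paper's: factor $L\hookrightarrow X_1\oplus X_2\twoheadrightarrow X_i$ through a lattice $L_i$ to get the upper bound, kill $X_i\hookrightarrow X_1\oplus X_2\twoheadrightarrow (X_1\oplus X_2)/L$ on a lattice $L_i'$ and use the universal property of kernels for the lower bound. The gap is in how you certify that the induced comparison maps $L\to L_1\oplus L_2$ and $L_1'\oplus L_2'\to L$ are \emph{admissible} monics. The ``standard cancellation property'' you invoke --- if $g\circ f$ is an admissible monic then $f$ is an admissible monic --- is \emph{not} valid in an arbitrary exact category; it holds precisely in weakly idempotent complete exact categories (B\"uhler, \cite[Prop.~7.6]{MR2606234}), or, via the ``obscure axiom'' \cite[Prop.~2.16]{MR2606234}, when $f$ is already known to admit a cokernel. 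Here the ambient category is $\mathsf{Tate}^{el}(\mathcal{C})$, and the paper's own results stress that this category can fail to be idempotent complete even for $\mathcal{C}$ idempotent complete and split exact; so weak idempotent completeness is not available off the shelf, and you neither state nor prove it. Adding the hypothesis that $L_1\oplus L_2\hookrightarrow X_1\oplus X_2$ is an admissible monic does not rescue the cancellation without one of these extra inputs.

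The paper closes exactly this hole by citing a dedicated statement, \cite[Lemma~6.9]{TateObjectsExactCats}, about a morphism between two lattices of the same elementary Tate object commuting with the inclusions: such a morphism is an admissible monic (equivalently, the induced map of lattice quotients is an admissible epic), and the proof of that lemma is where the idempotent completeness of the \emph{base} category $\mathcal{C}$ is genuinely used. So you should replace each appeal to ``cancellation'' by an appeal to that lemma (for the lower bound, applied to the map of quotients $(X_1\oplus X_2)/(L_1'\oplus L_2')\to (X_1\oplus X_2)/L$). Two minor further remarks: your use of Lemma~\ref{Lemma_LatticeInsideLattice} in place of Proposition~\ref{Prop_LatticeLeftFiltAndRightFilt} is harmless, since the former is an immediate consequence of the latter; and your final ``matching'' step is actually unnecessary, because once $L_1'\oplus L_2'\subseteq L\subseteq L_1\oplus L_2$ is established, composing $L_i'\hookrightarrow X_i$ through this chain and the projection $L_1\oplus L_2\to L_i$ already exhibits $L_i'$ as contained in $L_i$.
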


\begin{proof}
Consider the composed morphism $L\hookrightarrow X_{1}\oplus X_{2}%
\twoheadrightarrow X_{i}$. By Prop. \ref{Prop_LatticeLeftFiltAndRightFilt}
this factors through a lattice of $X_{i}$, say $L_{i}\hookrightarrow X_{i}$.
Now we already have%
\[%
% 
%\oplus L_{2}`X_{1}\oplus X_{2};``]
%\efig}}%
% 
\bfig\btriangle/>`^{ (}->`^{ (}->/[L`L_{1}\oplus L_{2}`X_{1}\oplus X_{2};``]
\efig
\]
By \cite[Lemma 6.9]{TateObjectsExactCats} the downward arrow is an admissible
monic (this crucially makes use of the assumption that $\mathcal{C}$ is
idempotent complete). Dually, consider the composition $X_{i}\hookrightarrow
X_{1}\oplus X_{2}\twoheadrightarrow\left(  X_{1}\oplus X_{2}\right)  /L$.
Since Ind-objects are right filtering \cite[Prop. 5.10 (2)]%
{TateObjectsExactCats}, there must be a lattice $L_{i}^{\prime}$ so that the
map factors as $X_{i}\twoheadrightarrow X_{i}/L_{i}^{\prime}\rightarrow\left(
X_{1}\oplus X_{2}\right)  /L$. Thus, the composition $L_{1}^{\prime}\oplus
L_{2}^{\prime}\hookrightarrow X_{1}\oplus X_{2}\twoheadrightarrow\left(
X_{1}\oplus X_{2}\right)  /L$ is zero and by the universal property of kernels
we get a canonical morphism $L_{1}^{\prime}\oplus L_{2}^{\prime}\rightarrow
L$. Again by \cite[Lemma 6.9]{TateObjectsExactCats} the corresponding morphism
of the lattice quotients must be an admissible epic, so that this is an
admissible monic.
\end{proof}

\begin{definition}
\label{Def_BoundedDiscreteFiniteDimOne}Let $\mathcal{C}$ be an exact category.
For objects $X,X^{\prime}\in\mathsf{Tate}^{el}\mathcal{C}$ call a morphism
$\varphi:X\rightarrow X^{\prime}$

\begin{enumerate}
\item \emph{bounded} if there exists a lattice $L^{\prime}\subseteq X^{\prime
}$ so that $\varphi$ factors as $X\rightarrow L^{\prime}\hookrightarrow
X^{\prime}$;

\item \emph{discrete} if there exists a lattice $L\subseteq X$ so that
$L\hookrightarrow X\overset{\varphi}{\rightarrow}X^{\prime}$ is the zero morphism;

\item \emph{finite} if it is both bounded and discrete.
\end{enumerate}

Denote by $I^{+}\left(  X,X^{\prime}\right)  $, $I^{-}\left(  X,X^{\prime
}\right)  $, $I^{0}\left(  X,X^{\prime}\right)  $ the subsets of
$\operatorname*{Hom}\left(  X,X^{\prime}\right)  $ of bounded, discrete and
finite morphisms respectively.
\end{definition}

\begin{lemma}
\label{Lemma_PropertiesBoundedOrDiscreteMorphisms}Suppose $\mathcal{C}$ is
idempotent complete and $s\in\{+,-,0\}$. Then for arbitrary objects
$X,X^{\prime},X^{\prime\prime}$ the following are true:

\begin{enumerate}
\item $I^{s}\left(  X,X^{\prime}\right)  $ is a subgroup with respect to addition.

\item $I^{s}\left(  X,X^{\prime}\right)  $ is a categorical ideal, i.e. the
composition of any morphism with a morphism in $I^{s}$ lies in $I^{s}$. Thus,
the composition of morphisms factors as
\begin{align*}
I^{s}\left(  X^{\prime},X^{\prime\prime}\right)  \otimes\operatorname*{Hom}%
\left(  X,X^{\prime}\right)   & \longrightarrow I^{s}\left(  X,X^{\prime
\prime}\right) \\
\operatorname*{Hom}\left(  X^{\prime},X^{\prime\prime}\right)  \otimes
I^{s}\left(  X,X^{\prime}\right)   & \longrightarrow I^{s}\left(
X,X^{\prime\prime}\right)  \text{.}%
\end{align*}

\item In the ring $\operatorname*{End}(X)$ the subgroup $I^{s}\left(
X,X\right)  $ is a two-sided ideal.

\item Every morphism in $I^{0}\left(  X,X^{\prime}\right)  $ factors through a
morphism from an Ind- to a Pro-object. Every product of at least two morphisms
in $I^{0}$ factors through an object in $\mathcal{C}$.
\end{enumerate}
\end{lemma}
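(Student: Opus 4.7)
The plan is to handle the four assertions in order, relying on Lemma \ref{Lemma_LatticeInsideLattice} and the common over-/sub-lattice property that idempotent completeness guarantees for $\mathsf{Tate}^{el}(\mathcal{C})$ (recalled in \S \ref{sect_TateCategories} from \cite[Theorem 6.7]{TateObjectsExactCats}).

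For (1), I would show $I^{+}(X,X^{\prime})$ is closed under addition directly: two bounded morphisms $\varphi_{1},\varphi_{2}$ factor through lattices $L_{1}^{\prime},L_{2}^{\prime}\hookrightarrow X^{\prime}$; passing to a common over-lattice $L^{\prime}$ of the two makes $\varphi_{1}+\varphi_{2}$ factor through $L^{\prime}$, and inverses are immediate. The dual argument with common sub-lattices handles $I^{-}$, after which $I^{0}=I^{+}\cap I^{-}$ is automatically a subgroup.

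For (2), I would treat pre- and post-composition separately. Pre-composition into $I^{+}$ is trivial: any factorization $X\to L^{\prime}\hookrightarrow X^{\prime}$ survives pre-composition by $\psi:W\to X$. For post-composition, given $\psi:X^{\prime}\to X^{\prime\prime}$ and $\varphi$ factoring through a lattice $L^{\prime}\hookrightarrow X^{\prime}$, Lemma \ref{Lemma_LatticeInsideLattice}(1) applied to $L^{\prime}\hookrightarrow X^{\prime}\xrightarrow{\psi}X^{\prime\prime}$ provides a lattice $L^{\prime\prime}\hookrightarrow X^{\prime\prime}$ through which the composition factors, so $\psi\varphi\in I^{+}$. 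The dual statements for discrete morphisms come from Lemma \ref{Lemma_LatticeInsideLattice}(2). Assertion (3) is then the special case $X=X^{\prime}=X^{\prime\prime}$.

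Part (4) is where the main work lies and is the step I expect to be the obstacle. The first assertion is bookkeeping: a finite $\varphi\in I^{0}(X,X^{\prime})$ factors as $X\xrightarrow{\alpha}L^{\prime}\hookrightarrow X^{\prime}$ by boundedness and vanishes on some lattice $L\hookrightarrow X$ by discreteness. Since $L^{\prime}\hookrightarrow X^{\prime}$ is a monomorphism, the composite $L\hookrightarrow X\xrightarrow{\alpha}L^{\prime}$ is zero, so $\alpha$ descends through the admissible cokernel to yield
\[
X\twoheadrightarrow X/L\longrightarrow L^{\prime}\hookrightarrow X^{\prime},
\]
whose middle arrow is a morphism from an Ind- to a Pro-object. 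For the second assertion, concatenating the factorizations just produced for two composable finite morphisms $\varphi_{1},\varphi_{2}$ creates an interior piece $L_{1}^{\prime}\hookrightarrow X_{1}\twoheadrightarrow X_{1}/L_{1}$, i.e.\ a morphism from a Pro-object to an Ind-object in $\mathsf{Ind}^{a}\mathsf{Pro}^{a}\mathcal{C}$. Here I would invoke the standard fact that every such morphism factors through an object of $\mathcal{C}$, which is a direct consequence of the defining filtered colimit/limit descriptions of admissible Hom-sets (available in \cite{MR2872533} and \cite{TateObjectsExactCats}). Inserting this $\mathcal{C}$-factorization into the middle of the composition finishes the case of two morphisms; for longer products one reduces inductively, since any composite containing a factor lying in $\mathcal{C}$ itself factors through that object.
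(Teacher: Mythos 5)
Your proposal follows the paper's own proof essentially step for step: common over-/sub-lattices from the directedness of the Sato Grassmannian for (1), the trivial direction plus Lemma \ref{Lemma_LatticeInsideLattice} for (2), and the monomorphism/cokernel argument producing $X\twoheadrightarrow X/L\rightarrow L'\hookrightarrow X'$ for (4). The only (harmless) divergence is in the last step of (4), where the paper factors the interior Pro-to-Ind morphism through a Pro-subobject via left-filtering and then invokes that an object which is simultaneously Ind and Pro lies in $\mathcal{C}$, whereas you appeal directly to the colimit description of Hom-sets; both justifications are valid.
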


\begin{example}
\label{Example_NeedLongWordsToHaveFiniteRank}Let $\mathcal{C}:=\mathsf{Vect}%
_{f}$ be the category of finite-dimensional $k$-vector spaces. Define
$X:=k[t]\oplus k[[t]]$ and an endomorphism%
\[
\varphi:k[t]\oplus k[[t]]\longrightarrow k[t]\oplus k[[t]]\text{,}%
\qquad(a,b)\longmapsto(0,a)\text{.}%
\]
It is easy to see that $\varphi\in I^{0}(X,X)$, but $\varphi$ does not factor
through an object in $\mathcal{C}$. This shows that Lemma
\ref{Lemma_PropertiesBoundedOrDiscreteMorphisms} (4) cannot be strengthened in
this alluring fashion. Note that this phenomenon is already present in Tate's
original work \cite{MR0227171}. It is the reason why he has to work with
`finite-potent' morphisms rather than finite rank ones.
\end{example}

\begin{proof}
(1) Suppose $\varphi_{1},\varphi_{2}\in I^{+}\left(  X,X^{\prime}\right)  $
are given and they factor as%
\[
\varphi_{1}:X\rightarrow L_{1}^{\prime}\hookrightarrow X^{\prime}%
\qquad\text{and}\qquad\varphi_{2}:X\rightarrow L_{2}^{\prime}\hookrightarrow
X^{\prime}\text{.}%
\]
Then by the directedness of the Sato Grassmannian \cite[Thm. 6.7]%
{TateObjectsExactCats} there exists a lattice $L_{3}^{\prime}$ so that
$L_{i}^{\prime}\subseteq L_{3}^{\prime}$ for $i=1,2$ and thus without loss of
generality we may assume $L_{i}^{\prime}=L_{3}^{\prime}$ in the above
factorizations, so the claim is clear. The same works for $I^{-}(X,X^{\prime
})$ by taking a common sub-lattice. By $I^{0}\left(  X,X^{\prime}\right)
=I^{+}\left(  X,X^{\prime}\right)  \cap I^{-}\left(  X,X^{\prime}\right)  $ it
also works for $I^{0}$. (2) For $\varphi\in I^{+}$ and $\psi$ arbitrary it is
trivial that $\varphi\circ\psi$ factors through a lattice, namely the same as
$\varphi$ does. In the reverse direction $\psi\circ\varphi$ we have a
factorization as depicted on the left in%
\[%
% 
%}`X`X^{\prime};``\varphi]
%\square(500,0)/```>/[``X^{\prime}`X^{\prime\prime};```\psi]
%\efig}}%
% 
\bfig\dtriangle/<-`^{ (}->`>/[L^{\prime}`X`X^{\prime};``\varphi]
\square(500,0)/```>/[``X^{\prime}`X^{\prime\prime};```\psi]
\efig
\qquad\qquad%
% 
%}`X`X^{\prime};``\varphi]
%\square(500,0)/.>``^{ (}.>`>/[L^{\prime}`L^{\prime\prime}`X^{\prime}%
%`X^{\prime\prime};{\psi\mid_{L^{\prime}}}```\psi]
%\efig}}%
% 
\bfig\dtriangle/<-`^{ (}->`>/[L^{\prime}`X`X^{\prime};``\varphi]
\square(500,0)/.>``^{ (}.>`>/[L^{\prime}`L^{\prime\prime}`X^{\prime}%
`X^{\prime\prime};{\psi\mid_{L^{\prime}}}```\psi]
\efig
\]
since $\varphi\in I^{+}$. Then by Lemma \ref{Lemma_LatticeInsideLattice} (1)
there exists a lattice $L^{\prime\prime}$ in $X^{\prime\prime}$ so that we get
a further factorization as depicted above on the right. Thus, we have a
factorization $X\rightarrow L^{\prime\prime}\hookrightarrow X^{\prime\prime}$
with $L^{\prime\prime}$ a lattice, so $\psi\circ\varphi$ is also bounded. For
$\varphi\in I^{-}$ and $\psi$ arbitrary, the proof is analogous. This time
$\psi\circ\varphi$ trivially sends a lattice to zero, namely the same one as
$\varphi$ and the reverse direction $\varphi\circ\psi$ requires an argument,
very analogous to the above one for $I^{+}$: Let $L^{\prime}$ be a lattice
which is sent to zero by $\varphi$ as shown left in%
\[%
% 
%\btriangle(500,0)/^{ (}->`>`>/[L^{\prime}`X^{\prime}`X^{\prime\prime
%};`0`\varphi]
%\efig}}%
% 
\bfig\square/```>/[``X`X^{\prime};```\psi]
\btriangle(500,0)/^{ (}->`>`>/[L^{\prime}`X^{\prime}`X^{\prime\prime
};`0`\varphi]
\efig
\qquad\qquad%
% 
%}`X`X^{\prime};{\psi\mid_{L }}```\psi]
%\btriangle(500,0)/^{ (}->`>`>/[L^{\prime}`X^{\prime}`X^{\prime\prime
%};`0`\varphi]
%\efig}}%
% 
\bfig\square/.>`^{ (}.>``>/[L`L^{\prime}`X`X^{\prime};{\psi\mid_{L }}```\psi]
\btriangle(500,0)/^{ (}->`>`>/[L^{\prime}`X^{\prime}`X^{\prime\prime
};`0`\varphi]
\efig
\]
According to Lemma \ref{Lemma_LatticeInsideLattice} (2) there exists a lattice
$L$ in $X$ so that we can complete the diagram as depicted on the right. Hence
$\varphi\circ\psi$ sends a lattice to zero. (3) is immediate from (2). For (4)
let $X\overset{\varphi}{\longrightarrow}X^{\prime}$ be a morphism in
$I^{0}\left(  X,X^{\prime}\right)  $. By boundedness we find a factorization
through a lattice $L^{\prime}$ so that we get the diagram on the left%
\[%
% 
%}`X`X^{\prime};```\varphi]
%\morphism(0,0)/>/<500,500>[X`L^{\prime};]
%\efig}}%
% 
\bfig\square/``^{ (}->`>/[`L^{\prime}`X`X^{\prime};```\varphi]
\morphism(0,0)/>/<500,500>[X`L^{\prime};]
\efig
\qquad\qquad%
% 
%}`X`X^{\prime};```\varphi]
%\morphism(0,0)/>/<500,500>[X`L^{\prime};]
%\efig}}%
% 
\bfig\square/.>`{ (}->`^{ (}->`>/[L`L^{\prime}`X`X^{\prime};```\varphi]
\morphism(0,0)/>/<500,500>[X`L^{\prime};]
\efig
\]
and by discreteness a lattice $L$ so that $L\rightarrow X\rightarrow
X^{\prime}$ is zero, as depicted on the right. Since $L^{\prime}%
\hookrightarrow X^{\prime}$ is a monomorphism, the induced upper horizontal
arrow must be the zero map itself. By the universal property of kernels this
yields a factorization%
\[%
% 
%->`>`>`->>`<-/[L`X`L^{\prime}`X/L;`0`\varphi``]
%\efig}}%
% 
\bfig\Dtrianglepair/{ (}->`>`>`->>`<-/[L`X`L^{\prime}`X/L;`0`\varphi``]
\efig
\]
Thus, we have obtained a factorization $X\twoheadrightarrow X/L\rightarrow
L^{\prime}\hookrightarrow X^{\prime}$ as desired. If we compose any two
morphisms, we may equivalently look at the composition of these
factorizations,%
\[
X\twoheadrightarrow X/L\rightarrow L^{\prime}\hookrightarrow X^{\prime
}\twoheadrightarrow X^{\prime}/L^{\prime\prime}\rightarrow L^{\prime\prime
}\hookrightarrow X^{\prime\prime\prime}\text{,}%
\]
then $L^{\prime}\rightarrow X^{\prime}/L^{\prime\prime}$ is a morphism from a
Pro-object to an Ind-object. Since Pro-objects are left filtering in
$\mathsf{Tate}^{el}\mathcal{C}$, this factors through a Pro-sub-object of
$X^{\prime}/L^{\prime\prime}$, which is therefore also an Ind-object. However,
by \cite[Prop. 5.9]{TateObjectsExactCats} an object can only be simultaneously
an Ind- and Pro-object if it actually lies in $\mathcal{C}$.
\end{proof}

\begin{definition}
Let $X$ be an elementary Tate object. If there exists a lattice
$i:L\hookrightarrow X$ which admits a splitting $s:X\rightarrow L$ so that
$si=\operatorname*{id}_{L}$, we call $X$ a \emph{sliced Tate object}.
\end{definition}

The following result is the categorical analogue of the decomposition used by
J. Tate in his original article \cite[Prop. 1]{MR0227171}.

\begin{proposition}
\label{Prop_SlicedTateObjIdealsSumUp}Let $\mathcal{C}$ be an exact category.

\begin{enumerate}
\item If $X$ is a sliced elementary Tate object, $\operatorname*{End}\left(
X\right)  =I^{+}\left(  X,X\right)  +I^{-}\left(  X,X\right)  $. More
generally, there is a short exact sequence of abelian groups%
\[
0\longrightarrow I^{0}(X,X)\longrightarrow I^{+}(X,X)\oplus I^{-}%
(X,X)\longrightarrow\operatorname*{End}(X)\longrightarrow0\text{,}%
\]
functorial in morphisms between sliced elementary Tate objects.

\item If $\mathcal{C}$ is split exact and idempotent complete, every
elementary Tate object in $\mathsf{Tate}_{\aleph_{0}}^{el}\mathcal{C}$ is
sliced. In particular, each $\operatorname*{End}(X)$ is a unital one-fold
cubical algebra in the sense of Definition \ref{def_BeilNFoldAlg}.
\end{enumerate}
\end{proposition}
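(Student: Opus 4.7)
\emph{Part (1).} Fix the sliced structure: an admissible monic $i\colon L \hookrightarrow X$ with retraction $s\colon X \to L$, so that $p := is$ is an idempotent endomorphism of $X$ satisfying $pi = i$. For each $\varphi \in \operatorname{End}(X)$ I use the decomposition
\[
\varphi = \varphi p + \varphi(1-p)\text{.}
\]
The summand $\varphi p$ factors as $X \xrightarrow{s} L \xrightarrow{\varphi i} X$; applying Lemma~\ref{Lemma_LatticeInsideLattice}(1) to $\varphi$ and the lattice $L$ produces a lattice $L' \hookrightarrow X$ through which $\varphi i$ factors, so $\varphi p \in I^{+}(X,X)$. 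A direct calculation gives $\varphi(1-p)\circ i = \varphi i - \varphi(isi) = \varphi i - \varphi i = 0$, so $\varphi(1-p)$ kills the lattice $L$ and thus lies in $I^{-}(X,X)$. Hence the sum map $\mu\colon I^{+}\oplus I^{-} \to \operatorname{End}(X)$ is surjective, with kernel $\{(\psi,-\psi)\mid \psi\in I^{+}\cap I^{-}\} = I^{0}$ embedded anti-diagonally, giving the asserted short exact sequence. The formula $\varphi \mapsto (\varphi p, \varphi(1-p))$ is linear in $\varphi$ and built from pure composition operations, so it is manifestly functorial in morphisms between sliced elementary Tate objects.

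\emph{Part (2).} Given $X \in \mathsf{Tate}_{\aleph_{0}}^{el}(\mathcal{C})$, I pick any lattice $L \hookrightarrow X$ and aim to produce a retraction. The quotient $X/L$ lies in $\mathsf{Ind}_{\aleph_{0}}^{a}(\mathcal{C})$, so we may write $X/L = \varinjlim_{n} Q_{n}$ with $Q_{n}\in\mathcal{C}$ and admissible monics $Q_{n}\hookrightarrow Q_{n+1}$. Because $\mathcal{C}$ is split exact and idempotent complete, each $Q_{n}\hookrightarrow Q_{n+1}$ is the inclusion of a direct summand, $Q_{n+1}\cong Q_{n}\oplus R_{n}$ with $R_{n}\in\mathcal{C}$. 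Iterating yields $X/L\cong\bigoplus_{n\geq 0} S_{n}$ for suitable $S_{n}\in\mathcal{C}$. To split the admissible epic $X\twoheadrightarrow X/L$ it then suffices, by the universal property of the coproduct, to lift each admissible inclusion $S_{n}\hookrightarrow X/L$ to a morphism $t_{n}\colon S_{n}\to X$; combining the $t_{n}$ then gives a section $X/L \to X$, hence a retraction of $L \hookrightarrow X$.

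To construct $t_{n}$, I pull back along $S_{n}\hookrightarrow X/L$ to obtain
\[
0\longrightarrow L\longrightarrow Y_{n}\longrightarrow S_{n}\longrightarrow 0\text{,}
\]
an extension living in $\mathsf{Pro}^{a}_{\aleph_{0}}(\mathcal{C})$ (both ends are Pro-objects at countable cardinality). Under the hypotheses on $\mathcal{C}$ every such extension splits, an assertion which I would prove by a tower-level argument, producing compatible splittings $S_n\to Y_n^{(i)}$ at each finite stage of a Pro-presentation using split exactness and idempotent completeness of $\mathcal{C}$; this is analogous to splitting results recorded in the authors' earlier paper \cite{TateObjectsExactCats}. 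This yields the required $t_{n}\colon S_{n}\to Y_{n}\hookrightarrow X$, and assembling them shows that $X$ is sliced. Combined with part~(1) and Lemma~\ref{Lemma_PropertiesBoundedOrDiscreteMorphisms}(3), $\operatorname{End}(X)$ is realized as a unital $1$-fold cubical algebra. \textbf{The main obstacle} is precisely this splitting step in the Pro-category: it relies crucially on the countable cardinality constraint and breaks down for $\kappa>\aleph_{0}$, consistent with the \v{S}\v{t}ov\'{\i}\v{c}ek--Trlifaj counterexamples mentioned in the introduction.
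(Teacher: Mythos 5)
Your part (1) is essentially the paper's argument: the authors set $P^{+}:=is$ and $P^{-}:=\operatorname{id}_{X}-P^{+}$, observe $P^{+}$ is bounded and $P^{-}$ kills $L$, and write $\varphi=P^{+}\varphi+P^{-}\varphi$, invoking the two-sided ideal property of Lemma \ref{Lemma_PropertiesBoundedOrDiscreteMorphisms}; you compose with the idempotents on the other side, $\varphi=\varphi P^{+}+\varphi P^{-}$, which works equally well and your direct verification via Lemma \ref{Lemma_LatticeInsideLattice} is fine. One small imprecision: the functoriality claimed in the statement concerns the anti-diagonal inclusion and the sum map, which do not depend on the slicing; your splitting $\varphi\mapsto(\varphi p,\varphi(1-p))$ does depend on the chosen idempotent and is not itself natural, but this does not affect the result.

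In part (2) there is a genuine gap. The paper's proof is a one-line appeal to the fact that $\mathsf{Tate}_{\aleph_{0}}^{el}\mathcal{C}$ is split exact when $\mathcal{C}$ is split exact and idempotent complete (Lemma \ref{Lemma_CIdemComplSplitExactAleph0}, i.e.\ Prop.~5.23 of the authors' earlier paper), after which any lattice inclusion splits. You instead try to rebuild this: decompose $X/L$ as a countable direct sum $\bigoplus S_{n}$, pull back to extensions $0\to L\to Y_{n}\to S_{n}\to 0$ in $\mathsf{Pro}_{\aleph_{0}}^{a}(\mathcal{C})$, and split each one. The entire mathematical content of the statement is concentrated in the assertion that these Pro-extensions split, and at exactly that point you write that you \emph{would} prove it by a tower-level argument. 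As written this is not a proof but a redirection to the same nontrivial input the paper cites; the Mittag-Leffler-style construction of compatible level-wise splittings along a countable tower is precisely where countability and idempotent completeness are used, and it is the step that fails for $\kappa>\aleph_{0}$. Either carry out that tower argument in full or, as the paper does, cite Lemma \ref{Lemma_CIdemComplSplitExactAleph0} directly --- in which case the detour through $\bigoplus S_{n}$ and the pullbacks $Y_{n}$ is unnecessary, since split exactness of the whole Tate category immediately splits $L\hookrightarrow X$.
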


\begin{proof}
(1) Define $P^{+}:=is:X\rightarrow L\hookrightarrow X$. This is clearly a
bounded morphism. Define $P^{-}:=\operatorname*{id}_{X}-P^{+}$. We find that
after precomposing by $L\hookrightarrow X$ the two morphisms $X\overset
{\operatorname*{id}}{\rightarrow}X$, $X\overset{P^{+}}{\rightarrow}X$ agree,
thus $P^{-}\mid_{L}=0$, i.e. $P^{-}$ is a discrete morphism. Finally,
$\operatorname*{id}_{X}=P^{+}+P^{-}$ by construction. Thus, any morphism
$\varphi\in\operatorname*{End}\left(  X\right)  $ can be written as
$\varphi=P^{+}\varphi+P^{-}\varphi$ and since bounded and discrete morphisms
form ideals, by Lemma \ref{Lemma_PropertiesBoundedOrDiscreteMorphisms},
$P^{+}\varphi$ is bounded and $P^{-}\varphi$ discrete. (2) By \cite[Prop.
5.23]{TateObjectsExactCats} the split exactness of $\mathcal{C}$ implies that
$\mathsf{Tate}_{\aleph_{0}}^{el}\mathcal{C}$ is also split exact. Hence, we
can pick any lattice (always exists), obtain $L\hookrightarrow X$, and the
split exactness enforces the existence of a splitting.
\end{proof}

\begin{example}
\label{Example_FinAbelianPGroups}Let $p$ be a prime number and $\mathcal{C}$
the abelian category of finite abelian $p$-groups. We shall show that
$\mathsf{Tate}^{el}\mathcal{C}$ contains both sliced and non-sliced objects.
Specifically, both%
\[
\text{\textquotedblleft}\mathbf{F}_{p}((t))\text{\textquotedblright}%
=\underset{i}{\underrightarrow{\operatorname*{colim}}}\underset{j}%
{\underleftarrow{\lim}}\,t^{-i}\mathbf{F}_{p}[t]/t^{j}\qquad\text{and}%
\qquad\text{\textquotedblleft}\mathbf{Q}_{p}\text{\textquotedblright%
}=\underset{i}{\underrightarrow{\operatorname*{colim}}}\underset
{j}{\underleftarrow{\lim}}\,p^{-i}\mathbf{Z}/p^{j}%
\]
are elementary Tate objects.

\begin{enumerate}
\item The former is sliced via $\mathbf{F}_{p}((t))\simeq\mathbf{F}%
_{p}[[t]]\oplus t^{-1}\mathbf{F}_{p}[t^{-1}]$ while the latter is not. To see
this, note that $\mathsf{Ind}^{a}\mathcal{C}$ is equivalent to the category of
$p$-primary torsion abelian groups. Hence, if there exists a splitting
$\mathbf{Q}_{p}\simeq I\oplus P$ with $I\in\mathsf{Ind}^{a}\mathcal{C}$, we
must have $I=0$ since $\mathbf{Q}_{p}$ has no non-trivial torsion elements at
all. Thus, $\mathbf{Q}_{p}\in\mathsf{Pro}^{a}\mathcal{C} $, forcing that
$\mathbf{Q}_{p}/\mathbf{Z}_{p}\in\mathcal{C}$, but this is clearly absurd.

\item Proposition \ref{Prop_SlicedTateObjIdealsSumUp} fails for $\mathbf{Q}%
_{p}$. Suppose not. Then there exist $p\in I^{+}(\mathbf{Q}_{p},\mathbf{Q}%
_{p})$ and $q\in I^{-}(\mathbf{Q}_{p},\mathbf{Q}_{p})$ so that
$\operatorname*{id}=p+q$. Then $pq$ and $qp$ lie in $I^{0}(\mathbf{Q}%
_{p},\mathbf{Q}_{p})$, so by Lemma
\ref{Lemma_PropertiesBoundedOrDiscreteMorphisms} they both factor through a
morphism from an Ind- to a Pro-object. So they factor through torsion
elements. Thus, $pq=qp=0$. As a result, $p=p(p+q)=p^{2}$ and analogously for
$q$. So these must be idempotents. Thus, in the idempotent completion we get a
direct sum splitting $\mathbf{Q}_{p}\simeq\operatorname*{im}p\oplus
\operatorname*{im}q$. Since $q$ kills a lattice, say $L$, the map
$\mathbf{Q}_{p}\overset{q}{\twoheadrightarrow}\operatorname*{im}q$ descends to
$\mathbf{Q}_{p}/L\twoheadrightarrow\operatorname*{im}q$, forcing
$\operatorname*{im}q$ to be an Ind-object and therefore zero. Again, we obtain
$\mathbf{Q}_{p}\in\mathsf{Pro}^{a}\mathcal{C}$, which is absurd.
\end{enumerate}
\end{example}

We recall that in an additive category a morphism $p$ is an epic if for any
composition%
\[
X\overset{p}{\longrightarrow}Y\overset{f}{\longrightarrow}Z
\]
which is zero, $f$ must already have been zero. Now suppose we want to find a
definition for `locally epic'. Then lattices take over the r\^{o}le of a basis
of open neighbourhoods of the neutral element. Hence, it makes sense to use
the definition of epic morphisms, but restrict both the assumption as well as
the conclusion to lattices. This leads to the following concept.

\begin{definition}
Let $p:X\rightarrow Y$ be a morphism of elementary Tate objects.

\begin{enumerate}
\item We call $p$ \emph{submersive} if for any morphism $f$ and lattice
$L\hookrightarrow X$ so that the diagonal arrow in%
\[%
% 
%\node x(500,0)[X]
%\node y(500,500)[Y]
%\node z(1400,500)[Z]
%\node ll(0,500)[L^{\prime}]
%\arrow/^{ (}->/[l`x;]
%\arrow[x`y;p]
%\arrow[y`z;f]
%\arrow/-->/[l`z;0]
%\arrow/^{ (}.>/[ll`y;]
%\efig}}%
% 
\bfig\node l(0,0)[L]
\node x(500,0)[X]
\node y(500,500)[Y]
\node z(1400,500)[Z]
\node ll(0,500)[L^{\prime}]
\arrow/^{ (}->/[l`x;]
\arrow[x`y;p]
\arrow[y`z;f]
\arrow/-->/[l`z;0]
\arrow/^{ (}.>/[ll`y;]
\efig
\]
is zero, there exists a lattice $L^{\prime}\hookrightarrow Y$ (drawn with a
dotted arrow) so that $L^{\prime}\hookrightarrow Y\rightarrow Z$ is
zero.\newline$\left.  \qquad\right.  $\emph{(Slogan: \textquotedblleft
vanishing on a lattice can be pushed forward\textquotedblright)}

\item Symmetrically, call $p$ \emph{immersive} if for any morphism $f$ and
lattice $L\hookrightarrow X$ so that the diagonal arrow in
\[%
% 
%\node x(500,0)[X]
%\node y(500,500)[Y]
%\node z(1400,500)[Z]
%\node ll(0,500)[Y/{L^{\prime}}]
%\arrow/->>/[x`l;]
%\arrow[y`x;p]
%\arrow[z`y;f]
%\arrow/-->/[z`l;0]
%\arrow/.>>/[y`ll;]
%\efig}}%
% 
\bfig\node l(0,0)[X/L]
\node x(500,0)[X]
\node y(500,500)[Y]
\node z(1400,500)[Z]
\node ll(0,500)[Y/{L^{\prime}}]
\arrow/->>/[x`l;]
\arrow[y`x;p]
\arrow[z`y;f]
\arrow/-->/[z`l;0]
\arrow/.>>/[y`ll;]
\efig
\]
is zero, there exists a lattice $L^{\prime}\hookrightarrow Y$ (whose quotient
is drawn with a dotted arrow) so that $Z\rightarrow Y\twoheadrightarrow
Y/L^{\prime}$ is zero.\newline$\left.  \qquad\right.  $\emph{(Slogan:
\textquotedblleft vanishing modulo a lattice can be pulled
back\textquotedblright)}
\end{enumerate}
\end{definition}

These two definitions are almost dual. One transforms one into the other by
going to the opposite category and interchanging Ind- and Pro-objects.

\begin{lemma}
\label{Lemma_AdmMonicsAreImmersiveAdmEpicsSubmersive}Let $\mathcal{C}$ be an
idempotent complete exact category.

\begin{enumerate}
\item Every admissible monic $p:Y\hookrightarrow X$ is immersive.

\item Every admissible epic $p:X\twoheadrightarrow Y$ is submersive.
\end{enumerate}
\end{lemma}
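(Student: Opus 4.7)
The plan is to construct an explicit candidate lattice $L'$ in each case and then verify the vanishing condition. The two parts are formally dual: passing to the opposite category exchanges admissible monics with admissible epics and $\mathsf{Ind}^{a}$ with $\mathsf{Pro}^{a}$, and it swaps the two diagrams defining immersive and submersive. It therefore suffices to carry out (1) in detail and then observe that (2) follows either by this duality or by a completely analogous construction using $\ker(p)$.

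For (1), given $p:Y\hookrightarrow X$ admissible monic and a lattice $L\hookrightarrow X$, I would take $L':=Y\times_{X}L$, the pullback of the admissible monic $L\hookrightarrow X$ along $p$. Stability of admissible monics under pullback gives that $L'\hookrightarrow Y$ is an admissible monic. Two facts then need to be checked. First, $L'$ is a lattice in $Y$: as an admissible sub-object of the Pro-object $L$ it lies in $\mathsf{Pro}^{a}(\mathcal{C})$, and the pullback square induces an admissible monic $Y/L'\hookrightarrow X/L$, realising $Y/L'$ as an admissible sub-object of the Ind-object $X/L$, hence in $\mathsf{Ind}^{a}(\mathcal{C})$. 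Second, if $f:Z\to Y$ is such that the composition $Z\to Y\to X\twoheadrightarrow X/L$ is zero, then $Z\to X$ factors through $L$; since it also factors through $Y$, the universal property of the pullback furnishes a factorization $Z\to L'\hookrightarrow Y$, so $Z\to Y\twoheadrightarrow Y/L'$ vanishes.

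For (2), dually, set $K:=\ker(p)$ and $L\cap K:=L\times_{X}K$, and define $L':=L/(L\cap K)$ with its induced admissible monic into $W:=X/K$. Thus $L\twoheadrightarrow L'\hookrightarrow W$ is an image factorization of the composite $L\hookrightarrow X\twoheadrightarrow W$. One then checks that $L'$ is a lattice in $W$: a quotient of a Pro-object by an admissible sub is Pro, while $W/L'\cong X/(L+K)$ is a quotient of the Ind-object $X/L$, hence Ind. Now suppose $f:W\to Z$ satisfies $L\hookrightarrow X\twoheadrightarrow W\xrightarrow{f}Z$ is zero. Then the composition $L\twoheadrightarrow L'\hookrightarrow W\xrightarrow{f}Z$ is zero, and since $L\twoheadrightarrow L'$ is an admissible epic (and so a fortiori an epimorphism), this forces $L'\to Z$ to be zero, exhibiting $L'$ as the sought-after lattice.

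The main obstacle is not the categorical argument above but the exact-category bookkeeping required to justify its ingredients: that the pullback in (1) and the image factorization in (2) exist with the required admissibility, and that admissible sub-objects of Pro-objects (respectively quotients of Ind-objects) lie again in $\mathsf{Pro}^{a}(\mathcal{C})$ (respectively $\mathsf{Ind}^{a}(\mathcal{C})$) inside $\mathsf{Tate}^{el}(\mathcal{C})$. All of these are standard closure properties of the Ind- and Pro-completions, valid under the idempotent completeness hypothesis on $\mathcal{C}$ and established in \cite{TateObjectsExactCats}; once they are invoked, the logical content reduces to the universal property of the pullback and the fact that admissible epics are epimorphisms.
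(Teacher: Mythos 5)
Your verification logic at the end of each case is fine, but the constructions it rests on are not available in a general exact category, and this is precisely the difficulty the lemma has to overcome. In (1) you form $L':=Y\times_{X}L$ and invoke ``stability of admissible monics under pullback''; the axioms of an exact category give stability of admissible \emph{epics} under pullback and of admissible \emph{monics} under \emph{pushout}, but the pullback of an admissible monic along another admissible monic (equivalently, the kernel of the composite $Y\hookrightarrow X\twoheadrightarrow X/L$, which is merely some morphism, not an admissible epic) need not exist, and even when it exists the intersection of two admissible sub-objects need not be an admissible sub-object. The same problem recurs in (2): the ``image factorization'' $L\twoheadrightarrow L'\hookrightarrow Y$ of the composite $L\hookrightarrow X\twoheadrightarrow Y$ requires $L\cap\ker(p)$ to be an admissible sub-object of $L$, i.e.\ again a kernel of an arbitrary morphism. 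None of this is among the closure properties established in \cite{TateObjectsExactCats}; what is available there (via Proposition \ref{Prop_LatticeLeftFiltAndRightFilt}) is only the weaker statement that $L\rightarrow Y$ factors \emph{through} some lattice $M$ of $Y$, with no control on $L\rightarrow M$ being an epimorphism --- and without that epicity your final cancellation step in (2), deducing $f|_{L'}=0$ from $f\circ(L\twoheadrightarrow L')=0$, collapses.

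The paper's proof is built exactly to route around this. It factors $L\to M\to N$ through lattices of $Y$ and $Z$, passes to the quotient exact category ${\mathsf{Tate}^{el}(\mathcal{C})}/{\mathsf{Ind}^{a}(\mathcal{C})}\simeq{\mathsf{Pro}^{a}(\mathcal{C})}/{\mathcal{C}}$, where lattice inclusions become isomorphisms and the admissible epic $p$ remains an epimorphism, concludes that $M\to N$ vanishes in ${\mathsf{Pro}^{a}(\mathcal{C})}/{\mathcal{C}}$, and then uses the calculus of right fractions (Lemma \ref{lemma:fact}) to factor $M\to N$ as $M\twoheadrightarrow Q\to N$ with $Q\in\mathcal{C}$. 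The sought lattice is $L':=\ker(M\twoheadrightarrow Q)$ --- the kernel of an admissible epic, which genuinely exists. If you want to keep your more direct approach, you would first have to prove that intersections of lattices with admissible sub-objects, and images of lattices under admissible epics, exist and are again lattices; that is not a formal consequence of the exact-category axioms and would amount to a separate, nontrivial argument.
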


For example for an arbitrary lattice in an elementary Tate object, the
inclusion is immersive and the respective quotient morphism submersive:%
\[%
% 
%{immersive}]
%\morphism(600,0)/->>/<600,0>[X`X/L;\mathrm{submersive}]
%\efig}}%
% 
\bfig\morphism(0,0)/^{ (}->/<600,0>[L`X;\mathrm{immersive}]
\morphism(600,0)/->>/<600,0>[X`X/L;\mathrm{submersive}]
\efig
\]
We shall show in Example \ref{example_NonAdmissibleMonoNotImmersive} that the
lemma can fail if we remove the word `admissible'.

For a morphism $f:X\rightarrow Y$ in ${\mathsf{Tate}^{el}(\mathcal{C})}$, and
$L\hookrightarrow X$ a lattice, the notation $f(L)=0$ is shorthand for the
statement that the diagram
\[
\xymatrix{ L \ar[r] \ar[d] & X \ar[d] \\ 0 \ar[r] & Y }
\]
commutes. As a first step towards the proof of the lemma, say for
$p:X\twoheadrightarrow Y$ epic, we observe that for $Y\in{\mathsf{Ind}%
^{a}(\mathcal{C})}$ the statement is automatically true, since then we have
that $0\hookrightarrow Y$ is a lattice, and we certainly have $g(0)=0$.

The general case relies on the following lemma.

\begin{lemma}
\label{lemma:fact}Let $\mathcal{C}$ be an idempotent complete exact category.
Let $g:M\rightarrow N$ be a morphism of admissible Pro-objects $M$,
$N\in\mathsf{Pro}^{a}(\mathcal{C})$, which is sent to the zero morphism by the
exact functor ${\mathsf{Pro}^{a}(\mathcal{C})}\rightarrow{\mathsf{Pro}%
^{a}(\mathcal{C})}/{\mathcal{C}}$. Then there exists a commutative triangle
\[
\xymatrix{ & U \ar[rd] & \\ M \ar[rr] \ar@{->>}[ru]^g & & N, }
\]
where $U\in\mathcal{C}$, and $g$ is an admissible epic in ${\mathsf{Pro}%
^{a}(\mathcal{C})}$.
\end{lemma}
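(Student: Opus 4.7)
The plan is to exploit two standard facts: first, that a morphism in $\mathsf{Pro}^{a}(\mathcal{C})$ which becomes zero in the quotient $\mathsf{Pro}^{a}(\mathcal{C})/\mathcal{C}$ must actually factor through an object of $\mathcal{C}$; and second, that any morphism from an admissible Pro-object $M$ to a fixed object $V \in \mathcal{C}$ is represented by a morphism out of one of the layers of the Pro-system defining $M$.

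First I would invoke the construction of the exact quotient $\mathsf{Pro}^{a}(\mathcal{C})/\mathcal{C}$. By results established in \cite{TateObjectsExactCats}, $\mathcal{C}$ embeds into $\mathsf{Pro}^{a}(\mathcal{C})$ as a left $s$-filtering subcategory, so the Schlichting-type exact quotient exists and admits the standard characterization: a morphism is sent to zero in the quotient if and only if it factors through some object of $\mathcal{C}$. Applied to our $g$, this produces a factorization $g = \beta \circ \alpha$ with $\alpha \colon M \to V$, $\beta \colon V \to N$ and $V \in \mathcal{C}$.

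Next, I would write $M$ as a cofiltered limit $M = \underleftarrow{\lim}_{i \in I}\, M_{i}$ in $\mathsf{Pro}^{a}(\mathcal{C})$, with admissible-epic transition maps and with each $M_{i} \in \mathcal{C}$. The Pro-Hom formula
\[
\operatorname{Hom}_{\mathsf{Pro}^{a}(\mathcal{C})}(M, V) \;=\; \underrightarrow{\operatorname*{colim}}_{i \in I^{\mathrm{op}}}\, \operatorname{Hom}_{\mathcal{C}}(M_{i}, V)
\]
represents $\alpha$ as a composite $M \xrightarrow{\pi_{i}} M_{i} \xrightarrow{\alpha_{i}} V$ for some index $i \in I$, where $\pi_{i}$ is the canonical projection. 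I then set $U := M_{i}$. The projection $\pi_{i}$ is an admissible epic in $\mathsf{Pro}^{a}(\mathcal{C})$: indeed, its kernel is realized by the admissible sub-Pro-object $\underleftarrow{\lim}_{j \geq i}\, \ker(M_{j} \twoheadrightarrow M_{i})$, and the corresponding sequence is level-wise short exact, hence an admissible short exact sequence in the standard exact structure on $\mathsf{Pro}^{a}(\mathcal{C})$. Taking the diagonal edge of the triangle to be $\beta \circ \alpha_{i} \colon U \to N$, the triangle commutes by construction.

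The principal obstacle is the first step, namely upgrading the vanishing hypothesis modulo $\mathcal{C}$ to a genuine factorization through $\mathcal{C}$. This hinges on the left $s$-filtering property of the embedding $\mathcal{C} \hookrightarrow \mathsf{Pro}^{a}(\mathcal{C})$ proved in \cite{TateObjectsExactCats}, for which the idempotent completeness of $\mathcal{C}$ is used. Once this factorization is in hand, the remaining steps amount to unwinding the definition of an admissible Pro-object via the Pro-Hom formula.
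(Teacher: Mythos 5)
Your proof is correct, but it reaches the conclusion by a different (and slightly longer) route than the paper. The paper goes straight to B\"uhler's description of the quotient as a calculus of right fractions with respect to the class $\Sigma_{m}$ of admissible monics with cokernel in $\mathcal{C}$: the relation $g=0$ in $\mathsf{Pro}^{a}(\mathcal{C})/\mathcal{C}$ then literally means that there is an admissible monic $M'\hookrightarrow M$ with cokernel $Q\in\mathcal{C}$ on which $g$ vanishes, so $g$ factors through the admissible epic $M\twoheadrightarrow Q$ by the universal property of cokernels --- the admissible epic comes out of the fractions calculus for free. You instead first pass to a factorization $M\to V\to N$ with $V\in\mathcal{C}$ but with no control on the map $M\to V$, and then recover the admissible epic by the Pro-Hom formula $\operatorname{Hom}(M,V)=\underrightarrow{\operatorname*{colim}}_{i}\operatorname{Hom}(M_{i},V)$, i.e.\ by re-deriving the right filtering property of $\mathcal{C}\subset\mathsf{Pro}^{a}(\mathcal{C})$; that second step is valid (the projections $M\to M_{i}$ of an admissible Pro-object are indeed admissible epics) but becomes redundant once one phrases the first step sharply. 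Two cautions: your ``standard characterization'' (zero in the quotient iff factoring through $\mathcal{C}$) is precisely the content that the fractions calculus is needed to establish, so it should be derived from \cite[Prop.~2.19]{TateObjectsExactCats} rather than merely asserted --- and when derived that way it already gives the stronger statement with the admissible epic; and the embedding $\mathcal{C}\subset\mathsf{Pro}^{a}(\mathcal{C})$ is \emph{right} s-filtering in this paper's conventions (left s-filtering is the Ind-object case), so your attribution should be corrected accordingly.
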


\begin{proof}
We use that $\mathcal{C}\subset{\mathsf{Pro}^{a}(\mathcal{C})}$ is right
s-filtering \cite[Prop. 4.2. (2)]{TateObjectsExactCats}. By an observation of
B\"{u}hler this implies that the class $\Sigma_{m}$ of admissible
monomorphisms in ${\mathsf{Pro}^{a}(\mathcal{C})}$ with cokernel in
$\mathcal{C}$ satisfies a calculus of right fractions, see \cite[Prop.
2.19]{TateObjectsExactCats} for the broader context. Moreover, we also know
from B\"{u}hler that ${\mathsf{Pro}^{a}(\mathcal{C})}[\Sigma_{m}^{-1}%
]\cong{\mathsf{Pro}^{a}(\mathcal{C})}/{\mathcal{C}}$, \cite[Prop.
2.19]{TateObjectsExactCats}. Since $g:M\rightarrow N$ and $0:M\rightarrow N$
induce the same map in ${\mathsf{Pro}^{a}(\mathcal{C})}/{\mathcal{C}}$, we see
that there exists a commutative diagram
\[
\xymatrix{ & M \ar[ld] \ar[rd]^g & \\ M & M' \ar[l] \ar[u]^h \ar[r] \ar[d] & N \\ & M, \ar[lu] \ar[ru]_0 & }
\]
where $h\colon M^{\prime}\hookrightarrow M$ is an admissible monic with
cokernel $Q\in\mathcal{C}$. The commutativity of the diagram implies that the
horizontal arrow $M^{\prime}\rightarrow N$ is zero. Therefore, we obtain by
the universal property of cokernels a factorization
\[
\xymatrix{ & Q \ar[rd] & \\ M \ar[rr]^g \ar@{->>}[ru] & & N }
\]
as required to conclude the proof of the assertion.
\end{proof}

We are now ready to prove that admissible epimorphisms are submersive.

\begin{proof}
[Proof of Lemma \ref{Lemma_AdmMonicsAreImmersiveAdmEpicsSubmersive}]We shall
only treat the case of an admissible epic, and leave the necessary
modifications for the monic case to the reader. By Lemma
\ref{Lemma_LatticeInsideLattice} we have a commutative diagram
\[
\xymatrix{ L \ar[r] \ar[d] & X \ar[d]^f \\ M \ar[r] \ar[d] & Y \ar[d]^g \\ N \ar[r] & Z }
\]
where the horizontal arrows are inclusions of lattices. We also know that the
inclusion ${\mathsf{Ind}^{a}(\mathcal{C})}\hookrightarrow{\mathsf{Tate}%
^{el}(\mathcal{C})}$ is right s-filtering \cite[Cor. 2.3]{bgwTensor}, and the
quotient category is equivalent to ${\mathsf{Pro}^{a}(\mathcal{C}%
)}/{\mathcal{C}}$ \cite[Prop. 5.34]{TateObjectsExactCats}. Inclusions of
lattices are sent to isomorphisms in ${\mathsf{Tate}^{el}(\mathcal{C}%
)}/{\mathsf{Ind}^{a}(\mathcal{C})}$. Hence, we obtain that the composition
$f\circ p$ is sent to $0$ in ${\mathsf{Tate}^{el}(\mathcal{C})}/{\mathsf{Ind}%
^{a}(\mathcal{C})}$. However, exact functors send admissible epimorphisms to
admissible epimorphisms; and every admissible epimorphism is an epimorphism in
the categorical sense. The relation $f\circ p=0\circ p$ in ${\mathsf{Tate}%
^{el}(\mathcal{C})}/{\mathsf{Ind}^{a}(\mathcal{C})}$ implies now that $f=0$ in
${\mathsf{Tate}^{el}(\mathcal{C})}/{\mathsf{Ind}^{a}(\mathcal{C})}$.

We have shown above that the morphism $M\rightarrow N$ in ${\mathsf{Pro}%
^{a}(\mathcal{C})}$ is sent to $0$ in ${\mathsf{Pro}^{a}(\mathcal{C}%
)}/{\mathcal{C}}$. By Lemma \ref{lemma:fact}, this yields a factorization
$M\twoheadrightarrow Q\rightarrow N$ with $Q\in\mathcal{C}$. Let $L^{\prime} $
be the kernel of the admissible epimorphism $M\twoheadrightarrow Q$. By
construction $L^{\prime}\subset Y$ is a lattice, and $f(L^{\prime})=0$. This
concludes the proof.
\end{proof}

\begin{example}
A submersive morphism does not need to be an epimorphism. For example, for
$\mathcal{C}:=\mathsf{Vect}_{f}$ the zero morphism $k[[t]]\overset
{0}{\rightarrow}k[t]$ is a submersion. This makes sense topologically since we
would think of $k[t]$ as having zero-dimensional tangent spaces. It is however
also a finite immersion, which appears rather strange from the point of view
of topological intuition.
\end{example}

\begin{example}
\label{example_NonAdmissibleMonoNotImmersive}Let us construct a
(non-admissible!) monomorphism which is not immersive. Let $\mathcal{C}%
:=\mathsf{Vect}_{f}$ be the category of finite-dimensional $k$-vector spaces.
We have a morphism%
\[
\varphi:k[t]\longrightarrow k((t))\text{,}%
\]
the obvious inclusion. This morphism is monic, but it is not an admissible
monic since otherwise a Pro-object would have an Ind-object as a sub-object.
We claim that this morphism is not immersive. Suppose it is. Take
$Z\rightarrow Y$ to be the identity $\operatorname*{id}\nolimits_{k[t]}$ and
$L:=k[[t]]$, which is clearly a lattice in $k((t))$. The immersion property
now implies that $k[t]$ must be a lattice in itself. In particular, it must be
a Pro-object, which is absurd.
\end{example}

\begin{lemma}
\label{Lemma_OpenMorProps}Submersive morphisms have the following properties:

\begin{enumerate}
\item Isomorphisms are submersive.

\item The composition of submersive morphisms is submersive.
\end{enumerate}
\end{lemma}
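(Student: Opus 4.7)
The plan is to unwind the definition of submersive in both cases and verify the required existence of lattices by direct manipulation; no deep input is needed.

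For (1), suppose $p \colon X \to Y$ is an isomorphism, and we are given $f \colon Y \to Z$ together with a lattice $L \hookrightarrow X$ whose composite $L \hookrightarrow X \xrightarrow{p} Y \xrightarrow{f} Z$ vanishes. The key observation is that an isomorphism in $\mathsf{Tate}^{el}(\mathcal{C})$ carries lattices to lattices: the admissible monic $L \hookrightarrow X$ is mapped to the admissible monic $p \circ (L \hookrightarrow X) \colon L \hookrightarrow Y$, whose cokernel is isomorphic to $X/L$ and hence still lies in $\mathsf{Ind}^{a}(\mathcal{C})$, while $L$ itself lies in $\mathsf{Pro}^{a}(\mathcal{C})$. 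Taking $L'$ to be the image of $L$ under $p$ gives $f|_{L'} = 0$ by construction, so $p$ is submersive.

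For (2), suppose $p \colon X \to Y$ and $q \colon Y \to W$ are both submersive, and take $f \colon W \to Z$ and a lattice $L \hookrightarrow X$ such that the composite $L \hookrightarrow X \xrightarrow{q \circ p} W \xrightarrow{f} Z$ vanishes. The idea is a two-step reduction. First, apply submersivity of $p$ to the morphism $f \circ q \colon Y \to Z$ and the lattice $L \hookrightarrow X$; the hypothesis $f \circ q \circ p|_{L} = 0$ produces a lattice $L' \hookrightarrow Y$ with $(f \circ q)|_{L'} = 0$. Second, apply submersivity of $q$ to the morphism $f \colon W \to Z$ and the lattice $L' \hookrightarrow Y$; since $f \circ q|_{L'} = 0$, we obtain a lattice $L'' \hookrightarrow W$ with $f|_{L''} = 0$, which is precisely the submersivity of $q \circ p$.

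No step presents a genuine obstacle; the only subtlety is bookkeeping the direction of the implication in the definition, namely that submersivity is a statement about propagating a vanishing-on-a-lattice hypothesis along $p$ from source to target. Because this propagation composes associatively, the two-step chaining in (2) works without any compatibility condition between the intermediate lattices $L$ and $L'$, and hence no lattice intersection or common-refinement argument (such as Theorem 6.7 of \cite{TateObjectsExactCats}) is required.
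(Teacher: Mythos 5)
Your argument is correct and coincides with the paper's own proof: part (1) is handled by transporting the lattice along the isomorphism, and part (2) by the same two-step chaining, first applying submersivity of $p$ to $f\circ q$ and then submersivity of $q$ to $f$. Your closing remark is also accurate: no common sub- or over-lattice argument is needed here.
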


\begin{proof}
(1) is trivial, just transport the lattice along the isomorphism. (2) Let
$p,q$ be composable submersive morphisms. Let $f$ be an arbitrary morphism and
$L$ a lattice that gets sent to zero by $(f\circ q)\circ p$, i.e. the lower
diagonal arrow in%
\[%
% 
%\node x(500,0)[X]
%\node y(500,500)[Y]
%\node z(1400,1000)[W]
%\node w(500,1000)[Z]
%\node ll(0,500)[L^{\prime}]
%\node lll(0,1000)[L^{\prime\prime}]
%\arrow/^{ (}->/[l`x;]
%\arrow[x`y;p]
%\arrow[y`w;q]
%\arrow[w`z;f]
%\arrow/-->/[l`z;0]
%\arrow/-->/[ll`z;0]
%\arrow/^{ (}.>/[ll`y;]
%\arrow/^{ (}.>/[lll`w;]
%\efig}}%
% 
\bfig\node l(0,0)[L]
\node x(500,0)[X]
\node y(500,500)[Y]
\node z(1400,1000)[W]
\node w(500,1000)[Z]
\node ll(0,500)[L^{\prime}]
\node lll(0,1000)[L^{\prime\prime}]
\arrow/^{ (}->/[l`x;]
\arrow[x`y;p]
\arrow[y`w;q]
\arrow[w`z;f]
\arrow/-->/[l`z;0]
\arrow/-->/[ll`z;0]
\arrow/^{ (}.>/[ll`y;]
\arrow/^{ (}.>/[lll`w;]
\efig
\]
The submersiveness of $p$ (for the morphism $f\circ q$) guarantees the
existence of a lattice $L^{\prime}$ so that $f\circ q$ sends it to zero. Now
the submersiveness of $q$ yields the existence of a lattice $L^{\prime\prime}$
which is being sent to zero by $f$. But this is all we had to show.
\end{proof}

\begin{lemma}
\label{Lemma_CoOpenMorProps}Immersive morphisms have the following properties:

\begin{enumerate}
\item Isomorphisms are immersive.

\item The composition of immersive morphisms is immersive.
\end{enumerate}
\end{lemma}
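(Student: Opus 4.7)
My plan is to mirror, step for step, the argument that proved Lemma \ref{Lemma_OpenMorProps}: the immersive condition is formally dual to the submersive one, obtained by reversing arrows and swapping the roles of lattice inclusions and lattice quotients, so either a direct diagram chase or a passage to $\mathcal{C}^{\mathrm{op}}$ would do. A direct chase is short and parallels the submersive proof almost verbatim, so that is what I would write.

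For part (1), let $p\colon Y \xrightarrow{\sim} X$ be an isomorphism. Given a test morphism $f\colon Z \to Y$ and a lattice $L \hookrightarrow X$ for which the composite $Z \xrightarrow{f} Y \xrightarrow{p} X \twoheadrightarrow X/L$ is zero, I would transport the lattice along $p$ by setting $L' := p^{-1}(L) \hookrightarrow Y$. Since $p$ restricts to an isomorphism $L' \xrightarrow{\sim} L$ and descends to an isomorphism $Y/L' \xrightarrow{\sim} X/L$, we have $L' \in \mathsf{Pro}^{a}(\mathcal{C})$ and $Y/L' \in \mathsf{Ind}^{a}(\mathcal{C})$, so $L'$ is indeed a lattice in $Y$. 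Under this induced iso, the map $Z \to Y \twoheadrightarrow Y/L'$ coincides with $Z \to X/L$, which vanishes by hypothesis.

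For part (2), let $q\colon W \to Y$ and $p\colon Y \to X$ be composable immersive morphisms, and consider $f\colon Z \to W$ together with a lattice $L \hookrightarrow X$ such that $Z \xrightarrow{f} W \xrightarrow{q} Y \xrightarrow{p} X \twoheadrightarrow X/L$ vanishes. I would first apply immersiveness of $p$ to the test morphism $q \circ f\colon Z \to Y$; this produces a lattice $L' \hookrightarrow Y$ with $Z \to Y \twoheadrightarrow Y/L'$ zero. Next, I would apply immersiveness of $q$ to the test morphism $f\colon Z \to W$ and the lattice $L' \hookrightarrow Y$ just produced: since $Z \to W \to Y \twoheadrightarrow Y/L'$ vanishes, this yields a lattice $L'' \hookrightarrow W$ with $Z \to W \twoheadrightarrow W/L''$ zero, which is the required conclusion. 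There is no real obstacle in this argument; the one thing to get right is that immersiveness involves a morphism \emph{into} the object carrying the lattice, so the immersiveness of the outer map $p$ has to be invoked before that of the inner map $q$ — the reverse order to the submersive case.
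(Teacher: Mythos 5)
Your proof is correct and is exactly what the paper intends: the paper's own proof of Lemma \ref{Lemma_CoOpenMorProps} consists of the single remark that one dualizes the proof of Lemma \ref{Lemma_OpenMorProps} by reversing all arrows, and your argument is precisely that dualization carried out explicitly. Your closing observation about the order of application (immersiveness of the outer morphism $p$ first, since the given lattice lives in the final target) is the one point where the dualization requires care, and you have it right.
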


\begin{proof}
The proof is essentially dual to the proof of Lemma \ref{Lemma_OpenMorProps},
just reverse the direction of all arrows.
\end{proof}

\begin{lemma}
[{\cite[Prop. 5.23]{TateObjectsExactCats}}]%
\label{Lemma_CIdemComplSplitExactAleph0}If $\mathcal{C}$ is idempotent
complete and split exact, $\mathsf{Tate}_{\aleph_{0}}^{el}\mathcal{C}$ is
split exact.
\end{lemma}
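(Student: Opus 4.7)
The goal is to show that every admissible short exact sequence
\[
E\colon\quad 0\to X'\hookrightarrow X\twoheadrightarrow X''\to 0
\]
in $\mathsf{Tate}_{\aleph_{0}}^{el}(\mathcal{C})$ admits a splitting. I would proceed in two stages: first establish the statement for the ``easy'' sub-categories $\mathsf{Pro}^{a}_{\aleph_{0}}(\mathcal{C})$ and $\mathsf{Ind}^{a}_{\aleph_{0}}(\mathcal{C})$, then use compatible lattices to bootstrap to the Tate case.

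\textbf{Stage 1 (split exactness of the countable Pro- and Ind-completions).} I would show that $\mathsf{Pro}^{a}_{\aleph_{0}}(\mathcal{C})$ is split exact; the Ind-case is dual. A countable admissible Pro-object can be presented as a tower $\cdots\twoheadrightarrow P_{2}\twoheadrightarrow P_{1}\twoheadrightarrow P_{0}$ of admissible epics in $\mathcal{C}$ with kernels in $\mathcal{C}$. Since $\mathcal{C}$ is split exact and idempotent complete, after passing to a cofinal sub-tower one may assume each transition is a split surjection, so $P_{n+1}\cong P_{n}\oplus K_{n}$. A short exact sequence of such Pro-objects may be refined to a levelwise short exact sequence, and levelwise splittings in $\mathcal{C}$ can be chosen coherently by induction: at each step one extends the splitting using the splitting of the new layer, and the tower structure (with its Mittag-Leffler property coming from split surjections) eliminates the usual $\lim^{1}$ obstruction. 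The countability assumption is what makes this inductive argument close up.

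\textbf{Stage 2 (from Pro/Ind to Tate).} Given the sequence $E$, I would pick a lattice $L\hookrightarrow X$ and produce compatible lattices $L'\hookrightarrow X'$, $L''\hookrightarrow X''$ fitting into a $3\times 3$ commutative diagram of admissible short exact sequences whose top row lies in $\mathsf{Pro}^{a}_{\aleph_{0}}(\mathcal{C})$, bottom row in $\mathsf{Ind}^{a}_{\aleph_{0}}(\mathcal{C})$, and middle column is $L\hookrightarrow X\twoheadrightarrow X/L$. The existence of such a compatible lattice triple uses Proposition \ref{Prop_LatticeLeftFiltAndRightFilt}, Lemma \ref{Lemma_LatticeInsideLattice}, and the directedness of Sato Grassmannians (Theorem 6.7 of \cite{TateObjectsExactCats}): starting from any lattice in $X''$, lift via the admissible epic to a lattice in $X$, then intersect with $X'$ to obtain $L'$.

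\textbf{Stage 3 (assembly).} Stage 1 splits the top Pro-row and the bottom Ind-row of the $3\times 3$ diagram. Applied to the fundamental sequence $L\hookrightarrow X\twoheadrightarrow X/L$, Stage 1 also yields $X\cong L\oplus X/L$, and similarly for $X'$ and $X''$; that is, every elementary Tate object in the countable split exact setting is sliced. Under these direct sum decompositions, the sequence $E$ becomes the direct sum of the Pro-row (a split sequence of lattices) and the Ind-row (a split sequence of lattice quotients), and a splitting of $E$ is obtained by taking the direct sum of the two splittings produced in Stage 1.

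\textbf{Main obstacle.} The delicate point is Stage 1 in the Pro-case: the coherent choice of splittings along an infinite tower. Without the split exact hypothesis on $\mathcal{C}$ this is genuinely obstructed, and even with it, one must be careful that cofinal refinement preserves the short exact sequence of towers. Once this is in hand, Stage 3 is essentially formal. The countability of diagrams is used essentially in Stage 1 and cannot be dropped, which is consistent with the failure of the analogous statement at larger cardinals announced in the introduction.
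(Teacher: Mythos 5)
The paper does not actually prove this lemma: it is imported verbatim from \cite{TateObjectsExactCats} (Prop.\ 5.23 there), so there is no internal argument to compare yours against. Judged on its own terms, your architecture is sensible, and the inductive correction of sections along a countable tower in Stage 1 (pick any levelwise splitting, measure its failure to commute with the transition maps as a map into the sub-tower, and absorb that defect using a splitting of the sub-tower's transition map) is exactly the right mechanism; countability is indeed what makes the induction close up.

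The gap is in Stage 3. The sequence $L\hookrightarrow X\twoheadrightarrow X/L$ has a Pro-subobject and an Ind-quotient: it is a short exact sequence in $\mathsf{Tate}^{el}_{\aleph_{0}}(\mathcal{C})$, not in $\mathsf{Pro}^{a}_{\aleph_{0}}(\mathcal{C})$ or $\mathsf{Ind}^{a}_{\aleph_{0}}(\mathcal{C})$, so Stage 1 says nothing about it. What you are invoking is precisely the ``slicing'' of elementary Tate objects, which in this paper (Prop.\ \ref{Prop_SlicedTateObjIdealsSumUp}(2)) is \emph{deduced from} the split exactness of $\mathsf{Tate}^{el}_{\aleph_{0}}(\mathcal{C})$, i.e.\ from the very lemma you are proving; as written the step is a non sequitur, and relative to the paper's logical order it is circular. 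It can be repaired by a separate argument that any admissible epic onto an Ind-object with Pro kernel splits: write $I=\operatorname*{colim}_{n}I_{n}$ with $I_{n}\in\mathcal{C}$, pull back to get Pro-objects $Y_{n}:=Y\times_{I}I_{n}$, split each $Y_{n}\twoheadrightarrow I_{n}$ using Stage 1, and correct the sections inductively, this time using that the admissible monics $I_{n}\hookrightarrow I_{n+1}$ split in $\mathcal{C}$ so that the defect (a map $I_{n}\rightarrow P$) extends to $I_{n+1}$. Two smaller imprecisions: the ``lift'' of a lattice $L''\subseteq X''$ along $X\twoheadrightarrow X''$ is the pullback $X\times_{X''}L''$, which is an extension of $L''$ by $X'$ and hence in general a Tate object rather than a lattice, so one must take a lattice inside it and arrange that it maps onto a lattice of $X''$ by an admissible epic; and the final splitting is not the direct sum of the two row-splittings with respect to arbitrarily chosen column-decompositions, but with respect to the decomposition of $X''$ induced by the section you actually construct. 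Both of these are fixable; the missing splitting of the mixed Pro/Ind column is the point that must be supplied.
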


\begin{lemma}
\label{Lemma_LiftDiscreteOrBoundedness}Suppose we are given one of the squares%
\[%
% 
%`>/<600,600>[X_1`X_2`Y_1`Y_2;\mathrm{discrete}`\mathrm{submersive}``f]
%\efig}}%
% 
\bfig\square(0,0)/>`>`<-{) }`>/<600,600>[X_1`X_2`Y_1`Y_2;\mathrm
{discrete}`\mathrm{submersive}``f]
\efig
\qquad\text{resp.}\qquad%
% 
%(0,0)/>`->>`<-`>/<600,600>[X_1`X_2`Y_1`Y_2;\mathrm{bounded}``\mathrm
%{immersive}`g]
%\efig}}%
% 
\bfig\square(0,0)/>`->>`<-`>/<600,600>[X_1`X_2`Y_1`Y_2;\mathrm{bounded}%
``\mathrm{immersive}`g]
\efig
\]
Then $f$ is discrete (resp. $g$ bounded).
\end{lemma}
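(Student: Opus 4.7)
The plan is to prove both parts by a direct diagram chase: in each case pull a lattice out of the ``top'' edge, transport it around the square by commutativity, cancel the vertical admissible monic (respectively admissible epic) via its categorical mono/epi property, and then invoke the definition of submersive (respectively immersive) on the remaining side.

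For the first square, I would start by picking a lattice $L \hookrightarrow X_1$ killed by the discrete top arrow $X_1 \to X_2$. Commutativity forces the longer composition $L \hookrightarrow X_1 \to Y_1 \xrightarrow{f} Y_2 \hookrightarrow X_2$ to vanish as well. Since the right-hand arrow $Y_2 \hookrightarrow X_2$ is an admissible monic, and in particular a monomorphism in the categorical sense, I can cancel it to conclude that $L \to X_1 \to Y_1 \xrightarrow{f} Y_2$ is zero. This is exactly the hypothesis required by the definition of submersive applied to the left-hand arrow $X_1 \to Y_1$ with test morphism $f$; it produces a lattice $L' \hookrightarrow Y_1$ with $L' \to Y_1 \xrightarrow{f} Y_2 = 0$, which is precisely the discreteness of $f$.

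The second square is handled by the dual argument. I would first express boundedness of $X_1 \to X_2$ as the vanishing of $X_1 \to X_2 \twoheadrightarrow X_2/L$ for some lattice $L \hookrightarrow X_2$. Postcomposing the commutative square with this quotient and cancelling the left-hand admissible epic $X_1 \twoheadrightarrow Y_1$ (which is epic in the categorical sense) shows that $Y_1 \xrightarrow{g} Y_2 \to X_2 \twoheadrightarrow X_2/L$ vanishes. A single application of the immersive property of the right-hand arrow $Y_2 \to X_2$, with test morphism $g$, then yields a lattice $L' \hookrightarrow Y_2$ through which $g$ factors, and this is the definition of boundedness.

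I do not expect a genuine obstacle: each direction decomposes into exactly one use of commutativity, one cancellation via a categorical epi/mono, and one invocation of the appropriate definition. The only matter requiring attention is the bookkeeping of which arrow in each square plays the role of the submersive/immersive morphism ``$p$'' versus the role of the ``test'' morphism ``$f$'' in Definitions of submersive and immersive.
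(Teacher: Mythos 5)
Your argument is correct and coincides with the paper's own proof: the paper proves the first square exactly as you do (extract the lattice killed by the discrete arrow, use commutativity, cancel the categorical mono, then invoke submersivity) and dismisses the second square as dual, which is precisely the dual chase you spell out. Note, as the paper remarks after the lemma, that you only ever use the categorical mono/epi property of the vertical arrows, so admissibility is not actually needed there.
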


For this statement to be true the monic (resp. epic) would need not be admissible.

\begin{proof}
If $X_{1}\rightarrow X_{2}$ is discrete, there is a lattice $L$ so that the
upper row in%
\[%
% 
%\square(600,0)/>`>`<-{) }`>/<600,600>[X_1`X_2`Y_1`Y_2;\mathrm{discrete}%
%`\mathrm{submersive}``g]
%\efig}}%
% 
\bfig\morphism(0,600)/^{ (}->/<600,0>[L`X_1;]
\square(600,0)/>`>`<-{) }`>/<600,600>[X_1`X_2`Y_1`Y_2;\mathrm{discrete}%
`\mathrm{submersive}``g]
\efig
\]
is the zero morphism. Since the right-hand side upward arrow is a
monomorphism, it follows that $L\hookrightarrow X_{1}\rightarrow
Y_{1}\rightarrow Y_{2}$ must already be zero. Now being submersive implies
that there is a lattice $L^{\prime}$ in $Y_{1}$ so that $L^{\prime
}\hookrightarrow Y_{1}\overset{g}{\rightarrow}Y_{2}$ is zero. Hence, $g$ is
discrete. The argument for the other square is dual.
\end{proof}

We collect a few more useful properties.

\begin{lemma}
\label{Lemma_InterplayOpenDiscrete}Suppose $\mathcal{C}$ is idempotent complete.

\begin{enumerate}
\item If $p:X\rightarrow Y$ is submersive, either no lattice $L\hookrightarrow
X$ is sent to zero, or $Y$ is an Ind-object.

\item Submersive discrete morphisms are precisely the morphisms $X\rightarrow
Y$ with $Y$ an Ind-object.

\item If $p:X\rightarrow Y$ is immersive, either it does not factor through
any lattice in $Y$, or $X$ is a Pro-object.

\item Immersive bounded morphisms are precisely the morphisms $X\rightarrow Y
$ with $X$ a Pro-object.
\end{enumerate}
\end{lemma}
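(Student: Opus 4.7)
The plan is to establish (1) first by applying the defining property of a submersive morphism to a cleverly chosen test morphism; then deduce the forward direction of (2) immediately from (1) and supply the backward direction from Proposition \ref{Prop_LatticeLeftFiltAndRightFilt}(2); and finally argue (3) and (4) as exact duals. I expect no fundamentally new ideas beyond those used for (1)--(2) to be required, although the bookkeeping for the immersive case takes some care.

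For (1), I would apply submersiveness of $p$ with $Z = Y$ and $f = \mathrm{id}_Y$. The hypothesis that some lattice $L \hookrightarrow X$ satisfies $p|_L = 0$ is then exactly the required vanishing $f \circ p|_L = 0$, so submersiveness produces a lattice $L' \hookrightarrow Y$ whose inclusion composed with $\mathrm{id}_Y$ is zero, i.e.\ the admissible monic $L' \hookrightarrow Y$ is itself the zero morphism. Since admissible monics are monomorphisms, this forces $L' = 0$. But $0$ being a lattice means exactly that $Y = Y/L' \in \mathsf{Ind}^a(\mathcal{C})$.

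For (2), the forward direction is immediate: discreteness says some lattice is sent to zero, so (1) applies. For the converse, suppose $Y \in \mathsf{Ind}^a(\mathcal{C})$. Discreteness of any $p \colon X \to Y$ follows from Proposition \ref{Prop_LatticeLeftFiltAndRightFilt}(2), which factors $p$ as $X \twoheadrightarrow X/L \to Y$ for some lattice $L$, so that the composition $L \hookrightarrow X \xrightarrow{p} Y$ vanishes. Submersiveness is automatic: since $Y \in \mathsf{Ind}^a(\mathcal{C})$ one has that $0 \hookrightarrow Y$ is a lattice (trivially a Pro-object, with quotient $Y$ an Ind-object), so one can always choose $L' = 0$ in the submersive condition.

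For (3) and (4) I would run the exact dual of the above arguments, interchanging the r\^oles of Ind- and Pro-objects, submersive and immersive morphisms, and ``sending a lattice to zero'' versus ``factoring through a lattice''. Concretely, for (3) the immersive property applied to a suitable identity-type test morphism produces a lattice $L' \subset X$ whose quotient map $X \twoheadrightarrow X/L'$ must vanish in the appropriate sense, forcing $X/L' = 0$ and hence $X \in \mathsf{Pro}^a(\mathcal{C})$. For the converse direction in (4), Proposition \ref{Prop_LatticeLeftFiltAndRightFilt}(1) supplies boundedness when $X$ is a Pro-object, and immersiveness becomes vacuous because $X$ itself is then a maximal lattice in $X$ with trivial quotient. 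The main obstacle I anticipate is aligning the orientation of the immersive diagram with the dualized argument so that the extracted lattice is forced to be the maximal one $X$ rather than the minimal one $0$; once this alignment is set up correctly, the proof mirrors (1)--(2) step by step.
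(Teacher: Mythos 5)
Your proposal is correct and follows essentially the same route as the paper: part (1) by testing submersiveness against $f=\operatorname{id}_Y$ to force the zero object to be a lattice in $Y$, part (2) by combining (1) with the factorization from Proposition \ref{Prop_LatticeLeftFiltAndRightFilt}, and parts (3)--(4) by the evident dualization. The only cosmetic difference is that you spell out why the vanishing admissible monic forces $L'=0$ and make the dual argument for (3)--(4) slightly more explicit than the paper does.
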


\begin{proof}
(1) If a lattice $L$ exists that $p$ sends to zero, being submersive gives a
lattice $L^{\prime}$ in%
\[%
% 
%\node x(500,0)[X]
%\node y(500,500)[Y]
%\node z(1400,500)[Y]
%\node ll(0,500)[L^{\prime}]
%\arrow/^{ (}->/[l`x;]
%\arrow[x`y;p]
%\arrow[y`z;1]
%\arrow/-->/[l`z;0]
%\arrow/^{ (}.>/[ll`y;]
%\efig}}%
% 
\bfig\node l(0,0)[L]
\node x(500,0)[X]
\node y(500,500)[Y]
\node z(1400,500)[Y]
\node ll(0,500)[L^{\prime}]
\arrow/^{ (}->/[l`x;]
\arrow[x`y;p]
\arrow[y`z;1]
\arrow/-->/[l`z;0]
\arrow/^{ (}.>/[ll`y;]
\efig
\]
so that $L^{\prime}\hookrightarrow Y$ is the zero map. So the zero object is a
lattice, which forces $Y$ to be an Ind-object. (2)\ if $p:X\rightarrow Y $ is
discrete, a lattice is sent to zero, so just use (1). Conversely, if $Y$ is an
Ind-object, by Prop. \ref{Prop_LatticeLeftFiltAndRightFilt} the morphism $p$
factors through a lattice quotient $p:X\twoheadrightarrow X/L\rightarrow Y$.
In particular $p$ sends $L$ to zero and so $p$ is discrete. As $Y$ is an
Ind-object, the zero object is a lattice, so $p$ is clearly submersive. (3)
and (4) are dual.
\end{proof}

Finally, we can show that the boundedness of a morphism is preserved under
passing to sub-objects or quotients, and analogously for discreteness and finiteness.

\begin{proposition}
\label{Prop_IfMorphismFactorsPreservesBasicProperties}Suppose $\mathcal{C}$ is
idempotent complete. Let $\varphi:X\rightarrow X$ be a bounded (resp.
discrete, finite) morphism and $Y\hookrightarrow X$ an admissible monic such
that $\varphi\mid_{Y}$ factors over $Y$, i.e.%
\begin{equation}%
% 
%\node xp(500,0)[X]
%\node n(0,500)[Y]
%\node np(500,500)[Y]
%\arrow/^{ (}->/[n`x;]
%\arrow/>/[x`xp;\varphi]
%\arrow/^{ (}->/[np`xp;]
%\arrow/>/[n`np;\varphi]
%\efig}}%
% 
\bfig\node x(0,0)[X]
\node xp(500,0)[X]
\node n(0,500)[Y]
\node np(500,500)[Y]
\arrow/^{ (}->/[n`x;]
\arrow/>/[x`xp;\varphi]
\arrow/^{ (}->/[np`xp;]
\arrow/>/[n`np;\varphi]
\efig
\label{laa1}%
\end{equation}
Then

\begin{enumerate}
\item the restriction $\varphi\mid_{Y}:Y\rightarrow Y$ is also bounded (resp.
discrete, finite), and

\item the quotient map $\overline{\varphi}:X/Y\rightarrow X/Y$ is also bounded
(resp. discrete, finite).
\end{enumerate}

If $\varphi$ is discrete and $L^{\prime}\hookrightarrow L\hookrightarrow X$
are lattices so that $\varphi$ factors as%
\[
\varphi:L/L^{\prime}\longrightarrow L/L^{\prime}\text{,}%
\]
then there exist lattices $L_{1}^{\prime}\hookrightarrow L_{1}\hookrightarrow
Y$ and $L_{2}^{\prime}\hookrightarrow L_{2}\hookrightarrow X/Y$ so that
$\varphi\mid_{Y}$ and $\overline{\varphi}$ factor as%
\begin{align*}
\varphi\mid_{Y}:L_{1}/L_{1}^{\prime}  & \longrightarrow L_{1}/L_{1}^{\prime}\\
\overline{\varphi}:L_{2}/L_{2}^{\prime}  & \longrightarrow L_{2}/L_{2}%
^{\prime}%
\end{align*}
and%
\[
L_{1}/L_{1}^{\prime}\hookrightarrow L/L^{\prime}\twoheadrightarrow L_{2}%
/L_{2}^{\prime}%
\]
is short exact.
\end{proposition}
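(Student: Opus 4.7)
The plan is to handle the bounded and discrete cases of parts (1) and (2) separately; the finite case will follow automatically from $I^0 = I^+ \cap I^-$. The two main tools are Lemma~\ref{Lemma_LatticeInsideLattice}, which lets one push forward or pull back lattices along an arbitrary morphism, and Lemma~\ref{Lemma_AdmMonicsAreImmersiveAdmEpicsSubmersive}, by which $Y \hookrightarrow X$ is immersive and $p := X \twoheadrightarrow X/Y$ is submersive.

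For part (1) in the bounded case, write $\varphi = (X \to L' \hookrightarrow X)$ with $L' \subseteq X$ a lattice. Combined with the commuting square~(\ref{laa1}), this yields $(X \twoheadrightarrow X/L') \circ (Y \hookrightarrow X) \circ \varphi|_Y = 0$, and the immersiveness of $Y \hookrightarrow X$ then produces a lattice $\widetilde L' \subseteq Y$ with $\varphi|_Y$ vanishing modulo $\widetilde L'$; the universal property of kernels factors $\varphi|_Y$ through $\widetilde L'$. In the discrete case, fix a lattice $L \subseteq X$ with $\varphi|_L = 0$, and apply Lemma~\ref{Lemma_LatticeInsideLattice}(2) to $Y \hookrightarrow X$ and $L$, obtaining a lattice $L_Y \subseteq Y$ whose inclusion into $X$ factors through $L$. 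Then $\varphi \circ (L_Y \hookrightarrow L \hookrightarrow X) = 0$, and the monicity of $Y \hookrightarrow X$ combined with~(\ref{laa1}) forces $\varphi|_Y$ to kill $L_Y$.

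For part (2) the arguments are dual. In the bounded case, Lemma~\ref{Lemma_LatticeInsideLattice}(1) applied to $p$ and the bounding lattice $L'$ pushes $L'$ forward to a lattice $\widetilde L' \subseteq X/Y$ through which $L' \hookrightarrow X \twoheadrightarrow X/Y$ factors. The resulting morphism $h \colon X \to \widetilde L'$ satisfies $(\widetilde L' \hookrightarrow X/Y) \circ h = \overline\varphi \circ p$ and, by monicity of $\widetilde L' \hookrightarrow X/Y$, kills $Y$; hence $h$ descends to $\bar h \colon X/Y \to \widetilde L'$ with $\overline\varphi = (\widetilde L' \hookrightarrow X/Y) \circ \bar h$. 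In the discrete case, the identity $\overline\varphi \circ p = p \circ \varphi$ shows that the composite $L \hookrightarrow X \to X/Y \to X/Y$ vanishes, so submersiveness of $p$ produces a lattice $\widetilde L \subseteq X/Y$ killed by $\overline\varphi$. The finite case is obtained by running the bounded and discrete arguments in parallel; if explicit common lattices are desired, directedness of the Sato Grassmannian gives a common sub- or over-lattice.

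For the final assertion, set $L_1 := L \times_X Y$ and $L_1' := L' \times_X Y$ as pullbacks inside $Y$, and let $L_2 := L/L_1$, $L_2' := L'/L_1'$ be the images of $L$, $L'$ in $X/Y$. Using idempotent completeness of $\mathcal{C}$ and the foundational results of \cite{TateObjectsExactCats}, these pullbacks and images are lattices in $Y$ and $X/Y$ respectively. The hypothesis that $\varphi$ restricts to an endomorphism of $L/L'$ (that is, $\varphi(L) \subseteq L$ and $\varphi(L') \subseteq L'$) together with~(\ref{laa1}) and the naturality of the constructions then shows that $\varphi|_Y$ descends to $L_1/L_1' \to L_1/L_1'$, and similarly $\overline\varphi$ descends to $L_2/L_2' \to L_2/L_2'$. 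A $3 \times 3$ diagram whose first two rows are $L_1' \hookrightarrow L' \twoheadrightarrow L_2'$ and $L_1 \hookrightarrow L \twoheadrightarrow L_2$ has short-exact rows and columns in the ambient exact category, and its bottom row is precisely the claimed short exact sequence $L_1/L_1' \hookrightarrow L/L' \twoheadrightarrow L_2/L_2'$. The main technical obstacle lies in verifying that the pullbacks and images are genuine lattices and that the $3 \times 3$ diagram closes up short-exactly; both are standard exact-category manipulations, but each relies essentially on idempotent completeness of $\mathcal{C}$.
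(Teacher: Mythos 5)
Your parts (1) and (2) are correct and follow the paper's proof essentially step for step: boundedness of $\varphi\mid_{Y}$ via immersiveness of the admissible monic, discreteness of $\varphi\mid_{Y}$ via Lemma \ref{Lemma_LatticeInsideLattice}(2) and monicity, boundedness of $\overline{\varphi}$ via Lemma \ref{Lemma_LatticeInsideLattice}(1) and the universal property of the cokernel, and discreteness of $\overline{\varphi}$ via submersiveness of the admissible epic. The only cosmetic difference is that for the bounded quotient case you descend the auxiliary map $h$ rather than arguing with the quotient $(X/Y)\twoheadrightarrow (X/Y)/L'$; the two are equivalent.

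The final assertion is where your argument has a genuine gap. You set $L_{1}:=L\times_{X}Y$, $L_{1}':=L'\times_{X}Y$ and take $L_{2},L_{2}'$ to be the images of $L,L'$ in $X/Y$, deferring existence to ``standard exact-category manipulations'' that ``rely essentially on idempotent completeness.'' But a general exact category, even an idempotent complete one, does not admit pullbacks of admissible monics along admissible monics, nor images of arbitrary morphisms: the intersection of two admissible subobjects need not be an admissible subobject, and the composite $L\hookrightarrow X\twoheadrightarrow X/Y$ need not have an image at all. Idempotent completeness of $\mathcal{C}$ does not supply these constructions, and this is precisely the difficulty the whole section is engineered to avoid: the filtering results (Proposition \ref{Prop_LatticeLeftFiltAndRightFilt}, Lemma \ref{Lemma_LatticeInsideLattice}) only ever give one-sided approximations --- a lattice of $Y$ mapping \emph{into} $L$, a lattice of $X/Y$ \emph{containing} the image of $L$ --- never the exact intersection or image. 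The paper's proof accordingly takes $L_{1},L_{1}'$ and $L_{2},L_{2}'$ to be the lattices already produced by the part-(1) and part-(2) arguments applied to $L$ and $L'$, and then uses the (co-)directedness of the Sato Grassmannian to arrange the nesting $L_{1}'\hookrightarrow L_{1}$ and $L_{2}'\hookrightarrow L_{2}$. Your $3\times 3$ argument has the further problem that short-exactness of the middle column $L_{1}\hookrightarrow L\twoheadrightarrow L_{2}$ presupposes that $L_{1}$ is literally the preimage of $L_{2}$ under $L\twoheadrightarrow L_{2}$, which you have not established and which the available lemmas do not provide. To repair the proof, replace the pullback/image constructions by the lattices furnished by Lemma \ref{Lemma_LatticeInsideLattice} and the im/submersiveness arguments, adjusted by common over- and sub-lattices as in the paper.
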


\begin{proof}
\textit{(1, Bounded)} As $\varphi:X\rightarrow X$ is bounded, it factors over
a lattice, say $L$. Thus, $Y\hookrightarrow X\overset{\varphi}{\rightarrow}X$
factors over $L$ in the target, but by the commutativity of Diagram \ref{laa1}
this means that $Y\overset{\varphi}{\rightarrow}Y\hookrightarrow X$ factors
over $L$ in the target. Hence, we get the diagram%
\[%
% 
%\node x(500,0)[X]
%\node y(500,500)[Y]
%\node z(1400,500)[Y]
%\node ll(0,500)[Y/{L^{\prime}}]
%\arrow/->>/[x`l;]
%\arrow/^{ (}->/[y`x;p]
%\arrow[z`y;\varphi]
%\arrow/-->/[z`l;0]
%\arrow/.>>/[y`ll;]
%\efig}}%
% 
\bfig\node l(0,0)[X/L]
\node x(500,0)[X]
\node y(500,500)[Y]
\node z(1400,500)[Y]
\node ll(0,500)[Y/{L^{\prime}}]
\arrow/->>/[x`l;]
\arrow/^{ (}->/[y`x;p]
\arrow[z`y;\varphi]
\arrow/-->/[z`l;0]
\arrow/.>>/[y`ll;]
\efig
\]
By Lemma \ref{Lemma_AdmMonicsAreImmersiveAdmEpicsSubmersive} the admissible
monic $p$ is immersive. Thus, a lattice $L^{\prime}$ as in the above diagram
exists, showing that $\varphi\mid_{L}$ is bounded.\newline\textit{(1,
Discrete)} This is simpler. As $\varphi:X\rightarrow X$ is discrete, there
exists a lattice $L\hookrightarrow X$ so that $L\hookrightarrow X\overset
{\varphi}{\rightarrow}X$ is zero. By Lemma \ref{Lemma_LatticeInsideLattice}
there exists a lattice $L^{\prime}\hookrightarrow Y$ such that under
$Y\hookrightarrow X$ it maps to $L\hookrightarrow X$, and then the composition%
\[
L^{\prime}\rightarrow L\rightarrow X\rightarrow X
\]
is zero. Thus, by commutativity $L^{\prime}\hookrightarrow Y\rightarrow
Y\hookrightarrow X$ is zero, and by the defining property of monics, the
composition $L^{\prime}\hookrightarrow Y\rightarrow Y$ must already be zero.
Since $L^{\prime}$ is a lattice, it follows that $\varphi\mid_{L}$ is
discrete.\newline\textit{(1, Finite)} Just combine both statements.\newline%
\textit{(2, Bounded)} Consider the commutative diagram%
\[%
% 
%\node xp(500,0)[X/Y.]
%\node n(0,500)[X]
%\node np(500,500)[X]
%\arrow/>>/[n`x;]
%\arrow/>/[x`xp;{\overline{\varphi}}]
%\arrow/>>/[np`xp;]
%\arrow/>/[n`np;\varphi]
%\efig}}%
% 
\bfig\node x(0,0)[X/Y]
\node xp(500,0)[X/Y.]
\node n(0,500)[X]
\node np(500,500)[X]
\arrow/>>/[n`x;]
\arrow/>/[x`xp;{\overline{\varphi}}]
\arrow/>>/[np`xp;]
\arrow/>/[n`np;\varphi]
\efig
\]
As $\varphi$ is bounded, there exists a lattice $L\hookrightarrow X$ so that
$\varphi:X\rightarrow L\hookrightarrow X$. By Lemma
\ref{Lemma_LatticeInsideLattice} there exists a lattice $L^{\prime
}\hookrightarrow X/Y$ so that $X\twoheadrightarrow X/Y$ restricted to $L$
factors over $L^{\prime}\hookrightarrow X/Y$. In other words, $X\overset
{\varphi}{\rightarrow}X\twoheadrightarrow X/Y\twoheadrightarrow(X/Y)/L^{\prime
}$ is zero. By the commutativity of the diagram,%
\[
X\twoheadrightarrow X/Y\overset{\overline{\varphi}}{\rightarrow}%
X/Y\twoheadrightarrow(X/Y)/L^{\prime}%
\]
must be zero as well. Since the first morphism is an epic, we deduce that
$X/Y\overset{\overline{\varphi}}{\rightarrow}X/Y\twoheadrightarrow
(X/Y)/L^{\prime}$ is already the zero map. By the universal property of
cokernels, this means that there is a factorization $\overline{\varphi
}:X/Y\rightarrow L^{\prime}\hookrightarrow X/Y$, i.e. $\overline{\varphi}$ is
bounded.\newline\textit{(2, Discrete)} As $\varphi$ is discrete, there exists
a lattice $L\hookrightarrow X$ such that $L\hookrightarrow X\overset{\varphi
}{\rightarrow}X$ is zero. Thus, we obtain that the diagonal morphism in%
\[%
% 
%\node x(500,0)[X]
%\node y(500,500)[X/Y]
%\node z(1400,500)[X/Y]
%\node ll(0,500)[L^{\prime}]
%\arrow/^{ (}->/[l`x;]
%\arrow/>>/[x`y;p]
%\arrow[y`z;{\overline{\varphi}}]
%\arrow/-->/[l`z;0]
%\arrow/^{ (}.>/[ll`y;]
%\efig}}%
% 
\bfig\node l(0,0)[L]
\node x(500,0)[X]
\node y(500,500)[X/Y]
\node z(1400,500)[X/Y]
\node ll(0,500)[L^{\prime}]
\arrow/^{ (}->/[l`x;]
\arrow/>>/[x`y;p]
\arrow[y`z;{\overline{\varphi}}]
\arrow/-->/[l`z;0]
\arrow/^{ (}.>/[ll`y;]
\efig
\]
is zero. Following Lemma \ref{Lemma_AdmMonicsAreImmersiveAdmEpicsSubmersive}
the admissible epic $p$ is submersive, i.e. there exists a lattice $L^{\prime
}\hookrightarrow X/Y$ such that $L^{\prime}\hookrightarrow X/Y\overset
{\overline{\varphi}}{\rightarrow}X/Y$ is zero. But this just means that
$\overline{\varphi}$ is discrete, too.\newline\textit{(2, Finite)} Just
combine the last two statements.\newline\textit{(Lattices)} Finally, combine
the above constructions for a discrete morphism and lattices $L^{\prime
}\hookrightarrow L\hookrightarrow X$ such that $\varphi$ factors as%
\[
\varphi:L/L^{\prime}\longrightarrow L/L^{\prime}\text{.}%
\]
We see that they construct lattices $L_{1}^{\prime},L_{1}\hookrightarrow Y$ so
that $L_{1}\hookrightarrow L$ and $L_{1}^{\prime}\hookrightarrow L^{\prime}$
under $Y\hookrightarrow X$; without loss of generality use the
(co-)directedness of the Sato Grassmannian \cite[Thm. 6.7]%
{TateObjectsExactCats} to achieve that $L_{1}^{\prime}\hookrightarrow L_{1}$
holds, by replacing $L_{1}$ by a common over-lattice of the two constructed
lattices if necessary. Proceed similarly for the quotient $X/Y$.
\end{proof}

\section{\label{sect_GeneralTateObjects}General Tate objects}

In this section we extend (in a trivial way) the previous definitions to
non-elementary Tate objects.\medskip

Let $\mathcal{C}$ be an exact category. We recall that its idempotent
completion $\mathcal{C}^{ic}$ is the category whose objects are pairs $(X,p)$
with $X\in\mathcal{C}$ and $p:X\rightarrow X$ with $p^{2}=p$ an idempotent.
Its morphisms are%
\begin{align}
\operatorname*{Hom}\nolimits_{\mathcal{C}^{ic}}((X,p),(Y,q))  & =\left\{
f\in\operatorname*{Hom}\nolimits_{\mathcal{C}}(X,Y)\mid qfp=f\right\}
\label{lcw1}\\
& =\left\{  f\mid\exists g\in\operatorname*{Hom}\nolimits_{\mathcal{C}%
}(X,Y)\text{ so that }f=qgp\right\}  \text{.}\nonumber
\end{align}
We refer to \cite[\S 6]{MR2606234} or \cite[Ch. II]{MR2791358} for a detailed
construction and basic properties of the idempotent completion. Recall that
$\mathsf{Tate}(\mathcal{C}):=(\mathsf{Tate}^{el}\mathcal{C})^{ic} $. We will
now define all basic types of morphisms between general Tate objects by simply
requiring that the morphism of the underlying elementary Tate objects has the
relevant property. This is, by the way, the same mechanism which is employed
to equip $\mathcal{C}^{ic}$ with an exact structure: A kernel-cokernel
sequence in $\mathcal{C}^{ic}$ is called exact iff it is a direct summand of
an exact sequence in $\mathcal{C}$. See \cite[Prop. 6.13]{MR2606234} for more
on this. In particular, admissible monics and epics in $\mathcal{C}^{ic}$ are
represented by admissible monics and epics in $\mathcal{C}$.

\begin{lemma}
\label{Lemma_Tate_ICOfSplitExactIsSplitExact}Let $\mathcal{C}$ be a split
exact category and $\mathcal{C}^{ic}$ its idempotent completion. Then
$\mathcal{C}^{ic}$ is also split exact.
\end{lemma}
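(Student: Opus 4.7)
The plan is to unpack the definition of the exact structure on $\mathcal{C}^{ic}$ and to invoke the general fact that being split is inherited by direct summands of exact sequences. Concretely: let $E'$ denote an arbitrary admissible short exact sequence in $\mathcal{C}^{ic}$, say
\[
(X_{1},p_{1}) \hookrightarrow (X_{2},p_{2}) \twoheadrightarrow (X_{3},p_{3}).
\]
By the characterization of exact sequences in the idempotent completion (the characterization mentioned just before the lemma, following \cite[Prop. 6.13]{MR2606234}), $E'$ is a direct summand of an admissible exact sequence in $\mathcal{C}$. Thus there exists a sequence $E''$ in $\mathcal{C}^{ic}$ together with an isomorphism in the arrow category
\[
E' \oplus E'' \;\cong\; E,
\]
where $E : Y_{1} \hookrightarrow Y_{2} \twoheadrightarrow Y_{3}$ comes from $\mathcal{C}$ (via the embedding $\mathcal{C} \hookrightarrow \mathcal{C}^{ic}$).

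Next, I would use the split exactness of $\mathcal{C}$ to produce a section $s : Y_{3} \to Y_{2}$ of the epic in $E$. Pulling $s$ back along the isomorphism $E \cong E' \oplus E''$ yields a splitting of $E' \oplus E''$ in $\mathcal{C}^{ic}$, i.e.\ a morphism
\[
(X_{3},p_{3}) \oplus (X''_{3},p''_{3}) \longrightarrow (X_{2},p_{2}) \oplus (X''_{2},p''_{2})
\]
that is a right inverse to the componentwise epic.

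The final step is the routine observation that a direct summand of a split exact sequence is again split. Writing the splitting of $E' \oplus E''$ as a $2 \times 2$ matrix of morphisms relative to the two direct sum decompositions, the diagonality of the epic $p_{2}' \oplus p_{2}''$ forces the $(1,1)$-entry to be a section of the epic $(X_{2},p_{2}) \twoheadrightarrow (X_{3},p_{3})$ of $E'$, and symmetrically the $(2,2)$-entry splits $E''$. Hence $E'$ splits in $\mathcal{C}^{ic}$, which is what we wanted to show.

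There is really no substantive obstacle here, as the argument is entirely formal once one has the definition of the exact structure on $\mathcal{C}^{ic}$ in hand; the only point that requires a moment of care is verifying that the off-diagonal entries do not interfere with extracting a section of $E'$ from a section of $E' \oplus E''$, which is immediate from the block-diagonal form of the epic in $E' \oplus E''$.
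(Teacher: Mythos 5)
Your proof is correct and follows essentially the same route as the paper's: realize the sequence as a direct summand of an exact sequence coming from $\mathcal{C}$, split that sequence using split exactness of $\mathcal{C}$, and extract the diagonal component of the splitting. The only (immaterial) difference is that you produce a section of the epic where the paper produces a retraction of the monic.
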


\begin{proof}
Suppose $0\rightarrow A\rightarrow B\rightarrow C\rightarrow0$ is an exact
sequence in $\mathcal{C}^{ic}$. Then by definition \cite[\S 6, cf. Prop.
6.13]{MR2606234} it arises as a direct summand of an exact sequence in
$\mathcal{C}$, viewed as a sequence in $\mathcal{C}^{ic}$. Thus, there is an
exact sequence in $\mathcal{C}$ so that%
\[
0\longrightarrow A\oplus A^{\prime}\overset{i}{\longrightarrow}B\oplus
B^{\prime}\overset{j}{\longrightarrow}C\oplus C^{\prime}\longrightarrow0
\]
is exact in $\mathcal{C}$. Since $\mathcal{C}$ is split exact, there exists a
left splitting $\pi:B\oplus B^{\prime}\rightarrow A\oplus A^{\prime}$ so that
$\pi i=1$. It is now easy to check that $B\rightarrow B\oplus B^{\prime
}\overset{\pi}{\rightarrow}A\oplus A^{\prime}\rightarrow A$, where the outer
arrows are the inclusion and projection from the direct summands, is a left
splitting of the original exact sequence.
\end{proof}

\begin{definition}
For objects $X,X^{\prime}\in\mathsf{Tate}(\mathcal{C})$ we say that
$\varphi:X\rightarrow X^{\prime}$ is

\begin{enumerate}
\item bounded,

\item discrete,

\item finite,

\item immersive,

\item submersive,
\end{enumerate}

if, when unwinding the definition of idempotent completion, we have $X=(Y,p)$
and $X^{\prime}=(Y^{\prime},q)$ and $\varphi:Y\rightarrow Y^{\prime}$ (so that
$q\varphi p=\varphi$) is a morphism of elementary Tate objects so that
$\varphi$ has the named property.
\end{definition}

\begin{lemma}
\label{Lemma_BasicLemmataForGeneralTateObjects}Let $\mathcal{C}$ be an
idempotent complete exact category. In $\mathsf{Tate}(\mathcal{C})$

\begin{enumerate}
\item Lemma \ref{Lemma_PropertiesBoundedOrDiscreteMorphisms} remains valid,
i.e. bounded, discrete and finite morphisms form categorical ideals,

\item Lemma \ref{Lemma_AdmMonicsAreImmersiveAdmEpicsSubmersive} remains valid,
i.e. admissible monics (resp. epics) are immersive (resp. submersive) as before.

\item Lemma \ref{Lemma_OpenMorProps} remains valid, i.e. submersions behave as before.

\item Lemma \ref{Lemma_CoOpenMorProps} remains valid, i.e. immersions behave
as before.

\item Lemma \ref{Lemma_CIdemComplSplitExactAleph0} remains valid, i.e. if
$\mathcal{C}$ is split exact, $\mathsf{Tate}_{\aleph_{0}}\mathcal{C}$ is split exact.

\item Lemma \ref{Lemma_LiftDiscreteOrBoundedness} remains valid, i.e. given%
\[%
% 
%`>/<600,600>[X_1`X_2`Y_1`Y_2;\mathrm{discrete}`\mathrm{submersive}``f]
%\efig}}%
% 
\bfig\square(0,0)/>`>`<-{) }`>/<600,600>[X_1`X_2`Y_1`Y_2;\mathrm
{discrete}`\mathrm{submersive}``f]
\efig
\qquad\text{resp.}\qquad%
% 
%(0,0)/>`->>`<-`>/<600,600>[X_1`X_2`Y_1`Y_2;\mathrm{bounded}``\mathrm
%{immersive}`g]
%\efig}}%
% 
\bfig\square(0,0)/>`->>`<-`>/<600,600>[X_1`X_2`Y_1`Y_2;\mathrm{bounded}%
``\mathrm{immersive}`g]
\efig
\]
$f$ is discrete (resp. $g$ bounded).
\end{enumerate}
\end{lemma}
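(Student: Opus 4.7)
The plan is to observe that each clause of this meta-lemma is a verbatim transport of its elementary counterpart along the unwinding of the idempotent completion $\mathsf{Tate}(\mathcal{C})=(\mathsf{Tate}^{el}\mathcal{C})^{ic}$. Indeed, a morphism $\varphi\colon(Y,p)\to(Y',q)$ in $\mathsf{Tate}(\mathcal{C})$ is by definition an elementary Tate morphism $\varphi\colon Y\to Y'$ constrained by $q\varphi p=\varphi$, and all five properties (bounded, discrete, finite, immersive, submersive) are declared to hold for $\varphi$ in $\mathsf{Tate}(\mathcal{C})$ precisely when they hold for this very representative in $\mathsf{Tate}^{el}\mathcal{C}$. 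Since composition and addition of representatives preserve the defining constraint, both operations in $\mathsf{Tate}(\mathcal{C})$ coincide with the corresponding operations on elementary Tate morphisms; hence any statement about closure under sums, composition, or commutativity of diagrams descends trivially from the elementary case.

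Concretely, for (1), $I^{s}$ being a subgroup and a categorical ideal at the level of representatives is exactly Lemma \ref{Lemma_PropertiesBoundedOrDiscreteMorphisms} applied in $\mathsf{Tate}^{el}\mathcal{C}$, so I would just cite it. For (3), (4) and (6), each assertion — composition of submersives (respectively immersives) remains submersive (respectively immersive), and the fourth arrow in the displayed square is discrete or bounded — is again a statement purely about representatives, and is therefore immediate from Lemmas \ref{Lemma_OpenMorProps}, \ref{Lemma_CoOpenMorProps}, and \ref{Lemma_LiftDiscreteOrBoundedness} respectively. The only piece of (3) and (4) that might look different is the isomorphism clause, but an isomorphism in any exact category is trivially both an admissible monic and an admissible epic, so this clause reduces at once to (2).

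For (2) itself, the key non-trivial input is the remark recalled in the excerpt: admissible monics and epics in $\mathcal{C}^{ic}$ are represented by admissible monics and epics in $\mathcal{C}$. Applied to $\mathcal{C}=\mathsf{Tate}^{el}\mathcal{C}$, this lets us replace the representative by an admissible monic (respectively epic) in $\mathsf{Tate}^{el}\mathcal{C}$, after which Lemma \ref{Lemma_AdmMonicsAreImmersiveAdmEpicsSubmersive} yields immersiveness (respectively submersiveness). This is the single point of the proof where more than a bare definition chase is invoked.

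Finally, for (5) I would argue in two steps: by Lemma \ref{Lemma_CIdemComplSplitExactAleph0}, if $\mathcal{C}$ is idempotent complete and split exact then $\mathsf{Tate}^{el}_{\aleph_{0}}\mathcal{C}$ is split exact; and by Lemma \ref{Lemma_Tate_ICOfSplitExactIsSplitExact}, passage to the idempotent completion preserves split exactness, so $\mathsf{Tate}_{\aleph_{0}}\mathcal{C}$ is split exact. I do not anticipate any genuine obstacle; the only step that merits flagging is (2), since it rests on the non-tautological representability result for admissible monics and epics in an idempotent completion, whereas everything else is a transparent transfer of already-proven elementary statements across the forgetful passage $(Y,p)\mapsto Y$.
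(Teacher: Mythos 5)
Your proposal is correct and follows essentially the same route as the paper, which itself dismisses the lemma with ``Nothing really happens'': everything is a transfer of the elementary statements along the forgetful passage $(Y,p)\mapsto Y$, with (1) resting on $\operatorname{Hom}_{\mathcal{C}^{ic}}((Y,p),(Y',q))$ being a subgroup of $\operatorname{Hom}(Y,Y')$ closed under the ordinary composition of representatives, and (5) resting on Lemma \ref{Lemma_Tate_ICOfSplitExactIsSplitExact}. Your extra care in flagging (2) as the one place where the representability of admissible monics/epics in an idempotent completion is genuinely used is a reasonable refinement of the paper's terser ``(2), (3), (4) similar.''
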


\begin{proof}
Nothing really happens. We give some details nonetheless: (1) The ideal
property follows from the corresponding property for elementary Tate objects
since $\operatorname*{Hom}\nolimits_{\mathcal{C}^{ic}}(X,X^{\prime})$ is a
subgroup of $\operatorname*{Hom}\nolimits_{\mathcal{C}}(Y,Y^{\prime})$ for
$X=:(Y,p)$ and $X^{\prime}=:(Y^{\prime},q)$, Equation \ref{lcw1}. (2), (3),
(4) similar. (5) Use Lemma \ref{Lemma_Tate_ICOfSplitExactIsSplitExact} for
split exactness. For (6) note that we have such squares in $\mathsf{Tate}%
(\mathcal{C})$ only if they come from a square of elementary Tate objects with
morphisms with the same properties, so Lemma
\ref{Lemma_LiftDiscreteOrBoundedness} applies to this square, implying that
$f$ is discrete in $\mathsf{Tate}^{el}\mathcal{C}$ and then so is in
$\mathsf{Tate}(\mathcal{C})$. Analogously for $g$.
\end{proof}

\begin{remark}
Clearly our approach is based on drawing parallels to similar concepts in
functional analysis. For example our notion of bounded morphisms is not too
remote from the concept of a compact operator. The same remark applies to
trace-class operators. The idea to look at higher local fields, i.e. special
cases of $n$-Tate objects over vector spaces, from a functional analytic
perspective has already been pursued in the work of A. C\'{a}mara
\cite{MR3161556} and \cite{MR3227342}.
\end{remark}

\section{Cubical structure}

In Beilinson's definition, that is Definition
\ref{def_HigherAdeleOperatorIdeals}, an interesting continuity condition
appears. One looks at all $k$-linear maps \textquotedblleft such that for all
lattices $L_{1}\hookrightarrow M_{1},L_{2}\hookrightarrow M_{2}$ there exist
lattices $L_{1}^{\prime}\hookrightarrow M_{1},L_{2}^{\prime}\hookrightarrow
M_{2}$ such that%
\[
L_{1}^{\prime}\hookrightarrow L_{1},\qquad L_{2}\hookrightarrow L_{2}^{\prime
},\qquad f(L_{1\triangle^{\prime}}^{\prime})\hookrightarrow L_{2\triangle
^{\prime}},\qquad f(L_{1\triangle^{\prime}})\hookrightarrow L_{2\triangle
^{\prime}}^{\prime}%
\]
holds.\textquotedblright

In order to relate this to Tate objects, we first need to show that the very
definition of morphisms of Tate objects implies this kind of behaviour
automatically. This is not entirely obvious from the outset due to the rather
different style of definition of lattices:

\begin{lemma}
\label{Lemma_CanFactorTateMorThroughLatticePairs}Suppose $\mathcal{C}$ is an
idempotent complete exact category and $X_{1},X_{2}\in\mathsf{Tate}%
^{el}\mathcal{C}$. Let $f\in\operatorname*{Hom}\left(  X_{1},X_{2}\right)  $
be an arbitrary morphism. For all lattices $L_{1}\hookrightarrow X_{1}%
,L_{2}\hookrightarrow X_{2}$ there exist lattices $L_{1}^{\prime}%
,L_{2}^{\prime}$ and a \emph{double lattice factorization}%
\begin{equation}%
% 
%\node b(0,400)[L_1]
%\node c(0,800)[X_1]
%\node g(800,0)[L_2]
%\node h(800,400)[L_2^{\prime}]
%\node i(800,800)[X_2]
%\arrow/.>/[a`g;{\exists}]
%\arrow/.>/[b`h;{\exists}]
%\arrow/>/[c`i;f]
%\arrow/{ (}->/[a`b;{}]
%\arrow/{ (}->/[b`c;{}]
%\arrow/{ (}->/[g`h;{}]
%\arrow/{ (}->/[h`i;{}]
%\efig}}%
% 
\bfig\node a(0,0)[L_1^{\prime}]
\node b(0,400)[L_1]
\node c(0,800)[X_1]
\node g(800,0)[L_2]
\node h(800,400)[L_2^{\prime}]
\node i(800,800)[X_2]
\arrow/.>/[a`g;{\exists}]
\arrow/.>/[b`h;{\exists}]
\arrow/>/[c`i;f]
\arrow/{ (}->/[a`b;{}]
\arrow/{ (}->/[b`c;{}]
\arrow/{ (}->/[g`h;{}]
\arrow/{ (}->/[h`i;{}]
\efig
\label{lFullFactorization}%
\end{equation}
and for all such $L_{1},L_{1}^{\prime},L_{2},L_{2}^{\prime}$ we get an induced
morphism%
\[
\overline{f}:L_{1}/L_{1}^{\prime}\rightarrow L_{2}^{\prime}/L_{2}%
\qquad\text{in}\qquad\operatorname*{Hom}\nolimits_{\mathcal{C}}\left(
L_{1}/L_{1}^{\prime},L_{2}^{\prime}/L_{2}\right)
\]
in the category $\mathcal{C}$.
\end{lemma}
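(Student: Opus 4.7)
The plan is to construct $L_2'$ and $L_1'$ by two applications of Proposition~\ref{Prop_LatticeLeftFiltAndRightFilt}, then enlarge/shrink them using directedness of the Sato Grassmannian so that they are compatible with the prescribed $L_1, L_2$, and finally read off $\overline{f}$ from the universal properties of kernel and cokernel.

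First I would produce $L_2'$. The composition $L_1\hookrightarrow X_1 \xrightarrow{f} X_2$ has source a Pro-object, hence by Proposition~\ref{Prop_LatticeLeftFiltAndRightFilt}(1) it factors through some lattice $\widetilde{L}_2\hookrightarrow X_2$. Using the directedness of $Gr(X_2)$ (\cite[Thm.~6.7]{TateObjectsExactCats}, which is where idempotent completeness of $\mathcal{C}$ enters), I replace $\widetilde{L}_2$ by a common over-lattice $L_2'$ of $\widetilde{L}_2$ and $L_2$. This gives the upper horizontal factorization $L_1\to L_2'\hookrightarrow X_2$ together with the chain $L_2\hookrightarrow L_2'\hookrightarrow X_2$.

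Dually, I produce $L_1'$. The composition $X_1\xrightarrow{f}X_2\twoheadrightarrow X_2/L_2$ has target an Ind-object, so by Proposition~\ref{Prop_LatticeLeftFiltAndRightFilt}(2) it factors through some lattice quotient $X_1\twoheadrightarrow X_1/\widetilde{L}_1$. Using directedness once more, I replace $\widetilde{L}_1$ by a common sub-lattice $L_1'$ of $\widetilde{L}_1$ and $L_1$. By construction $L_1'\hookrightarrow L_1$ and the composite $L_1'\hookrightarrow X_1\xrightarrow{f}X_2\twoheadrightarrow X_2/L_2$ vanishes; since $L_2\hookrightarrow L_2'$ is an admissible monic, the kernel pair of $L_2'\twoheadrightarrow L_2'/L_2$ forces $f|_{L_1'}$ to land already in $L_2$, yielding the bottom arrow $L_1'\to L_2$. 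This assembles into the double lattice factorization~\eqref{lFullFactorization}.

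Next, to obtain $\overline{f}$, I compose the middle row $L_1\to L_2'$ with $L_2'\twoheadrightarrow L_2'/L_2$; since $L_1'\to L_1\to L_2'\twoheadrightarrow L_2'/L_2$ equals $L_1'\to L_2\twoheadrightarrow L_2'/L_2$ and the latter is zero, the universal property of the cokernel $L_1\twoheadrightarrow L_1/L_1'$ produces a unique morphism $\overline{f}\colon L_1/L_1'\to L_2'/L_2$. Finally, because $L_1'\hookrightarrow L_1$ and $L_2\hookrightarrow L_2'$ are both inclusions of lattices in the \emph{same} ambient Tate object, the quotients $L_1/L_1'$ and $L_2'/L_2$ lie in $\mathcal{C}$ by \cite[Proposition~6.6]{TateObjectsExactCats}. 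Hence $\overline{f}\in\operatorname{Hom}_{\mathcal{C}}(L_1/L_1',L_2'/L_2)$, as required.

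The step that requires the most care is the coordination of the two independent applications of Proposition~\ref{Prop_LatticeLeftFiltAndRightFilt}: a priori the lattices produced are unrelated to the prescribed $L_1, L_2$, and it is the directedness of the Sato Grassmannian (which is the place where idempotent completeness of $\mathcal{C}$ is used) that lets one nest them correctly so that $L_1'\hookrightarrow L_1$ and $L_2\hookrightarrow L_2'$ simultaneously hold and the whole square commutes.
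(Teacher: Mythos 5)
Your proposal is correct and follows essentially the same route as the paper: the paper invokes Lemma \ref{Lemma_LatticeInsideLattice} (which is itself just a repackaging of Proposition \ref{Prop_LatticeLeftFiltAndRightFilt}) to produce $\tilde{L}_2$ and $\tilde{L}_1$, then uses directedness and co-directedness of the Sato Grassmannian to nest them with the prescribed $L_2$ and $L_1$, and concludes with \cite[Prop.~6.6]{TateObjectsExactCats} that the lattice quotients lie in $\mathcal{C}$. Your extra care in deducing $f|_{L_1'}\hookrightarrow L_2$ from the vanishing of the composite into $X_2/L_2$ is exactly the content the paper absorbs into Lemma \ref{Lemma_LatticeInsideLattice}(2).
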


We keep the notation $\overline{f}$ for later use.

\begin{proof}
From the assumptions we just get the diagram depicted on the left in:%
\[%
% 
%\node c(0,800)[X_1]
%\node g(800,0)[L_2]
%\node i(800,800)[X_2]
%\arrow/>/[c`i;f]
%\arrow/{ (}->/[b`c;{}]
%\arrow/{ (}->/[g`i;{}]
%\efig}}%
% 
\bfig\node b(0,400)[L_1]
\node c(0,800)[X_1]
\node g(800,0)[L_2]
\node i(800,800)[X_2]
\arrow/>/[c`i;f]
\arrow/{ (}->/[b`c;{}]
\arrow/{ (}->/[g`i;{}]
\efig
\qquad\qquad%
% 
%\node c(0,800)[X_1]
%\node g(800,0)[L_2]
%\node h(800,400)[L_2^{\prime}]
%\node i(800,800)[X_2]
%\arrow/.>/[b`h;{\exists}]
%\arrow/>/[c`i;f]
%\arrow/{ (}->/[b`c;{}]
%\arrow/{ (}->/[g`h;{}]
%\arrow/{ (}->/[h`i;{}]
%\efig}}%
% 
\bfig\node b(0,400)[L_1]
\node c(0,800)[X_1]
\node g(800,0)[L_2]
\node h(800,400)[L_2^{\prime}]
\node i(800,800)[X_2]
\arrow/.>/[b`h;{\exists}]
\arrow/>/[c`i;f]
\arrow/{ (}->/[b`c;{}]
\arrow/{ (}->/[g`h;{}]
\arrow/{ (}->/[h`i;{}]
\efig
\]
By Lemma \ref{Lemma_LatticeInsideLattice} (1) the restriction $f\mid_{L_{1}} $
factors through some lattice of $X_{2}$, say $\tilde{L}_{2}$. By the
directedness of the Sato Grassmannian \cite[Thm. 6.7]{TateObjectsExactCats} we
can find a common over-lattice of both $\tilde{L}_{2}$ and $L_{2}$, call it
$L_{2}^{\prime}$, so that we arrive at the diagram on the right. By Lemma
\ref{Lemma_LatticeInsideLattice} (2) there exists some lattice $\tilde{L}_{1}$
of $X_{1}$ so that $f\mid_{\tilde{L}_{1}}$ factors through $L_{2}$. By the
codirectedness of the Sato Grassmannian \cite[Thm. 6.7]{TateObjectsExactCats}
we can find a common sub-lattice of both $L_{1}$ and $\tilde{L}_{1}$, call it
$L_{1}^{\prime}$, so that we arrive at the Diagram \ref{lFullFactorization}.
Finally, this induces a canonical morphism $\overline{f}:L_{1}/L_{1}^{\prime
}\longrightarrow L_{2}^{\prime}/L_{2}$ and by \cite[Prop. 6.6]%
{TateObjectsExactCats} quotients of nested lattices lie in the base category,
i.e. both source and target of $\overline{f}$ lie in the sub-category
$\mathcal{C}$.
\end{proof}

Later, we will need to understand how the composition of morphisms leads to
the composition of such induced morphisms $\overline{f}$. In order to do this,
we need to be able to find intermediate double lattice factorizations. The
best we can hope for in this direction is the following existence result:

\begin{lemma}
\label{Lemma_FindIntermediateDoubleLatticePair}Suppose $\mathcal{C}$ is
idempotent complete. Let $X_{1}\overset{f}{\longrightarrow}X_{2}\overset
{g}{\longrightarrow}X_{3}$ be arbitrary morphisms between elementary Tate
objects. Then for every double lattice factorization as in Diagram
\ref{lFullFactorization} for the composite $g\circ f$ we can find lattices
$\tilde{L}_{1}$ in $X_{1}$, $L_{2},L_{2}^{\prime}$ in $X_{2}$ and $\tilde
{L}_{3}$ in $X_{3}$ so that
\begin{equation}%
% 
%\node c(0,400)[L_1/{\tilde{L}_1}]
%\node f(600,400)[L_2/{L_2^{\prime}}]
%\node h(1200,0)[{L_3^{\prime}}/L_3]
%\node i(1200,400)[{\tilde{L}_3}/L_3]
%\arrow/>/[b`h;{\overline{g\circ f}}]
%\arrow/>/[c`f;{\overline{f}}]
%\arrow/>/[f`i;{\overline{g}}]
%\arrow/<<-/[b`c;{}]
%\arrow/{ (}->/[h`i;{}]
%\efig}}%
% 
\bfig\node b(0,0)[L_1/{L_1^{\prime}}]
\node c(0,400)[L_1/{\tilde{L}_1}]
\node f(600,400)[L_2/{L_2^{\prime}}]
\node h(1200,0)[{L_3^{\prime}}/L_3]
\node i(1200,400)[{\tilde{L}_3}/L_3]
\arrow/>/[b`h;{\overline{g\circ f}}]
\arrow/>/[c`f;{\overline{f}}]
\arrow/>/[f`i;{\overline{g}}]
\arrow/<<-/[b`c;{}]
\arrow/{ (}->/[h`i;{}]
\efig
\label{lFullFactorization2}%
\end{equation}
commutes.
\end{lemma}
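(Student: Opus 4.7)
The strategy is to construct the intermediate lattices in $X_2$ via a pullback-style construction that exploits the admissible monic $L_3' \hookrightarrow X_3$, and to simply take $\tilde L_1 := L_1'$ and $\tilde L_3 := L_3'$. With these choices the vertical surjection $L_1/\tilde L_1 \twoheadrightarrow L_1/L_1'$ and the vertical inclusion $\tilde L_3/L_3 \hookrightarrow L_3'/L_3$ in Diagram~\ref{lFullFactorization2} become identities, so the desired commutativity reduces to the identity $\overline{g \circ f} = \bar g \circ \bar f : L_1/L_1' \to L_3'/L_3$, which will follow tautologically from the way $L_2, L_2'$ are built.

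To construct $L_2$: apply Lemma~\ref{Lemma_LatticeInsideLattice}(1) to $f|_{L_1}: L_1 \to X_2$ (well-defined since $L_1 \in \mathsf{Pro}^a(\mathcal{C})$) to obtain a lattice $N \subseteq X_2$ together with a factorization $L_1 \xrightarrow{\alpha} N \hookrightarrow X_2$. Consider $\pi \circ g|_N : N \to X_3/L_3'$, where $\pi: X_3 \twoheadrightarrow X_3/L_3'$ is the admissible epic onto the Ind-object $X_3/L_3'$. This is a morphism from a Pro-object to an Ind-object; by the s-filtering properties of $\mathcal{C}$ inside $\mathsf{Pro}^a(\mathcal{C})$ and $\mathsf{Ind}^a(\mathcal{C})$ we may choose a factorization
\[
N \twoheadrightarrow C \hookrightarrow X_3/L_3'
\]
with $C \in \mathcal{C}$, the first arrow an admissible epic in $\mathsf{Pro}^a(\mathcal{C})$, the second an admissible monic in $\mathsf{Ind}^a(\mathcal{C})$. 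Define $L_2 := \ker(N \twoheadrightarrow C)$. Then $L_2 \hookrightarrow N$ is admissible, so $L_2 \in \mathsf{Pro}^a(\mathcal{C})$ and $N/L_2 \cong C \in \mathcal{C}$; the exact sequence $N/L_2 \hookrightarrow X_2/L_2 \twoheadrightarrow X_2/N$ shows $X_2/L_2 \in \mathsf{Ind}^a(\mathcal{C})$, so $L_2$ is a lattice. By construction $g(L_2) \subseteq L_3'$, and since $gf(L_1) \subseteq L_3'$ the composite $L_1 \xrightarrow{\alpha} N \to C$ vanishes (as $C \hookrightarrow X_3/L_3'$ is monic), whence $\alpha$ lifts through $L_2$ and so $f(L_1) \subseteq L_2$.

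Running the analogous construction for the composite $N \to X_3/L_3$ produces a sub-lattice $L_2' \subseteq L_2$ of $X_2$ with $g(L_2') \subseteq L_3$ and, using now the hypothesis $gf(L_1') \subseteq L_3$, also $f(L_1') \subseteq L_2'$. The morphisms $\bar f : L_1/L_1' \to L_2/L_2'$ and $\bar g : L_2/L_2' \to L_3'/L_3$ are then induced between objects of $\mathcal{C}$, and the commutativity of Diagram~\ref{lFullFactorization2} is immediate by tracking the constructed factorizations. The main technical obstacle is the existence of the factorization $N \twoheadrightarrow C \hookrightarrow X_3/L_3'$ with admissibility on both sides; this rests on the relationship $\mathsf{Pro}^a(\mathcal{C}) \cap \mathsf{Ind}^a(\mathcal{C}) = \mathcal{C}$ available in idempotent complete $\mathcal{C}$ and the corresponding s-filtering properties already exploited in \cite{TateObjectsExactCats}.
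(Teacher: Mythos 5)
There is a genuine gap, and it sits exactly where you flag ``the main technical obstacle''. Your construction of $L_2$ requires factoring the Pro-to-Ind morphism $N\rightarrow X_3/L_3'$ as $N\twoheadrightarrow C\hookrightarrow X_3/L_3'$ with the \emph{same} object $C\in\mathcal{C}$ receiving an admissible epic and injecting into $X_3/L_3'$. The cited ingredients do not provide this: left filtering of Pro-objects gives $N\rightarrow P\hookrightarrow X_3/L_3'$ with $P\in\mathcal{C}$ but with the first arrow arbitrary, while right filtering of $\mathcal{C}\subset\mathsf{Pro}^a(\mathcal{C})$ gives $N\twoheadrightarrow C'\rightarrow P$ with the second arrow arbitrary. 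Welding these into a single epi--mono factorization would require factoring $C'\rightarrow P$ through its image inside $\mathcal{C}$, which a general exact category does not permit. Without the monic half, your key deduction fails: from the vanishing of $L_1\rightarrow N\rightarrow C'\rightarrow X_3/L_3'$ you cannot cancel the non-monic $C'\rightarrow X_3/L_3'$ to conclude that $L_1\rightarrow N\rightarrow C'$ vanishes, so you do not get $f(L_1)\subseteq L_2$ for $L_2=\ker(N\twoheadrightarrow C')$. The same problem recurs for $L_2'$.

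This is not a repairable technicality but the very reason the lemma is stated with $\tilde L_1$ and $\tilde L_3$. You are attempting to prove the strictly stronger claim that one may take $\tilde L_1=L_1'$ and $\tilde L_3=L_3'$, i.e.\ find a middle lattice $L_2$ satisfying \emph{both} $f(L_1)\subseteq L_2$ and $g(L_2)\subseteq L_3'$ (and likewise for $L_2'$). In an abelian category one would take $L_2:=N\cap g^{-1}(L_3')$, but such intersections and preimages are unavailable here; the set of lattices containing $f(L_1)$ is upward closed and the set of lattices mapped into $L_3'$ is downward closed, and nothing forces them to meet. The paper's proof sidesteps this entirely: it first chooses $L_2\supseteq f(L_1)$ and then \emph{enlarges} $L_3'$ to a $\tilde L_3$ containing $g(L_2)$; dually it chooses $L_2'$ with $g(L_2')\subseteq L_3$ and then \emph{shrinks} $L_1'$ to a $\tilde L_1$ with $f(\tilde L_1)\subseteq L_2'$, using only Lemma~\ref{Lemma_LatticeInsideLattice} and the (co)directedness of the Sato Grassmannian. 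To fix your argument you should abandon $\tilde L_1=L_1'$, $\tilde L_3=L_3'$ and adopt this two-sided refinement.
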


\begin{proof}
For the beginning, let $f,g$ be arbitrary morphisms. Suppose we are given a
double lattice factorization for $g\circ f$, i.e.%
\[%
% 
%\node b(0,400)[L_1]
%\node c(0,800)[X_1]
%\node g(800,0)[L_3.]
%\node h(800,400)[L_3^{\prime}]
%\node i(800,800)[X_3]
%\arrow/>/[a`g;{ }]
%\arrow/>/[b`h;{ }]
%\arrow/>/[c`i;g \circ f]
%\arrow/{ (}->/[a`b;{}]
%\arrow/{ (}->/[b`c;{}]
%\arrow/{ (}->/[g`h;{}]
%\arrow/{ (}->/[h`i;{}]
%\efig}}%
% 
\bfig\node a(0,0)[L_1^{\prime}]
\node b(0,400)[L_1]
\node c(0,800)[X_1]
\node g(800,0)[L_3.]
\node h(800,400)[L_3^{\prime}]
\node i(800,800)[X_3]
\arrow/>/[a`g;{ }]
\arrow/>/[b`h;{ }]
\arrow/>/[c`i;g \circ f]
\arrow/{ (}->/[a`b;{}]
\arrow/{ (}->/[b`c;{}]
\arrow/{ (}->/[g`h;{}]
\arrow/{ (}->/[h`i;{}]
\efig
\]
In general there is no reason why it should be possible to factor the two
lower horizontal arrows over lattices in $X_{2}$. Thus, we first need to
refine a given factorization. Using Lemma \ref{Lemma_LatticeInsideLattice} (1)
there exists a lattice $L_{2}$ in $X_{2}$, and\ (using the Lemma again) a
lattice $\tilde{L}_{3}$ so that the diagram depicted below on the left
commutes:%
\[%
% 
%\node b(0,250)[L_1]
%\node bb(500,500)[L_2]
%\node c(0,750)[X_1]
%\node f(500,750)[X_2]
%\node bbb(1000,500)[\tilde{L}_{3}]
%\node g(1000,0)[L_3]
%\node h(1000,250)[L_3^{\prime}]
%\node i(1000,750)[X_3]
%\arrow/>/[a`g;{}]
%\arrow/>/[b`h;{}]
%\arrow/>/[c`f;f]
%\arrow/>/[f`i;g]
%\arrow/.>/[b`bb;{}]
%\arrow/.>/[bb`bbb;{}]
%\arrow/{ (}.>/[bb`f;{}]
%\arrow/{ (}.>/[bbb`i;{}]
%\arrow/{ (}->/[a`b;{}]
%\arrow/{ (}->/[b`c;{}]
%\arrow/{ (}->/[g`h;{}]
%\arrow/{ (}->/[h`bbb;{}]
%\efig}}%
% 
\bfig\node a(0,0)[L_1^{\prime}]
\node b(0,250)[L_1]
\node bb(500,500)[L_2]
\node c(0,750)[X_1]
\node f(500,750)[X_2]
\node bbb(1000,500)[\tilde{L}_{3}]
\node g(1000,0)[L_3]
\node h(1000,250)[L_3^{\prime}]
\node i(1000,750)[X_3]
\arrow/>/[a`g;{}]
\arrow/>/[b`h;{}]
\arrow/>/[c`f;f]
\arrow/>/[f`i;g]
\arrow/.>/[b`bb;{}]
\arrow/.>/[bb`bbb;{}]
\arrow/{ (}.>/[bb`f;{}]
\arrow/{ (}.>/[bbb`i;{}]
\arrow/{ (}->/[a`b;{}]
\arrow/{ (}->/[b`c;{}]
\arrow/{ (}->/[g`h;{}]
\arrow/{ (}->/[h`bbb;{}]
\efig
\qquad\qquad%
% 
%\node b(0,250)[L_1]
%\node bb(500,500)[L_2]
%\node c(0,750)[X_1]
%\node f(500,750)[X_2]
%\node bbb(1000,500)[\tilde{L}_{3}]
%\node g(1000,0)[L_3]
%\node h(1000,250)[L_3^{\prime}]
%\node i(1000,750)[X_3]
%\node ee(500,-250)[L_2^{\prime}]
%\node eee(0,-250)[\tilde{L}_1]
%\arrow/>/[a`g;{}]
%\arrow/>/[b`h;{}]
%\arrow/>/[c`f;f]
%\arrow/>/[f`i;g]
%\arrow/>/[b`bb;{}]
%\arrow/>/[bb`bbb;{}]
%\arrow/.>/[ee`g;{}]
%\arrow/.>/[eee`ee;{}]
%\arrow/{ (}->/[bb`f;{}]
%\arrow/{ (}->/[bbb`i;{}]
%\arrow/{ (}->/[a`b;{}]
%\arrow/{ (}->/[b`c;{}]
%\arrow/{ (}->/[g`h;{}]
%\arrow/{ (}->/[h`bbb;{}]
%\arrow/{ (}.>/[ee`bb;{}]
%\arrow/{ (}.>/[eee`a;{}]
%\efig}}%
% 
\bfig\node a(0,0)[L_1^{\prime}]
\node b(0,250)[L_1]
\node bb(500,500)[L_2]
\node c(0,750)[X_1]
\node f(500,750)[X_2]
\node bbb(1000,500)[\tilde{L}_{3}]
\node g(1000,0)[L_3]
\node h(1000,250)[L_3^{\prime}]
\node i(1000,750)[X_3]
\node ee(500,-250)[L_2^{\prime}]
\node eee(0,-250)[\tilde{L}_1]
\arrow/>/[a`g;{}]
\arrow/>/[b`h;{}]
\arrow/>/[c`f;f]
\arrow/>/[f`i;g]
\arrow/>/[b`bb;{}]
\arrow/>/[bb`bbb;{}]
\arrow/.>/[ee`g;{}]
\arrow/.>/[eee`ee;{}]
\arrow/{ (}->/[bb`f;{}]
\arrow/{ (}->/[bbb`i;{}]
\arrow/{ (}->/[a`b;{}]
\arrow/{ (}->/[b`c;{}]
\arrow/{ (}->/[g`h;{}]
\arrow/{ (}->/[h`bbb;{}]
\arrow/{ (}.>/[ee`bb;{}]
\arrow/{ (}.>/[eee`a;{}]
\efig
\]
Here we may have without loss of generality replaced $\tilde{L}_{3}$ in the
diagram by a common over-lattice of $L_{3}^{\prime}$ and $\tilde{L}_{3}$ so
that the diagram still commutes (use directedness of the Sato Grassmannian).
Now consider the bottom horizontal arrow in this diagram. Analogous to the
previous refinement, using Lemma \ref{Lemma_LatticeInsideLattice} (2) we find
a lattice $L_{2}^{\prime}$ in $X_{2}$ which (after possibly replacing
$L_{2}^{\prime}$ by a common sub-lattice with $L_{2}$) fits in the diagram
depicted above on the right. Repeating this step again for $\tilde{L}_{1}$
yields the full diagram on the right. Taking quotients we get Diagram
\ref{lFullFactorization2}.
\end{proof}

The following definition is a fairly precise imitation (even regarding the
naming of the variables) of the continuity condition employed by Beilinson in
his ad\`{e}le paper, compare with Definition
\ref{def_HigherAdeleOperatorIdeals}, or see the original paper \cite{MR565095}.

\begin{definition}
\label{Def_CubicalStructureForTateObjects}Suppose $\mathcal{C}$ is idempotent
complete. Let $X_{1},X_{2}\in\left.  n\text{-}\mathsf{Tate}^{el}%
\mathcal{C}\right.  $ be elementary $n$-Tate objects.

\begin{enumerate}
\item Let $I_{1}^{s}\left(  X_{1},X_{2}\right)  $ for $s\in\{+,-,0\}$ denote
the bounded, discrete and finite morphisms in $\operatorname*{Hom}\left(
X_{1},X_{2}\right)  $ respectively, exactly as in Definition
\ref{Def_BoundedDiscreteFiniteDimOne}.

\item For $i=2,\ldots,n$ let $I_{i}^{s}\left(  X_{1},X_{2}\right)  $ denote
the morphisms $f\in\operatorname*{Hom}\left(  X_{1},X_{2}\right)  $ such that
for all lattices $L_{1},L_{1}^{\prime},L_{2},L_{2}^{\prime}$ and double
lattice factorizations as in Diagram \ref{lFullFactorization} we have%
\[
\overline{f}\in I_{(i-1)}^{s}(L_{1}/L_{1}^{\prime},L_{2}^{\prime}%
/L_{2})\text{.}%
\]

\item We define%
\[
I_{tr}\left(  X_{1},X_{2}\right)  :=\bigcap_{i=1,\ldots,n}I_{i}^{0}\left(
X_{1},X_{2}\right)  \text{,}%
\]
its elements will be called \emph{trace-class} morphisms.
\end{enumerate}
\end{definition}

As in \S \ref{sect_GeneralTateObjects} this immediately implies a reasonable
definition for general (non-elementary) Tate objects:

\begin{definition}
If $(X_{1},p_{1})$ and $(X_{2},p_{2})$ are general Tate objects, define
$I_{i}^{s}\left(  X_{1},X_{2}\right)  $ to consist of those morphisms
$f:(X_{1},p_{1})\rightarrow(X_{2},p_{2})$ such that the underlying morphism of
elementary Tate objects $X_{1}\rightarrow X_{2}$ lies in $I_{i}^{s}\left(
X_{1},X_{2}\right)  $ in the above sense.
\end{definition}

\begin{theorem}
\label{theorem_CIdempCompleteHaveTateIdeals}Suppose $\mathcal{C}$ is
idempotent complete and $X,X^{\prime},X^{\prime\prime}\in\left.
n\text{-}\mathsf{Tate}^{el}\mathcal{C}\right.  $ or $\left.  n\text{-}%
\mathsf{Tate}(\mathcal{C})\right.  $.

\begin{enumerate}
\item The $I_{i}^{s}\left(  -,-\right)  $ for $i=1,\ldots,n$ are categorical
ideals. This means that the composition of morphisms factors as
\begin{align*}
I_{i}^{s}\left(  X^{\prime},X^{\prime\prime}\right)  \otimes
\operatorname*{Hom}\left(  X,X^{\prime}\right)   & \longrightarrow I_{i}%
^{s}\left(  X,X^{\prime\prime}\right) \\
\operatorname*{Hom}\left(  X^{\prime},X^{\prime\prime}\right)  \otimes
I_{i}^{s}\left(  X,X^{\prime}\right)   & \longrightarrow I_{i}^{s}\left(
X,X^{\prime\prime}\right)  \text{.}%
\end{align*}

\item In the ring $\operatorname*{End}(X)$ the $I_{i}^{s}\left(  X,X\right)  $
are two-sided ideals.

\item Every composition of $\geq2^{n}$ morphisms from $I_{tr}\left(
-,-\right)  $ factors through an object in $\mathcal{C}$. For words in $<2^{n}
$ letters this is in general false.
\end{enumerate}
\end{theorem}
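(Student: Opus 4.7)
The argument is by induction on $n$. The base case $n = 1$ is immediate: parts (1) and (2) are Lemma \ref{Lemma_PropertiesBoundedOrDiscreteMorphisms}(1--3), and part (3) is exactly Lemma \ref{Lemma_PropertiesBoundedOrDiscreteMorphisms}(4) with $2^{1} = 2$. For the inductive step we view $n\text{-}\mathsf{Tate}^{el}(\mathcal{C}) = \mathsf{Tate}^{el}((n-1)\text{-}\mathsf{Tate}(\mathcal{C}))$, so the base exact category $(n-1)\text{-}\mathsf{Tate}(\mathcal{C})$ is idempotent complete by construction, and every result of the previous sections applies to morphisms at level $n$ with ``$\mathcal{C}$'' in the conclusions replaced by ``$(n-1)\text{-}\mathsf{Tate}(\mathcal{C})$''. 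Lemma \ref{Lemma_BasicLemmataForGeneralTateObjects} reduces the general Tate case to the elementary one, so we only treat elementary Tate objects.

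For parts (1) and (2), the case $i = 1$ follows directly from Lemma \ref{Lemma_PropertiesBoundedOrDiscreteMorphisms}. For $i \geq 2$, fix $f \in I_{i}^{s}(X_{1}, X_{2})$ and $g : X_{2} \to X_{3}$, and consider any double lattice factorization $(L_{1}', L_{1}, L_{3}', L_{3})$ of $g \circ f$. Applying Lemma \ref{Lemma_FindIntermediateDoubleLatticePair} yields intermediate lattices $\tilde L_{1}, L_{2}, L_{2}', \tilde L_{3}$ realizing the commutative diagram (\ref{lFullFactorization2}); the hypothesis $f \in I_{i}^{s}$ gives $\bar f \in I_{i-1}^{s}$, and the inductive hypothesis for parts (1), (2) at level $n-1$ produces $\bar g \circ \bar f \in I_{i-1}^{s}$. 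The diagram then reads $\bar g \bar f = \iota \circ \overline{g \circ f} \circ \pi$ with $\pi$ an admissible epic and $\iota$ an admissible monic in $(n-1)\text{-}\mathsf{Tate}(\mathcal{C})$. Transferring the $I_{i-1}^{s}$ property from $\bar g \bar f$ to $\overline{g \circ f}$ requires a stability lemma extending Proposition \ref{Prop_IfMorphismFactorsPreservesBasicProperties} to all $I_{i}^{s}$, which we prove by a parallel induction on $i$. A symmetric argument covers left-multiplication; taking $X_{1} = X_{2} = X_{3} = X$ then yields (2).

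For part (3), let $\varphi_{1}, \ldots, \varphi_{2^{n}} \in I_{tr}$. The factor of $2$ at each Tate level corresponds to a single application of Lemma \ref{Lemma_PropertiesBoundedOrDiscreteMorphisms}(4): since $I_{tr} \subseteq I_{1}^{0}$, every consecutive pair $\varphi_{2k-1} \circ \varphi_{2k}$ factors through an object $W_{k} \in (n-1)\text{-}\mathsf{Tate}(\mathcal{C})$ via the Pro-to-Ind step used in the proof of that lemma. The crucial point is to verify that, for a suitable choice of these factorizations, the connecting morphisms at level $n-1$ produced by the pairing again lie in $I_{tr}$ at level $n-1$; this uses the higher conditions $\varphi_{j} \in I_{i}^{0}$ for $i = 2, \ldots, n$, unpacked via the recursive definition of $I_{i}^{s}$ and the compatibility between lattice quotients at adjacent Tate levels. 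Applying the inductive hypothesis (3) to the resulting $2^{n-1}$ trace-class morphisms at level $n-1$ gives the desired factorization through $\mathcal{C}$. The sharpness claim follows by iterating Example \ref{Example_NeedLongWordsToHaveFiniteRank} across the $n$ Tate levels.

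The main obstacle is the careful tracking in part (3): showing that after pairing, the inherited morphisms at level $n-1$ retain the trace-class property on the nose, and not merely the finiteness property. This requires unpacking the Pro-to-Ind factorization from Lemma \ref{Lemma_PropertiesBoundedOrDiscreteMorphisms}(4) and verifying compatibility with every higher ideal condition $I_{i}^{0}$ simultaneously. The parallel stability lemma needed to transfer $I_{i-1}^{s}$ from $\bar g \bar f$ to $\overline{g \circ f}$ in parts (1), (2) is a secondary, but nontrivial, technical step that has to be run in tandem with the main induction.
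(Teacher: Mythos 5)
Your overall architecture --- reduce to elementary objects, invoke Lemma \ref{Lemma_FindIntermediateDoubleLatticePair} to produce intermediate double lattice factorizations, settle the level-one case by Lemma \ref{Lemma_PropertiesBoundedOrDiscreteMorphisms}, and for part (3) pair off consecutive morphisms and induct --- is the paper's. The genuine gap is in your transfer step for parts (1) and (2). Having obtained $\overline{g}\,\overline{f}=\iota\circ\overline{g\circ f}\circ\pi$ with $\pi$ an admissible epic and $\iota$ an admissible monic, you must pass ideal membership from $\overline{g}\,\overline{f}$ to $\overline{g\circ f}$, i.e.\ in the direction \emph{not} supplied by the ideal property (an admissible epic need not split, so you cannot cancel $\pi$). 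You defer this to an unproven ``stability lemma extending Proposition \ref{Prop_IfMorphismFactorsPreservesBasicProperties}'', but that proposition has the wrong shape: it concerns an endomorphism restricted to an admissible subobject or induced on a quotient, whereas your square has a quotient as source and a subobject as target. The tool the paper actually uses is Lemma \ref{Lemma_LiftDiscreteOrBoundedness}: a square whose top row is discrete (resp.\ bounded) and whose relevant leg is submersive (resp.\ immersive) forces the bottom row to be discrete (resp.\ bounded), and admissible epics/monics are submersive/immersive by Lemma \ref{Lemma_AdmMonicsAreImmersiveAdmEpicsSubmersive}. That lemma only covers the level-one ideals, which is precisely why the paper does \emph{not} transfer at each level as you propose: it iterates the intermediate-factorization construction down to level $m=i$, where the residual condition on the iterated overline is the plain bounded/discrete condition, and then applies Lemma \ref{Lemma_LiftDiscreteOrBoundedness} a single time (the stacked submersive and immersive legs compose, by Lemmas \ref{Lemma_OpenMorProps} and \ref{Lemma_CoOpenMorProps}). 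Your route would instead require a higher-$i$ analogue of Lemma \ref{Lemma_LiftDiscreteOrBoundedness}; that may well be true, but it is exactly the hard content of the theorem and cannot be dismissed as a ``secondary technical step''.

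Part (3) of your proposal does match the paper: pair consecutive trace-class morphisms, factor each pair through an object of $(n-1)$-$\mathsf{Tate}(\mathcal{C})$ via the Pro-to-Ind argument of Lemma \ref{Lemma_PropertiesBoundedOrDiscreteMorphisms} (4), check that the induced morphisms are again trace-class one level down, and apply the inductive hypothesis to the resulting $2^{n-1}$ morphisms, with sharpness coming from iterating Example \ref{Example_NeedLongWordsToHaveFiniteRank}. You correctly isolate the verification that the connecting morphisms remain in $I_{tr}$ as the crucial point, though you do not carry it out (the paper is also brief here).
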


The following argument is very close in spirit to the handling of lattices by
A. Yekutieli in \cite{MR3317764}. However, we encounter a number of additional
technical issues because of the less concrete notion of lattice we work with.
There is also a similar study in the case of vector spaces and $n$-local
fields by D. V. Osipov \cite{MR2314612}.

\begin{proof}
(1) We only show this for elementary Tate objects since the general case
follows directly along the same lines as the proofs in
\S \ref{sect_GeneralTateObjects}. We will reduce this to the case of a single
Tate category, notably Lemma \ref{Lemma_PropertiesBoundedOrDiscreteMorphisms}.
Let $f,g$ be composable morphisms as depicted in the top row of the diagram
below. In order to prove that $g\circ f$ lies in $I_{i}^{s}$ (for some $s$ and
$i$), the condition to check reduces to proving a property $g\circ f$ induced
to a certain iterated subquotient of lattices. The lattice subquotients arise
from an inductive choice. More precisely: Starting with $m:=1$, consider any
double lattice factorization of the composition as in diagram
\begin{equation}%
% 
%\node b(0,250)[L_1]
%\node c(0,500)[X_1]
%\node f(500,500)[X_2]
%\node g(1000,0)[L_3]
%\node h(1000,250)[L_3^{\prime}]
%\node i(1000,500)[X_3]
%\arrow/>/[a`g;{}]
%\arrow/>/[b`h;{}]
%\arrow/>/[c`f;f]
%\arrow/>/[f`i;g]
%\arrow/{ (}->/[a`b;{}]
%\arrow/{ (}->/[b`c;{}]
%\arrow/{ (}->/[g`h;{}]
%\arrow/{ (}->/[h`i;{}]
%\efig}}%
% 
\bfig\node a(0,0)[L_1^{\prime}]
\node b(0,250)[L_1]
\node c(0,500)[X_1]
\node f(500,500)[X_2]
\node g(1000,0)[L_3]
\node h(1000,250)[L_3^{\prime}]
\node i(1000,500)[X_3]
\arrow/>/[a`g;{}]
\arrow/>/[b`h;{}]
\arrow/>/[c`f;f]
\arrow/>/[f`i;g]
\arrow/{ (}->/[a`b;{}]
\arrow/{ (}->/[b`c;{}]
\arrow/{ (}->/[g`h;{}]
\arrow/{ (}->/[h`i;{}]
\efig
\label{lDoubleLatticeFactorizationD}%
\end{equation}
with $X_{1},X_{2},X_{3}$ being elementary $\left(  n-m+1\right)  $-Tate
objects. By Lemma \ref{Lemma_FindIntermediateDoubleLatticePair} we can
construct a commutative diagram%
\[%
% 
%\node c(0,400)[L_1/{\tilde{L}_1}]
%\node f(600,400)[L_2/{L_2^{\prime}}]
%\node h(1200,0)[{L_3^{\prime}}/L_3]
%\node i(1200,400)[{\tilde{L}_3}/L_3]
%\arrow/>/[b`h;{\overline{g\circ f}}]
%\arrow/>/[c`f;{\overline{f}}]
%\arrow/>/[f`i;{\overline{g}}]
%\arrow|m|/<<-/[b`c;{\mathrm{submersive}}]
%\arrow|m|/{ (}->/[h`i;{\mathrm{immersive}}]
%\efig}}%
% 
\bfig\node b(0,0)[L_1/{L_1^{\prime}}]
\node c(0,400)[L_1/{\tilde{L}_1}]
\node f(600,400)[L_2/{L_2^{\prime}}]
\node h(1200,0)[{L_3^{\prime}}/L_3]
\node i(1200,400)[{\tilde{L}_3}/L_3]
\arrow/>/[b`h;{\overline{g\circ f}}]
\arrow/>/[c`f;{\overline{f}}]
\arrow/>/[f`i;{\overline{g}}]
\arrow|m|/<<-/[b`c;{\mathrm{submersive}}]
\arrow|m|/{ (}->/[h`i;{\mathrm{immersive}}]
\efig
\]
where the left and right outer arrows are an admissible epic (resp. monic) and
thus are submersive (resp. immersive) by Lemma
\ref{Lemma_BasicLemmataForGeneralTateObjects}. Now continue with a picking
another double lattice factorization as in Equation
\ref{lDoubleLatticeFactorizationD}, but this time with $m_{new}:=m_{old}+1$
and using the top row of the above diagram in place of $X_{1}\overset
{f}{\longrightarrow}X_{2}\overset{g}{\longrightarrow}X_{3}$. Note that the
objects in this new row are quotients of nested lattices, so by \cite[Prop.
6.6]{TateObjectsExactCats} they are elementary $\left(  n-m\right)  $-Tate
objects. Repeat this until we reach $m=i$. For the rest of the proof
$\overline{g\circ f}$ will refer to the respective morphism coming from the
last step in this inductive procedure, i.e. when $m=i$. In particular,
$\overline{g\circ f}$ is a morphism between $\left(  n-i\right)  $-Tate
objects and from now on the word lattice will only refer to lattices in such.
No more interplay of lattices of varying Tate categories will be needed, let
us also rename the entries of the above diagram into neutral terms%
\[%
% 
%\node c(0,400)[W_1]
%\node f(600,400)[A]
%\node h(1200,0)[Z_2]
%\node i(1200,400)[W_2]
%\arrow/>/[b`h;{\overline{g\circ f}}]
%\arrow/>/[c`f;{\overline{f}}]
%\arrow/>/[f`i;{\overline{g}}]
%\arrow|m|/<<-/[b`c;{\mathrm{submersive}}]
%\arrow|m|/{ (}->/[h`i;{\mathrm{immersive}}]
%\efig}}%
% 
\bfig\node b(0,0)[Z_1]
\node c(0,400)[W_1]
\node f(600,400)[A]
\node h(1200,0)[Z_2]
\node i(1200,400)[W_2]
\arrow/>/[b`h;{\overline{g\circ f}}]
\arrow/>/[c`f;{\overline{f}}]
\arrow/>/[f`i;{\overline{g}}]
\arrow|m|/<<-/[b`c;{\mathrm{submersive}}]
\arrow|m|/{ (}->/[h`i;{\mathrm{immersive}}]
\efig
\]
Now by assumption one of $\overline{f}$ or $\overline{g}$ lies in $I^{s}$, so
by Lemma \ref{Lemma_PropertiesBoundedOrDiscreteMorphisms} the entire top row
lies in $I^{s}$. Then by Lemma \ref{Lemma_LiftDiscreteOrBoundedness} it
follows that the bottom row lies in $I^{s}$ as well. (2) trivially follows
from (1). For (3) first note that it suffices to show this for elementary
$n$-Tate objects. Now we show the claim by induction on $n$. For $n=1$ Lemma
\ref{Lemma_PropertiesBoundedOrDiscreteMorphisms} gives the claim. Hence,
assume the case $n-1$ has been dealt with and suppose $f_{j}\in I_{tr}\left(
-,-\right)  $ for $j=1,\ldots,2^{n}$ are given and composable so that
$f_{1}\circ\cdots\circ f_{2^{n}}$ makes sense. By a minimal variation of the
argument for Lemma \ref{Lemma_PropertiesBoundedOrDiscreteMorphisms} there is a
factorization of $f_{j}\circ f_{j+1}$ as%
\begin{equation}
X_{1}\twoheadrightarrow X_{1}/L\overset{\overline{f_{j}\circ f_{j+1}}%
}{\longrightarrow}L^{\prime}\hookrightarrow X_{2}\text{,}\label{lcw83}%
\end{equation}
where $L$ is a lattice in $X_{1}$ and $L^{\prime}$ a lattice in $X_{2}$.
Following the argument of Lemma
\ref{Lemma_PropertiesBoundedOrDiscreteMorphisms} further, the composition of
any two morphisms having a factorization as in Equation \ref{lcw83}, factors
through an object in $\left(  n-1\right)  $-$\mathsf{Tate}(\mathcal{C})$.
Thus, for every second index there is a factorization $\overline{f_{1}f_{2}%
},\overline{f_{3}f_{4}},\overline{f_{5}f_{6}},\ldots:X_{\ast}\rightarrow
C_{\ast}\rightarrow X_{\ast}$ with `$\ast$' replaced by suitable indices and
with $C_{j}\in\left(  n-1\right)  $-$\mathsf{Tate}(\mathcal{C})$. Now if we
compose these $2^{n}/2=2^{n-1}$ morphisms, by induction it factors over an
object in $\mathcal{C}$. To see that one cannot do with less than $2^{n}$
morphisms, we ask the reader to adapt Example
\ref{Example_NeedLongWordsToHaveFiniteRank} accordingly.
\end{proof}

\begin{definition}
\label{def_GoodIdempotents}Suppose we are given $A:=\operatorname*{End}(X)$ in
the situation of Theorem \ref{theorem_CIdempCompleteHaveTateIdeals}. Pairwise
commuting elements $P_{i}^{+}\in A$ (with $i=1,\ldots,n$) such that the
following conditions are met:

\begin{itemize}
\item $P_{i}^{+2}=P_{i}^{+}$.

\item $P_{i}^{+}A\subseteq I_{i}^{+}$.

\item $P_{i}^{-}A\subseteq I_{i}^{-}\qquad$(and we define $P_{i}%
^{-}:=\mathbf{1}_{A}-P_{i}^{+}$)
\end{itemize}

will be called a \emph{system of good idempotents}. We shall call an
(elementary) $n$-Tate object $n$\emph{-sliced} if $A=\operatorname*{End}(X)$
admits a system of good idempotents.
\end{definition}

A very explicit example for good idempotents will be given in Example
\ref{example_LaurentSeriesOverRHaveVeryEasyTateStructure}.

\begin{proposition}
\label{cor_SlicedTateObjectsHaveEndNFoldCubical}For every $n$-sliced object
$X\in\left.  n\text{-}\mathsf{Tate}^{el}(\mathcal{C})\right.  $ or $\left.
n\text{-}\mathsf{Tate}(\mathcal{C})\right.  $ we have%
\begin{equation}
I_{i}^{+}\left(  X,X\right)  +I_{i}^{-}\left(  X,X\right)
=\operatorname*{End}(X)\label{lcwx1}%
\end{equation}
for all $i=1,\ldots,n$. Moreover, $\operatorname*{End}(X)$ is a Beilinson
$n$-fold cubical algebra as in Definiton \ref{def_BeilNFoldAlg}.
\end{proposition}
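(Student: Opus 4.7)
The proposition follows almost tautologically from the defining properties of a system of good idempotents, together with Theorem \ref{theorem_CIdempCompleteHaveTateIdeals}. The plan is as follows.

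First, I would establish equation \eqref{lcwx1}. Fix $i \in \{1,\ldots,n\}$ and let $\varphi \in \operatorname{End}(X)$ be arbitrary. Since $P_i^+ + P_i^- = \mathbf{1}_A$ by the definition of $P_i^-$, we can write
\[
\varphi \;=\; \mathbf{1}_A \cdot \varphi \;=\; P_i^+ \varphi \;+\; P_i^- \varphi.
\]
The hypothesis $P_i^+ A \subseteq I_i^+$ (resp.\ $P_i^- A \subseteq I_i^-$) from Definition \ref{def_GoodIdempotents} immediately places $P_i^+\varphi \in I_i^+(X,X)$ and $P_i^-\varphi \in I_i^-(X,X)$, so $\varphi \in I_i^+(X,X) + I_i^-(X,X)$. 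This gives one inclusion; the reverse inclusion is trivial.

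Next, I would check that $\operatorname{End}(X)$ carries the full structure of a Beilinson $n$-fold cubical algebra in the sense of Definition \ref{def_BeilNFoldAlg}. The ring $\operatorname{End}(X)$ is of course an associative $k$-algebra. By Theorem \ref{theorem_CIdempCompleteHaveTateIdeals}(2), each $I_i^\pm(X,X)$ is a two-sided ideal, and the identity $I_i^+ + I_i^- = \operatorname{End}(X)$ for every $i = 1, \ldots, n$ has just been established. This verifies all three bullet points of Definition \ref{def_BeilNFoldAlg} (the trace-class ideal $I_{tr}$ being merely defined, not required to satisfy any further axiom at this stage).

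There is really no obstacle here: the system of good idempotents has been designed precisely so as to encode the decomposition $\mathbf{1} = P_i^+ + P_i^-$ witnessing the sum property, with pairwise commutativity ensuring that applying the decomposition for different indices $i$ is compatible. Both the elementary and the non-elementary cases are handled uniformly, since Theorem \ref{theorem_CIdempCompleteHaveTateIdeals} covers both $n\text{-}\mathsf{Tate}^{el}(\mathcal{C})$ and $n\text{-}\mathsf{Tate}(\mathcal{C})$, and the idempotents $P_i^\pm$ live in $\operatorname{End}(X)$ in either setting.
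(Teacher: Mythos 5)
Your proof is correct and follows essentially the same route as the paper: the paper's own argument is exactly the decomposition $\varphi = P_i^+\varphi + P_i^-\varphi$ (deferred to as "an immediate generalization of the proof of Prop.~\ref{Prop_SlicedTateObjIdealsSumUp}"), combined with Theorem~\ref{theorem_CIdempCompleteHaveTateIdeals} for the ideal property. You have merely written out explicitly what the paper leaves implicit, correctly noting that the memberships $P_i^{\pm}A\subseteq I_i^{\pm}$ are axioms of Definition~\ref{def_GoodIdempotents} and need no further verification.
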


\begin{proof}
By Theorem \ref{theorem_CIdempCompleteHaveTateIdeals} we have the necessary
ideals $I_{i}^{\pm}$. In order to meet all axioms of Definiton
\ref{def_BeilNFoldAlg}, it suffices to prove Equation \ref{lcwx1}. However,
this can be done using the idempotents, by an immediate generalization of the
proof of Prop. \ref{Prop_SlicedTateObjIdealsSumUp}.
\end{proof}

\begin{lemma}
\label{lemma_TraceClassOfObjectInCIsAll}If $X\in\mathcal{C}$ then every
endomorphism is trace-class, i.e.%
\[
I_{tr}(X)=\operatorname*{End}\nolimits_{\left.  n\text{-}\mathsf{Tate}%
(\mathcal{C})\right.  }(X)=\operatorname*{End}\nolimits_{\mathcal{C}%
}(X)\text{.}%
\]

\end{lemma}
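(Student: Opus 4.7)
The plan is to address the two equalities in turn. First, the identification $\operatorname{End}_{n\text{-}\mathsf{Tate}(\mathcal{C})}(X) = \operatorname{End}_{\mathcal{C}}(X)$ is immediate from the fully faithful composite embedding $\mathcal{C} \hookrightarrow n\text{-}\mathsf{Tate}(\mathcal{C})$, obtained by iterating the fully faithful inclusions $\mathcal{D} \hookrightarrow \mathsf{Pro}^{a}(\mathcal{D}) \hookrightarrow \mathsf{Tate}^{el}(\mathcal{D})$ and the idempotent completion, facts extracted from \cite{TateObjectsExactCats}. For the non-trivial equality $I_{tr}(X) = \operatorname{End}(X)$, I would proceed by induction on $n$. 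The base case $n = 0$ holds trivially since $I_{tr}$ is then an empty intersection of ideals in $\operatorname{End}(X)$.

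For the inductive step at level $n \geq 1$, fix $f \in \operatorname{End}_{\mathcal{C}}(X)$; I must verify $f \in I_{i}^{0}(X,X)$ for each $i = 1, \ldots, n$. The case $i = 1$ is essentially tautological: since $X \in \mathcal{C}$ sits inside both $\mathsf{Pro}^{a}((n-1)\text{-}\mathsf{Tate}(\mathcal{C}))$ and $\mathsf{Ind}^{a}((n-1)\text{-}\mathsf{Tate}(\mathcal{C}))$ as constant diagrams, both the identity $X \hookrightarrow X$ and $0 \hookrightarrow X$ qualify as lattices of $X$. The former witnesses $f$ as bounded, the latter as discrete, so $f \in I_{1}^{0}(X,X)$.

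For $i \geq 2$, the strategy is to check that every induced morphism $\overline{f}: L_{1}/L_{1}^{\prime} \to L_{2}^{\prime}/L_{2}$ coming from a double lattice factorization (Definition \ref{Def_CubicalStructureForTateObjects}) lies in $I_{i-1}^{0}$ at the $(n-1)$-Tate level. The crux is the sub-claim that any lattice $L \hookrightarrow X$ of $X \in \mathcal{C}$ is itself in $\mathcal{C}$, and likewise for $X/L$. To establish this, I would argue by descent through the Tate tower: Proposition \ref{Prop_LatticeLeftFiltAndRightFilt} lets the map $L \to X$ factor through a lattice of $X$ at the next level down, but since $L \hookrightarrow X$ is admissible monic, this intermediate factor must be isomorphic to $L$ itself; iterating the descent all the way to $\mathcal{C}$ places $L$ there. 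Granting this, $L_{1}/L_{1}^{\prime}$ and $L_{2}^{\prime}/L_{2}$ lie in $\mathcal{C}$, and by the inductive hypothesis (extended in the obvious way from endomorphisms to morphisms between pairs of objects of $\mathcal{C}$), $\overline{f} \in I_{tr} \subseteq I_{i-1}^{0}$ at the $(n-1)$-Tate level, closing the induction.

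The main obstacle is this descent sub-claim: carrying $L \in \mathsf{Pro}^{a}((n-1)\text{-}\mathsf{Tate}(\mathcal{C}))$ all the way down to $\mathcal{C}$ requires a careful iteration of the filtering properties of $\mathcal{D} \hookrightarrow \mathsf{Pro}^{a}(\mathcal{D}) \hookrightarrow \mathsf{Tate}^{el}(\mathcal{D})$ through every layer of the Tate tower, together with the preservation of admissibility under these embeddings.
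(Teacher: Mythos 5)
Your overall architecture coincides with the paper's: the second equality via full faithfulness of $\mathcal{C}\hookrightarrow n\text{-}\mathsf{Tate}(\mathcal{C})$, the case $i=1$ via the two extreme lattices $0\hookrightarrow X$ and $X\hookrightarrow X$, and the cases $i\geq 2$ by feeding the induced maps $\overline{f}$ between lattice quotients into the inductive hypothesis. You have also correctly isolated the one non-trivial ingredient, namely that for $X\in\mathcal{C}$ the lattices and lattice quotients descend all the way down to $\mathcal{C}$.

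Your proposed justification of that ingredient, however, does not work. The factorization of Proposition \ref{Prop_LatticeLeftFiltAndRightFilt} ``at the next level down'' cannot be invoked for $L\rightarrow X$: the lattice $L$ lives in $\mathsf{Pro}^{a}\bigl((n-1)\text{-}\mathsf{Tate}(\mathcal{C})\bigr)$, not in $\mathsf{Pro}^{a}\bigl((n-2)\text{-}\mathsf{Tate}(\mathcal{C})\bigr)$, so the hypothesis of that proposition one level down is simply not satisfied. Moreover, even where the proposition does apply, factoring the admissible monic $L\hookrightarrow X$ as $L\rightarrow M\hookrightarrow X$ through a lattice $M$ does not force $M\cong L$: the resulting map $L\rightarrow M$ is monic but need not be an isomorphism (one may always take $M=X$), so no descent is obtained this way. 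The mechanism the paper uses instead is: every admissible sub-object $L$ of $X\in\mathcal{C}$ is \emph{automatically} a lattice for the top $1$-Tate structure, since $L$ is an admissible sub-object of the Pro-object $X$ and $X/L$ an admissible quotient of the Ind-object $X$; hence $0\subseteq N'\subseteq N\subseteq X$ is a chain of nested lattices, and \cite[Proposition 6.6]{TateObjectsExactCats} (quoted in \S\ref{sect_TateCategories}) places $N$, $N'$, $N/N'$ and $X/N$ in $(n-1)\text{-}\mathsf{Tate}(\mathcal{C})$. One then proves by induction on $m$ the statement ``an admissible sub-object, and its quotient, of an object of $\mathcal{C}$ inside $m\text{-}\mathsf{Tate}(\mathcal{C})$ lies in $\mathcal{C}$,'' applying it first to $N\hookrightarrow X$ and then to $N'\hookrightarrow N$ to land $N/N'$ in $\mathcal{C}$. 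With that replacement your induction closes exactly as in the paper.
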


\begin{proof}
The first equality holds since \textit{every} sub-object of $X\in\mathcal{C} $
is a lattice with respect to the top $1$-Tate structure, $\left.
n\text{-}\mathsf{Tate}(\mathcal{C})\right.  =\mathsf{Tate}(\left.
(n-1)\text{-}\mathsf{Tate}(\mathcal{C})\right.  )$. Then for all quotients
$N/N^{\prime}$ of such lattices $N^{\prime}\hookrightarrow N\hookrightarrow
X$, we still have $N/N^{\prime}\in\mathcal{C}$. Thus, inductively, every
endomorphism is trace-class. Then second equality holds since the embeddings
$\mathcal{C}\hookrightarrow\left.  \mathsf{Tate}(\mathcal{C})\right.  $ are
all fully faithful.
\end{proof}

\section{The countable split exact case}

With the previous results we have seriously approached arriving at a structure
as in Definition \ref{def_BeilNFoldAlg}. Suppose $\mathcal{C}$ is an
idempotent complete exact category. Now suppose $X$ is an elementary $n$-Tate
object. We may present it as%
\begin{equation}
X=\underset{L_{1}}{\underrightarrow{\operatorname*{colim}}}\underset
{L_{1}^{\prime}}{\underleftarrow{\lim}}\,\frac{L_{1}}{L_{1}^{\prime}}%
\text{,}\label{lcab1}%
\end{equation}
where $L_{1}^{\prime}\hookrightarrow L_{1}\hookrightarrow X$ are a nested pair
of lattices. One could also write this as%
\begin{equation}
X=\underset{L_{1}\in Gr(X)}{\underrightarrow{\operatorname*{colim}}}%
\underset{L_{1}^{\prime}\in Gr(X),L_{1}^{\prime}\subseteq L_{1}}%
{\underleftarrow{\lim}}\,\frac{L_{1}}{L_{1}^{\prime}}\text{,}\label{lcab2}%
\end{equation}
where $Gr(X)$ denotes the Sato Grassmannian of all lattices in $X$.

\begin{remark}
Let us look at the situation of a general $n$-Tate object, without the
cardinality hypothesis. We can always write $X$ as an Ind-diagram (of
Pro-objects) or Pro-diagram (of Ind-objects)%
\begin{equation}
X=\underset{L_{1}}{\underrightarrow{\operatorname*{colim}}}L_{1}\text{,}\qquad
X=\underset{L_{1}}{\underleftarrow{\lim}}\,\frac{X}{L_{1}}\label{lcab3}%
\end{equation}
where $L_{1}$ runs over the partially ordered set of lattices in $X$. The
first presentation follows trivially from \cite{TateObjectsExactCats} and
would work for a general exact category $\mathcal{C}$; for the second one
needs the dual viewpoint developed in \cite[\S \ Duality]{bgwTensor},
requiring $\mathcal{C}$ to be idempotent complete. This asymmetry stems from
\cite{TateObjectsExactCats} defining the Tate category as a sub-category of
$\mathsf{Ind}^{a}\mathsf{Pro}^{a}\mathcal{C}$ so that some `preferred
viewpoint' is built into the theory. Working from the outset with the opposite
category would shove the idempotent completeness assumption to the first
presentation and remove it from the second. The presentation in Equation
\ref{lcab2} can be obtained by first using the left-hand side presentation in
Equation \ref{lcab3}, and then employing the right-hand side presentation for
each $L_{1}$ individually. The co-directedness of the Sato Grassmannian and
\cite[Corollary 2]{bgwTensor} imply that instead of $L_{1}=\underset
{L_{1}^{\prime}}{\underleftarrow{\lim}}\,\frac{L_{1}}{L_{1}^{\prime}}$ with
$L_{1}^{\prime}$ running through lattices of the Pro-object $L_{1}$, we may
alternatively run through the lattices $L_{1}^{\prime}$ of $X$ which are
contained in $L_{1}$.
\end{remark}

By \cite[Prop. 6.6]{TateObjectsExactCats} any such quotient $L_{1}%
/L_{1}^{\prime}$ is an $\left(  n-1\right)  $-Tate object. This observation
generalizes to the case where $X$ is a general $n$-Tate object by refining our
presentation to%
\[
X=P\,\underset{L_{1}}{\underrightarrow{\operatorname*{colim}}}\underset
{L_{1}^{\prime}}{\underleftarrow{\lim}}\,\frac{L_{1}}{L_{1}^{\prime}}\text{,}%
\]
where $P$ denotes an idempotent. By induction it follows that%
\[
X=P\,\underset{L_{1}}{\underrightarrow{\operatorname*{colim}}}\underset
{L_{1}^{\prime}}{\underleftarrow{\lim}}\,P_{L_{1},L_{1}^{\prime}}%
\,\underset{L_{2}}{\underrightarrow{\operatorname*{colim}}}\underset
{L_{2}^{\prime}}{\underleftarrow{\lim}}\,P_{L_{1},L_{1}^{\prime},L_{2}%
,L_{2}^{\prime}}\cdots\underset{L_{n}}{\underrightarrow{\operatorname*{colim}%
}}\underset{L_{n}^{\prime}}{\underleftarrow{\lim}}\,P_{\left(  \ldots\right)
}\,\frac{L_{n}}{L_{n}^{\prime}}\text{,}%
\]
where $L_{1}^{\prime}\hookrightarrow L_{1}$ are nested lattices of the
elementary $n$-Tate object underlying $X$, $L_{2}^{\prime}\hookrightarrow
L_{2}$ are nested lattices of the elementary $\left(  n-1\right)  $-Tate
object underlying $L_{1}/L_{1}^{\prime}$, $L_{3}^{\prime}\hookrightarrow
L_{3}$ are nested lattices of the elementary $\left(  n-2\right)  $-Tate
object underlying $L_{2}/L_{2}^{\prime}$,\ldots, and finally $L_{n}%
/L_{n}^{\prime}$ is an object of $\mathcal{C}$. The letters $P,P_{L_{1}%
,L_{1}^{\prime}},\ldots$ denote idempotents cutting out the respective Tate objects.

The results of the last section on double lattice factorizations, notably
Lemma \ref{Lemma_CanFactorTateMorThroughLatticePairs}, tell us that any
morphism%
\[
f:X_{1}\longrightarrow X_{2}%
\]
of $n$-Tate objects stems from a system of compatible morphisms $L_{n}%
/L_{n}^{\prime}\rightarrow N_{n}/N_{n}^{\prime}$ in the category $\mathcal{C}$
so that $f$ is induced from assembling these morphisms into%
\[
\underset{L_{1}}{\underrightarrow{\operatorname*{colim}}}\underset
{L_{1}^{\prime}}{\underleftarrow{\lim}}\cdots\underset{L_{n}}{\underrightarrow
{\operatorname*{colim}}}\underset{L_{n}^{\prime}}{\underleftarrow{\lim}%
}\,\frac{L_{n}}{L_{n}^{\prime}}\overset{f}{\longrightarrow}\underset{N_{1}%
}{\underrightarrow{\operatorname*{colim}}}\underset{N_{1}^{\prime}%
}{\underleftarrow{\lim}}\cdots\underset{N_{n}}{\underrightarrow
{\operatorname*{colim}}}\underset{N_{n}^{\prime}}{\underleftarrow{\lim}%
}\,\frac{N_{n}}{N_{n}^{\prime}}\text{.}%
\]
If we take over from Lemma \ref{Lemma_CanFactorTateMorThroughLatticePairs} the
notation that the induced morphism of a double lattice factorization is
$\overline{f}$, the morphisms $L_{n}/L_{n}^{\prime}\rightarrow N_{n}%
/N_{n}^{\prime}$ here are nothing but \textquotedblleft$n$-fold overline
$f$\textquotedblright.

\begin{theorem}
\label{thm_CSplitExactIdealsAddUpGetNFoldCubicalBeilAlgebra}Suppose
$\mathcal{C}$ is a split and idempotent complete exact category and
$X\in\left.  n\text{-}\mathsf{Tate}_{\aleph_{0}}^{el}(\mathcal{C})\right.  $
or $\left.  n\text{-}\mathsf{Tate}_{\aleph_{0}}(\mathcal{C})\right.  $, i.e.
$X$ is a countable $n$-Tate object. Then $X$ is $n$-sliced and
$\operatorname*{End}(X)$ carries the structure of an $n$-fold
cubical\ Beilinson algebra in the sense of Definition \ref{def_BeilNFoldAlg}.
\end{theorem}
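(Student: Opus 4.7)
The strategy is to prove the theorem by induction on $n$, reducing the final conclusion to Proposition \ref{cor_SlicedTateObjectsHaveEndNFoldCubical} by exhibiting a system of good idempotents on $A := \operatorname*{End}(X)$. The base case $n = 1$ is exactly Proposition \ref{Prop_SlicedTateObjIdealsSumUp}(2): split exactness propagates to $\mathsf{Tate}_{\aleph_{0}}^{el}\mathcal{C}$ by Lemma \ref{Lemma_CIdemComplSplitExactAleph0}, so any chosen outer lattice $i\colon L\hookrightarrow X$ admits a splitting $s\colon X\rightarrow L$, and $P_{1}^{+}:=i\circ s$ is a bounded idempotent with $\operatorname*{id}_X - P_1^+$ discrete.

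For the inductive step, iterating Lemma \ref{Lemma_CIdemComplSplitExactAleph0} I conclude that $n$-$\mathsf{Tate}_{\aleph_{0}}^{el}(\mathcal{C})$ is again split exact and idempotent complete. I choose an outer lattice $i\colon L\hookrightarrow X$ with splitting $s\colon X\rightarrow L$, decompose $X\cong L\oplus X/L$, and set $P_{1}^{+}:=i\circ s\in I_{1}^{+}(X,X)$. Now $L$ is a Pro-object and $X/L$ is an Ind-object in $(n-1)$-$\mathsf{Tate}_{\aleph_{0}}(\mathcal{C})$; by countability I may write $L=\underleftarrow{\lim}_{j\in\mathbf{N}}C_{j}$ and $X/L=\underrightarrow{\operatorname*{colim}}_{j\in\mathbf{N}}D_{j}$ with $C_{j},D_{j}$ countable $(n-1)$-Tate objects and admissible structure maps. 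The inductive hypothesis furnishes systems of $(n-1)$ good idempotents on each $C_{j}$ and $D_{j}$; the plan is to make \emph{compatible} choices along the countable diagram so that these glue into endomorphisms $P_{k}^{+}(L)\in\operatorname*{End}(L)$ and $P_{k}^{+}(X/L)\in\operatorname*{End}(X/L)$ for $k=1,\ldots,n-1$. I would then define the remaining idempotents block-diagonally in the splitting:
\[
P_{i}^{+}(X):=P_{i-1}^{+}(L)\oplus P_{i-1}^{+}(X/L),\qquad i=2,\ldots,n.
\]

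Verifying that $\{P_{i}^{+}(X)\}_{i=1}^{n}$ is a system of good idempotents should then be largely formal: commutation and idempotence follow from the block-diagonal structure together with the inductive hypothesis, and each containment $P_{i}^{+}(X)\cdot A\subseteq I_{i}^{+}(X,X)$ reduces via the ideal property (Theorem \ref{theorem_CIdempCompleteHaveTateIdeals}) to showing that $P_{i}^{+}(X)\in I_{i}^{+}(X,X)$. For $i=1$ this holds by construction, and for $i\geq 2$ it unravels the recursive Definition \ref{Def_CubicalStructureForTateObjects}: given any outer double lattice factorization, Cartesian sandwich (Lemma \ref{Lemma_CartesianSandwich}) refines the lattices into direct sums respecting $X=L\oplus X/L$, and $\overline{P_{i}^{+}(X)}$ decomposes as a direct sum morphism on lattice subquotients of $L$ and $X/L$, which by the inductive hypothesis lies in $I_{i-1}^{+}$. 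The non-elementary case is handled by the same argument using Lemmas \ref{Lemma_Tate_ICOfSplitExactIsSplitExact} and \ref{Lemma_BasicLemmataForGeneralTateObjects}(5), and Proposition \ref{cor_SlicedTateObjectsHaveEndNFoldCubical} closes the deduction that $\operatorname*{End}(X)$ is a Beilinson $n$-fold cubical algebra.

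The hardest part will be the compatible glueing: arranging that the inductively chosen good idempotents on each $C_{j},D_{j}$ can be refined to commute with the structure maps $C_{j+1}\rightarrow C_{j}$ and $D_{j}\hookrightarrow D_{j+1}$, so that the naive limit/colimit actually produces an endomorphism rather than only a compatible system. An arbitrary choice of outer lattice inside $C_{j+1}$ need not pull back to the chosen outer lattice of $C_{j}$. I would handle this by processing the countable diagrams inductively on $j$, repeatedly using directedness of the Sato Grassmannian (\cite[Thm. 6.7]{TateObjectsExactCats}) and Lemma \ref{Lemma_LatticeInsideLattice} to refine the lattice choices compatibly at each stage, replacing a previously chosen lattice by a common sub- or over-lattice as dictated by the structure map. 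Countability is essential here; as the introduction remarks, the analogous result fails in uncountable cardinalities thanks to the \v{S}\v{t}ov\'{\i}\v{c}ek--Trlifaj examples, which rules out any argument that does not exploit the cardinality hypothesis.
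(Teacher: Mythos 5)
Your proposal is correct and follows essentially the same route as the paper: induction on $n$, reduction to exhibiting a system of good idempotents and then invoking Proposition \ref{cor_SlicedTateObjectsHaveEndNFoldCubical}, with $n-1$ idempotents obtained by gluing the inductively given ones across a countable diagram of $(n-1)$-Tate objects via split exactness, and the remaining one coming from splitting off a lattice. The only cosmetic difference is that you split $X\cong L\oplus X/L$ at the outset and glue along the Pro- and Ind-diagrams of $L$ and $X/L$ separately, whereas the paper glues directly along the presentation $X=\underrightarrow{\operatorname*{colim}}_{L_{1}}\underleftarrow{\lim}_{L_{1}'}L_{1}/L_{1}'$ and only extracts the outer idempotent at the end; the compatible-gluing step you flag as the hardest part is handled there exactly as you propose, by writing $L_{2}/L_{2}'\simeq L_{1}/L_{1}'\oplus(\text{complement})$ and extending the idempotents block-diagonally, with countability ensuring the exhaustion terminates.
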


\begin{proof}
For a $1$-Tate object this is literally Prop.
\ref{Prop_SlicedTateObjIdealsSumUp}. In general, by Prop.
\ref{cor_SlicedTateObjectsHaveEndNFoldCubical}, it suffices to find a system
of good idempotents. Proceed by induction in $n$. Write an elementary $n$-Tate
object as%
\begin{equation}
X=\underset{L_{1}}{\underrightarrow{\operatorname*{colim}}}\underset
{L_{1}^{\prime}}{\underleftarrow{\lim}}\frac{L_{1}}{L_{1}^{\prime}}%
\text{.}\label{lcw7}%
\end{equation}
As any quotient $L_{1}/L_{1}^{\prime}$ is an $\left(  n-1\right)  $-Tate
object and we assume our claim, i.e. the existence of a system of good
idempotents, for $n-1$, the idempotents supply a direct sum decomposition%
\[
L_{1}/L_{1}^{\prime}=\bigoplus_{s_{1},\ldots,s_{n-1}\in\{\pm\}}P_{1}^{s_{1}%
}\cdots P_{n-1}^{s_{n-1}}\left(  L_{1}/L_{1}^{\prime}\right)  \text{.}%
\]
If $L_{1}\hookrightarrow L_{2}$ is a larger lattice in $X$ and $L_{2}^{\prime
}\hookrightarrow L_{1}^{\prime}$ a smaller lattice, the split exactness allows
one to find a direct sum decomposition%
\[
\frac{L_{2}}{L_{2}^{\prime}}\simeq\frac{L_{1}}{L_{1}^{\prime}}\oplus\left(
\text{another }\left(  n-1\right)  \text{-Tate object}\right)  \text{.}%
\]
We can use the same idempotents $P_{1}^{\pm},\ldots,P_{n-1}^{\pm}$ to
decompose the new summand. As our indexing categories are countable, we can
exhaust $X$ in this fashion to get a choice of $n-1$ good idempotents on all
of $X$. Finally, on all of $X$, we get a further idempotent $P_{n}^{\pm}$,
just by splitting the entire presentation of Equation \ref{lcw7} as%
\[
0\longrightarrow\underset{L_{1}^{\prime}}{\underleftarrow{\lim}}\,\frac
{\tilde{L}}{L_{1}^{\prime}}\longrightarrow\underset{L_{1}}{\underrightarrow
{\operatorname*{colim}}}\underset{L_{1}^{\prime}}{\underleftarrow{\lim}%
}\,\frac{L_{1}}{L_{1}^{\prime}}\longrightarrow\underset{L_{1}}%
{\underrightarrow{\operatorname*{colim}}}\,\frac{L_{1}}{\tilde{L}%
}\longrightarrow0\text{,}%
\]
where $\tilde{L}$ is some fixed lattice of $X$. This gives us a full system of
$n$ good idempotents and thus proves our claim. If $X$ is a general Tate
object, $(X,p)$, use the idempotents $pP_{i}^{\pm}p$ of the underlying
elementary Tate object $X$ instead.
\end{proof}

\begin{openproblem}
What is the correct analogue of this theorem in the context of Hennion's Tate
categories for stable $\infty$-categories?
\cite{HennionTateObjsInStableInftyCats}
\end{openproblem}

We close this section by presenting an example where it is particularly easy
to find a system of good idempotents.

\begin{example}
\label{example_LaurentSeriesOverRHaveVeryEasyTateStructure}Let $R$ be a ring,
possibly non-commutative. Define the ring of formal Laurent series by
$R((t)):=R[[t]][t^{-1}]$. Then $R((t_{1}))((t_{2}))\ldots((t_{n}))$
canonically has a representative in $\left.  n\text{-}\mathsf{Tate}%
^{el}(\mathsf{Mod}_{R})\right.  $ via%
\begin{align*}
& X:=\text{\textquotedblleft}R((t_{1}))((t_{2}))\ldots((t_{n}%
))\text{\textquotedblright}\\
& \qquad\qquad=\underset{i_{n}}{\underrightarrow{\operatorname*{colim}}%
}\underset{j_{n}}{\underleftarrow{\lim}}\cdots\underset{i_{1}}%
{\underrightarrow{\operatorname*{colim}}}\underset{j_{1}}{\underleftarrow
{\lim}}\,\frac{1}{t_{1}^{i_{1}}\cdots t_{n}^{i_{n}}}R[t_{1},\ldots
,t_{n}]/(t_{1}^{j_{1}},\ldots,t_{n}^{j_{n}})\text{.}%
\end{align*}
When evaluating the colimits and limits in $\mathsf{Mod}_{R}$, we get the
usual $R$-module%
\[
R((t_{1}))((t_{2}))\ldots((t_{n}))
\]
and with a little more work one obtains the ring structure on it. Since the
(co)limits are taken over projective $R$-modules so that this object actually
could be constructed on the left in%
\[
\left.  n\text{-}\mathsf{Tate}_{\aleph_{0}}^{el}(P_{f}(R))\right.
\longrightarrow\left.  n\text{-}\mathsf{Tate}^{el}(\mathsf{Mod}_{R}%
)\text{,}\right.
\]
we deduce that the implications of Thm.
\ref{thm_CSplitExactIdealsAddUpGetNFoldCubicalBeilAlgebra} apply to this
particular object. For $i=1,\ldots,n$ define idempotents%
\[
P_{i}^{+}\,\sum a_{m_{1},\ldots,m_{n}}t_{1}^{m_{1}}\cdots t_{n}^{m_{n}}%
:=\sum\delta_{m_{i}\geq0}a_{m_{1},\ldots,m_{n}}t_{1}^{m_{1}}\cdots
t_{n}^{m_{n}}%
\]
for $a_{m_{1},\ldots,m_{n}}\in R$ and $P_{i}^{-}:=1-P_{i}^{+}$. This is a
system of good idempotents for $X$ in the sense of Definition
\ref{def_GoodIdempotents}. A description of the ideals $I_{(-)}^{\pm}$ is easy
to give; the statements would be analogous to those in Lemma
\ref{Lemma_IdealsAndIndProLimits} in \S \ref{sect_ApplicationsToAdeles}.
\end{example}

\begin{remark}
A different approach has been introduced by A. Yekutieli. He developed the
concept of semi-topological rings in \cite{MR1213064}, \cite{MR1352568}. If
one prefers Yekutieli's semi-topological theory over $n$-Tate objects, an
analogous construction is possible: If $R$ is a semi-topological ring, e.g.
with the discrete topology, Yekutieli shows that $R((t_{1}))((t_{2}%
))\ldots((t_{n}))$ also possesses a canonical structure as a semi-topological
ring itself. He has established a result in the style of Theorem
\ref{thm_CSplitExactIdealsAddUpGetNFoldCubicalBeilAlgebra} in \cite{MR3317764}%
. Whichever way one proceeds, one needs a replacement for classical
topological concepts: For example, it is known that for $n\geq2$ an $n$-local
field like $k((t_{1}))\ldots((t_{n}))$ is \textit{not} a topological field. A.
N. Parshin and I. B. Fesenko have resolved this issue by using sequential
topologies, cf. \cite{MR1850194}. K. Kato's version of Tate categories was
also introduced to address exactly this issue, we refer to the introduction of
\cite{MR1804933}. See \cite{bgwGeomAnalAdeles} for a comparison of these
different viewpoints.
\end{remark}

\section{\label{sect_RelationToProjectiveModules}Relation to projective
modules}

For a possibly non-commutative ring $R$ we denote by $P_{f}(R)$ the category
of finitely generated projective \textsl{right} $R$-modules. First, let us
recall the following:

\begin{theorem}
\label{thm_body_DrinfeldModulesSitInsideTateCat}(\cite[Thm. 5.30]%
{TateObjectsExactCats}) Suppose $R$ is a commutative ring.

\begin{enumerate}
\item Then $\mathsf{Tate}^{Dr}(R)$ admits a canonical fully faithful embedding
as a sub-category of $\mathsf{Tate}(P_{f}(R))$.

\item When restricting to countable cardinality, this becomes an equivalence,%
\[
\mathsf{Tate}_{\aleph_{0}}^{Dr}(R)\overset{\sim}{\longrightarrow}%
\mathsf{Tate}_{\aleph_{0}}(P_{f}(R))
\]

\end{enumerate}
\end{theorem}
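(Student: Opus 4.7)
The plan is to construct an explicit exact functor $F\colon \mathsf{Tate}^{Dr}(R)\to \mathsf{Tate}(P_{f}(R))$, verify fully faithfulness, and in the countable setting prove essential surjectivity by slicing a Tate object along a lattice. Recall that a Drinfeld Tate $R$-module $M$ admits a (non-canonical) topological decomposition $M\cong P\oplus Q^{\ast}$, where $P$ is a discrete projective $R$-module and $Q^{\ast}$ is the topological dual of a discrete projective $R$-module $Q$. First I would present $P$ as a filtered colimit of finitely generated projective direct summands, exhibiting $P\in\mathsf{Ind}^{a}(P_{f}(R))$, and similarly write $Q^{\ast}$ as $\underleftarrow{\lim}_{\beta}Q_{\beta}^{\ast}$ with $Q_{\beta}\in P_{f}(R)$, so that $Q^{\ast}\in\mathsf{Pro}^{a}(P_{f}(R))$. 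Setting $F(M):=P\oplus Q^{\ast}\in\mathsf{Tate}^{el}(P_{f}(R))$ one must verify that the resulting object is independent, up to canonical isomorphism, of the auxiliary choices, and that $F$ is exact.

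For fully faithfulness the key dictionary is that open $R$-submodules of a Drinfeld Tate module correspond to Tate lattices $L\hookrightarrow F(M)$. Given this, a continuous $R$-linear map $f\colon M\to M'$ translates via Lemma \ref{Lemma_CanFactorTateMorThroughLatticePairs} into a compatible system of morphisms $\overline{f}\colon L_{1}/L_{1}'\to L_{2}'/L_{2}$ in $P_{f}(R)$, which reassembles to a morphism in $\mathsf{Tate}(P_{f}(R))$. Conversely, a morphism of Tate objects, upon evaluating its underlying diagram of colimits and limits of finitely generated projectives, produces a continuous $R$-linear map between the original topological Tate modules. Checking that these two operations are mutually inverse relies on Proposition \ref{Prop_LatticeLeftFiltAndRightFilt} and the (co)directedness of the Sato Grassmannian.

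For essential surjectivity in the countable case, given $X\in\mathsf{Tate}_{\aleph_{0}}(P_{f}(R))$, I would choose any lattice $L\hookrightarrow X$. Since $P_{f}(R)$ is split exact and idempotent complete, Lemmata \ref{Lemma_CIdemComplSplitExactAleph0} and \ref{Lemma_Tate_ICOfSplitExactIsSplitExact} propagate split exactness to $\mathsf{Tate}_{\aleph_{0}}(P_{f}(R))$, so the short exact sequence $L\hookrightarrow X\twoheadrightarrow X/L$ splits, yielding $X\cong L\oplus X/L$ with $L\in\mathsf{Pro}_{\aleph_{0}}^{a}(P_{f}(R))$ and $X/L\in\mathsf{Ind}_{\aleph_{0}}^{a}(P_{f}(R))$. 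Evaluating the underlying diagrams in the category of topological $R$-modules, the former assembles into a linearly compact $R$-module and the latter into a discrete projective $R$-module, exhibiting $X$ as $F(M)$ for an explicit Drinfeld Tate module $M$.

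I expect the main obstacles to be twofold. First, one must show that $F$ is well-defined up to canonical isomorphism regardless of the chosen decomposition $M\cong P\oplus Q^{\ast}$ and the presentations of $P$ and $Q$; this amounts to a naturality check in $\mathsf{Ind}^{a}\mathsf{Pro}^{a}(P_{f}(R))$ and is where most of the care is needed. Second, in the essential-surjectivity step one must certify that $L$ and $X/L$ are Pro- and Ind-systems indexed over countable directed sets of \emph{finitely generated} projective modules, rather than merely yielding countably generated $R$-modules at the level of underlying modules. This cardinality control is precisely what forces the restriction to $\aleph_{0}$ in part (2); without it $F$ remains only a fully faithful embedding, as asserted in part (1).
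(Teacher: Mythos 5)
First, a point of comparison: the paper does not prove this theorem. It is stated with the citation \cite[Thm.\ 5.30]{TateObjectsExactCats} and simply imported from the authors' earlier paper, so there is no in-text argument to measure your proposal against. Your outline is broadly consistent with how such a comparison result is established (Drinfeld modules decompose as discrete-plus-compact, these pieces are admissible Ind- and Pro-objects of $P_{f}(R)$, open submodules correspond to lattices, and countable split exactness gives essential surjectivity), but as it stands it is an outline with several load-bearing steps left unproved.

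The concrete gaps are these. (a) You ``recall'' that every Drinfeld Tate $R$-module decomposes as $P\oplus Q^{\ast}$; a Tate $R$-module is by definition only a \emph{direct summand} of such an object, and the fact that it is itself of this form is a nontrivial theorem of Drinfeld, not a definition --- and your part (1) needs it at arbitrary cardinality. (b) The ``key dictionary'' between open $R$-submodules and Tate lattices is precisely the content of full faithfulness; you assert it and then invoke Lemma \ref{Lemma_CanFactorTateMorThroughLatticePairs} and Proposition \ref{Prop_LatticeLeftFiltAndRightFilt}, but those lemmata operate entirely inside $\mathsf{Tate}^{el}(P_{f}(R))$ and do not by themselves produce the bijection between continuous maps of topological modules and morphisms of Tate objects. (c) You correctly identify the well-definedness of $F$ under the choice of decomposition and presentations as the main difficulty, but you do not resolve it; without this, $F$ is not even a functor. (d) In the essential surjectivity step, lattices are defined for \emph{elementary} Tate objects, whereas a general $X\in\mathsf{Tate}_{\aleph_{0}}(P_{f}(R))$ is an idempotent summand $(Y,p)$; you need to either reduce to the elementary case and then use that Drinfeld's category is closed under direct summands, or say why a lattice of $X$ itself exists. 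On the positive side, your appeal to Lemma \ref{Lemma_CIdemComplSplitExactAleph0} to split $L\hookrightarrow X\twoheadrightarrow X/L$ is exactly the step that is specific to $\aleph_{0}$ (the paper's introduction notes that $1$-$\mathsf{Tate}_{\kappa}$ can fail to be split exact for $\kappa>\aleph_{0}$), so your diagnosis of where countability enters is essentially right, though the failure for larger $\kappa$ is a failure of the splitting, not merely of indexing-set bookkeeping.
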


We shall also need a result identifying split exact categories and projective
module categories, a type of projective generator argument. It applies to a
wide range of situations, so let us state it in this generality.

\begin{lemma}
\label{Lemma_SplitExactCatWithStandardObjectsAreProjModules}Let $\mathcal{C}$
be an idempotent complete, split exact category such that every object is a
direct summand of some fixed object $S\in\mathcal{C}$. Then the functor%
\begin{align}
\mathcal{C}  & \longrightarrow P_{f}(\operatorname*{End}\nolimits_{\mathcal{C}%
}(S))\label{lFunctorMorita}\\
Z  & \longmapsto\operatorname*{Hom}\nolimits_{\mathcal{C}}(S,Z)\nonumber
\end{align}
is an exact equivalence of exact categories.
\end{lemma}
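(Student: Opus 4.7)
Set $R := \operatorname{End}_{\mathcal{C}}(S)$, so that $F(S) = \operatorname{Hom}_{\mathcal{C}}(S,S) = R$ as a free right $R$-module of rank one. The plan is a standard Morita-style argument: I will exhibit $F$ as fully faithful and essentially surjective using only two ingredients, namely idempotent completeness of $\mathcal{C}$ and the hypothesis that every object of $\mathcal{C}$ — and therefore in particular every finite power $S^n$ — is a direct summand of $S$. To begin, for any $Z \in \mathcal{C}$ write $Z \cong e_Z S$ for an idempotent $e_Z \in R$, which is possible by idempotent completeness together with the summand hypothesis. Then a direct computation gives $F(Z) = \operatorname{Hom}_{\mathcal{C}}(S, e_Z S) \cong e_Z R$ as a right $R$-module, which is a direct summand of $R$ and hence lies in $P_f(R)$; in particular $F$ is well-defined.

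Fully faithfulness reduces to computing both sides of
\[
F_{Z_1,Z_2} : \operatorname{Hom}_{\mathcal{C}}(Z_1,Z_2) \longrightarrow \operatorname{Hom}_{R}(F(Z_1),F(Z_2))
\]
with $Z_i = e_i S$: the left side is $\{f \in R : e_2 f e_1 = f\} = e_2 R e_1$, and on the right side a morphism $e_1 R \to e_2 R$ of right $R$-modules is determined by the image of $e_1$, which must lie in $e_2 R \cap R e_1 = e_2 R e_1$. One checks that $F_{Z_1,Z_2}$ is precisely the identity map under this identification. For essential surjectivity, let $M \in P_f(R)$, so that $M$ is a direct summand of some $R^n$. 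But the hypothesis gives $S^n$ as a summand of $S$, so applying $F$ yields $R^n = F(S^n)$ as a summand of $F(S) = R$; hence $M$ is a summand of $R$ itself. Using idempotent completeness of $\mathcal{C}$, the corresponding idempotent $e \in R = \operatorname{End}_{\mathcal{C}}(S)$ cuts out an object $Z \in \mathcal{C}$ with $F(Z) \cong M$.

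Finally, exactness is essentially automatic: a split exact category has only split short exact sequences, and $F$ is additive (being a $\operatorname{Hom}$-functor), so it preserves direct sum decompositions and hence admissible short exact sequences in both directions. The target $P_f(R)$ is itself split exact (short exact sequences of projectives split), so the equivalence is an equivalence of exact categories. The main subtlety I expect is the essential surjectivity step: it is only because the hypothesis ``every object is a summand of $S$'' is strong enough to force $S^n$ itself to be a summand of $S$ that an arbitrary $M \in P_f(R)$ can be realized inside $\mathcal{C}$; if the hypothesis were weakened to ``summand of some $S^n$'' (as in the countable-family version of Theorem \ref{intro_Thm1}), one would instead need to enlarge $S$ to a countable direct sum, and the bookkeeping for writing $M$ as an idempotent image would be the main point to get right.
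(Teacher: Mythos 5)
Your proof is correct and follows essentially the same Morita-style route as the paper: full faithfulness via the idempotent description of direct summands of $S$, essential surjectivity by lifting an idempotent through $\operatorname{End}_{\mathcal{C}}(S)$ and invoking idempotent completeness, and exactness from the split exactness of both categories. The only (harmless) variation is in the essential surjectivity step, where you reduce a summand of $R^{\oplus n}$ to a summand of $R$ using the observation that $S^{\oplus n}$ is itself a summand of $S$, whereas the paper lifts the idempotent directly from $\operatorname{End}_{\mathcal{C}}(S^{\oplus n})$; both are valid under the stated hypothesis.
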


\begin{proof}
Firstly, define $E:=\operatorname*{End}\nolimits_{\mathcal{C}}(S)$ and note
that for any $Z\in\mathcal{C}$ the group $\operatorname*{Hom}%
\nolimits_{\mathcal{C}}(S,Z)$ becomes a right $E$-module by the composition of
morphisms, i.e.%
\[
\operatorname*{Hom}\nolimits_{\mathcal{C}}(S,Z)\times\operatorname*{Hom}%
\nolimits_{\mathcal{C}}(S,S)\longrightarrow\operatorname*{Hom}%
\nolimits_{\mathcal{C}}(S,Z)\text{.}%
\]
This produces a functor $\mathcal{C}\rightarrow\mathsf{Mod}(E)$. Thus, for any
objects $Z_{1},Z_{2}\in\mathcal{C}$ we get an induced map of homomorphism
groups%
\begin{equation}
\operatorname*{Hom}\nolimits_{\mathcal{C}}(Z_{1},Z_{2})\longrightarrow
\operatorname*{Hom}\nolimits_{\mathsf{Mod}(E)}(\operatorname*{Hom}%
\nolimits_{\mathcal{C}}(S,Z_{1}),\operatorname*{Hom}\nolimits_{\mathcal{C}%
}(S,Z_{2}))\text{.}\label{lcw64}%
\end{equation}
For $Z_{1}=Z_{2}:=S$ it is an isomorphism. It also sends idempotents to
idempotents, so the object $S$ and all its direct summands are sent to $E$ and
direct summands of it. However, by assumption any object of $\mathcal{C}$ is
of this shape, so this gives an alternative description of the same functor,
and therefore implies that the map in Equation \ref{lcw64} is an isomorphism
for arbitrary $Z_{1},Z_{2}$. So the functor is fully faithful. Moreover, we
see that every object is sent to a direct summand of the free $E $-module $E$,
so the image of the functor consists of finitely generated projective
$E$-modules, which shows that the functor is well-defined. Conversely, every
finitely generated projective $E$-module $M$ is a direct summand of $E^{\oplus
n}$ for some $n$. Since Equation \ref{lcw64} is also an isomorphism for
$S^{\oplus n}$, the idempotent defining $M$ comes from an idempotent of
$S^{\oplus n}$. As $\mathcal{C}$ is idempotent complete, this idempotent
possesses a kernel. This shows that the functor is essentially surjective. As
a result, we have an equivalence of categories and since both are split exact,
this equivalence is necessarily exact. In either case, the only short exact
sequences are the split ones. This finishes the proof.
\end{proof}

\begin{remark}
Let us quickly address the uniqueness of such a presentation. Suppose
$S,S^{\prime}\in\mathcal{C}$ are objects both satisfying the assumptions of
the lemma. For example, $S^{\prime}:=S\oplus S$. Then the equivalences of
categories are related by the functor%
\begin{align*}
P_{f}(\operatorname*{End}\nolimits_{\mathcal{C}}(S))  & \longrightarrow
P_{f}(\operatorname*{End}\nolimits_{\mathcal{C}}(S^{\prime}))\\
M  & \longmapsto M\otimes_{\operatorname*{End}\nolimits_{\mathcal{C}}%
(S)}\operatorname*{Hom}\nolimits_{\mathcal{C}}(S^{\prime},S)\text{,}%
\end{align*}
which itself is an exact equivalence. Note that this is precisely the shape of
a Morita equivalence: $\operatorname*{Hom}\nolimits_{\mathcal{C}}(S^{\prime
},S) $ is the Morita bimodule with the rings $\operatorname*{End}%
\nolimits_{\mathcal{C}}(S)$ and $\operatorname*{End}\nolimits_{\mathcal{C}%
}(S^{\prime})$ acting from the left- and right respectively. Exchanging the
roles of $S$ and $S^{\prime}$ yields the Morita bimodule for the reverse direction.
\end{remark}

Suppose $\mathcal{C}$ is a split exact category. Moreover, suppose there is a
collection $\{S_{i}\}_{i\in\mathbf{N}}$ of objects $S_{i}\in\mathcal{C}$ such
that every object $X\in\mathcal{C}$ is a direct summand of some countable
direct sum of these $S_{i}$. Then $\mathcal{C}^{\prime}:=\mathsf{Tate}%
_{\aleph_{0}}\mathcal{C}$ is idempotent complete and split exact by Lemma
\ref{Lemma_BasicLemmataForGeneralTateObjects}. Moreover, there is the
canonical object%

\begin{equation}
Y:=\widehat{%
{\textstyle\prod_{\mathbf{N}}}
S}\oplus\widehat{%
{\textstyle\bigoplus_{\mathbf{N}}}
S}\text{,}\label{la32}%
\end{equation}
defined by%
\[
\widehat{%
{\textstyle\prod_{\mathbf{N}}}
S}:=%
{\textstyle\prod_{\mathbf{N}}}
{\textstyle\prod_{i\in\mathbf{N}}}
S_{i}\text{,}\qquad\text{and}\qquad\widehat{%
{\textstyle\bigoplus_{\mathbf{N}}}
S}:=%
{\textstyle\bigoplus_{\mathbf{N}}}
{\textstyle\bigoplus_{i\in\mathbf{N}}}
S_{i}\text{,}%
\]
viewed as a $\mathsf{Pro}^{a}$-object (respectively $\mathsf{Ind}^{a}%
$-object), and it follows from \cite[Prop. 5.24]{TateObjectsExactCats} that
every object $X\in\mathsf{Tate}_{\aleph_{0}}^{el}\mathcal{C}$ is a direct
summand of $Y$. Then of course the same holds in the idempotent completion
$\mathcal{C}^{\prime}$. As a result, we have shown the assumptions of our
argument, but for $\mathsf{Tate}_{\aleph_{0}}\mathcal{C}$ instead of
$\mathcal{C}$ and for the family $\{S_{i}\}_{i\in\mathbf{N}}$ we can take the
single object $Y$. We may now iterate this procedure to obtain the following.

\begin{definition}
\label{def_doublebracketstdobject}Let $\mathcal{C}$ be a split exact category.
For any object $X\in\mathcal{C}$ define%
\[
X((t)):=%
{\textstyle\prod_{\mathbf{N}}}
X\oplus%
{\textstyle\bigoplus_{\mathbf{N}}}
X\qquad\in\mathsf{Tate}^{el}\mathcal{C}\text{.}%
\]

\end{definition}

This is just a special case of Equation \ref{la32} in the case of a single object.

\begin{definition}
\label{Definition_StandardObject}Let $\mathcal{C}$ be a split exact category
and $\{S_{i}\}_{i\in\mathbf{N}}$ a collection of objects such that every
$X\in\mathcal{C}$ is a direct summand of a countable direct sum of objects in
$\{S_{i}\}$. Then we call%
\[
S:=\left(  \widehat{%
{\textstyle\prod_{\mathbf{N}}}
S}\oplus\widehat{%
{\textstyle\bigoplus_{\mathbf{N}}}
S}\right)  ((t_{2}))\cdots((t_{n}))
\]
a \emph{standard object} for $n$-$\mathsf{Tate}_{\aleph_{0}}\mathcal{C}$.
\end{definition}

\begin{theorem}
\label{thm_body_TateObjsAreProjModules}Let $\mathcal{C}$ be a idempotent
complete and split exact category with a countable collection $\{S_{i}\}$ of
objects as in Definition \ref{Definition_StandardObject}.

\begin{enumerate}
\item Then every object $X\in n$-$\mathsf{Tate}_{\aleph_{0}}\mathcal{C}$ is a
direct summand of a standard object.

\item There is an exact equivalence of exact categories%
\[
n\text{-}\mathsf{Tate}_{\aleph_{0}}(\mathcal{C})\overset{\sim}{\longrightarrow
}P_{f}\left(  R\right)
\]
for $R:=\operatorname*{End}\nolimits_{n\text{-}\mathsf{Tate}_{\aleph_{0}%
}(\mathcal{C})}(S)$ and $S$ any standard object.

\item Under this equivalence, the ideals $I_{i}^{\pm}$ in $R$ correspond to
the categorical ideals of Definition \ref{Def_CubicalStructureForTateObjects}.
\end{enumerate}
\end{theorem}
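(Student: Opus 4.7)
The plan is to prove the three parts in sequence: (1) by induction on $n$, (2) as a direct application of Lemma~\ref{Lemma_SplitExactCatWithStandardObjectsAreProjModules}, and (3) by a short computation identifying both $I_i^s(X,Y)$ and its image under the equivalence with a common corner subgroup $f\cdot I_i^s(S,S)\cdot e$. For (1), I induct on $n$, with base case $n=1$. At $n=1$, the family $\{S_i\}$ satisfies the hypothesis of the construction preceding Definition~\ref{def_doublebracketstdobject}, and by \cite[Prop.~5.24]{TateObjectsExactCats} every elementary Tate object is a direct summand of $S^{(1)}:=\widehat{\prod S}\oplus\widehat{\bigoplus S}$; idempotent completion adds nothing, since an object $(Y,p)$ with $Y$ a summand of $S^{(1)}$ is itself a summand. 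For the inductive step, the hypothesis provides a single object $S^{(n-1)}\in (n-1)\text{-}\mathsf{Tate}_{\aleph_0}(\mathcal{C})$ of which every object of that category is a direct summand. By Lemmata~\ref{Lemma_Tate_ICOfSplitExactIsSplitExact} and~\ref{Lemma_CIdemComplSplitExactAleph0}, the category $(n-1)\text{-}\mathsf{Tate}_{\aleph_0}(\mathcal{C})$ is itself split exact and idempotent complete, so applying the $n=1$ case to it with the single-object family $\{S^{(n-1)}\}$ yields $S^{(n)}:=\widehat{\prod S^{(n-1)}}\oplus\widehat{\bigoplus S^{(n-1)}}=S^{(n-1)}((t_n))$, which agrees with the standard object of Definition~\ref{Definition_StandardObject}.

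For (2), the category $n\text{-}\mathsf{Tate}_{\aleph_0}(\mathcal{C})$ is idempotent complete by construction and split exact by iterated application of the lemmata just cited. Part (1) supplies a single object $S$ of which every object is a direct summand. These are precisely the hypotheses of Lemma~\ref{Lemma_SplitExactCatWithStandardObjectsAreProjModules}, so the functor $X\mapsto\operatorname{Hom}(S,X)$ yields the claimed exact equivalence with $P_f(R)$ for $R=\operatorname{End}(S)$.

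For (3), part (1) lets me represent any $X,Y\in n\text{-}\mathsf{Tate}_{\aleph_0}(\mathcal{C})$ as $X=eS$, $Y=fS$ for idempotents $e,f\in R$ (using that $S\oplus S\cong S$ for the standard object), so that under the equivalence of (2) one has $\operatorname{Hom}(X,Y)\cong fRe$, with $\varphi$ mapping to $f\tilde\varphi e$ for any lift $\tilde\varphi:S\to S$. By the categorical ideal property (Theorem~\ref{theorem_CIdempCompleteHaveTateIdeals}(1)), $I_i^s$ is stable under pre- and post-composition with arbitrary morphisms, so $I_i^s(X,Y)\subseteq\operatorname{Hom}(X,Y)$ maps to the corner subgroup $f\cdot I_i^s(S,S)\cdot e\subseteq fRe$. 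The same computation on the $P_f(R)$ side identifies this corner subgroup with the categorical ideal in $P_f(R)$ attached to the two-sided ideal $I_i^s(S,S)\subseteq R$, yielding the correspondence asserted in (3).

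The main obstacle I anticipate is the inductive step of (1): checking that \cite[Prop.~5.24]{TateObjectsExactCats} applies cleanly at each level of the tower. One must verify that $\widehat{\prod S^{(n-1)}}$ and $\widehat{\bigoplus S^{(n-1)}}$ are genuine admissible Pro- and Ind-objects in $(n-1)\text{-}\mathsf{Tate}_{\aleph_0}(\mathcal{C})$, and that the summand argument of that proposition carries over verbatim with a single generator in place of a countable family. Once this is in place, (2) and (3) reduce to essentially formal applications of Lemma~\ref{Lemma_SplitExactCatWithStandardObjectsAreProjModules} and Theorem~\ref{theorem_CIdempCompleteHaveTateIdeals}.
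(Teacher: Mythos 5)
Your proposal is correct and follows essentially the same route as the paper: part (1) is the iteration of \cite[Prop.~5.24]{TateObjectsExactCats} that the paper carries out in the discussion preceding Definition~\ref{Definition_StandardObject} (and then cites as \cite[Prop.~7.4]{TateObjectsExactCats}), part (2) is the same direct application of Lemma~\ref{Lemma_SplitExactCatWithStandardObjectsAreProjModules}, and your corner-subgroup computation $I_i^s(eS,fS)\cong f\cdot I_i^s(S,S)\cdot e$ is exactly the argument the paper compresses into ``obvious for the standard object, then pass to direct summands.'' No gaps; you have merely written out what the paper leaves implicit.
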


\begin{proof}
The first claim is just \cite[Prop. 7.4]{TateObjectsExactCats} and the second
is a direct consequence thanks to Lemma
\ref{Lemma_SplitExactCatWithStandardObjectsAreProjModules}. Part (3) is
obvious for the standard object and then use that every object is a direct
summand of the latter.
\end{proof}

Let us now adapt this result to the case of `Tate modules \textit{\`{a} la
Drinfeld}', we refer to \cite[\S 5.4]{TateObjectsExactCats} for a definition
and background information. This type of object has been introduced by
Drinfeld in his paper \cite{MR2181808} as a candidate for the local sections
of a reasonable notion of infinite-dimensional vector bundles over a scheme.

\begin{theorem}
\label{thm_body_ModulesForTateObjsALaDrinfeld}Let $R$ be a commutative ring.
Then there is an exact equivalence of categories%
\[
\mathsf{Tate}_{\aleph_{0}}^{Dr}(R)\overset{\sim}{\longrightarrow}%
P_{f}(E)\text{,}%
\]
where $E$ is the Beilinson $1$-fold cubical algebra%
\[
E:=\operatorname*{End}\nolimits_{\mathsf{Tate}_{\aleph_{0}}^{Dr}(R)}\left(
\left.  R((t))\right.  \right)  \text{,}%
\]
where \textquotedblleft$R((t))$\textquotedblright\ is understood as the Tate
module \`{a} la Drinfeld with this name in Drinfeld's paper \cite{MR2181808}.
\end{theorem}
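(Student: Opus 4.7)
The plan is to reduce the statement to the special case $n=1$ of Theorem \ref{thm_body_TateObjsAreProjModules}, applied to the base category $\mathcal{C} := P_f(R)$.

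First I would verify the hypotheses of Theorem \ref{thm_body_TateObjsAreProjModules} for $\mathcal{C} = P_f(R)$: the category of finitely generated projective right $R$-modules is idempotent complete and split exact (every short exact sequence of finitely generated projectives splits). Moreover, since every object of $P_f(R)$ is by definition a direct summand of some free module $R^{\oplus m}$, the singleton family $\{S_1\} := \{R\}$ satisfies the generation hypothesis: every object of $P_f(R)$ is a direct summand of $\bigoplus_{\mathbf{N}} R$. Thus all the assumptions needed to invoke Theorem \ref{thm_body_TateObjsAreProjModules} (with $n=1$) are in place.

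Second, I would unwind the definition of the standard object from Definition \ref{Definition_StandardObject} with this choice of generator and $n=1$. Since the indexing is countable and $S_i = R$ for all $i$, the object
\[
S \;=\; \widehat{\prod_{\mathbf{N}}R}\,\oplus\,\widehat{\bigoplus_{\mathbf{N}}R}
\;\cong\;\prod_{\mathbf{N}}R\,\oplus\,\bigoplus_{\mathbf{N}}R
\]
in $\mathsf{Tate}^{el}_{\aleph_0}(P_f(R))$. Under the equivalence $\mathsf{Tate}_{\aleph_0}^{Dr}(R) \xrightarrow{\sim} \mathsf{Tate}_{\aleph_0}(P_f(R))$ supplied by Theorem \ref{thm_body_DrinfeldModulesSitInsideTateCat}, this standard object corresponds exactly to Drinfeld's Tate $R$-module ``$R((t))$'', which splits as the Pro-part $R[[t]] = \prod_{\mathbf{N}} R$ and the Ind-part $t^{-1} R[t^{-1}] = \bigoplus_{\mathbf{N}} R$.

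Third, I would assemble the conclusion. Composing the equivalence of Theorem \ref{thm_body_DrinfeldModulesSitInsideTateCat} with the equivalence given by Theorem \ref{thm_body_TateObjsAreProjModules}(2) yields
\[
\mathsf{Tate}_{\aleph_0}^{Dr}(R) \;\xrightarrow{\sim}\; \mathsf{Tate}_{\aleph_0}(P_f(R)) \;\xrightarrow{\sim}\; P_f(E),
\]
with $E = \operatorname{End}(S)$. Since equivalences of categories preserve endomorphism rings, $E$ may equivalently be computed as $\operatorname{End}_{\mathsf{Tate}_{\aleph_0}^{Dr}(R)}(R((t)))$, which is the identification claimed. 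Finally, that $E$ carries the structure of a Beilinson $1$-fold cubical algebra is part of the conclusion of Theorem \ref{thm_CSplitExactIdealsAddUpGetNFoldCubicalBeilAlgebra} (applied to the split exact, idempotent complete base category $P_f(R)$ and the countable elementary Tate object $R((t))$), and that the ideals $I^\pm$ correspond to the categorical ideals on the Tate side is part (3) of Theorem \ref{thm_body_TateObjsAreProjModules}.

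The main obstacle is bookkeeping rather than substance: one must check that Drinfeld's explicit object $R((t))$ really is isomorphic, as an object of $\mathsf{Tate}_{\aleph_0}(P_f(R))$, to the abstractly constructed standard object $\prod_{\mathbf{N}}R \oplus \bigoplus_{\mathbf{N}} R$ produced by Definition \ref{Definition_StandardObject}. This is essentially immediate from Drinfeld's defining presentation $R((t)) = R[[t]] \oplus t^{-1}R[t^{-1}]$ (with $R[[t]]$ the admissible countable Pro-object $\varprojlim_j R[t]/t^j$ and $t^{-1}R[t^{-1}]$ the admissible countable Ind-object $\varinjlim_i t^{-i}R$), but should be spelled out to make the identification of $E$ canonical.
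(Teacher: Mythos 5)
Your proposal is correct and follows essentially the same route as the paper: compose the equivalence $\mathsf{Tate}_{\aleph_0}^{Dr}(R)\simeq\mathsf{Tate}_{\aleph_0}(P_f(R))$ from Theorem \ref{thm_body_DrinfeldModulesSitInsideTateCat} with Theorem \ref{thm_body_TateObjsAreProjModules} applied to the split exact, idempotent complete category $P_f(R)$ with generator $R$, identifying the standard object with Drinfeld's $R((t))$ and transporting the endomorphism ring via full faithfulness. No gaps.
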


\begin{proof}
We claim that we have exact equivalences of categories, namely%
\[
\mathsf{Tate}_{\aleph_{0}}^{Dr}(R)\overset{\sim}{\longrightarrow}%
\mathsf{Tate}_{\aleph_{0}}(P_{f}(R))\overset{\sim}{\longrightarrow}%
P_{f}\left(  E\right)  \text{,}%
\]
where the first equivalence stems from Theorem
\ref{thm_body_DrinfeldModulesSitInsideTateCat}. The latter exists since
$P_{f}(R) $ is an idempotent complete split exact category so that Theorem
\ref{thm_body_TateObjsAreProjModules} is applicable with%
\[
E:=\operatorname*{End}\nolimits_{\mathsf{Tate}_{\aleph_{0}}\left(
P_{f}(R)\right)  }\left(  \left.  R((t))\right.  \right)  \text{.}%
\]
To justify this, note that every finitely generated projective $R$-module is a
direct summand of a finitely generated free module $R^{\oplus n}$ for $n$
large enough, so $R((t))$, as in Definition \ref{def_doublebracketstdobject},
is indeed a standard object. However, now using the equivalence of Theorem
\ref{thm_body_DrinfeldModulesSitInsideTateCat} again, the full faithfulness
yields an isomorphism of rings
\[
E\cong\operatorname*{End}\nolimits_{\mathsf{Tate}_{\aleph_{0}}^{Dr}(R)}\left(
\left.  R((t))\right.  \right)  \text{,}%
\]
where now $\left.  R((t))\right.  $ is to be understood as the (rather: one
choice of a) Tate module \`{a} la Drinfeld corresponding to the Tate object
with the same name. However, Drinfeld himself introduced the corresponding
object already in his original paper \cite[\S 3, especially Example
3.2.2]{MR2181808}, and in fact it is also called $R((t))$ in loc. cit. This
establishes the claim.
\end{proof}

\begin{openproblem}
It would seem interesting to study the analogous problem without the
restriction to countable cardinality. This probably would lead to a very
complicated picture. Kaplansky has shown that a projective module over a ring
must necessarily be a direct sum of countably generated modules. Over the
years it has become increasingly clear that this perhaps surprising appearance
of questions of cardinality permeate the entire field \cite{MR2900662}. See
for example \cite{MR2900444} or \cite{MR3223352} for intricacies in the
context of Drinfeld's ideas.
\end{openproblem}

\section{\label{sect_TraceClassOps}Trace-class operators}

Suppose $A$ is an $n$-fold cubical algebra as in Definition
\ref{def_BeilNFoldAlg}. Then we call the intersection of ideals%
\[
I_{tr}:=\bigcap_{i=1,\ldots,n}I_{i}^{+}\cap I_{i}^{-}%
\]
the ideal of \emph{trace-class} operators in $A$. Let us say a few things
about the historical precursors of this concept: While the name is inspired
from a vaguely related definition in functional analysis, the present format
originates from Tate's 1968 article on residues for curves \cite{MR0227171}.
He considers a $1$-fold cubical algebra of $k$-linear maps and wants to define
a trace on $I_{tr}$, mimicking the usual trace. Sadly, the maps in his ideal
$I_{tr}$ need not have finite rank, so a priori it is not clear whether a
notion of trace exists for them at all. Tate then follows the principle that
any nilpotent map should have trace zero, no matter whether it has finite rank
or not. From this he distills the concept of a `finite-potent' map $-$ a map
for which some finite power has finite rank. Tate manages to develop a
well-defined trace for such maps. Nonetheless, this trace has some fairly
mysterious properties. Most notably it is not always linear, as was
conjectured by Tate and later shown by F. Pablos Romo \cite{MR2319783}, see
also \cite{MR2360831}, \cite{MR3177048} for a fairly complete analysis of this
issue. However, in all applications of Tate's trace one usually only needs it
for trace-class operators, i.e. maps in $I_{tr}$, rather than all
finite-potent maps. Restricted to $I_{tr}$, Tate's generalized trace becomes
linear and very well-behaved. In this section we shall generalize this concept
to Tate categories. Just as Tate's original work takes the classical finite
rank trace as input, we shall also use a notion of trace on the input category
$\mathcal{C}$ as the starting point for the construction:

\begin{definition}
Let $\mathcal{C}$ be an exact category. An \emph{exact trace} on $\mathcal{C}
$ with values in an abelian group $Q$ is for each object $X\in\mathcal{C}$ a
group homomorphism%
\[
\operatorname*{tr}\nolimits_{X}:\operatorname*{End}\nolimits_{\mathcal{C}%
}(X)\longrightarrow Q
\]
so that the following properties hold:

\begin{enumerate}
\item \emph{(Zero on commutators)} For $f,g\in\operatorname*{End}%
\nolimits_{\mathcal{C}}(X)$ we have $\operatorname*{tr}\nolimits_{X}(fg-gf)=0$.

\item \emph{(Additivity)} For a short exact sequence $A\hookrightarrow
B\twoheadrightarrow A/B$ and $f\in\operatorname*{End}\nolimits_{\mathcal{C}%
}(B)$ so that $f\mid_{A}$ factors over $A$, we have%
\begin{equation}
\operatorname*{tr}\nolimits_{B}(f)=\operatorname*{tr}\nolimits_{A}(f\mid
_{A})+\operatorname*{tr}\nolimits_{A/B}(f)\text{.}\label{ladd1}%
\end{equation}

\end{enumerate}
\end{definition}

\begin{example}
For $\mathcal{C}:=\mathsf{Vect}_{f}$ the usual trace of a $k$-linear
endomorphism is an exact trace with values in the base field $k$.
\end{example}

\begin{example}
[Hattori-Stallings trace]Suppose $R$ is any unital associative ring, not
necessarily commutative. Let $P_{f}(R)$ be the category of finitely generated
right $R$-modules. For any $X\in P_{f}(R)$ the \emph{Hattori-Stallings trace}
is the morphism%
\begin{align*}
\operatorname*{tr}\nolimits_{X}:\operatorname*{End}\nolimits_{R}(X)  &
\longrightarrow R/[R,R]\\
X\otimes X^{\vee}  & \longrightarrow R/[R,R]\\
x\otimes x^{\vee}  & \longmapsto x^{\vee}(x)\text{.}%
\end{align*}
It is an exact trace. In fact, it is known to be universal on the category
$P_{f}(R)$, i.e. any exact trace with values in an abelian group $Q$ arises as
the composition of the Hattori-Stallings trace with a morphism
$R/[R,R]\rightarrow Q$. See \cite{MR0175950}, \cite{MR0202807} for the
original papers, \cite[\S 1]{MR564418} for a review. If $R:=k$ is a field, we
recover the classical trace.
\end{example}

\begin{example}
In Tate's theory in \cite{MR0227171} every nilpotent endomorphism has trace
zero. This need not hold in our context $-$ for entirely trivial reasons. We
give an explicit counter-example nonetheless: Take $\mathcal{C}:=P_{f}%
(\mathbf{Z}/2^{10})$. Define a trace $tr_{\mathbf{Z}/2^{10}}%
:\operatorname*{End}(\mathbf{Z}/2^{10})\rightarrow\mathbf{Z}/2^{10}$ as the
identity. Since $\mathcal{C}$ is split exact, the axiom regarding additivity
for exact sequences determines a unique continuation of $tr_{\mathbf{Z}%
/2^{10}}$ to the entire category $\mathcal{C}$. Clearly, multiplication with
$2$ is a nilpotent endomorphism of $\mathbf{Z}/2^{10}$, yet has trace $2$.
\end{example}

Once such a trace is available, we can lift it to the trace-class operators of
$n$-Tate objects:

\begin{proposition}
\label{Prop_TraceExists}Suppose $\mathcal{C}$ is an idempotent complete exact
category and $\operatorname*{tr}\nolimits_{(-)}$ an exact trace with values in
an abelian group $Q$. Then for every object $X\in\left.  n\text{-}%
\mathsf{Tate}(\mathcal{C})\right.  $ there is a canonically defined morphism%
\[
\tau_{X}:I_{tr}\rightarrow Q
\]
and these morphisms are uniquely determined by the following properties:

\begin{enumerate}
\item If $X\in\mathcal{C}$ then $\tau_{X}(f)=\operatorname*{tr}\nolimits_{X}%
(f)$ for all $f\in\operatorname*{End}(X)$.

\item Suppose $N^{\prime}\hookrightarrow N\hookrightarrow X$ are any lattices
of $X$ such that $\varphi\in I_{tr}(X)$ admits a lift%
\[%
% 
%\node xp(500,0)[X]
%\node np(500,500)[N]
%\arrow/>/[x`xp;\varphi]
%\arrow/^{ (}->/[np`xp;]
%\arrow/.>/[x`np;{\varphi}]
%\efig}}%
% 
\bfig\node x(0,0)[X]
\node xp(500,0)[X]
\node np(500,500)[N]
\arrow/>/[x`xp;\varphi]
\arrow/^{ (}->/[np`xp;]
\arrow/.>/[x`np;{\varphi}]
\efig
\]
and for which $\varphi\mid_{N^{\prime}}$ is zero, and thus factors as
$N/N^{\prime}\overset{\overline{\varphi}}{\longrightarrow}N/N^{\prime}$.\ Then
$\tau_{X}(\varphi):=\tau_{N/N^{\prime}}(\overline{\varphi})$. This element is
independent of the choice of $N,N^{\prime}$.

\item \emph{(Zero on commutators)} For $f,g\in I_{tr}(X)$ we have $\tau
_{X}(fg-gf)=0$.

\item \emph{(Additivity)} For a short exact sequence $A\hookrightarrow
B\twoheadrightarrow A/B$ and $f\in I_{tr}(B)$ so that $f\mid_{A}$ factors over
$A$, we have%
\[
\tau_{B}(f)=\tau_{A}(f\mid_{A})+\tau_{A/B}(\overline{f})\text{.}%
\]
We automatically have $f\mid_{A}\in I_{tr}(A)$ and $\overline{f}\in
I_{tr}(A/B)$.
\end{enumerate}
\end{proposition}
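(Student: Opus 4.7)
The plan is to proceed by induction on $n$. In the base case $n=0$, every $X \in \mathcal{C}$ has $I_{tr}(X) = \operatorname{End}_{\mathcal{C}}(X)$ by Lemma \ref{lemma_TraceClassOfObjectInCIsAll}, so I set $\tau_X := \operatorname{tr}_X$ and all four properties are immediate from the axioms of an exact trace. For the inductive step, let $X$ be an elementary $n$-Tate object and $\varphi \in I_{tr}(X)$; the case of a general $(X,p) \in n\text{-}\mathsf{Tate}(\mathcal{C})$ will be handled afterwards by passing to the underlying elementary Tate object. Since $\varphi \in I_1^+ \cap I_1^-$, boundedness produces a lattice $N \hookrightarrow X$ through which $\varphi$ factors, and discreteness produces a lattice $N_0$ with $\varphi|_{N_0} = 0$; the codirectedness of the Sato Grassmannian \cite[Thm.~6.7]{TateObjectsExactCats} lets me replace $N_0$ by a common sub-lattice $N' \hookrightarrow N$. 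By \cite[Prop.~6.6]{TateObjectsExactCats} the quotient $N/N'$ is an $(n-1)$-Tate object, and unwinding Definition \ref{Def_CubicalStructureForTateObjects} shows that the induced morphism $\overline{\varphi} : N/N' \to N/N'$ lies in $I_{tr}$ at the $(n-1)$-Tate level. The inductive hypothesis then provides $\tau_{N/N'}(\overline{\varphi})$, which I propose to take as the definition of $\tau_X(\varphi)$.

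The hard part will be showing this definition is independent of the choice of $(N,N')$. My approach is a zig-zag via a larger common pair. Given two valid pairs $(N,N')$ and $(\tilde N, \tilde N')$, I use the directedness and codirectedness of the Sato Grassmannian to produce $M \supseteq N, \tilde N$ and $M' \subseteq N', \tilde N'$; the pair $(M, M')$ still supports the factorization of $\varphi$, because enlarging a lattice preserves boundedness and shrinking preserves discreteness. It thus suffices to compare $(N, N')$ with $(M, M')$ in the nested situation $M' \subseteq N' \subseteq N \subseteq M$. Inside the $(n-1)$-Tate object $M/M'$ I consider the two short exact sequences
\[
N'/M' \hookrightarrow M/M' \twoheadrightarrow M/N', \qquad N/N' \hookrightarrow M/N' \twoheadrightarrow M/N.
\]
The induced endomorphism of $N'/M'$ is zero because $\varphi$ kills $N'$, and that of $M/N$ is zero because $\varphi$ lands in $N$. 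Applying the inductive additivity of $\tau$ at the $(n-1)$-Tate level to each sequence in turn collapses the computation and yields $\tau_{M/M'}(\overline{\varphi}) = \tau_{N/N'}(\overline{\varphi})$.

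With $\tau_X$ well defined, I will verify properties (1)--(4). Property (1) follows by induction: for $X \in \mathcal{C}$, take $N = X$ and $N' = 0$ at each level, peeling off one layer of Tate structure until the base case delivers $\operatorname{tr}_X$. Property (2) is the definition together with the independence just established. For additivity in (4), given $A \hookrightarrow B \twoheadrightarrow B/A$ and $f \in I_{tr}(B)$ with $f|_A$ factoring through $A$, I choose lattices $(N, N')$ in $B$ supporting $f$ and, via Proposition \ref{Prop_IfMorphismFactorsPreservesBasicProperties}, extract corresponding lattices in $A$ and $B/A$ fitting into a $3 \times 3$ diagram of admissible short exact sequences of subquotients; a single application of $(n-1)$-Tate additivity to the middle row then yields the identity, and the fact that $f|_A \in I_{tr}(A)$, $\overline{f} \in I_{tr}(A/B)$ follows from the same Proposition. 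For vanishing on commutators (3), given $f, g \in I_{tr}(X)$ I use directedness to find a single pair $(N, N')$ jointly supporting $fg$ and $gf$, so that $\overline{fg - gf} = [\overline{f}, \overline{g}]$ on $N/N'$, and invoke the inductive commutator identity.

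Uniqueness follows at once: any trace satisfying (1) and (2) is determined by the defining formula $\tau_X(\varphi) := \tau_{N/N'}(\overline{\varphi})$, and the inductive uniqueness in the $(n-1)$-Tate case propagates. Finally, for general Tate objects $(X,p) \in n\text{-}\mathsf{Tate}(\mathcal{C})$ I set $\tau_{(X,p)} := \tau_X$ restricted to $p$-fixed endomorphisms (see Equation \ref{lcw1}); the four properties transfer because short exact sequences in the idempotent completion are direct summands of short exact sequences of elementary Tate objects, on which $\tau$ is already additive.
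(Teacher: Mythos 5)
Your proof is correct and follows essentially the same route as the paper's: you define $\tau_X(\varphi)$ via a lattice factorization $\overline{\varphi}:N/N'\to N/N'$ and recurse, prove independence of the pair $(N,N')$ by comparing nested pairs through the additivity axiom of the previous inductive level, and verify (1)--(4) by the same lattice-compatibility arguments (the paper merely organizes the recursion as a full chain of $j$-factorizations down to $\mathcal{C}$ rather than peeling one layer per induction step). The only small omission is an explicit check that $\tau_X$ is additive in $\varphi$ itself, which the paper notes and which follows from the same common-over-/sub-lattice argument you already use for commutators.
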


The endomorphism group in (1) makes sense in view of Lemma
\ref{lemma_TraceClassOfObjectInCIsAll}.

\begin{proof}
\textit{(Step 1)} The case $n=0$ is trivial and directly reduces to the axioms
of an exact trace. We deal with the case of an elementary $1$-Tate object $X$
first, i.e. assume $n=1$. Suppose $X\overset{\varphi}{\rightarrow}X$ is any
endomorphism in $\mathsf{Tate}^{el}(\mathcal{C})$. We call a diagram%
\[%
% 
%\node xp(500,0)[X]
%\node n(0,500)[N]
%\node np(500,500)[N]
%\arrow/^{ (}->/[n`x;]
%\arrow/>/[x`xp;\varphi]
%\arrow/^{ (}->/[np`xp;]
%\arrow/>/[x`np;]
%\arrow/.>/[n`np;\overline{\varphi}]
%\efig}}%
% 
\bfig\node x(0,0)[X]
\node xp(500,0)[X]
\node n(0,500)[N]
\node np(500,500)[N]
\arrow/^{ (}->/[n`x;]
\arrow/>/[x`xp;\varphi]
\arrow/^{ (}->/[np`xp;]
\arrow/>/[x`np;]
\arrow/.>/[n`np;\overline{\varphi}]
\efig
\]
a\emph{\ }$1$-\emph{factorization} if $N\hookrightarrow X$ is a lattice, the
diagram commutes, and $\overline{\varphi}$ factors over $\overline{\varphi
}:N/N^{\prime}\rightarrow N/N^{\prime}$ with $N^{\prime}\hookrightarrow
N\hookrightarrow X$ a further (smaller) lattice. For any $1$-factorization of
$\varphi$ we can define a preliminary trace by $\tau(\varphi
):=\operatorname*{tr}\nolimits_{N/N^{\prime}}(\overline{\varphi})\in Q$ for
the simple reason that any quotient of lattices, e.g. $N/N^{\prime}$, must be
an object in $\mathcal{C}$, \cite[Prop. 6.6]{TateObjectsExactCats}. Next, we
note that any \textit{finite} morphism has a $1$-factorization: Since
$\varphi$ is bounded, it factors as $X\rightarrow N\hookrightarrow X$. Now,
restrict this to $N$. Since $\varphi$ is also discrete, there exists some
lattice $V$ with $V\hookrightarrow X\rightarrow X$ being zero, so let
$N^{\prime}$ be any common sub-lattice of $V$ and $N$. Such exists because of
the co-directedness of the Sato Grassmannian \cite[Theorem 6.7]%
{TateObjectsExactCats}. Now $N$ and $N^{\prime}$ satisfy all necessary
criteria. Suppose we find a further $1$-factorization with $N^{\prime}$
replaced by a smaller lattice $N^{\prime\prime}$. From $N^{\prime\prime
}\hookrightarrow N^{\prime}\hookrightarrow N$ we get the short exact sequence%
\[
\frac{N^{\prime}}{N^{\prime\prime}}\hookrightarrow\frac{N}{N^{\prime\prime}%
}\twoheadrightarrow\frac{N}{N^{\prime}}\text{.}%
\]
Since $\overline{\varphi}$ already vanishes on $N^{\prime}$, the trace must be
zero on the left-hand side term. The additivity axiom of the trace, Equation
\ref{ladd1}, implies that $\operatorname*{tr}\nolimits_{N/N^{\prime\prime}%
}(\overline{\varphi})=\operatorname*{tr}\nolimits_{N/N^{\prime}}%
(\overline{\varphi})+0$. Similarly, if we replace $N$ by a larger lattice
$N^{+}$, we get the short exact sequence%
\[
\frac{N}{N^{\prime}}\hookrightarrow\frac{N^{+}}{N^{\prime}}\twoheadrightarrow
\frac{N^{+}}{N}%
\]
and as $\overline{\varphi}$ factors over $N$ by assumption, the trace must be
zero on the right-hand side term. Again, we get $\operatorname*{tr}%
\nolimits_{N^{+}/N^{\prime}}(\overline{\varphi})=\operatorname*{tr}%
\nolimits_{N/N^{\prime}}(\overline{\varphi})+0$ by Equation \ref{ladd1}. This
shows that our preliminary definition of a trace is independent under
replacing $N^{\prime}$ by a smaller lattice and $N$ by a larger one. Since any
two lattices have a common sub-lattice and over-lattice, \cite[Theorem
6.7]{TateObjectsExactCats}, and $\operatorname*{tr}\nolimits_{N/N^{\prime}%
}(\overline{\varphi})\in Q$ is unchanged under replacing lattices this way, we
conclude that $\operatorname*{tr}\nolimits_{N/N^{\prime}}(\overline{\varphi})$
is actually independent of $N$ and $N^{\prime}$. In the same way we can show
that the trace is linear: Pick $N_{i},N_{i}^{\prime}$ for $i=1,2$ and both
morphisms in consideration and then verify the claim by replacing $N_{1}%
,N_{2}$ by a joint over-lattice, resp. sub-lattice for $N_{1}^{\prime}%
,N_{2}^{\prime}$. For a general (not necessarily elementary) $1$-Tate object,
we proceed as in \S \ref{sect_GeneralTateObjects}: The morphism $\varphi$ is
called finite if the underlying morphism of elementary Tate objects
$(X,p)\rightarrow(X,p)$ is finite, so we can take the trace which we have just
constructed.\newline Summarizing our findings, we have seen that axioms (1)
and (2) actually dictate a well-defined construction of a group homomorphism
$\tau_{X}:I_{tr}\rightarrow Q$. This implies the uniqueness and it remains to
show that axioms (3) and (4) hold. Vanishing on $[I_{tr},I_{tr}]$ is
immediately clear since we find $\overline{\varphi\circ\varphi^{\prime}%
}=\overline{\varphi}\circ\overline{\varphi^{\prime}}$ for composable
trace-class morphisms $\varphi,\varphi^{\prime}$, so the $1$-factorization of
a commutator can be expressed as the commutator of $1$-factorizations. Now use
the vanishing of exact traces on commutators. This proves (3). For (4),
suppose%
\begin{equation}
A\hookrightarrow B\twoheadrightarrow A/B\label{lc4}%
\end{equation}
is a short exact sequence and $f\in I_{tr}(B)$ is such that $f\mid_{A}$
factors over $A$. In this situation, Prop.
\ref{Prop_IfMorphismFactorsPreservesBasicProperties} guarantees that
$f\mid_{A}\in I_{tr}(A)$ and $\overline{f}\in I_{tr}(A/B)$ are also
trace-class. Moreover, if $f$ factors over $N/N^{\prime}$, it supplies us with
a short exact sequence%
\[
N_{1}/N_{1}^{\prime}\hookrightarrow N/N^{\prime}\twoheadrightarrow N_{2}%
/N_{2}^{\prime}\text{,}%
\]
where $N_{1}^{\prime}\hookrightarrow N_{1}\hookrightarrow A$ and
$N_{2}^{\prime}\hookrightarrow N_{2}\hookrightarrow A/B$ are lattices. Of
course these quotients are objects in $\mathcal{C}$, so the additivity of the
exact trace tells us that%
\[
\operatorname*{tr}\nolimits_{N_{1}/N_{1}^{\prime}}(f)=\operatorname*{tr}%
\nolimits_{N/N^{\prime}}(f\mid_{A})+\operatorname*{tr}\nolimits_{N_{2}%
^{\prime}/N_{2}}(f)\text{,}%
\]
but by (1) and (2) each of these traces is just a way to evaluate our trace
$\tau_{(-)}$, and we get%
\[
\tau_{B}(f)=\tau_{A}(f\mid_{A})+\tau_{A/B}(\overline{f})\text{,}%
\]
which is exactly what we wanted to show. This settles axiom (4).\newline%
\textit{(Step 2)} For an elementary $n$-Tate object proceed exactly as above,
but combined with an induction: Define a\emph{\ }$n$\emph{-factorization} just
like a $1$-factorization%
\[%
% 
%\node xp(500,0)[X]
%\node n(0,500)[N]
%\node np(500,500)[N]
%\arrow/^{ (}->/[n`x;]
%\arrow/>/[x`xp;\varphi]
%\arrow/^{ (}->/[np`xp;]
%\arrow/>/[x`np;]
%\arrow/.>/[n`np;{\varphi}]
%\efig}}%
% 
\bfig\node x(0,0)[X]
\node xp(500,0)[X]
\node n(0,500)[N]
\node np(500,500)[N]
\arrow/^{ (}->/[n`x;]
\arrow/>/[x`xp;\varphi]
\arrow/^{ (}->/[np`xp;]
\arrow/>/[x`np;]
\arrow/.>/[n`np;{\varphi}]
\efig
\]
but with $\overline{\varphi}:N/N^{\prime}\rightarrow N/N^{\prime}$ an
endomorphism of an $(n-1)$-Tate object. By the definition of trace-class
operators, Definition \ref{Def_CubicalStructureForTateObjects}, this is now
again a trace-class operator for this $(n-1)$-Tate object:%
\[
\varphi\in I_{tr}=I_{1}^{+}\cap I_{1}^{-}\cap\left(
{\textstyle\bigcap\nolimits_{i=2,\ldots,n}}
I_{i}^{+}\cap I_{i}^{-}\right)  \text{,}%
\]
from which we deduce that $\overline{\varphi}\in I_{tr}(N/N^{\prime})$, as
$(n-1)$-Tate objects. Next, pick a $(n-1)$-factorization for $\overline
{\varphi}$ and proceed this way until we get a $1$-factorization. Define
$\tau(\varphi)$ as before by $\tau(\varphi):=\operatorname*{tr}%
\nolimits_{N/N^{\prime}}(\overline{\varphi})$ of this $1$-factorization. As in
the case of $1$-factorizations, verify that for any $j$-factorization
($j=1,\ldots,n$), replacing lattices by over- resp. sub-lattices does not
affect the value of $\operatorname*{tr}\nolimits_{N/N^{\prime}}(\overline
{\varphi})$:\ We prove this by induction starting from $j=1$. But this case
has already been dealt with in Step 1. For $j\geq2$ use the same argument as
in Step 1, adapted as follows: In Step 1 we essentially only used the
additivity property of the exact trace. Replace this by using the additivity
of our $\tau$, i.e. its axiom (4), of the previous induction step
$j-1$.\newline Now, by construction properties (1), (2) in our claim are
satisfied. Property (3) follows easily as in Step 1. In order to show axiom
(4), we can again just copy the proof in Step 1 since it only uses the
additivity of the exact trace, which we can again replace by the additivity of
our $\tau$, axiom (4), of the previous induction step.
\end{proof}

It would be very nice if one could prove the following result in greater generality.

\begin{proposition}
[Strong Commutator\ Vanishing]\label{Prop_StrongVanishing}Let $\mathcal{C}$ be
an abelian category. Suppose $X\in\left.  n\text{-}\mathsf{Tate}%
(\mathcal{C})\right.  $ and $R:=\operatorname*{End}(X)$, $I_{tr}\subseteq R$
the trace-class ideal. Then%
\[
\tau_{X}([I_{tr},R])=0\text{,}%
\]
i.e. a commutator of a trace-class endomorphism with an \emph{arbitrary}
endomorphism vanishes.
\end{proposition}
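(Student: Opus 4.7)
\bigskip

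My plan is to prove Proposition \ref{Prop_StrongVanishing} by induction on $n$, reducing the $n$-Tate statement to a cyclic trace lemma that in turn reduces to the same statement at level $n-1$. The base case $n=0$ is immediate: for $X\in\mathcal{C}$, Lemma \ref{lemma_TraceClassOfObjectInCIsAll} gives $I_{tr}(X)=\operatorname{End}(X)$, and the vanishing of $\tau_X$ on $[\operatorname{End}(X),\operatorname{End}(X)]$ is exactly the ``zero on commutators'' axiom of the input exact trace on $\mathcal{C}$.

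For the inductive step, the starting observation is that every $f\in I_{tr}(X)$ admits a \textbf{factorization through $(n-1)$-Tate}: since $f\in I_1^{+}\cap I_1^{-}$, it factors as $X\twoheadrightarrow X/L'\to L\hookrightarrow X$, and the middle arrow is a morphism from an admissible Ind-object to an admissible Pro-object in $n\text{-}\mathsf{Tate}(\mathcal{C})$, hence factors through some $C\in (n-1)\text{-}\mathsf{Tate}(\mathcal{C})$ by \cite[Prop.~5.9]{TateObjectsExactCats}. Thus $f=A\circ B$ with $A\colon C\to X$ and $B\colon X\to C$, and accordingly $fg=A\circ(Bg)$ while $gf=(gA)\circ B$.

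The heart of the proof is the following \textbf{cyclic-trace lemma} at level $n$: for any $C'\in(n-1)\text{-}\mathsf{Tate}(\mathcal{C})$ and morphisms $\alpha\colon C'\to X$, $\beta\colon X\to C'$ with $\alpha\beta\in I_{tr}(X)$, one has $\beta\alpha\in I_{tr}(C')$ and $\tau_X(\alpha\beta)=\tau_{C'}(\beta\alpha)$. Applying this to $(\alpha,\beta)=(A,Bg)$ yields $\tau_X(fg)=\tau_C(BgA)$, and applying it to $(\alpha,\beta)=(gA,B)$ yields $\tau_X(gf)=\tau_C(BgA)$; hence $\tau_X([f,g])=0$. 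To prove the lemma, note that $C'$ is a trivial Pro-object in $n\text{-}\mathsf{Tate}$, so by Proposition \ref{Prop_LatticeLeftFiltAndRightFilt} the morphism $\alpha$ factors through a lattice $L\hookrightarrow X$ and $\beta$ factors through a quotient $X\twoheadrightarrow X/L'$; choose (using Theorem 6.7 of \cite{TateObjectsExactCats}) a common nested pair $L'\subseteq L$. A direct diagram chase then shows that $\overline{\alpha\beta}\colon L/L'\to L/L'$ equals $\bar\alpha\circ\bar\beta$ for induced maps $\bar\alpha\colon C'\to L/L'$ and $\bar\beta\colon L/L'\to C'$, while $\beta\alpha=\bar\beta\circ\bar\alpha$ on $C'$. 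By the construction of $\tau_X$ (Proposition \ref{Prop_TraceExists}), $\tau_X(\alpha\beta)=\tau_{L/L'}(\bar\alpha\bar\beta)$, so the level-$n$ cyclic trace reduces to the equality $\tau_{L/L'}(\bar\alpha\bar\beta)=\tau_{C'}(\bar\beta\bar\alpha)$, an instance of \emph{cyclic trace between two $(n-1)$-Tate objects}.

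The final step is to deduce this last equality from the inductive hypothesis. Form the direct sum $Z:=L/L'\oplus C'\in(n-1)\text{-}\mathsf{Tate}$ (available since $\mathcal{C}$ is abelian, hence so is every finite biproduct in the iterated Tate category in the sense needed), and consider the off-diagonal matrices $\tilde\alpha,\tilde\beta\in\operatorname{End}(Z)$. Their products $\tilde\alpha\tilde\beta$ and $\tilde\beta\tilde\alpha$ are block-diagonal with $\bar\alpha\bar\beta$ (respectively $\bar\beta\bar\alpha$) as their only nonzero entry, both lying in $I_{tr}(Z)$; additivity of $\tau_Z$ along the split sequence $L/L'\hookrightarrow Z\twoheadrightarrow C'$ then gives $\tau_Z([\tilde\alpha,\tilde\beta])=\tau_{L/L'}(\bar\alpha\bar\beta)-\tau_{C'}(\bar\beta\bar\alpha)$, and it suffices to show that this commutator trace vanishes. \textbf{The main obstacle} is that neither $\tilde\alpha$ nor $\tilde\beta$ is itself trace-class in general, so the inductive hypothesis $\tau_Z([I_{tr}(Z),\operatorname{End}(Z)])=0$ does not apply verbatim to the pair $(\tilde\alpha,\tilde\beta)$. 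I expect to circumvent this by exhibiting $[\tilde\alpha,\tilde\beta]$ as lying in the ideal $[I_{tr}(Z),\operatorname{End}(Z)]$ via a careful decomposition: one uses the abelian structure of $\mathcal{C}$ to extract the image $\operatorname{im}(\bar\alpha)$ and kernel $\ker(\bar\alpha)$, reassembling $\bar\alpha,\bar\beta$ as compositions through their intermediate subquotients (which are all still $(n-1)$-Tate by \cite[Prop.~6.6]{TateObjectsExactCats}), and exploiting that the trace-class operator $\tilde\alpha\tilde\beta$ can be cyclically rotated within $[I_{tr},\operatorname{End}]$ because one of its factorizations already passes through a trace-class-and-arbitrary commutator. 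This abelian-category decomposition, and the bookkeeping it requires to match the block-diagonal structure of $[\tilde\alpha,\tilde\beta]$, is the delicate step where the hypothesis ``$\mathcal{C}$ abelian'' enters essentially.
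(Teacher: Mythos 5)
Your argument breaks down at its very first step. You claim that a single $f\in I_{tr}(X)$ factors through an object $C\in(n-1)\text{-}\mathsf{Tate}(\mathcal{C})$ because the middle arrow $X/L'\to L$ of the factorization $X\twoheadrightarrow X/L'\to L\hookrightarrow X$ goes from an Ind-object to a Pro-object. But \cite[Prop.~5.9]{TateObjectsExactCats} says that an \emph{object} which is simultaneously Ind and Pro lies in the base category; it does not say that a \emph{morphism} from an Ind-object to a Pro-object factors through such an object, and in this direction it is false. The paper's Example \ref{Example_NeedLongWordsToHaveFiniteRank} is an explicit counterexample: $\varphi:(a,b)\mapsto(0,a)$ on $k[t]\oplus k[[t]]$ lies in $I^{0}$ but does not factor through $\mathcal{C}$; its middle arrow is the non-factorable inclusion $k[t]\to k[[t]]$. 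The factorization you want only exists for \emph{products of at least two} trace-class morphisms (Lemma \ref{Lemma_PropertiesBoundedOrDiscreteMorphisms}~(4), and Theorem \ref{theorem_CIdempCompleteHaveTateIdeals}~(3) states that fewer than $2^{n}$ letters is in general not enough). The mechanism that does work, used in the proof of Lemma \ref{Lemma_PropertiesBoundedOrDiscreteMorphisms}~(4), goes the other way: a morphism from a \emph{Pro}-object to an \emph{Ind}-object factors through something in the base category, because Pro-objects are left filtering.

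Beyond this, your cyclic-trace lemma is also not sound as stated: the membership claim ``$\alpha\beta\in I_{tr}(X)$ implies $\beta\alpha\in I_{tr}(C')$'' fails in general (already for $2\times 2$ block morphisms one can arrange $\alpha\beta=0$ while $\beta\alpha$ is an infinite-rank partial identity), and you yourself flag that the final reduction of $\tau_{L/L'}(\bar\alpha\bar\beta)=\tau_{C'}(\bar\beta\bar\alpha)$ to the inductive hypothesis does not go through because $\tilde\alpha,\tilde\beta$ need not be trace-class; the proposed circumvention is not carried out. For comparison, the paper's proof avoids both issues entirely: it first uses property (2) of Proposition \ref{Prop_TraceExists} to descend $\varphi_{0}$, $\varphi\varphi_{0}$ and $\varphi_{0}\varphi$ simultaneously through nested lattice quotients (possible by the directedness and co-directedness of the Sato Grassmannian) until one may assume $X\in\mathcal{C}$, and then applies the additivity axiom of the exact trace to the two short exact sequences $\ker(\varphi_{0})\hookrightarrow X\twoheadrightarrow\operatorname{im}(\varphi_{0})$ (with epic given by $\varphi_{0}$ itself) and $\operatorname{im}(\varphi_{0})\hookrightarrow X\twoheadrightarrow X/\operatorname{im}(\varphi_{0})$, obtaining $\tau_{X}(\varphi\varphi_{0})=\tau_{\operatorname{im}(\varphi_{0})}(\varphi_{0}\varphi)=\tau_{X}(\varphi_{0}\varphi)$. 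This is where the abelian hypothesis enters --- through the existence of $\ker$ and $\operatorname{im}$ --- not through the matrix bookkeeping you propose.
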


\begin{proof}
Let $X\in\left.  n\text{-}\mathsf{Tate}(\mathcal{C})\right.  $ and $\varphi\in
R$, $\varphi_{0}\in I_{tr}(R)$. Since $I_{tr}$ is a two-sided ideal,
$\varphi_{0}$, $\varphi\varphi_{0}$ and $\varphi_{0}\varphi$ are all
trace-class and thus we know that for each of them we can quotient
$X\rightarrow X$ step-by-step through lattices as in Prop.
\ref{Prop_TraceExists} (2), going from $n$-Tate objects to $0$-Tate objects
while preserving the value of $\tau$, so that we may assume from the outset
that $X\in\mathcal{C}$. We can also find such lattices simultaneously for all
three of them since in each step the directedness and co-directedness of the
Sato Grassmannian \cite[Theorem 6.7]{TateObjectsExactCats} assures us that we
can take a common over- (respectively sub-)lattice of the lattices we find for
each individual morphism. Now consider the commutative diagram%
\[%
% 
%\node xp(600,0)[X]
%\node n(0,500)[{\mathrm{ker}(\varphi_0)}]
%\node np(600,500)[{\mathrm{ker}(\varphi_0)}]
%\node ns(0,-500)[{\mathrm{im}(\varphi_0)}]
%\node nps(600,-500)[{\mathrm{im}(\varphi_0)}]
%\arrow/^{ (}->/[n`x;]
%\arrow/>/[x`xp;{\varphi\varphi_0}]
%\arrow/^{ (}->/[np`xp;]
%\arrow/>/[n`np;{\varphi\varphi_0}]
%\arrow/>>/[x`ns;{\varphi_0}]
%\arrow/>>/[xp`nps;{\varphi_0}]
%\arrow/>/[ns`nps;{\varphi_0 \varphi}]
%\efig}}%
% 
\bfig\node x(0,0)[X]
\node xp(600,0)[X]
\node n(0,500)[{\mathrm{ker}(\varphi_0)}]
\node np(600,500)[{\mathrm{ker}(\varphi_0)}]
\node ns(0,-500)[{\mathrm{im}(\varphi_0)}]
\node nps(600,-500)[{\mathrm{im}(\varphi_0)}]
\arrow/^{ (}->/[n`x;]
\arrow/>/[x`xp;{\varphi\varphi_0}]
\arrow/^{ (}->/[np`xp;]
\arrow/>/[n`np;{\varphi\varphi_0}]
\arrow/>>/[x`ns;{\varphi_0}]
\arrow/>>/[xp`nps;{\varphi_0}]
\arrow/>/[ns`nps;{\varphi_0 \varphi}]
\efig
\]
The kernel and image exist since $\mathcal{C}$ is abelian. The top horizontal
arrow is clearly the zero map so that $\tau_{X}(\varphi\varphi_{0}%
)=\tau_{\operatorname*{im}(\varphi_{0})}(\varphi_{0}\varphi)$ by the
additivity axiom of the trace. Moreover, we have the commutative diagram%
\[%
% 
%\node xp(600,0)[X]
%\node n(0,500)[{\mathrm{im}(\varphi_0)}]
%\node np(600,500)[{\mathrm{im}(\varphi_0)}]
%\node ns(0,-500)[X/{\mathrm{im}(\varphi_0)}]
%\node nps(600,-500)[X/{\mathrm{im}(\varphi_0)}]
%\arrow/^{ (}->/[n`x;]
%\arrow/>/[x`xp;{\varphi_0 \varphi}]
%\arrow/^{ (}->/[np`xp;]
%\arrow/>/[n`np;{\varphi_0 \varphi}]
%\arrow/>>/[x`ns;]
%\arrow/>>/[xp`nps;]
%\arrow/>/[ns`nps;{\varphi_0 \varphi}]
%\efig}}%
% 
\bfig\node x(0,0)[X]
\node xp(600,0)[X]
\node n(0,500)[{\mathrm{im}(\varphi_0)}]
\node np(600,500)[{\mathrm{im}(\varphi_0)}]
\node ns(0,-500)[X/{\mathrm{im}(\varphi_0)}]
\node nps(600,-500)[X/{\mathrm{im}(\varphi_0)}]
\arrow/^{ (}->/[n`x;]
\arrow/>/[x`xp;{\varphi_0 \varphi}]
\arrow/^{ (}->/[np`xp;]
\arrow/>/[n`np;{\varphi_0 \varphi}]
\arrow/>>/[x`ns;]
\arrow/>>/[xp`nps;]
\arrow/>/[ns`nps;{\varphi_0 \varphi}]
\efig
\]
Again, by additivity we must have $\tau_{X}(\varphi_{0}\varphi)=\tau
_{\operatorname*{im}(\varphi_{0})}(\varphi_{0}\varphi)$, since the bottom
horizontal arrow is the zero map.
\end{proof}

\section{\label{sect_TateExtension}The Tate extension}

Next, we recall a construction due to Beilinson \cite{MR565095}, generalizing
Tate's ingenious insight from \cite{MR0227171} for $n=1$:

\begin{construction}
\label{construction_1}Let $k$ be a field. For every $n$-fold cubically
decomposed algebra $(A,(I_{i}^{\pm}),\tau)$ over $k$, as in Definition
\ref{BT_DefCubicallyDecompAlgebra}, there is a canonically defined Lie
cohomology class%
\[
\phi_{\operatorname*{Beil}}\in H_{\operatorname*{Lie}}^{n+1}(\mathfrak{g}%
,k)\text{,}%
\]
where $\mathfrak{g}:=A_{Lie}$ is the Lie algebra associated to $A$ via the
commutator $[x,y]_{\mathfrak{g}}:=xy-yx$.
\end{construction}

This cohomology class was introduced in \cite{MR565095}. An explicit formula
and example computations can be found in \cite{MR3207578}, \cite{olliTateRes}.

\begin{example}
For $n=1$ Tate constructs \textquotedblleft the original\textquotedblright%
\ cubically decomposed algebra in \cite[Prop. 1]{MR0227171} $-$ this is the
example which has started the entire subject in a way. It was independently
found by many others, notably Kac-Peterson \cite{MR619827} or the Japanese
school, cf. Date-Jimbo-Kashiwara-Miwa \cite{MR688946}. See also the complete
cohomology computations by Feigin and Tsygan \cite{MR705056}. For a certain
field $K$, Tate's paper \cite[Theorem 1]{MR0227171} constructs a map,
following the notation of \textit{loc. cit.},%
\[
\operatorname*{res}:K\wedge K\longrightarrow k\text{,}\qquad\text{(the
\textquotedblleft abstract residue\textquotedblright)}%
\]
which can be re-intepreted as $\phi_{\operatorname*{Beil}}\in
H_{\operatorname*{Lie}}^{2}(K_{Lie},k)$. It produces a map $\Omega_{K/k}%
^{1}\rightarrow k$, $f\mathrm{d}g\mapsto\operatorname*{res}(f\wedge g)$ which
agrees with the usual one-dimensional residue of a rational $1$-form at a
point. Going well beyond the viewpoint in \cite[Prop. 1]{MR0227171}, one can
regard the Lie $2$-cocycle $\phi_{\operatorname*{Beil}}$ as defining a Lie
algebra central extension%
\[
k\longrightarrow\widehat{\mathfrak{g}}\longrightarrow K\text{.}%
\]
The Lie algebra $\widehat{\mathfrak{g}}$ is an example of what is nowadays
called \emph{Tate's central extension}. In this case, $\widehat{\mathfrak{g}}
$ is known as the \emph{Heisenberg Lie algebra}. The theory is presented and
used from this perspective for example in \cite[\S 2.4]{MR962493},
\cite[\S 2.10, \S 2.13]{MR1988970}, \cite[\S 2.7]{MR2058353}, \cite{MR2563803}%
, \cite{MR2972546}, etc\ldots\ Applications abound.
\end{example}

\begin{example}
For $n=2$ the cocycle $\phi_{\operatorname*{Beil}}\in H_{\operatorname*{Lie}%
}^{3}(\mathfrak{g},k)$, applied to a doubly infinite matrix Lie algebra, has
been studied in great detail by Frenkel and Zhu \cite{MR2972546}.
\end{example}

The construction of a trace for trace-class operators in the previous section
allows us to define a higher Tate extension class for the Lie algebras
underlying endomorphism algebras of suitable $n$-Tate objects. In
\cite{olliTateRes} Beilinson's construction was lifted from Lie cohomology to
Hochschild and cyclic homology. These generalize analogously to our present situation.

\begin{theorem}
\label{Thm_BTClassesExist}Let $\mathcal{C}$ be a $k$-linear abelian category
with a $k$-valued exact trace. For every $n$-sliced object $X\in\left.
n\text{-}\mathsf{Tate}(\mathcal{C})\right.  $ the endomorphism algebra
$E:=\operatorname*{End}(X)$ is a cubically decomposed algebra in the sense of
Definition \ref{BT_DefCubicallyDecompAlgebra}.

\begin{enumerate}
\item In particular, its Lie algebra $\mathfrak{g}_{X}:=E_{Lie}$ carries a
canonical Beilinson-Tate Lie cohomology class,%
\[
\phi_{\operatorname*{Beil}}\in H_{\operatorname*{Lie}}^{n+1}(\mathfrak{g}%
_{X},k)
\]
via Construction \ref{construction_1}. Alternatively, one may view this as a
functional $\phi_{\operatorname*{Beil}}:H_{n+1}^{\operatorname*{Lie}%
}(\mathfrak{g}_{X},k)\rightarrow k$.

\item There is also a canonical Hochschild homology and cyclic homology
functional%
\[
\phi_{HH}:HH_{n}(E)\rightarrow k\qquad\text{resp.}\qquad\phi_{HC}%
:HC_{n}(E)\rightarrow k\text{.}%
\]

\end{enumerate}
\end{theorem}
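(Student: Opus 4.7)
The plan is to assemble the three ingredients that have been prepared in the preceding sections. First, I would check that $E = \operatorname{End}(X)$ satisfies all three data needed for a cubically decomposed algebra in the sense of Definition \ref{BT_DefCubicallyDecompAlgebra}: the algebra, the ideals, and the trace map. The algebra structure is automatic. The two-sided ideals $I_i^\pm$ are supplied by Theorem \ref{theorem_CIdempCompleteHaveTateIdeals} (applied in the idempotent complete setting, noting that an abelian category is always idempotent complete). The crucial sum condition $I_i^+ + I_i^- = E$ is precisely Proposition \ref{cor_SlicedTateObjectsHaveEndNFoldCubical}, which applies because $X$ is assumed $n$-sliced.

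Next I would construct the trace. Proposition \ref{Prop_TraceExists} already produces a $k$-linear map $\tau_X : I_{tr} \to k$ out of the given exact trace on $\mathcal{C}$. What Definition \ref{BT_DefCubicallyDecompAlgebra} requires is a map on the quotient $I_{tr}/[I_{tr},E]$, i.e.\ vanishing on commutators of a trace-class element with an \emph{arbitrary} endomorphism. This is exactly the content of Proposition \ref{Prop_StrongVanishing}, whose hypothesis is that $\mathcal{C}$ be abelian — this is the sole place in the argument where the abelian hypothesis of the theorem is used. Together, these three points establish that $(E,(I_i^\pm),\tau_X)$ is a cubically decomposed $k$-algebra.

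With the cubically decomposed structure in hand, part (1) is immediate: Construction \ref{construction_1}, as recalled from \cite{MR565095}, associates to any such datum a canonical class $\phi_{\operatorname{Beil}} \in H^{n+1}_{\operatorname{Lie}}(\mathfrak{g}_X,k)$, and one simply feeds $(E,(I_i^\pm),\tau_X)$ into that machine. The dual interpretation as a functional on $H_{n+1}^{\operatorname{Lie}}(\mathfrak{g}_X,k)$ is the usual Lie-homology/cohomology pairing.

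For part (2), the Hochschild and cyclic versions are obtained by invoking the lift of Beilinson's construction from Lie cohomology to Hochschild and cyclic homology carried out in \cite{olliTateRes}: that construction takes as input a cubically decomposed algebra and outputs functionals $\phi_{HH} : HH_n(E) \to k$ and $\phi_{HC} : HC_n(E)\to k$, so the same cubical data $(E,(I_i^\pm),\tau_X)$ feeds directly into it. I expect the only genuinely non-formal step to be the verification that $\tau_X$ kills $[I_{tr},E]$, which is why the abelian hypothesis is needed; once that hurdle is crossed, everything else is an application of already-established machinery.
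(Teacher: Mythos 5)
Your proposal is correct and follows exactly the paper's own argument: Proposition \ref{cor_SlicedTateObjectsHaveEndNFoldCubical} for the Beilinson cubical structure, Proposition \ref{Prop_TraceExists} for the trace, Proposition \ref{Prop_StrongVanishing} (the sole use of the abelian hypothesis) to upgrade vanishing on $[I_{tr},I_{tr}]$ to vanishing on $[I_{tr},E]$, and then Construction \ref{construction_1} and \cite{olliTateRes} for parts (1) and (2). No discrepancies to report.
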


The Hochschild and the Lie invariant are not completely independent of each
other, cf. \cite{olliTateRes} for details on the interplay of these constructions.

\begin{example}
Recall that, by Theorem
\ref{thm_CSplitExactIdealsAddUpGetNFoldCubicalBeilAlgebra}, if $\mathcal{C}$
is split exact and idempotent complete, every countable $n$-Tate object is
automatically $n$-sliced. For example, if we consider the category
$\mathcal{C}:=\mathsf{Vect}_{f}$, then the above theorem applies to all
objects in $\left.  n\text{-}\mathsf{Tate}_{\aleph_{0}}(\mathcal{C})\right.  $.
\end{example}

\begin{proof}
(1) By Prop. \ref{cor_SlicedTateObjectsHaveEndNFoldCubical}, the endomorphisms
$E:=\operatorname*{End}(X)$ form a Beilinson cubical algebra, but so far
without a trace formalism $\tau$. Prop. \ref{Prop_TraceExists} promotes the
$k$-valued exact trace on $\mathcal{C}$ to a trace%
\[
\tau_{X}:I_{tr}/[I_{tr},I_{tr}]\rightarrow k
\]
for any $n$-Tate object $X$ and $I_{tr}=I_{tr}(X,X)$ its trace-class
operators. As $\mathcal{C}$ is abelian, Prop. \ref{Prop_StrongVanishing},
shows that this trace satisfies the stronger axioms of a cubically decomposed
algebra. Finally, Construction \ref{construction_1} constructs $\phi
_{\operatorname*{Beil}}$; here we refer to \cite{MR565095} for the actual
construction. (2) The construction of these maps just takes a cubically
decomposed algebra as its input, so we can directly feed $E$ into
\cite{olliTateRes}.
\end{proof}

\begin{example}
If $n=1$, this means that $\mathfrak{g}_{X}$ comes equipped with a canonical
Lie central extension%
\[
0\longrightarrow k\longrightarrow\widehat{\mathfrak{g}_{X}}\longrightarrow
\mathfrak{g}_{X}\longrightarrow0
\]
and if $\mathcal{C}:=\mathsf{Vect}_{f}$ and we employ the usual trace, this
produces most of the classical examples of Tate's central extension. For
example, if $\mathfrak{g}$ is a simple Lie algebra, its loop Lie algebra%
\[
\mathfrak{g}((t)):=\underset{i}{\underrightarrow{\operatorname*{colim}}%
}\underset{j}{\underleftarrow{\lim}}\,t^{-i}\mathfrak{g}[t]/t^{j}%
\]
can naturally be viewed as an object in $\left.  1\text{-}\mathsf{Tate}%
_{\aleph_{0}}\mathcal{C}\right.  $. The adjoint representation can be promoted
to a Lie algebra embedding%
\begin{align*}
\widetilde{\operatorname*{ad}}:\mathfrak{g}((t))  & \hookrightarrow
\mathfrak{E}:=\operatorname*{End}\nolimits_{\left.  1\text{-}\mathsf{Tate}%
_{\aleph_{0}}\mathcal{C}\right.  }(\mathfrak{g}((t)))_{Lie}\\
x  & \mapsto(y\mapsto\lbrack x,y])
\end{align*}
(on the left-hand side view $\mathfrak{g}((t))$ as a plain Lie algebra and not
as a $1$-Tate object). The pullback of $\phi_{\operatorname*{Beil}}\in
H_{\operatorname*{Lie}}^{2}(\mathfrak{E},k)$ along $\widetilde
{\operatorname*{ad}}$ is the Kac-Moody cocycle. This mechanism defines a
higher Lie cohomology class also for higher loop Lie algebras $\mathfrak{g}%
((t_{1}))\ldots((t_{n}))$, \cite{MR2972546}, \cite{MR3207578}.
\end{example}

\begin{example}
The classical residue for a rational $1$-form on a curve can be obtained as
follows: Let $X/k$ be a smooth integral curve and $x\in X$ a closed point.
Then the field of fractions of the completed local ring at $x$ has a canonical
structure as a $1$-Tate object in $\mathcal{C}:=\mathsf{Vect}_{f}$: To see
this, observe that%
\[
\widehat{\mathcal{O}}_{X,x}=\underset{i}{\underleftarrow{\lim}}\,\mathcal{O}%
_{X,x}/\mathfrak{m}_{X,x}^{i}%
\]
is a Pro-object of finite-dimensional $k$-vector spaces. The field of
fractions $\widehat{\mathcal{K}}_{X,x}:=\operatorname*{Frac}\widehat
{\mathcal{O}}_{X,x}$ can be written as the colimit over all finitely generated
$\widehat{\mathcal{O}}_{X,x}$-submodules of $\widehat{\mathcal{K}}_{X,x}$.
Combining both presentations allows us to view $\widehat{\mathcal{K}}_{X,x}%
\in\left.  1\text{-}\mathsf{Tate}_{\aleph_{0}}\mathcal{C}\right.  $. Of
course, this is only a special case of the Parshin-Beilinson ad\`{e}les, see
\S \ref{sect_ClassicalMotivatingExample}. The Hochschild functional of Theorem
\ref{Thm_BTClassesExist} supplies us with a canonical map%
\[
\phi_{HH}:HH_{1}(\operatorname*{End}\nolimits_{\left.  1\text{-}%
\mathsf{Tate}_{\aleph_{0}}\mathcal{C}\right.  }(\widehat{\mathcal{K}}%
_{X,x}))\longrightarrow k\text{.}%
\]
Since the multiplication map $z\mapsto\alpha\cdot z$ for any $\alpha
\in\widehat{\mathcal{K}}_{X,x}$ defines an endomorphism of this $1$-Tate
object, there is a canonical ring map from the rational function field
$k\left(  X\right)  $ (or $\widehat{\mathcal{K}}_{X,x}$) to the above
endomorphism algebra. Composing them, we get%
\[
HH_{1}(k\left(  X\right)  )\longrightarrow k
\]
and the Hochschild-Kostant-Rosenberg isomorphism identifies the left-hand side
with $\Omega_{k(X)/k}^{1}$. This map turns out to be the residue. This is the
Hochschild analogue of Tate's construction of the residue. See
\cite{olliTateRes} for details.\medskip\newline Note that Beilinson's paper
\cite{MR565095} would have used Beilinson's cubically decomposed algebra, see
Theorem \ref{Thm_BeilFlagInSchemeGivesCubDecompAlgebra}, instead of using a
Tate category. However, by our Theorem \ref{thm_intro_OnAdeles} these are
isomorphic. Alternatively, one could also use Yekutieli's cubically decomposed
algebra, see \cite{MR3317764}.
\end{example}

\begin{remark}
The structures produced by Theorem \ref{Thm_BTClassesExist} can be viewed as
\textquotedblleft linearizations\textquotedblright\ of a more involved
non-linear extension on the level of groups, resp. algebraic $K$-theory. See
\cite{IndexMapAlgKTheory} for the non-linear version. For $K$-groups in low
degrees, notably $K_{1}$ and $K_{2}$, this has been pioneered by
\cite{MR619827} and \cite{MR1013132}. See \cite{MR900587} for the analogue in
topological $K$-theory.
\end{remark}

\section{\label{sect_ApplicationsToAdeles}Applications to ad\`{e}les of
schemes}

We refer to \cite[\S 7.2]{TateObjectsExactCats} for a detailed treatment of
the relation between Parshin-Beilinson ad\`{e}les and $n$-Tate objects. Let
$\mathsf{Ab}$ (resp. $\mathsf{Ab}_{f}$) be the category of all (resp. finite)
abelian groups. Suppose $X$ is a scheme, finite type of pure Krull dimension
$n$ over $\operatorname*{Spec}R$ for $R$ some commutative ring. Moreover, let
$\mathcal{F}$ be a quasi-coherent sheaf on $X$ and we fix a subset
$\triangle\subseteq S\left(  X\right)  _{n}$ of the flags of scheme points.

The treatment of \cite[\S 7.2]{TateObjectsExactCats} views ad\`{e}les as an
elementary $n$-Tate object in coherent sheaves of $X$ with zero-dimensional
support, i.e. with a slight abuse of language, we could say%
\[
A(\triangle,\mathcal{F})\in\left.  n\text{-}\mathsf{Tate}^{el}(\mathsf{Coh}%
_{0}X)\right.  \text{.}%
\]
This gives an exact functor $\mathsf{QCoh}\left(  X\right)  \rightarrow\left.
n\text{-}\mathsf{Tate}^{el}(\mathsf{Coh}_{0}X)\right.  $, $\mathcal{F}\mapsto
A(\triangle,\mathcal{F})$. Of course, one might wish to distinguish between
$A(\triangle,\mathcal{O}_{X})$ as an $n$-Tate object of coherent sheaves, or
as the $\mathcal{O}_{X}$-module sheaf one obtains by carrying out the
respective limits and colimits in the bi-complete category of $\mathcal{O}%
_{X}$-module sheaves $A(\triangle,\mathcal{F})\in\left.  \mathsf{Mod}%
(\mathcal{O}_{X})\right.  $. However, this distinction will always be clear
from the context.

\begin{remark}
The category of quasi-coherent $\mathcal{O}_{X}$-module sheaves $\mathsf{QCoh}%
\left(  X\right)  $ is also bi-complete, but carrying out the limits in this
category instead would \textsl{not} form an exact functor $\left.
n\text{-}\mathsf{Tate}^{el}(\mathsf{Coh}_{0}X)\right.  \rightarrow
\mathsf{QCoh}\left(  X\right)  $, and furthermore the resulting objects do not
appear to be particularly interesting. In fact, in $\mathsf{QCoh}\left(
X\right)  $ even taking countable infinite products $\prod_{\mathbf{Z}}$ is
not an exact functor.
\end{remark}

However, if $R=k$ is a field, we may alternatively take global sections,
$\mathsf{Coh}_{0}X\overset{\Gamma}{\longrightarrow}\mathsf{Vect}_{f}$, and
view the ad\`{e}les as an $n$-Tate object in finite-dimensional $k$-vector
spaces,%
\[
A(\triangle,\mathcal{F})\in\left.  n\text{-}\mathsf{Tate}^{el}(\mathsf{Vect}%
_{f})\right.  \text{.}%
\]
For applications in number theory it is interesting to look at schemes over
$\operatorname*{Spec}\mathbf{Z}$. Then the global section functor allows to
formulate the ad\`{e}les as an $n$-Tate object in finite abelian groups, that
is%
\[
A(\triangle,\mathcal{F})\in\left.  n\text{-}\mathsf{Tate}^{el}(\mathsf{Ab}%
_{f})\right.  \text{.}%
\]
All these variations of the ad\`{e}les provide a rich source of examples of
higher Tate objects. We shall show:

\begin{theorem}
\label{thm_body_AdelesAreSlicedOverVectSpaces}Let $k$ be a field and $X/k$ a
reduced finite type scheme of pure dimension $n$. For any quasi-coherent sheaf
$\mathcal{F}$ and subset $\triangle\subseteq S\left(  X\right)  _{n}$ the
Beilinson-Parshin ad\`{e}les $A(\triangle,\mathcal{F})$, viewed as an
elementary $n$-Tate object in finite-dimensional $k$-vector spaces, i.e. so
that%
\[
A(\triangle,\mathcal{F})\in n\text{-}\mathsf{Tate}^{el}\left(  \mathsf{Vect}%
_{f}\right)  \text{,}%
\]
is $n$-sliced (cf. Definition \ref{def_GoodIdempotents}). In particular,%
\[
E_{\triangle}^{\operatorname*{Tate}}:=\operatorname*{End}\left(
A(\triangle,\mathcal{O}_{X})\right)
\]
carries the structure of an $n$-fold cubical\ Beilinson algebra (cf.
Definition \ref{def_BeilNFoldAlg}).
\end{theorem}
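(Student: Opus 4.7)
The plan is to build a system of good idempotents $P_1^\pm,\ldots,P_n^\pm$ on $E_\triangle^{\operatorname{Tate}}$, from which the cubical algebra structure will follow automatically by Proposition \ref{cor_SlicedTateObjectsHaveEndNFoldCubical}. Note that Theorem \ref{thm_CSplitExactIdealsAddUpGetNFoldCubicalBeilAlgebra} does not apply directly, because the products $\prod_\eta$ appearing in the Beilinson--Parshin recursion are typically uncountable (for instance, a curve over $\mathbb{C}$ has uncountably many closed points), so the countable-cardinality hypothesis fails. Instead, I would argue by induction on $n$, mimicking the inductive definition
\[
A(\triangle,\mathcal{F})=\underset{\mathcal{F}'\subset\mathcal{F}}{\underrightarrow{\operatorname*{colim}}}\,\prod_{\eta}\,\underset{i}{\underleftarrow{\lim}}\,A\bigl(\,_{\eta}\triangle,\,\mathcal{F}'\otimes\mathcal{O}_{X,\eta}/\mathfrak{m}_\eta^{i}\bigr).
\]

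The base case $n=0$ is trivial. For the inductive step, reduce (without loss of generality) to an irreducible component of $X$. The outermost Ind--Pro layer of the $n$-Tate structure comes from combining the outer colimit $\operatorname*{colim}_{\mathcal{F}'}$ with the limit $\lim_i$ at the codimension-$1$ flag entry $\eta_1$. Choose a Tate lattice $L\hookrightarrow A(\triangle,\mathcal{F})$ arising from a fixed coherent subsheaf $\mathcal{F}_0\subset\mathcal{F}$ together with the integral part of the local completion at $\eta_1$. Since $k$ is a field, the short exact sequence $L\hookrightarrow A(\triangle,\mathcal{F})\twoheadrightarrow A(\triangle,\mathcal{F})/L$ of $k$-vector spaces admits a splitting, and the resulting projection yields an idempotent $P_n^+\in E_\triangle^{\operatorname{Tate}}$ with image $L$. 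By construction $P_n^+E_\triangle^{\operatorname{Tate}}\subseteq I_n^+$ and $P_n^-E_\triangle^{\operatorname{Tate}}\subseteq I_n^-$. The inner adèle $A(_{\eta}\triangle,-)$ is a Beilinson--Parshin adèle on a closed subscheme of dimension strictly less than $n$, which is $(n-1)$-sliced by the inductive hypothesis; lifting its good idempotents componentwise through the outer $\operatorname*{colim}_{\mathcal{F}'}\prod_\eta\lim_i$ produces the remaining idempotents $P_1^\pm,\ldots,P_{n-1}^\pm$ on $E_\triangle^{\operatorname{Tate}}$. Pairwise commutativity is automatic because each $P_i$ is built from a splitting at a distinct layer of the $n$-Tate structure.

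The main obstacle is verifying that the locally chosen splittings assemble into genuine morphisms in $n\text{-}\mathsf{Tate}^{el}(\mathsf{Vect}_f)$ rather than mere $k$-linear endomorphisms. This is precisely where the field hypothesis is essential: the splitting of $L\hookrightarrow A(\triangle,\mathcal{F})\twoheadrightarrow A(\triangle,\mathcal{F})/L$ can be chosen compatibly with the underlying product and limit structure because every short exact sequence encountered along the way involves only $k$-vector spaces, and hence splits; the lift through $\prod_\eta$ is simply componentwise, so it visibly defines a morphism of $n$-Tate objects. This also explains why the analogous assertion fails over base categories such as $\mathsf{Coh}_0 X$ or $\mathsf{Ab}_f$, in agreement with the counter-examples promised in the introduction.
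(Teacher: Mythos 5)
Your overall strategy is the same as the paper's: exhibit an explicit system of good idempotents, one for each Ind--Pro layer of the ad\`{e}le presentation, using split exactness of $\mathsf{Vect}_{f}$, and then invoke Proposition \ref{cor_SlicedTateObjectsHaveEndNFoldCubical}. Your preliminary observations --- that Theorem \ref{thm_CSplitExactIdealsAddUpGetNFoldCubicalBeilAlgebra} is unavailable because the products over scheme points are uncountable, and that the argument must fail over $\mathsf{Coh}_{0}X$ or $\mathsf{Ab}_{f}$ --- are both correct and match the paper. The organizational difference (induction on $n$ versus unwinding all layers simultaneously) is immaterial.

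The one step that needs more care is the construction of the idempotent itself. As written, you split the short exact sequence $L\hookrightarrow A(\triangle,\mathcal{F})\twoheadrightarrow A(\triangle,\mathcal{F})/L$ ``of $k$-vector spaces''; a splitting of the underlying vector spaces is only a $k$-linear map and is not yet an element of $E_{\triangle}^{\operatorname*{Tate}}=\operatorname*{End}\nolimits_{n\text{-}\mathsf{Tate}^{el}(\mathsf{Vect}_{f})}(A(\triangle,\mathcal{O}_{X}))$. You correctly flag this as the main obstacle, but your resolution (the splittings ``can be chosen compatibly with the underlying product and limit structure because every short exact sequence encountered along the way \ldots\ splits'') asserts precisely the point that has to be proved: the retractions must commute with all transition maps of the Ind- and Pro-diagrams, and splitting each sequence individually does not guarantee this. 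The paper resolves it by unwinding $A(\triangle,\mathcal{O}_{X})$ into the explicit iterated diagram $\prod_{\eta_{0}}\underleftarrow{\lim}_{i_{0}}\,\underrightarrow{\operatorname*{colim}}_{f_{0}}\prod_{\eta_{1}}\underleftarrow{\lim}_{i_{1}}\,\underrightarrow{\operatorname*{colim}}_{f_{1}}\cdots$, in which the $j$-th Ind layer is the colimit over denominators $f_{j-1}\notin\eta_{j-1}$; the idempotent $P_{j}^{+}$ is then the levelwise retraction onto the sub-diagram indexed by $f_{j-1}=1$, and the only choices required are sections of the canonical surjections between the finite-dimensional vector spaces $\mathcal{O}\langle f_{0}^{-1}\rangle/\eta_{0}^{i_{0}}\otimes\cdots\otimes\mathcal{O}\langle f_{n-1}^{-1}\rangle/\eta_{n-1}^{i_{n-1}}$ sitting at the innermost level, which by construction assemble into straight morphisms of the Tate diagrams. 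In short: define the idempotents at the level of the diagrams from the outset, rather than trying to recover them from a splitting of the realized vector spaces.
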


The claim of this theorem fails if we instead view the ad\`{e}les as $n$-Tate
objects over $\mathsf{Coh}_{0}\left(  X\right)  $ or $\mathsf{Ab}_{f}$. These
variations are usually not $n$-sliced. We defer the proof and begin with some
negative examples:

\begin{example}
Suppose $X:=\operatorname*{Spec}\mathbf{Z}[t]$. We shall only consider
singleton flags $\triangle=\{(\eta_{0}>\eta_{1}>\eta_{2})\}$, defining objects
in the category $\left.  2\text{-}\mathsf{Tate}^{el}(\mathsf{Ab}_{f})\right.
$. We shall look at some examples modelled after $2$-local fields of mixed
characteristics $(0,0,p)$ and $(0,p,p)$.

\begin{enumerate}
\item $A((0)>(t)>(p,t),\mathcal{O}_{X})$ evaluates to what could be called
$\mathbf{Q}_{p}((t))$. The objects $t^{n}\mathbf{Q}_{p}[[t]]$ for
$n\in\mathbf{Z}$ are lattices and the relative quotients%
\begin{equation}
\frac{t^{n}\mathbf{Q}_{p}[[t]]}{t^{m}\mathbf{Q}_{p}[[t]]}\simeq\mathbf{Q}%
_{p}^{\oplus(m-n)}\label{lcw8}%
\end{equation}
for $m\geq n$ lie in $\left.  1\text{-}\mathsf{Tate}^{el}(\mathsf{Ab}%
_{f})\right.  $. Here the sub-objects $p^{i}\mathbf{Z}_{p}^{\oplus(n-m)}$ are
examples of lattices, with respective quotients $\simeq(\mathbf{Z}%
/p^{j}\mathbf{Z})^{\oplus n-m}\in\mathsf{Ab}_{f}$. We have%
\begin{equation}
I_{i}^{+}\left(  X,X\right)  +I_{i}^{-}\left(  X,X\right)
=\operatorname*{End}(X)\label{lcw6}%
\end{equation}
for $i=1$ by the presence of the splitting $\mathbf{Q}_{p}%
((t))\twoheadrightarrow\mathbf{Q}_{p}[[t]]$ which chops off the principal part
of the Laurent series. On the other hand, for $i=2$ Equation \ref{lcw6} fails.
It suffices to apply Example \ref{Example_FinAbelianPGroups} to the lattice
quotients appearing in Equation \ref{lcw8}.

\item $A((0)>(p)>(p,t),\mathcal{O}_{X})$ evaluates to something interesting.
It is denoted by $\mathbf{Q}_{p}\{\{t\}\}$ in \cite{MR1804915}, and can be
described explicitly as doubly infinite $\mathbf{Q}_{p}$-valued sequences with
boundedness conditions, namely%
\[
\mathbf{Q}_{p}\{\{t\}\}=\left\{  \left.  \sum_{i=-\infty}^{+\infty}a_{i}%
t^{i}\right\vert \exists C\in\mathbf{R}:a_{i}\in\mathbf{Q}_{p},\left\vert
a_{i}\right\vert _{p}\leq C\text{,}\underset{i\rightarrow-\infty}{\lim
}\left\vert a_{i}\right\vert _{p}=0\text{ }\right\}  \text{.}%
\]
It carries the structure of a $2$-local field. The objects $L_{n}%
:=p^{n}A((p)>(p,t),\mathcal{O}_{X})$ for $n\in\mathbf{Z}$, which identify with%
\[
L_{n}=\left\{  \left.  \sum_{i=-\infty}^{+\infty}a_{i}t^{i}\right\vert
a_{i}\in\mathbf{Q}_{p},\left\vert a_{i}\right\vert _{p}\leq p^{-n}%
\text{,}\underset{i\rightarrow-\infty}{\lim}\left\vert a_{i}\right\vert
_{p}=0\text{ }\right\}  \text{,}%
\]
are lattices and the relative quotients%
\[
\frac{p^{n}A((p)>(p,t),\mathcal{O}_{X})}{p^{m}A((p)>(p,t),\mathcal{O}_{X}%
)}\simeq(\mathbf{Z}/p^{m-n}\mathbf{Z})((t))
\]
for $m\geq n$ lie in $\left.  1\text{-}\mathsf{Tate}^{el}(\mathsf{Ab}%
_{f})\right.  $. Here we are in the opposite situation. Equation \ref{lcw6}
holds for $i=2$, but fails for $i=1$. The argument of Example
\ref{Example_FinAbelianPGroups} can be adapted to show the latter. For this
note that $\mathbf{Q}_{p}\{\{t\}\}/L_{n}$ is a $p$-primary torsion group.
\end{enumerate}

See for example \cite[Ch. I]{MR1804915} or \cite{MorrowHLF} for a further
discussion of higher local fields. These sources also explain the construction
of $F\{\{t\}\}$ for $F$ a general complete discrete valuation field. We leave
it to the reader to formulate its Tate object structure in general. All these
higher local fields arise as special cases of ad\`{e}les of suitably chosen
singleton flags.
\end{example}

\begin{example}
Suppose $X:=\operatorname*{Spec}k[t]$ and we view its ad\`{e}les as an
elementary $1$-Tate object in $\left.  1\text{-}\mathsf{Tate}^{el}%
(\mathsf{Coh}_{0}X)\right.  $. We show that it cannot be sliced. For
simplicitly, let us look at $\triangle=\{(0)>(t)\}$ and $\triangle^{\prime
}=\{(t)\}$. Then%
\[
A(\triangle^{\prime},\mathcal{O}_{X})\hookrightarrow A(\triangle
,\mathcal{O}_{X})\twoheadrightarrow A(\triangle,\mathcal{O}_{X})/A(\triangle
^{\prime},\mathcal{O}_{X})
\]
is a short exact sequence. Roughly speaking, it identifies with%
\[
k[[t]]\hookrightarrow k((t))\twoheadrightarrow k((t))/k[[t]]\text{.}%
\]
The Ind-object $k((t))/k[[t]]=\underset{i}{\underrightarrow
{\operatorname*{colim}}}\frac{1}{t^{i}}k[[t]]/k[[t]]$ is $t$-torsion. In
particular, if there was a section to $k((t))$, the latter would have to
possess non-trivial $t$-torsion elements. This is a contradiction. Quite
differently, in $\left.  1\text{-}\mathsf{Tate}^{el}\left(  \mathsf{Vect}%
_{f}\right)  \right.  $ a section exists.
\end{example}

\begin{proof}
[Proof of Theorem \ref{thm_body_AdelesAreSlicedOverVectSpaces}]This is not
very difficult because we can produce the required idempotents explicitly. For
the sake of simplicity let us write $\eta^{i}$ to denote the $i$-th ideal
power of the ideal sheaf of the reduced closed sub-scheme $\overline{\{\eta
\}}$ for a given scheme point $\eta\in X$. Moreover, let us write
\textquotedblleft$\mathcal{O}\left\langle f^{-1}\right\rangle $%
\textquotedblright\ to denote coherent sub-sheaves of $\mathcal{O}_{X,\eta}$,
indexed by $f$, so that the quasi-coherent sheaf $\mathcal{O}_{X,\eta}$ is
presented as the $\mathcal{O}_{X}$-module colimit over them, i.e. as depicted
on the left below:%
\[
\mathcal{O}_{X,\eta}=\underset{f\notin\eta}{\underrightarrow
{\operatorname*{colim}}}\,\mathcal{O}\left\langle f^{-1}\right\rangle
\text{.}\qquad\qquad\qquad R_{P}=\underset{f\notin P}{\underrightarrow
{\operatorname*{colim}}}\,\frac{1}{f}R\subset R[\frac{1}{f}]
\]
(This notation is supposed to be suggestive of the corresponding presentation
if $R$ is a ring and $P\in\operatorname*{Spec}R$ a prime ideal, as depicted
above on the right). We unwind the formation of ad\`{e}les directly from the
definition; but recall that by the limits and colimits we really mean the
respective diagrams of $n$-Tate objects and do not refer to carrying out
actual limits in the categories themselves, see \cite[\S 7.2]%
{TateObjectsExactCats} for details on how this can be implemented explicitly.
We arrive at%
\begin{align*}
A(\triangle,\mathcal{O}_{X})  & =%
{\textstyle\prod\limits_{\eta_{0}\in X}}
\underset{i_{0}}{\underleftarrow{\lim}}\,A\left(  \left.  _{\eta_{0}}%
\triangle\right.  ,\frac{\mathcal{O}_{X,\eta_{0}}}{\eta_{0}^{i_{0}}}\right) \\
& =%
{\textstyle\prod\limits_{\eta_{0}\in X}}
\underset{i_{0}}{\underleftarrow{\lim}}\underset{f_{0}\notin\eta_{0}%
}{\underrightarrow{\operatorname*{colim}}}\,A\left(  \left.  _{\eta_{0}%
}\triangle\right.  ,\frac{\mathcal{O}\left\langle f_{0}^{-1}\right\rangle
}{\eta_{0}^{i_{0}}}\right) \\
& =%
{\textstyle\prod\limits_{\eta_{0}\in X}}
\underset{i_{0}}{\underleftarrow{\lim}}\underset{f_{0}\notin\eta_{0}%
}{\underrightarrow{\operatorname*{colim}}}%
{\textstyle\prod\limits_{\eta_{1}\in X}}
\underset{i_{1}}{\underleftarrow{\lim}}\,A\left(  \left.  _{\eta_{1}\eta_{0}%
}\triangle\right.  ,\frac{\mathcal{O}\left\langle f_{0}^{-1}\right\rangle
}{\eta_{0}^{i_{0}}}\underset{\mathcal{O}_{X}}{\otimes}\frac{\mathcal{O}%
_{X,\eta_{1}}}{\eta_{1}^{i_{1}}}\right) \\
& =%
{\textstyle\prod\limits_{\eta_{0}\in X}}
\underset{i_{0}}{\underleftarrow{\lim}}\underset{\mathsf{Tate}^{el}%
}{\underbrace{\underset{f_{0}\notin\eta_{0}}{\underrightarrow
{\operatorname*{colim}}}%
{\textstyle\prod\limits_{\eta_{1}\in X}}
\underset{i_{1}}{\underleftarrow{\lim}}}}\underset{f_{1}\notin\eta_{1}%
}{\underrightarrow{\operatorname*{colim}}}\,A\left(  \left.  _{\eta_{1}%
\eta_{0}}\triangle\right.  ,\frac{\mathcal{O}\left\langle f_{0}^{-1}%
\right\rangle }{\eta_{0}^{i_{0}}}\underset{\mathcal{O}_{X}}{\otimes}%
\frac{\mathcal{O}\left\langle f_{1}^{-1}\right\rangle }{\eta_{1}^{i_{1}}%
}\right) \\
& =%
{\textstyle\prod\limits_{\eta_{0}\in X}}
\underset{i_{0}}{\underleftarrow{\lim}}\underset{\mathsf{Tate}^{el}%
}{\underbrace{\underset{f_{0}\notin\eta_{0}}{\underrightarrow
{\operatorname*{colim}}}%
{\textstyle\prod\limits_{\eta_{1}\in X}}
\underset{i_{1}}{\underleftarrow{\lim}}}}\underset{\mathsf{Tate}^{el}%
}{\underbrace{\underset{f_{1}\notin\eta_{1}}{\underrightarrow
{\operatorname*{colim}}}%
{\textstyle\prod\limits_{\eta_{2}\in X}}
\underset{i_{2}}{\underleftarrow{\lim}}}}\underset{f_{2}\notin\eta_{2}%
}{\underrightarrow{\operatorname*{colim}}}\,\left(  \ldots\right) \\
& \text{and so forth\ldots,}%
\end{align*}
where we only run through those $\eta_{0},\ldots,\eta_{n}$ such that $\eta
_{0}>\cdots>\eta_{n}\in\triangle$. The underbraces emphasize which parts of
this expression are to be read as limits or colimits respectively, and how to
group them to form Tate diagrams. We need to justify why the left-most limits,
left of the initial underbrace, exist: Since our scheme is of finite type and
$\eta_{0}$ runs through the irreducible components of $X$, the product over
the $\eta_{0}$ is finite. Similarly, the ideal sheaf of each respective
irreducible component is necessarily nilpotent so that for each $\eta_{0}$ the
limit over $i_{0}$ is over an essentially finite diagram. Unwinding this
further presents the ad\`{e}le object as an elementary $n$-Tate object (in the
sketch above only the first two outer-most Tate category iterations is
visible). Our claim is proven if we can exhibit pairwise commuting idempotents
$P_{j}^{+}$, $j=1,\ldots,n$, projecting this object onto the respective
lattice, indexed by $f_{j-1}=1$. This reduces to constructing sections%
\[
\frac{\mathcal{O}\left\langle f_{0}^{-1}\right\rangle }{\eta_{0}^{i_{0}}%
}\otimes\cdots\frac{\mathcal{O}\left\langle f_{j-1}^{-1}\right\rangle }%
{\eta_{j-1}^{i_{j-1}}}\cdots\otimes\frac{\mathcal{O}\left\langle f_{n-1}%
^{-1}\right\rangle }{\eta_{n-1}^{i_{n}}}\twoheadrightarrow\frac{\mathcal{O}%
\left\langle f_{0}^{-1}\right\rangle }{\eta_{0}^{i_{0}}}\otimes\cdots
\frac{\mathcal{O}_{X}}{\eta_{j-1}^{i_{j-1}}}\cdots\otimes\frac{\mathcal{O}%
\left\langle f_{n-1}^{-1}\right\rangle }{\eta_{n-1}^{i_{n}}}%
\]
in the category of finite-dimensional $k$-vector spaces (since once these
exist, they define straight morphisms between the respective Tate diagrams and
therefore the desired idempotents in the category of $n$-Tate objects).
However, the latter is obvious since the category of vector spaces is split exact.
\end{proof}

Of course Theorem \ref{thm_body_AdelesAreSlicedOverVectSpaces}\ provokes a
question:%
\[
E_{\triangle}^{\operatorname*{Beil}}=E_{\triangle}^{\operatorname*{Tate}%
}\,\text{?}%
\]
Beilinson had already shown, see Theorem
\ref{Thm_BeilFlagInSchemeGivesCubDecompAlgebra}, that for flags $\triangle
=\{(\eta_{0}>\cdots>\eta_{n})\}$ in a scheme $X$ a cubical algebra
$E_{\triangle}^{\operatorname*{Beil}}$ can be formed from his notion of
lattices. Its definition \textit{hinges crucially} on geometric data of $X$
simply because the underlying notion of lattice depends on $X$. On the other
hand, we have just seen that $E_{\triangle}^{\operatorname*{Tate}}$ is also a
cubical algebra. It comes with its own notion of lattices, which now only
depends on the structure as a Tate object. One can show that these two types
of lattices are different, albeit very closely related to each other. We refer
to \cite[\S 5]{bgwGeomAnalAdeles} for an explicit example illustrating this discrepancy.

However, the answer to our question is still affirmative:

\begin{theorem}
\label{thm_RelationCubicallyDecompAlgsBeilToTate}Let $k$ be a field. Suppose
$X/k$ is a reduced scheme of pure dimension $n$, and view the
Beilinson-Parshin ad\`{e}les $A(\triangle,\mathcal{O}_{X})$, for
$\triangle=\{(\eta_{0}>\cdots>\eta_{n})\}$ with $\operatorname*{codim}%
\nolimits_{X}\overline{\{\eta_{i}\}}=i$, as an $n$-Tate object in
finite-dimensional $k$-vector spaces. Then there is a canonical isomorphism of
Beilinson cubical algebras%
\[
E_{\triangle}^{\operatorname*{Beil}}\cong E_{\triangle}^{\operatorname*{Tate}%
}\text{.}%
\]

\end{theorem}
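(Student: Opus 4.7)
The plan is to exhibit both $E_{\triangle}^{\operatorname{Beil}}$ and $E_{\triangle}^{\operatorname{Tate}}$ as the same subring of $\operatorname{End}_k(A(\triangle,\mathcal{O}_X))$, where $A(\triangle,\mathcal{O}_X)$ now denotes the underlying $k$-vector space obtained by evaluating the colimits and limits in $\mathsf{Vect}_k$. Both inclusions are essentially by construction: $E_{\triangle}^{\operatorname{Beil}}$ is defined as a subring of $\operatorname{End}_k(A(\triangle,\mathcal{O}_X))$ by Beilinson's continuity condition, while for $E_{\triangle}^{\operatorname{Tate}}$ the evaluation functor from countable Ind-Pro diagrams to $\mathsf{Vect}_k$ is faithful on the class of objects at hand, so a Tate endomorphism is determined by its underlying $k$-linear map. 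The task reduces to identifying these two continuity conditions on a $k$-linear map.

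The key technical input is a cofinality statement: the Beilinson lattices of Definition \ref{def_BeilLattice} embed as Tate lattices in the Sato Grassmannian of $A(\triangle,\mathcal{O}_X) \in n\text{-}\mathsf{Tate}^{el}(\mathsf{Vect}_f)$, and this embedding is cofinal as well as coinitial. One direction is built into the explicit Ind-Pro presentation used in the proof of Theorem \ref{thm_body_AdelesAreSlicedOverVectSpaces}: for a Beilinson lattice $L \subseteq \mathcal{O}_{\eta_0}$, the sub-ad\`{e}le $A(\triangle',L)$ is manifestly an admissible Pro-object with admissible Ind-quotient, hence a Tate lattice. Conversely, any Tate lattice can be sandwiched between two Beilinson lattices by applying Lemma \ref{Lemma_LatticeInsideLattice} inductively through the layers of the presentation, combined with the directedness and co-directedness of the Sato Grassmannian.

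Granted this cofinality, one compares the two cubical structures by induction on $n$. For $n=1$, Beilinson's and Tate's definitions both reduce to the existence of a finitely generated $\mathcal{O}_{\eta_1}$-submodule absorbing the image (respectively annihilated by the map), and cofinality identifies them directly, together with the ideals $I_1^{\pm}$ and $I_{1\triangle}^{\pm}$. For the inductive step, Lemma \ref{Lemma_CanFactorTateMorThroughLatticePairs} produces a double Tate-lattice factorization of any $f \in E_{\triangle}^{\operatorname{Tate}}$; by cofinality one may replace these by Beilinson lattices, so that the induced map $\overline{f}$ on the subquotient $L_1/L_1^{\prime}$ is an endomorphism of an $(n-1)$-Tate object arising as the ad\`{e}les of a lower-dimensional subscheme, to which the induction hypothesis applies. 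The reverse inclusion $E_{\triangle}^{\operatorname{Beil}} \subseteq E_{\triangle}^{\operatorname{Tate}}$ is symmetric, and the matching of the cubical ideals $I_i^{\pm}$ with $I_{i\triangle}^{\pm}$ follows from the same cofinality argument applied level-by-level.

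The principal obstacle is the cofinality step itself, because the two notions of lattice live in structurally different categories: a Beilinson lattice is a coherent sheaf-theoretic datum attached to one step of the flag, while a Tate lattice is a formal sub-object in an iterated Ind-Pro category. Showing mutual refinement requires simultaneously unwinding the $n$ nested layers of the ad\`{e}le's colim-lim presentation and gluing the resulting sub-objects back into a genuine coherent-sheaf-theoretic datum at each stage. The finite-type assumption on $X$ and the pure-codimension condition on the flag are essential to ensure that this inductive process terminates with an honest Beilinson lattice at every level.
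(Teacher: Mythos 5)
Your proposal follows essentially the same route as the paper: the sandwiching of every Tate lattice between two Beilinson lattices (the paper's Lemma \ref{lemma_BeilLatticesAreTateLattices}) is exactly the cofinality input you identify as the crux, the comparison of the two continuity conditions is carried out via the double lattice factorizations of Lemma \ref{Lemma_CanFactorTateMorThroughLatticePairs} against the explicit colim/lim presentation over Beilinson lattices, and the ideals are matched by unravelling the factorization conditions level by level (the paper's Lemma \ref{Lemma_IdealsAndIndProLimits}). The only cosmetic difference is that you phrase the identification as an induction on $n$ and via faithfulness of the evaluation to $k$-vector spaces, where the paper constructs the two mutually inverse maps directly; the mathematical content is the same.
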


We shall split the proof into several lemmata. For notational clarity, let us
temporarily introduce the following distinction:

\begin{definition}
Suppose $M$ is a finitely generated $\mathcal{O}_{\eta_{0}}$-module and
$\triangle=\{(\eta_{0}>\cdots>\eta_{n})\}$ a flag, $\triangle^{\prime
}:=\{(\eta_{1}>\cdots>\eta_{n})\}$.

\begin{enumerate}
\item A \emph{Beilinson lattice} is a lattice in the sense of Definition
\ref{def_BeilLattice}, i.e. a finitely generated $\mathcal{O}_{\eta_{1}}%
$-module $L\subseteq M$ such that $\mathcal{O}_{\eta_{0}}\cdot L=M$.

\item A \emph{Tate lattice} is a lattice in the sense of Tate objects, i.e. a
sub-object of the $n$-Tate object $M_{\triangle}:=A(\triangle,M)$ which is a
Pro-object with an Ind-quotient.
\end{enumerate}
\end{definition}

\begin{lemma}
\label{lemma_BeilLatticesAreTateLattices}For $\triangle=\{(\eta_{0}%
>\cdots>\eta_{n})\}$ and $M$ a finitely generated $\mathcal{O}_{\eta_{0}}$-module,

\begin{enumerate}
\item $M_{\triangle}$ is an elementary $n$-Tate object and

\item for every Beilinson lattice $L\subseteq M$ we have that $L_{\triangle
^{\prime}}\hookrightarrow M_{\triangle}$ is a Tate lattice,

\item for every Tate lattice $T$ there exist Beilinson lattices $L_{1}%
\subseteq L_{2}$ such that%
\[
L_{1\triangle^{\prime}}\subseteq T\subseteq L_{2\triangle^{\prime}}\text{.}%
\]

\end{enumerate}
\end{lemma}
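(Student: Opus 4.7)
My plan is to prove the three parts in order, since (2) and (3) rely on the explicit Ind-Pro presentation of $M_{\triangle}$ established in (1). For (1), I argue by induction on the length of the flag, using the recursive formula for the Beilinson-Parshin ad\`eles. At each step, the outer $\varprojlim$ over powers of $\mathfrak{m}_{\eta_{j}}$ supplies a Pro-direction, and the colimit over coherent sub-sheaves (equivalently, over localizations by $f \notin \eta_{j}$) supplies an Ind-direction. Assembling these layer by layer gives an iterated Ind-Pro diagram of finite-dimensional $k$-vector spaces of exactly the shape required to lie in $n\text{-}\mathsf{Tate}^{el}(\mathsf{Vect}_{f})$. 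This is essentially the unwinding already carried out in the proof of Theorem \ref{thm_body_AdelesAreSlicedOverVectSpaces} and in \cite[\S 7.2]{TateObjectsExactCats}, so (1) is largely bookkeeping.

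For (2), let $L \subseteq M$ be a Beilinson lattice, i.e. a finitely generated $\mathcal{O}_{\eta_{1}}$-submodule with $\mathcal{O}_{\eta_{0}} \cdot L = M$. Since $L$ is finitely generated over $\mathcal{O}_{\eta_{1}}$, its ad\`ele $L_{\triangle^{\prime}}$ carries no outer Ind-layer at $\eta_{0}$ and therefore sits naturally as a Pro-object of $(n-1)$-Tate objects, embedded into $M_{\triangle}$ via $L \hookrightarrow M$. Conversely, the Beilinson condition says that $M = \underset{f_{0} \notin \eta_{0}}{\underrightarrow{\operatorname*{colim}}}\, f_{0}^{-1} L$, from which $M_{\triangle} = \underset{f_{0}}{\underrightarrow{\operatorname*{colim}}}\, (f_{0}^{-1} L)_{\triangle^{\prime}}$ and $M_{\triangle}/L_{\triangle^{\prime}} = \underset{f_{0}}{\underrightarrow{\operatorname*{colim}}}\, (f_{0}^{-1} L / L)_{\triangle^{\prime}}$, which is visibly an Ind-object of $(n-1)$-Tate objects since each $f_{0}^{-1} L / L$ is finitely generated over $\mathcal{O}_{\eta_{1}}$. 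Hence $L_{\triangle^{\prime}}$ is a Tate lattice.

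For (3), fix any Beilinson lattice $L_{0} \subseteq M$, so $M_{\triangle} = \underset{f_{0}}{\underrightarrow{\operatorname*{colim}}}\, (f_{0}^{-1} L_{0})_{\triangle^{\prime}}$ is an Ind-presentation by Pro-objects. Given a Tate lattice $T \subseteq M_{\triangle}$, I apply Proposition \ref{Prop_LatticeLeftFiltAndRightFilt}: since $T$ is a Pro-object, the inclusion $T \hookrightarrow M_{\triangle}$ factors through some stage $(f_{0}^{-1} L_{0})_{\triangle^{\prime}}$, and I set $L_{2} := f_{0}^{-1} L_{0}$, which remains a Beilinson lattice. Dually, the quotient $M_{\triangle} \twoheadrightarrow M_{\triangle}/T$ has Ind-target, so Proposition \ref{Prop_LatticeLeftFiltAndRightFilt} factors it as $M_{\triangle} \twoheadrightarrow M_{\triangle}/\tilde{L}_{\triangle^{\prime}} \to M_{\triangle}/T$ for some Beilinson lattice $\tilde{L}$. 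Replacing $\tilde{L}$ by $\tilde{L} \cap L_{2}$ (still a Beilinson lattice, as intersection of two finitely generated $\mathcal{O}_{\eta_{1}}$-submodules whose $\mathcal{O}_{\eta_{0}}$-span is $M$) produces $L_{1} \subseteq L_{2}$ with $L_{1\triangle^{\prime}} \subseteq T \subseteq L_{2\triangle^{\prime}}$.

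The main obstacle is the identification in parts (2) and (3) of the outer Ind-layer of $M_{\triangle}$ with the localizations along $\eta_{0}$: once this correspondence between Beilinson lattices and the coherent stages in the Ind-presentation of $M_{\triangle}$ is nailed down, the rest reduces to the filtering properties of Pro- and Ind-objects inside $\mathsf{Tate}^{el}$. The codimension hypothesis and the equality $\mathcal{O}_{\eta_{0}} \cdot L = M$ (as opposed to merely $L \subseteq M$) are exactly what make this bookkeeping work.
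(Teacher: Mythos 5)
Your parts (1) and (2), and the upper bound $T\subseteq L_{2\triangle^{\prime}}$ in (3), follow the same route as the paper: present $M_{\triangle}$ as an admissible Ind-diagram of the Pro-objects $L_{\triangle^{\prime}}$ indexed by Beilinson lattices (your $f_{0}$-indexing is a cofinal reparametrization of this), observe that fixing one index exhibits $L_{\triangle^{\prime}}$ as a Pro-object with Ind-quotient, and use left filtering of Pro-objects to force the Pro-object $T$ to factor through one stage of that Ind-diagram. This is all fine.

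The lower bound in (3) has a genuine gap. You invoke Proposition \ref{Prop_LatticeLeftFiltAndRightFilt}(2) to factor $M_{\triangle}\twoheadrightarrow M_{\triangle}/T$ as $M_{\triangle}\twoheadrightarrow M_{\triangle}/\tilde{L}_{\triangle^{\prime}}\rightarrow M_{\triangle}/T$ \emph{for some Beilinson lattice} $\tilde{L}$. But that proposition only produces a \emph{Tate} lattice $L\hookrightarrow M_{\triangle}$ with $L\subseteq T$; it gives no control on whether $L$ has the form $\tilde{L}_{\triangle^{\prime}}$. Knowing that the Tate lattices of the form $\tilde{L}_{\triangle^{\prime}}$ are cofinal downwards among all Tate lattices is precisely the content of the lower-bound half of (3), so as written this step assumes what is to be proved; note also that the situation is not symmetric to the upper bound, since the canonical presentation of $M_{\triangle}$ has the Ind-layer on the outside, not a Pro-presentation by the quotients $M_{\triangle}/\tilde{L}_{\triangle^{\prime}}$. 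The paper closes this gap with a two-stage filtering argument: for a Beilinson lattice $L$ (one may simply take $L:=L_{2}$), the composite $L_{\triangle^{\prime}}\hookrightarrow M_{\triangle}\twoheadrightarrow M_{\triangle}/T$ goes from a Pro-object to an Ind-object and hence factors through an object that is simultaneously Ind and Pro, i.e.\ an $(n-1)$-Tate object $P$; then, presenting $L_{\triangle^{\prime}}$ as the admissible Pro-diagram of the quotients $(L/L^{\prime})_{\triangle^{\prime}}$ over Beilinson sub-lattices $L^{\prime}\subseteq L$ and using that $(n-1)$-Tate objects are right (s-)filtering in Pro-objects over them, the map $L_{\triangle^{\prime}}\rightarrow P$ factors through some $(L/L^{\prime})_{\triangle^{\prime}}$, whence $L^{\prime}_{\triangle^{\prime}}\rightarrow M_{\triangle}/T$ is zero and $L^{\prime}_{\triangle^{\prime}}\subseteq T$ by the universal property of kernels. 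You need some version of this argument. A smaller issue: you assert that $\tilde{L}\cap L_{2}$ is again a Beilinson lattice; finite generation is fine by Noetherianity, but $\mathcal{O}_{\eta_{0}}\cdot(\tilde{L}\cap L_{2})=M$ requires justification, and the paper sidesteps it by running the argument directly with $L:=L_{2}$ so that the sub-lattice produced is automatically contained in $L_{2}$.
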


\begin{proof}
(1) The statement about $M_{\triangle}=A(\eta_{0}>\cdots>\eta_{n},M)$ is clear
from the discussion opening the section, i.e. essentially nothing but
\cite[\S 7.2]{TateObjectsExactCats}.\newline(2) Unwinding Equation
\ref{lcw106} for $L_{\triangle^{\prime}}=A(\eta_{1}>\cdots>\eta_{n},L)$, we
see that $L$ is one of the Tate lattices in the outer-most colimit so that we
clearly have, just by unwinding definitions, a canonical morphism%
\begin{align*}
L_{\triangle^{\prime}}  & \hookrightarrow M_{\triangle}\\
A(\triangle^{\prime},L)  & \hookrightarrow A(\triangle,M)\\
\underset{L_{1}^{\prime}}{\underleftarrow{\lim}}\underset{(n-1)\text{-Tate
object}}{\underbrace{\underset{L_{2}}{\underrightarrow{\operatorname*{colim}}%
}\cdots\underset{L_{n}}{\underrightarrow{\operatorname*{colim}}}%
\underset{L_{n}^{\prime}}{\underleftarrow{\lim}}\frac{L_{n}}{L_{n}^{\prime}}%
}}  & \hookrightarrow\underset{L_{1}}{\underrightarrow{\operatorname*{colim}}%
}\underset{L_{1}^{\prime}}{\underleftarrow{\lim}}\underset{(n-1)\text{-Tate
object}}{\underbrace{\underset{L_{2}}{\underrightarrow{\operatorname*{colim}}%
}\cdots\underset{L_{n}}{\underrightarrow{\operatorname*{colim}}}%
\underset{L_{n}^{\prime}}{\underleftarrow{\lim}}\frac{L_{n}}{L_{n}^{\prime}}}%
}\text{,}%
\end{align*}
where on the left-hand side we have replaced the colimit over $L_{1}$ by the
single value for $L_{1}:=L$ the lattice at hand. This is visibly a Pro-object
with an Ind-quotient, thus a Tate lattice.\newline(3) Let $T\hookrightarrow
M_{\triangle}$ be a Tate lattice. Since we may write $M_{\triangle}$ as
$M_{\triangle}=\underrightarrow{\operatorname*{colim}}_{L}L_{\triangle
^{\prime}}$ (by definition), i.e. as an Ind-diagram over Pro-objects, where
$L$ runs through all Beilinson lattices, the Pro-subobject $T$ must factor
through one of these $L_{\triangle^{\prime}}$. If $L_{2}$ denotes one such
index, i.e. the underlying Beilinson lattice, this means that
$T\hookrightarrow L_{2\triangle^{\prime}}$. The other direction is a little
more complicated: Let $L$ be any Beilinson lattice. Then the composition%
\[
L_{\triangle^{\prime}}\hookrightarrow M_{\triangle}\twoheadrightarrow
M_{\triangle}/T
\]
is a morphism from a Pro-object to an Ind-object.\ Thus, it must factor
through an $(n-1)$-Tate object $C$, i.e.%
\begin{equation}
L_{\triangle^{\prime}}\rightarrow P\hookrightarrow M_{\triangle}%
/T\label{lwxu1}%
\end{equation}
(\textit{Proof:} Since Pro-objects are left filtering in Tate objects by
\cite[Prop. 5.8]{TateObjectsExactCats}, the composed arrow $L_{\triangle
^{\prime}}\rightarrow M_{\triangle}/T$ factors as $L_{\triangle^{\prime}%
}\rightarrow P\hookrightarrow M_{\triangle}/T$, with $P$ a Pro-object. But
$M_{\triangle}/T$ is an Ind-object, so $P$ must be an Ind-object, too. By
\cite[Prop. 5.9]{TateObjectsExactCats} it follows that $P$ is an $(n-1)$-Tate
object). The object $L_{\triangle^{\prime}}$ can be presented as the
Pro-diagram%
\[
L\mapsto(L/L^{\prime})_{\triangle^{\prime}}=L_{\triangle^{\prime}%
}/L_{\triangle^{\prime}}^{\prime}\text{,}%
\]
where $L^{\prime}\subseteq L$ runs through all Beilinson sub-lattices,
partially ordered by inclusion. The quotients $(L/L^{\prime})_{\triangle
^{\prime}}$ are $(n-1)$-Tate objects, and since these are right filtering in
Pro-objects over them, \cite[Theorem 4.2 (2)]{TateObjectsExactCats}, it
follows that the arrow $L_{\triangle^{\prime}}\rightarrow P$ factors through
the projection to an object in the Pro-diagram%
\[
L_{\triangle^{\prime}}\twoheadrightarrow(L/L^{\prime})_{\triangle^{\prime}%
}\rightarrow P
\]
for a suitable $L^{\prime}\subseteq L$. Thus, returning to Equation
\ref{lwxu1} the composition%
\[
L_{\triangle^{\prime}}^{\prime}\hookrightarrow L_{\triangle^{\prime}%
}\rightarrow M_{\triangle}/T
\]
is zero. Thus, $L_{\triangle^{\prime}}^{\prime}\hookrightarrow T$ follows from
the universal property of kernels.
\end{proof}

\begin{remark}
The apparent asymmetry in the complexity of proving the existence of $L_{1}$
resp. $L_{2}$ in $L_{1\triangle^{\prime}}\subseteq T\subseteq L_{2\triangle
^{\prime}}$ is caused by the fact that we view Tate objects as a sub-category
of $\mathsf{Ind}^{a}\mathsf{Pro}^{a}(\mathcal{C})$. This is the place where
this kind of asymmetry is built in.
\end{remark}

Next, we observe that one can present the limits and colimits underlying the
ad\`{e}les in a particularly convenient format:

\begin{lemma}
\label{lemma_decodeindprolimits}Suppose we are in the situation of the theorem.

\begin{itemize}
\item Then for any $j=1,\ldots,n$ the following describes the same object:%
\[
\underset{L_{1}}{\underrightarrow{\operatorname*{colim}}}\underset
{L_{1}^{\prime}}{\underleftarrow{\lim}}\cdots\underset{L_{j}}{\underrightarrow
{\operatorname*{colim}}}\underset{L_{j}^{\prime}}{\underleftarrow{\lim}%
}\,A\left(  \eta_{j+1}>\cdots>\eta_{n},\frac{L_{j}}{L_{j}^{\prime}}\right)
\]
\newline where all the $L_{\ell}$ run increasingly through all finitely
generated $\mathcal{O}_{\eta_{\ell}}$-submodules of $\frac{L_{\ell-1}}%
{L_{\ell-1}^{\prime}}$ (if $\ell\geq2$) or $\mathcal{O}_{\eta_{0}}$ (if
$\ell=1$); the $L_{\ell}^{\prime}\subseteq L_{\ell}$ run decreasingly through
all finitely generated $\mathcal{O}_{\eta_{\ell}}$-submodules of $L_{\ell}$
having full rank.

\item This statement holds true irrespective of whether we carry out the
limits and colimits in the category of all $k$-vector spaces, or interpret it
as an elementary $j$-Tate object with values in an elementary $(n-j)$-Tate
object of finite-dimensional $k$-vector spaces.
\end{itemize}
\end{lemma}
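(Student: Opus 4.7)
My plan is to prove both bullets by induction on $j$. For the first bullet I will reorganise the nested $\varinjlim/\varprojlim$-presentation of the adèle coming directly from the definition (and already displayed in the proof of Theorem~\ref{thm_body_AdelesAreSlicedOverVectSpaces}) into a presentation where the indexing is by Beilinson lattices. The whole point is a cofinality argument: the indexing sets appearing in the raw definition of $A(\triangle,\mathcal{O}_X)$ (coherent subsheaves of $\mathcal{O}_{\eta_0}$ and ideal powers $\mathfrak{m}_{\eta_\ell}^{i_\ell}$) are cofinal with the poset of Beilinson lattices inside the relevant module, and vice versa.

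Concretely, the base case $j=1$ will be handled as follows. Since $\operatorname{codim}_X\overline{\{\eta_0\}}=0$, we have $\mathfrak{m}_{\eta_0}=0$, and the initial product in the adèle formula collapses to a single term together with a colimit over coherent subsheaves $\mathcal{M}'\subseteq\mathcal{O}_{\eta_0}$. I will then observe two cofinality statements: (a) every finitely generated $\mathcal{O}_{\eta_1}$-submodule $L_1\subseteq\mathcal{O}_{\eta_0}$ satisfying $\mathcal{O}_{\eta_0}\!\cdot\!L_1=\mathcal{O}_{\eta_0}$ extends, by spreading out, to a coherent subsheaf of $\mathcal{O}_{\eta_0}$ whose stalk at $\eta_1$ recovers $L_1$; and (b) conversely, for any such coherent subsheaf its germ at $\eta_1$ is a Beilinson lattice. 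This identifies the outer $\varinjlim$ over coherent subsheaves with a cofinal $\varinjlim$ over Beilinson lattices $L_1$. The inner $\varprojlim$ over $\mathfrak{m}_{\eta_1}^{i_1}$ acting on $L_1$ then becomes cofinal with the $\varprojlim$ over Beilinson sub-lattices $L_1'\subseteq L_1$, because the powers $\mathfrak{m}_{\eta_1}^{i_1}L_1$ are cofinal among all finitely generated full-rank $\mathcal{O}_{\eta_1}$-submodules of $L_1$ (using Artin--Rees and the local structure of f.g.\ modules over the local ring $\mathcal{O}_{\eta_1}$). This matches the desired expression with $A(\eta_2>\cdots>\eta_n,L_1/L_1')$ sitting inside.

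For the inductive step, once one has peeled off the outer colim/lim, the inner object is $A(\eta_{2}>\cdots>\eta_{n},L_1/L_1')$, which is an adèle with respect to a flag of length $n-1$ in the reduced closed subscheme $\overline{\{\eta_1\}}$. Applying the inductive hypothesis with $j$ replaced by $j-1$ to this inner adèle gives the full claim. For the second bullet I will argue essentially tautologically: the rewriting done above is an isomorphism of diagrams indexed by the same partially ordered set of nested Beilinson lattices; evaluated in $\mathsf{Vect}$ we recover $A(\triangle,\mathcal{O}_X)$ as a $k$-vector space, while by \cite[\S 7.2]{TateObjectsExactCats} the very same diagram, read in stages, defines a $j$-Tate object in elementary $(n-j)$-Tate objects of $\mathsf{Vect}_f$. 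The only check is that the $L_\ell/L_\ell'$ appearing as inner entries genuinely lie in $\mathsf{Vect}_f$; this follows because they are finitely generated $\mathcal{O}_{\eta_\ell}$-modules annihilated by a power of $\mathfrak{m}_{\eta_\ell}$, hence finite-dimensional as $k$-vector spaces since $X$ is of finite type over $k$.

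The main obstacle I anticipate is the cofinality argument in the base step, specifically moving between coherent $\mathcal{O}_X$-subsheaves of the quasi-coherent sheaf $\mathcal{O}_{\eta_0}$ and finitely generated $\mathcal{O}_{\eta_\ell}$-submodules at the generic point of $\overline{\{\eta_\ell\}}$. The passage relies on the fact that $\eta_\ell\in\overline{\{\eta_{\ell-1}\}}$ is of codimension $1$ there, so $\mathcal{O}_{\eta_{\ell-1}}$ is the localisation of $\mathcal{O}_{\eta_\ell}$ at the zero ideal of its one-dimensional quotient; this is what makes Beilinson lattices cofinal in both directions. Once this is in place the rest is a bookkeeping exercise that can be compressed into a single induction over $j$ applied symmetrically at each layer.
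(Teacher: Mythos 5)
Your proposal is correct and takes essentially the same route as the paper: the paper likewise obtains the case $j=1$ by unwinding the definition of the ad\`{e}les against the poset of Beilinson lattices and then passes from $j$ to $j+1$ inductively (deferring the cofinality bookkeeping you spell out to \cite[Lemma 30]{bgwGeomAnalAdeles}), and it also notes that both interpretations of the limits are handled verbatim. The one point worth making explicit is that the induction has to be formulated for ad\`{e}les with coefficients in an arbitrary finitely generated $\mathcal{O}_{\eta_{j}}$-module rather than for the structure sheaf of the reduced subscheme $\overline{\{\eta_{1}\}}$, since $L_{1}/L_{1}^{\prime}$ lives on an infinitesimal thickening; with that strengthening of the inductive hypothesis your argument goes through.
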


\begin{proof}
The immediate evaluation of $A(\eta_{0}>\cdots>\eta_{n},\mathcal{O}_{X})$
straight from the definition unravels easily to become the case $j=1$ in the
statement.\ Inductively, one can transform the expression into its counterpart
for $j+1$. For details, cf. \cite[Lemma 30]{bgwGeomAnalAdeles}. This procedure
works with both interpretations, verbatim.
\end{proof}

We may read this lemma as a kind of induction step. For its final step, $j=n$,
we arrive at%
\begin{equation}
A(\eta_{0}>\cdots>\eta_{n},\mathcal{O}_{X})=\underset{L_{1}}{\underrightarrow
{\operatorname*{colim}}}\underset{L_{1}^{\prime}}{\underleftarrow{\lim}}%
\cdots\underset{L_{n}}{\underrightarrow{\operatorname*{colim}}}\underset
{L_{n}^{\prime}}{\underleftarrow{\lim}}\frac{L_{n}}{L_{n}^{\prime}}%
\text{,}\label{lcw106}%
\end{equation}
presenting the ad\`{e}le object on the left-hand side entirely in terms of
Beilinson lattices. This presentation bridges from the definition of the
ad\`{e}les in Beilinson's original paper \cite{MR565095} (or \cite{MR1138291},
\cite{MR1213064}, \cite{MR1374916} for secondary sources) to the ideals in
Beilinson's cubical algebra structure as given in Definition
\ref{def_BeilNFoldAlg}:

\begin{lemma}
\label{Lemma_IdealsAndIndProLimits}We keep the assumptions as in the theorem.
Below, the `roof symbol' $\widehat{(\cdots)}$ will denote omission:

\begin{enumerate}
\item Suppose $M_{1},M_{2}$ are finitely generated $\mathcal{O}_{\eta_{0}}%
$-modules. Then a $k$-linear map $f\in\operatorname*{Hom}\nolimits_{k}%
(M_{1\triangle},M_{2\triangle})$ lies in $\operatorname*{Hom}%
\nolimits_{\triangle}(M_{1},M_{2})$ if and only if it stems from a compatible
system of $k$-linear morphisms%
\[
\frac{L_{n}}{L_{n}^{\prime}}\longrightarrow\frac{N_{n}}{N_{n}^{\prime}%
}\text{,}%
\]
with $L_{n}^{\prime}\subseteq L_{n}$ (in $M_{1}$) and $N_{n}^{\prime}\subseteq
N_{n}$ (in $M_{2}$) suitable Beilinson lattices, inducing a morphism in the
limit/colimit%
\begin{align*}
f:M_{1\triangle} &  \longrightarrow M_{2\triangle}\\
\underset{L_{1}}{\underrightarrow{\operatorname*{colim}}}\underset
{L_{1}^{\prime}}{\underleftarrow{\lim}}\cdots\underset{L_{n}}{\underrightarrow
{\operatorname*{colim}}}\underset{L_{n}^{\prime}}{\underleftarrow{\lim}}%
\frac{L_{n}}{L_{n}^{\prime}} &  \longrightarrow\underset{N_{1}}%
{\underrightarrow{\operatorname*{colim}}}\underset{N_{1}^{\prime}%
}{\underleftarrow{\lim}}\cdots\underset{N_{n}}{\underrightarrow
{\operatorname*{colim}}}\underset{N_{n}^{\prime}}{\underleftarrow{\lim}}%
\frac{N_{n}}{N_{n}^{\prime}}\text{.}%
\end{align*}

\item We remain in the situation of (1). We have $f\in I_{i\triangle}%
^{+}(M_{1},M_{2})$ if and only if $f$ factors as%
\[
\underset{L_{1}}{\underrightarrow{\operatorname*{colim}}}\underset
{L_{1}^{\prime}}{\underleftarrow{\lim}}\cdots\underset{L_{n}}{\underrightarrow
{\operatorname*{colim}}}\underset{L_{n}^{\prime}}{\underleftarrow{\lim}}%
\frac{L_{n}}{L_{n}^{\prime}}\longrightarrow\underset{N_{1}}{\underrightarrow
{\operatorname*{colim}}}\underset{N_{1}^{\prime}}{\underleftarrow{\lim}}%
\cdots\underset{N_{i}}{\underrightarrow{\widehat{\operatorname*{colim}}}%
}\cdots\underset{N_{n}}{\underrightarrow{\operatorname*{colim}}}%
\underset{N_{n}^{\prime}}{\underleftarrow{\lim}}\frac{N_{n}}{N_{n}^{\prime}%
}\text{,}%
\]
i.e. instead of the colimit over all $N_{i}$ we can take a fixed $N_{i}$
(depending on $N_{1},N_{1}^{\prime},\ldots,N_{i-1},N_{i-1}^{\prime}$). We have
$f\in I_{i\triangle}^{-}(M_{1},M_{2})$ if and only if $f$ factors as%
\[
\underset{L_{1}}{\underrightarrow{\operatorname*{colim}}}\underset
{L_{1}^{\prime}}{\underleftarrow{\lim}}\cdots\underset{L_{i}}{\underleftarrow
{\widehat{\lim}}}\cdots\underset{L_{n}}{\underrightarrow{\operatorname*{colim}%
}}\underset{L_{n}^{\prime}}{\underleftarrow{\lim}}\frac{L_{n}}{L_{n}^{\prime}%
}\longrightarrow\underset{N_{1}}{\underrightarrow{\operatorname*{colim}}%
}\underset{N_{1}^{\prime}}{\underleftarrow{\lim}}\cdots\underset{N_{n}%
}{\underrightarrow{\operatorname*{colim}}}\underset{N_{n}^{\prime}%
}{\underleftarrow{\lim}}\frac{N_{n}}{N_{n}^{\prime}}\text{,}%
\]
i.e. instead of the limit over all $L_{i}$ we can take a fixed $L_{i}$
(depending on $L_{1},L_{1}^{\prime},\ldots,L_{i-1},L_{i-1}^{\prime}$).
\end{enumerate}
\end{lemma}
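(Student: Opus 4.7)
The plan is to prove both parts by induction on $n$, using as main inputs Lemma \ref{lemma_BeilLatticesAreTateLattices} (cofinality of Beilinson lattices among Tate lattices via $L \mapsto L_{\triangle'}$) and Lemma \ref{lemma_decodeindprolimits} (the explicit colim/lim presentation of the ad\`ele object).

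For Part (1), the content amounts to identifying Beilinson's $\operatorname{Hom}_\triangle(M_1,M_2)$ with the Hom set in the $n$-Tate category; the claimed description as compatible systems of morphisms of lattice quotients is then just the universal property of the double colim/lim of Lemma \ref{lemma_decodeindprolimits}. I would induct on $n$. In the base case $\triangle' = \varnothing$, Beilinson's condition reduces to $\bar f$ being $k$-linear, which is automatic, and any $k$-linear map stems from a compatible system trivially. For the inductive step, the direction ``compatible system $\Rightarrow$ Beilinson's condition'' follows by restricting the system to any chosen Beilinson lattices $L_1,L_2$: the refined Beilinson lattices $L_1^{\prime},L_2^{\prime}$ appear as indices in the system itself, and the induced $\bar f$ between the $(n-1)$-Tate quotients $L_1/L_1^{\prime}$ and $L_2^{\prime}/L_2$ lies in $\operatorname{Hom}_{\triangle'}$ by the inductive hypothesis. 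For the converse, starting with $f \in \operatorname{Hom}_\triangle$, I would first observe that $f$ is a Tate morphism (which follows from its inductive continuity together with the presentation of Lemma \ref{lemma_decodeindprolimits}), then apply Lemma \ref{Lemma_CanFactorTateMorThroughLatticePairs} to extract a double lattice factorization for any pair of Tate lattices, and finally use Lemma \ref{lemma_BeilLatticesAreTateLattices}(3) together with the (co-)directedness of the Sato Grassmannian, \cite[Thm.~6.7]{TateObjectsExactCats}, to sandwich these Tate lattices between Beilinson lattices; the inductive hypothesis then delivers the required compatible system on the quotients.

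For Part (2), I would dispose of the case $i=1$ by direct unwinding of definitions. A morphism $f \in I^+_{1\triangle}$ satisfies $f(M_{1\triangle}) \subseteq L_{\triangle'}$ for some Beilinson lattice $L \subseteq M_2$, which in the colim/lim presentation says exactly that $f$ factors through a single term on the target side, so we may take $N_1 := L$ and suppress the outer-most colim. The case $I^-_{1\triangle}$ is dual: if $f$ kills $L_{\triangle'}$, then $f$ factors through $M_{1\triangle}/L_{\triangle'}$, and we may fix the corresponding index on the source side and suppress the corresponding limit. For $i \geq 2$, I would induct: by Definition \ref{def_HigherAdeleOperatorIdeals}(4), the condition $f \in I^{\pm}_{i\triangle}$ amounts to $\bar f \in I^{\pm}_{(i-1)\triangle'}$ on every induced quotient of nested Beilinson lattices; since such quotients are $(n-1)$-Tate objects by \cite[Prop.~6.6]{TateObjectsExactCats}, the inductive hypothesis produces the factorization at level $i-1$ inside each quotient. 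Assembling these factorizations across the various lattice choices then yields the fixed-colim (resp.\ fixed-lim) structure at level $i$ for $f$ itself.

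The main obstacle will be the bookkeeping in the forward direction of Part (1): Lemma \ref{lemma_BeilLatticesAreTateLattices}(3) gives only an existential comparison between Beilinson and Tate lattices, so at each level of the induction the Beilinson lattices chosen to replace the double lattice factorization must be refined jointly (using (co-)directedness of the Sato Grassmannian) so that all four containments $L_1^{\prime}\hookrightarrow L_1$, $L_2\hookrightarrow L_2^{\prime}$, and the two image containments of Definition \ref{def_HigherAdeleOperatorIdeals} hold simultaneously with the lattices selected at the previous level. Once this refinement scheme is in place, the remainder is a systematic unravelling of the nested colim/lim presentation.
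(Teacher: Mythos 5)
Your proposal is correct and follows essentially the same route as the paper: the published proof is a three-line remark instructing the reader to unravel the presentation of Equation \ref{lcw106} in terms of Beilinson lattices and to observe that membership in $I_{i\triangle}^{\pm}$ is precisely the property of factoring through a fixed Beilinson lattice of the target (resp.\ source) at level $i$, and your inductive bookkeeping simply makes that unravelling explicit. The one superfluous element is the detour, in the converse direction of part (1), through Lemma \ref{Lemma_CanFactorTateMorThroughLatticePairs} and the sandwiching of Lemma \ref{lemma_BeilLatticesAreTateLattices}: once Beilinson's recursive continuity condition is matched against the Beilinson-lattice-indexed diagram, the compatible system is already in hand, so no passage through general Tate lattices is required.
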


\begin{proof}
This follows rather directly from the definition. Firstly, unravel $A(\eta
_{0}>\cdots>\eta_{n},\mathcal{O}_{X})$ in terms of iterated limits and
colimits of lattices as in Equation \ref{lcw106}. But then ideal membership
for $I_{i}^{\pm}$ is exactly the property to factor through a Beilinson
lattice of the target, resp. a Beilinson lattice of the source.
\end{proof}

\begin{proof}
[Proof of Thm. \ref{thm_RelationCubicallyDecompAlgsBeilToTate}]For the sake of
clarity, we shall denote a Beilinson lattice by the letter $\mathcal{L}$ in
this proof, and Tate lattices by the letter $L$. We know that every Beilinson
lattice gives rise to a Tate lattice, Lemma
\ref{lemma_BeilLatticesAreTateLattices}, and conversely by the same Lemma
every possible Tate lattice $L$ is sandwiched as $\mathcal{L}_{1\triangle
}\subseteq L\subseteq\mathcal{L}_{2\triangle}$ between Beilinson lattices
$\mathcal{L}_{1},\mathcal{L}_{2}$. We now claim that%
\begin{equation}
E_{\triangle}^{\operatorname*{Beil}}\cong E_{\triangle}^{\operatorname*{Tate}%
}\label{lcw107}%
\end{equation}
holds as sets. This is seen as follows: Given a $k$-linear map $f\in
E_{\triangle}^{\operatorname*{Beil}}$ the definition of the subgroup
$\operatorname*{Hom}\nolimits_{\triangle}(\mathcal{O}_{\eta_{0}}%
,\mathcal{O}_{\eta_{0}})\subseteq\operatorname*{End}\nolimits_{k}%
(\mathcal{O}_{X\triangle},\mathcal{O}_{X\triangle})$ in Definition
\ref{def_HigherAdeleOperatorIdeals} guarantees that there exist factorizations%
\[
\overline{f}:(\mathcal{L}_{1}/\mathcal{L}_{1}^{\prime})_{\triangle^{\prime}%
}\rightarrow(\mathcal{L}_{2}^{\prime}/\mathcal{L}_{2})_{\triangle^{\prime}}%
\]
over suitable Beilinson lattices $\mathcal{L}_{1},\mathcal{L}_{1}^{\prime
},\mathcal{L}_{2}^{\prime},\mathcal{L}_{2}$. By the exactness of the ad\`{e}le
functor $(-)_{\triangle^{\prime}}$, this is nothing but $\overline
{f}:\mathcal{L}_{1\triangle^{\prime}}/\mathcal{L}_{1}^{\prime}{}%
_{\triangle^{\prime}}\rightarrow\mathcal{L}_{2\triangle^{\prime}}^{\prime
}/\mathcal{L}_{2\triangle^{\prime}}$. Hence, we get a (straight) morphism of
the explicit Tate diagrams arising from the presentation of Equation
\ref{lcw106}. In particular, this datum induces a morphism of $n$-Tate
objects. Conversely, any morphism of $n$-Tate objects can be factored over
lattice quotients in the desired shape by Lemma
\ref{Lemma_CanFactorTateMorThroughLatticePairs}. This proves Equation
\ref{lcw107} as an equality of sets because both maps are inverse to each
other. However, it is easy to check that these maps are in fact group
homomorphisms and also respect composition, so we get an isomorphism of
associative algebras. Lemma \ref{Lemma_IdealsAndIndProLimits} then establishes
the equality of ideals $I_{i}^{\pm}$: Just unravel the ideal membership
conditions and use that all Beilinson lattices give rise to Tate lattices, and
conversely we have the sandwiching property.
\end{proof}

\bibliographystyle{amsalpha}
\bibliography{ollinewbib}

\end{document}